 \newcommand{\mymarginpar}[1]{%
    \marginpar{\ifthenelse{\isodd{\arabic{page}}}{\flushleft 
#1}{\flushright #1}}}
\numberwithin{equation}{section}
 \newcommand{\pphi}{\varphi}
 \newcommand{\eps}{\varepsilon}           
 \newcommand{\IN}{\mathbb{N}}
 \newcommand{\IZ}{\mathbb{Z}}         
\newcommand{\CA}{\mathcal{A}}
\newcommand{\CB}{\mathcal{B}}
\newcommand{\CD}{\mathcal{D}}
\newcommand{\CE}{\mathcal{E}}
\newcommand{\CS}{\mathcal{S}}
\newcommand{\CO}{\mathcal{O}}
\newcommand{\CP}{\mathcal{P}}
\newcommand{\CT}{\mathcal{T}}
\newcommand{\CR}{\mathcal{R}}
\newcommand{\CU}{\mathcal{U}}
\newcommand{\CZ}{\mathcal{Z}}
\newcommand{\cstar}{$C^*$}
 \theoremstyle{plain} %%%%%%%%%%%%%%%%%%%%%%%%%%%%%%%%%
 \newtheorem{Theorem}{Theorem}[section]
 \newtheorem{Lemma}[Theorem]{Lemma}
 \newtheorem{Proposition}[Theorem]{Proposition}
 \newtheorem{Corollary}[Theorem]{Corollary}
 \theoremstyle{definition} %%%%%%%%%%%%%%%%%%%%%%%%%%%%
 \newtheorem{Definition}[Theorem]{Definition}
 \newtheorem{Remark}[Theorem]{Remark}
 \newtheorem{Example}[Theorem]{Example}
\begin{document}

\author{Jack Spielberg}
\title{Groupoids and \cstar-algebras for categories of paths}
\date{28 November 2011}
\address{School of Mathematical and Statistical Sciences \\ Arizona State University \\ P.O. Box 871804 \\ Tempe, AZ 85287-1804}
\email{jack.spielberg@asu.edu}
\subjclass[2010]{Primary 46L05; Secondary 20L05}
\keywords{Cuntz-Krieger algebra, Toeplitz Cuntz-Krieger algebra, groupoid, aperiodicity}

\dedicatory{Dedicated to the memory of Bill Arveson}

\begin{abstract}
In this paper we describe a new method of defining $C^*$-algebras from oriented combinatorial data, thereby generalizing the construction of algebras from directed graphs, higher-rank graphs, and ordered groups.  We show that only the most elementary notions of concatenation and cancellation of paths are required to define versions of Cuntz-Krieger and Toeplitz-Cuntz-Krieger algebras, and the presentation by generators and relations follows naturally.  We give sufficient conditions for the existence of an AF core, hence of the nuclearity of the $C^*$-algebras, and for aperiodicity, which is used to prove the standard uniqueness theorems.
\end{abstract}

\maketitle

\section{Introduction}
\label{s.intro}
%!?!\textit{(s.intro)} %%% erase label name

In this paper we describe a new method of defining $C^*$-algebras from oriented combinatorial data, thereby generalizing the constructions of algebras from directed graphs, higher-rank graphs, and ordered groups.  The use of directed graphs to analyze $C^*$-algebras goes back to Bratteli's thesis introducing AF algebras (\cite{bra}).  The dual role played by graphs was apparent even then.  On the one hand, known $C^*$-algebras could be described by generators and relations based on suitable graphs.  On the other hand, any graph (in the class being considered) gives rise, by the same construction, to a $C^*$-algebra.  The resulting family of $C^*$-algebras can be studied as a whole via this class of graphs.  The flexibility inherent in the description of a graph yields a method of great power, especially if the process is \textit{functorial} (so that symmetries of a graph determine corresponding symmetries of the associated $C^*$-algebra).  The graphs are combinatorial objects, easy to ``turn around in one's hands.''  Yet their combinatorial properties control the behavior of complicated analytic objects:  the $C^*$-algebras.

The current use of directed graphs in $C^*$-theory originated in the work of Cuntz and Cuntz-Krieger (\cite{cun}, \cite{cunkri}).  Here the generators and relations were so clearly apparent that subsequent work took their imitation as primary focus (although it can be argued that the original motivation came from symbolic dynamics).  The treatment of arbitrary directed graphs was developed over a period of some 20 years (see, e.g., the historical remarks in \cite{rae}), with ad hoc devices to deal with each new aspect of a graph that was considered.  In a far-reaching generalization of the graph algebra construction, Kumjian and Pask introduced in \cite{kumpas} the notion of  \textit{higher-rank graphs}, based on work of Robertson and Steger on $C^*$-algebras associated to actions of groups on the boundary of an affine building.  Kumjian and Pask realized that for the purposes of the $C^*$-algebra, a graph may be replaced by its space of paths.  The key features of the space of paths are the concatenation of paths, and the unique factorization of a path as a concatenation of subpaths of prescribed lengths.  If ``length'' is taken to be an element in the positive cone  $(\IZ^+)^k \subseteq \IZ^k$ (instead of in $\IZ^+$), one obtains a \textit{$k$-graph}.  As in the case of 1-graphs, the work of finding the right generators and relations, and indeed, the right hypotheses, for higher-rank graphs passed through several phases.  The culmination of this difficult process was the notion of \textit{finite alignment}, introduced in \cite{raesimyee} (based on earlier work of Fowler).  In that paper the authors explicitly laid out their desire to find generators and relations that shared the most important characteristics of the traditional Cuntz-Krieger algebras.

In this paper we have quite the opposite motivation.  We wish to start with a suitable combinatorial object, and define a $C^*$-algebra directly from what might be termed the \textit{generalized symbolic dynamics} that it induces.  We give a functorial procedure that does not make any a priori assumptions about possible presentations of the algebra.  (This approach was used to give a natural derivation of the presentation of the $C^*$-algebra of a general directed graph in \cite{spi_graphalg}.  The current paper simplifies and generalizes that work.)  In this general context, finite alignment is a natural property, but is not needed to define the algebra.  The construction itself naturally gives rise to a presentation by generators and relations.  We then specialize to the finitely aligned case, where the considerations are much simpler.  In this case, the natural generators and relations turn out to be the same as those introduced in \cite{raesimyee} for higher-rank graphs.  Our treatment, however, applies in much more generality.  Following an idea proposed in \cite{exel}, we find that a degree functor is not at all necessary for defining the $C^*$-algebra.  Indeed, many different degrees can exist, giving different decompositions of the algebra (see section \ref{s.gaugeactions}).

We briefly describe the idea of our construction.  Cuntz and Krieger began with a shift matrix, i.e. a space of admissible sequences.  Equivalently, one can consider the directed graph having this as incidence matrix.  The finite admissible sequences correspond to finite directed paths in this graph.  The symbolic dynamics can be represented by the twin notions of \textit{concatenation} (right shift) and \textit{cancellation} (left shift):  the right shift by a path $\alpha$ is the map $\beta \mapsto \alpha\beta$, when $\beta$ is a path for which the concatenation is defined, while the left shift by $\alpha$ is the inverse of the right shift.  If we let $\alpha E^*$ denote the set of all paths that extend $\alpha$, then the collection of all such sets generates a Boolean ring of sets which is preserved by the shift maps.  The set of ultrafilters in this ring is a compact Hausdorff space on which the shifts then act as partial homeomorphisms (of course, this is just the space of one-sided infinite admissible sequences).  There are various ways of passing from this to a $C^*$-algebra; we choose the method of groupoids as being the most natural (\cite{arzren}, \cite{pat}).  This produces the usual Cuntz-Krieger algebra, as well as the Toeplitz version.  Our point of departure is the observation that the entire process requires only the operations of concatenation and cancellation; \textit{any} collection that behaves roughly like paths with regard to such operations will permit an analogous construction.  We find that all versions of generators and relations that have been used in previous work follow from elementary set theoretic considerations (see Theorem \ref{t.ringhomomorphisms.two}).  (Ultrafilters were used to define a boundary space, for directed graphs and their generalizations, in \cite{spi_graphalg}, \cite{exel3}, and \cite{brosimvit}.)

One of our primary motivations is to recover some of the flexibility inherent in the situation of directed graphs.  Any collection of ``dots and arrows'' is a directed graph, and so produces a graph algebra.  These algebras include some very important classes:  AF algebras, and Kirchberg algebras having free $K_1$ group, are the most notable.  Yet it remains a fairly limited class.  Already the 2-graphs exhibit much richer behavior (e.g. \cite{pasraerorsim}). Higher-rank graphs, on the other hand, are quite rigid objects, and difficult to construct.  (It is not clear how to build a $k$-graph from a collection of $k$ commuting graphs.  See \cite{kumpassim}, Example 5.15(ii).  The construction we give treats this example as easily as a finite $k$-graph.)  Although there are as yet no compelling examples of higher-rank graphs that are not finitely aligned, we believe that the difficulty of envisioning generators and relations for such has been a formidable obstacle.  Our methods provide a natural, albeit complicated, definition.  Moreover, in some instances the considerations are not so difficult, and our definition may lead to new and interesting examples.   Another of our goals is to give subcategories a natural place in the theory.  One of the successes of the approach taken in \cite{spi_graphalg} was that each subgraph of a given graph defines a subalgebra of its graph algebra.  For example, the algebra of a graph equals the direct limit of algebras associated to its finite subgraphs.  This result was obtained from the observation that the algebra of a subgraph depends on the ambient graph; a graph algebra is properly thought of as the algebra of a pair of nested graphs.  We adapt that idea here, although the considerations are much more difficult, and hence our conclusions more preliminary.  For example, even the relative category of paths defined by a subcategory of a finite category of paths need not be finitely aligned.  We plan to address some of these issues in a subsequent paper.

In section \ref{s.restriction} we consider the example of a countable ordered group; the positive cone is a category of paths.  With appropriate hypotheses, it is possible to define a \textit{directed boundary} of the group in this situation.  We prove that the corresponding crossed product $C^*$-algebra is Morita equivalent to the $C^*$-algebra of the category of paths.  One unexpected application is to the Wiener-Hopf algebras of Nica (\cite{nica}, \cite{lacrae}).  This work is concerned with $C^*$-algebras constructed from groups having a \textit{quasi-lattice ordering}.  In our context, this just means a particularly strict version of finite alignment.  In particular, our treatment relaxes this requirement, as well as clarifying the role of amenability for these algebras.  Our construction generalizes that of \cite{crilac}  in the quasi-lattice ordered case.  In particular, for quasi-lattice ordered groups, the \textit{boundary spectrum} of \cite{crilac}, Definition 3.4, is the same as our boundary.  In \cite{crilac} it is Nica's notion of amenability that is investigated, namely, the coincidence of full and reduced $C^*$-algebras.  We consider amenability in the sense of Renault (\cite{ren}).  In a separate paper (\cite{spi_baumsol}) we use our methods to describe the structure of the algebras associated to the Baumslag-Solitar groups.  These turn out to be identical to certain algebras studied by Katsura in his work on topological graphs (\cite{kat}).  Our method gives a new approach to the description of these algebras by generators and relations, and also gives the ideal structure of the Toeplitz versions.

Already in \cite{nica} it was observed that the algebras obtained from a quasi-lattice ordered group need not be amenable, in the sense that the spatial and abstract versions might not coincide.  From the point of view of amenability of the underlying groupoid, the problem is also of the nuclearity of the $C^*$-algebras.  In the case of graphs and higher-rank graphs, these properties are established by decomposing the algebra by means of a compact abelian group action, having an AF fixed-point algebra with a (partial) action of a free abelian group.  In general, there may be many such actions, with fixed-point algebras that are not AF.  In section \ref{s.gaugeactions} we give reasonable hypotheses guaranteeing the existence of an action having an AF fixed-point algebra.  This generalizes the argument for higher-rank graphs given in \cite{raesimyee}.  In section \ref{s.aperiodicity} we define \textit{aperiodicity} for categories of paths, and prove that it is equivalent to topological freeness of the corresponding groupoid.  We give a simple criterion for this property, generalizing recent results of Lewin and Sims \cite{lewsim}.  We also give analogs of the standard uniqueness theorems in the subject, as well as results on minimality and local contractivity.  In section \ref{s.amalgamation} we define the amalgamation of a collection of categories of paths.  This generalizes and simplifies the construction in \cite{kirchmodels} that has proved useful in several applications.

We briefly describe the contents of the first part of the paper.  In sections \ref{s.definition} and \ref{s.finitealignment} we define (relative) categories of paths, and introduce the basic facts about homomorphisms of Boolean rings of their subsets.  In section \ref{s.groupoid} we define the groupoid of a relative category of paths, and describe the simplifications that follow from finite alignment.  In section \ref{s.boolean} we prove that $*$-homomorphisms from the $C^*$-algebra of continuous functions vanishing at infinity on the unit space of the groupoid are characterized in a simple way by homomorphisms from the Boolean ring of subsets of the category.  This is particularly simple in the finitely aligned case, and this is the reason for the form of the usual presentation of the Toeplitz Cuntz-Krieger algebra.  We derive this presentation in section \ref{s.gensandrels}.  In section \ref{s.boundary} we restrict our attention to the finitely aligned case, and give a precise description of the elements of the unit space of the groupoid --- this might be termed the general \textit{infinite path space}, and consists of the directed hereditary subsets of the category.  It is natural to define the \textit{boundary} to be the closure of the maximal elements (see also \cite{nica}, \cite{lacrae}, and \cite{crilac}).  We characterize the elements of this closure using the finite exhaustive subsets of the category.  (We acknowledge our debt to \cite{raesimyee} for this.)  In the first part of section \ref{s.restriction} we use this to derive the usual presentation of the Cuntz-Krieger algebra.  This generalizes, and gives what we feel is a more natural motivation for, the presentation given in \cite{raesimyee}.

In the following we will always identify the objects of a category with the identity morphisms in that category, and we use juxtaposition to indicate composition of morphisms.  Morphisms will be referred to as \textit{paths}, and objects as \textit{vertices}.  We will use $s$ and $r$ to denote the \textit{source} and \textit{range} of morphisms in a category, and $\Lambda^0$ for the vertices in the category $\Lambda$.  It should be mentioned that the categorical framework puts us firmly in the \textit{Australian convention} for the case of graph algebras.  Thus when our method is used to construct a Cuntz-Krieger algebra, we obtain the algebra associated to the transpose of the matrix.

We thank the referee for giving many valuable suggestions.
We also thank Allan Donsig and David Milan for finding a gap in our earlier proof of Theorem \ref{t.gensandrels}.

\section{Definition of a category of paths}
\label{s.definition}
%!?!\textit{(s.definition)} %%% erase label name

\begin{Definition}
\label{d.categoryofpaths}
%!?!\textit{(d.categoryofpaths)} %%% erase label name
A \textit{category of paths} is a small category satisfying
\begin{enumerate}
\item \label{d.categoryofpathsa}
$\alpha\beta = \alpha\gamma$ implies $\beta = \gamma$ \textit{(left-cancellation)}.
\item \label{d.categoryofpathsb} $\beta\alpha = \gamma\alpha$ implies $\beta = \gamma$ \textit{(right-cancellation)}.
\item \label{d.categoryofpathsc}
$\alpha\beta = s(\beta)$ implies $\alpha = \beta = s(\beta)$ \textit{(no inverses)}.
\end{enumerate}
\end{Definition}

\begin{Example}
\label{e.catpaths}
%!?!\textit{(e.catpaths)} %%% erase label name
The following are examples of categories of paths:  
\begin{enumerate}
\item Higher-rank graphs.
\item Arbitrary subcategories of higher-rank graphs.
\item The categories of (finite) paths of the hybrid objects constructed in \cite{kirchmodels}, Definition 2.1.  The obvious generalizations of this construction are straightforward to define in the context of categories of paths (see section \ref{s.amalgamation}).
\item The category of paths in examples like \cite{kumpassim}, 5.15(ii).
\item The positive cone in a discrete ordered group (and in particular, the quasi-lattice ordered groups studied in \cite{nica}).
\item A small category equipped with a degree functor, satisfying unique factorization, taking values in the positive cone of an ordered group (\cite{kumpas}, Remarks 1.2).  The $P$-graphs of \cite{brosimvit} are examples of this kind.  (In the case that the group is totally ordered abelian, one obtains the natural definition of  \textit{directed $\Lambda$-graphs}, generalizing (directed) $\Lambda$-trees (\cite{chiswell}, \cite{ephrem}).)  See Remark \ref{r.uniquefactorization}.
\label{e.catpathsuniquefactorization}
\end{enumerate}
\end{Example}

\begin{Definition}
\label{d.leftshifts}
%!?!\textit{(d.leftshifts)} %%% erase label name
Let $\Lambda$ be a category of paths.
For any $\alpha \in \Lambda$ we define the \textit{left shift} $\sigma^\alpha : \alpha\Lambda \to s(\alpha)\Lambda$ by $\sigma^\alpha(\alpha\beta) = \beta$  ($\sigma^\alpha$ is well-defined by left-cancellation).  Of course for a subset $E$ of $\Lambda$, $\sigma^\alpha(E) = \sigma^\alpha(E \cap \alpha\Lambda)$.  (In the case where $\Lambda$ is a higher-rank graph, the difference between the functions $\sigma^\alpha$ for $\alpha \in \Lambda$, and $\sigma^n$ for $n \in \IN^k$, should be clear from the context.  However, we will not have occasion to use the latter notation.)
\end{Definition}

\begin{Remark}\label{r.rightcancellation}
%!?!\textit{(r.rightcancellation)} %%% erase label name
The \textit{right shift maps} $\beta \in s(\alpha)\Lambda \mapsto \alpha\beta \in r(\alpha)\Lambda$ are one-to-one, by left-cancellation.  It is often useful to think of a category of paths as a category of injective maps.  Since the maps are generally not surjective, the hypothesis of right-cancellation is somewhat artificial.  We will see that much of the theory can be developed satisfactorily without it.  (But see Remark \ref{r.rightgroupoid}, section \ref{s.aperiodicity}, and the proof of Theorem \ref{t.amalgamationdegree}.)
\end{Remark}

\begin{Definition}
\label{d.extensions}
%!?!\textit{(d.extensions)} %%% erase label name
Let $\Lambda$ be a category of paths.  Let $\alpha$, $\beta \in \Lambda$.  We say that $\beta$ \textit{extends} $\alpha$ if there exists $\alpha' \in \Lambda$ such that $\beta = \alpha \alpha'$ (we may express this by writing $\beta \in \alpha\Lambda$).  If $\beta$ is an extension of $\alpha$, we call $\alpha$ an \textit{initial segment} of $\beta$. The set of initial segments of $\beta$ is denoted $[\beta]$.  It follows easily from Definition \ref{d.categoryofpaths}\eqref{d.categoryofpathsa} and \eqref{d.categoryofpathsc} that the relation $\alpha \in [\beta]$ is a partial order on $\Lambda$.   We follow Exel (\cite{exel}) in using the notations $\alpha \Cap \beta$ ($\alpha$ \textit{meets} $\beta$) if $\alpha\Lambda \cap \beta\Lambda \not= \emptyset$, and $\alpha \perp \beta$ ($\alpha$ is \textit{disjoint from} $\beta$) otherwise.  We let $\alpha \vee \beta$ denote the set of \textit{minimal common extensions} of $\alpha$ and $\beta$, i.e. the minimal elements of $\alpha\Lambda \cap \beta\Lambda$.  For a subset $F \subseteq \Lambda$ we let $\bigvee F$ denote the set of minimal common extensions of the elements of $F$.  (The context should suffice to distinguish this use of the symbol $\vee$ from its use to indicate the join of a family of projections in a \cstar-algebra (see section \ref{s.gensandrels}).)
\end{Definition}

Of course, not every common extension of $\alpha$ and $\beta$ need extend a minimal common extension; in fact,  $\alpha \vee \beta$ may be empty even if $\alpha \Cap \beta$.  If $\Lambda$ is a higher-rank graph, then every common extension does extend a minimal common extension.  Moreover, in the case of a higher-rank graph, distinct elements of $\alpha \vee \beta$ are disjoint; generally, this will not be the case in a category of paths.

Let $\Lambda$ be a category of paths.  The central object of study is the ring of subsets of $\Lambda$ generated by the tail sets $\{\alpha\Lambda : \alpha \in \Lambda\}$, and the action on it of the left shift maps $\{\sigma^\alpha : \alpha \in \Lambda\}$.  Much of the analysis rests on the following lemma from elementary set theory.  We use $\sqcup$ to indicate a union in which the sets presented are pairwise disjoint.

\begin{Lemma}
\label{l.setlemma}
%!?!\textit{(l.setlemma)} %%% erase label name
Let $\CD^{(0)}$ be a collection of sets with the property that the intersection of two sets from $\CD^{(0)}$ is a finite union of sets from $\CD^{(0)}$.  Let $\CD$ be the collection of all non-empty sets of the form $A\setminus(B_1 \cup \cdots \cup B_n)$, where $A$, $B_1$, $\ldots$, $B_n\in\CD^{(0)}$, and $B_i\subseteq A$ for all $i$.  Let $\CA$ be the collection of all finite disjoint unions of sets from $\CD$.  Then $\CA$ is the ring of sets generated by $\CD^{(0)}$.
\end{Lemma}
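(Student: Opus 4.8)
The plan is to show that $\CA$ is a ring of sets, since the two inclusions needed to identify it as the \emph{generated} ring are immediate. On the one hand, every $A \in \CD^{(0)}$ is itself of the required form with no sets subtracted, so $\CD^{(0)} \subseteq \CD \subseteq \CA$. On the other hand, every member of $\CD$ is a finite union and difference of members of $\CD^{(0)}$, so $\CD$, and hence every finite disjoint union of its elements, lies in any ring containing $\CD^{(0)}$. Thus once $\CA$ is known to be a ring it is automatically the smallest one containing $\CD^{(0)}$, and the only real content is the closure of $\CA$ under the Boolean operations.

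The difficulty is that $\CD^{(0)}$ is \emph{not} closed under intersection as a collection of single sets: the hypothesis only gives that $A \cap A'$ is a finite, and in general non-disjoint, union of members of $\CD^{(0)}$. Consequently $\CD$ is not a semiring in the usual sense, so the standard passage from a semiring to its generated ring does not apply verbatim, and the disjointness demanded by the definition of $\CA$ must be produced by hand. The linchpin, which I would isolate as the first step, is the claim that any finite union $C_1 \cup \cdots \cup C_k$ of members of $\CD^{(0)}$ already lies in $\CA$. I would prove this by induction on $k$, writing $C_1 \cup \cdots \cup C_k = (C_1 \cup \cdots \cup C_{k-1}) \sqcup \bigl(C_k \setminus (C_1 \cup \cdots \cup C_{k-1})\bigr)$ and observing that the first term lies in $\CA$ by induction while the second is a \emph{single} element of $\CD$: each $C_k \cap C_t$ is a finite union of members of $\CD^{(0)}$, all of them subsets of $C_k$, so $C_k \setminus (C_1 \cup \cdots \cup C_{k-1})$ has exactly the form $C_k \setminus (E_1 \cup \cdots \cup E_p)$ with $E_l \in \CD^{(0)}$ and $E_l \subseteq C_k$. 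This is the step I expect to carry the whole argument, since it is what manufactures disjointness out of the merely union-closed intersection hypothesis.

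With this claim in hand I would bootstrap the remaining closure properties in order. First, for $D = A \setminus (B_1 \cup \cdots \cup B_m) \in \CD$ and $B \in \CD^{(0)}$, the difference $D \setminus B$ is again in $\CD$ (replace $B$ by the members of $\CD^{(0)}$ comprising $B \cap A$, each a subset of $A$), and $D \cap B$ lies in $\CA$ (write $A \cap B$ as a disjoint union of $\CD$-elements via the claim, then subtract the $B_i$ from each piece, which keeps each piece in $\CD$). Next, for $D_1, D_2 \in \CD$ with $D_2 = A' \setminus (B'_1 \cup \cdots \cup B'_n)$, the intersection $D_1 \cap D_2 = (A \cap A') \setminus (\bigcup_i B_i \cup \bigcup_j B'_j)$ lies in $\CA$: decompose $A \cap A'$ disjointly by the claim and subtract all the $B$'s from each piece. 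Since the product of two disjoint decompositions is again disjoint, $\CA$ is then closed under finite intersections. For differences I would split over membership in $A'$, writing $D_1 \setminus D_2 = (D_1 \setminus A') \sqcup \bigl((D_1 \cap A') \cap (\bigcup_j B'_j)\bigr)$; the first summand is in $\CD$ and the second is an intersection of two members of $\CA$, hence in $\CA$ by the intersection-closure just established, and the two summands are disjoint by membership in $A'$.

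Finally I would lift these to all of $\CA$. A disjoint union of members of $\CA$ is patently in $\CA$, so closure under difference for single $\CD$-elements upgrades to $X \setminus Y = \bigsqcup_i \bigcap_j (D_i \setminus D'_j) \in \CA$ for $X = \bigsqcup_i D_i$ and $Y = \bigsqcup_j D'_j$, and closure under union then follows from $X \cup Y = (X \setminus Y) \sqcup Y$. Hence $\CA$ is a ring, which completes the proof. The routine verifications I have suppressed are the bookkeeping that each ``subtract the $B$'s from a $\CD$-piece'' operation indeed lands back in $\CD$, which is an iterated application of the first bootstrapping step.
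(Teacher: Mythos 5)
Your proof is correct. The paper offers no argument to compare against --- its proof reads, in full, ``The proof is routine'' --- and your write-up is a sound filling-in of exactly that routine verification: the induction that peels off $C_k \setminus (C_1 \cup \cdots \cup C_{k-1})$ as a single element of $\CD$ is indeed the step that manufactures disjointness out of the union-valued intersection hypothesis, and each subsequent closure property (difference of a $\CD$-set by a $\CD^{(0)}$-set, intersection of $\CD$-sets, intersections in $\CA$, differences in $\CA$, then unions) relies only on steps already established, so the bootstrapping order is consistent. The only detail left implicit is the trivial handling of empty sets (pieces that come out empty are dropped from disjoint unions, and $\emptyset$ belongs to $\CA$ as the empty union), which is needed for $\CA$ to literally be a ring.
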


\begin{proof}
The proof is routine.
\end{proof}

\begin{Remark}
The stipulation in the definition of $\CD$ that $B_i \subseteq A$ is unnecessary, since $B_i$ can be replaced with $B_i \cap A$.  However, it will be useful later to have this convention.
\end{Remark}

In the next section we will introduce the notion of \textit{finite alignment} for categories of paths.  This is a strong assumption that leads to very significant simplifications in the structure of the $C^*$-algebras.  For example, it implies that the ``usual generators and relations'' can be written in a particularly simple form.  However, much of the theory can be developed without this assumption, and one of our goals is to initiate the study of the non-finitely aligned theory.  In order to motivate the following definitions, we first present some examples.

\begin{Example}
\label{ex_nonfinitelyaligned}
%!?! \textit{ex.nonfinitelyaligned} %%% erase label name
The 2-graph in Figure~\ref{fig1cat} is the simplest non-finitely aligned example.  We have the identifications $\alpha \gamma_i = \beta \delta_i$ for $i \in \IN$.

\begin{figure}[ht]

\[
\begin{tikzpicture}[scale=2]

\node (0_0) at (0,0) [circle] {};
\node (2_0) at (2,0) [rectangle] {$v_0$};
\node (3_0) at (3,0) [rectangle] {$v_1$};
\node (4_0) at (4,0) [rectangle] {$v_2$};
\node (1_1) at (1,1) [circle] {};
\node (1_m1) at (1,-1) [circle] {};

\foreach \x in {0} \filldraw [black] (\x,0) circle (1pt);
\foreach \y in {1,-1} \filldraw [black] (1,\y) circle (1pt);

\draw[-latex,thick] (1_1) -- (0_0) node[pos=0.5, inner sep=0.5pt, anchor=south east] {$\alpha$};
\draw[-latex,thick] (1_m1) -- (0_0) node[pos=0.5, inner sep=0.5pt, anchor=north east] {$\beta$};

\draw[-latex,thick] (2_0) -- (1_1) node[pos=0.4, inner sep=0.5pt, anchor=north east] {$\gamma_0$};
\draw[-latex,thick] (3_0) -- (1_1) node[pos=0.3, inner sep=0.5pt, anchor=north east] {$\gamma_1$};
\draw[-latex,thick] (4_0) -- (1_1) node[pos=0.3, inner sep=0.5pt, anchor=south west] {$\gamma_2$};

\draw[-latex,thick] (2_0) -- (1_m1) node[pos=0.4, inner sep=0.5pt, anchor=south east] {$\delta_0$};
\draw[-latex,thick] (3_0) -- (1_m1) node[pos=0.3, inner sep=0.5pt, anchor=south east] {$\delta_1$};
\draw[-latex,thick] (4_0) -- (1_m1) node[pos=0.3, inner sep=0.5pt, anchor=north west] {$\delta_2$};

\draw (4.5,0) node {\textbf{. . .}};

\end{tikzpicture}
\]

\caption{} \label{fig1cat}

\end{figure}

Let us consider the \textit{tail sets} at the vertex $s(\alpha)$ (we anticipate notation introduced later --- see Remark \ref{r.finitelyalignedring}).  For $v \in \Lambda^0$ let $\CE_v = \{ \mu \Lambda : r(\mu) = v \}$, and let $\CA_v$ be the ring of sets generated by $\CE_v$.  Note that $s(\alpha)\Lambda = \{ \gamma_i : i \in \IN \} \cup \{s(\alpha)\}$, and $\gamma_i\Lambda = \{\gamma_i\}$; these are the elements of $\CE_{s(\alpha)}$.  In this example, the rings $\CA_v$ are not preserved by the shift maps.  For example, $\beta\Lambda = \{ \beta\delta_i : i \in \IN \} \cup \{ \beta \}$, and $\sigma^\alpha \beta \Lambda = \{ \gamma_i : i \in \IN \} \not\in \CA_{s(\alpha)}$.

Figure~\ref{fig2cat} illustrates the kind of further complication that can arise.  Again this is a non-finitely aligned 2-graph.  In this example, the central diamond represents the same 2-graph as in Figure~\ref{fig1cat}.  The edges $\eps_i$ are present only for $i$ even, while the edges $\theta_i$ are present only when $i$ is divisible by 3.  We have additional identifications $\mu \gamma_{2i} = \nu \eps_{2i}$ and $\xi \delta_{3i} = \eta \theta_{3i}$.

\begin{figure}[ht]

\[
\begin{tikzpicture}[scale=2]

\node (0_0) at (0,0) [circle] {};
\node (2_0) at (2,0) [circle] {};
\node (m2_0) at (-2,0) [circle] {};
\node (m3_1) at (-3,1) [circle] {};
\node (m1_1) at (-1,1) [circle] {};
\node (1_1) at (1,1) [circle] {};
\node (3_1) at (3,1) [circle] {};
\node (0_2) at (0,2) [circle] {$v_i$};

\foreach \x in {0,-2,2} \filldraw [black] (\x,0) circle (1pt);
\foreach \x in {-3,-1,1,3} \filldraw [black] (\x,1) circle (1pt);

\draw[-latex,thick] (m1_1) -- (0_0) node[pos=0.5, inner sep=0.5pt, anchor=north east] {$\alpha$};
\draw[-latex,thick] (1_1) -- (0_0) node[pos=0.5, inner sep=0.5pt, anchor=north west] {$\beta$};
\draw[-latex,thick] (m1_1) -- (m2_0) node[pos=0.5, inner sep=0.5pt, anchor=north west] {$\mu$};
\draw[-latex,thick] (m3_1) -- (m2_0) node[pos=0.5, inner sep=0.5pt, anchor=north east] {$\nu$};
\draw[-latex,thick] (1_1) -- (2_0) node[pos=0.5, inner sep=0.5pt, anchor=north east] {$\xi$};
\draw[-latex,thick] (3_1) -- (2_0) node[pos=0.5, inner sep=0.5pt, anchor=north west] {$\eta$};
\draw[-latex,thick] (0_2) -- (1_1) node[pos=0.5, inner sep=0.5pt, anchor=north east] {$\delta_i$};
\draw[-latex,thick] (0_2) -- (m1_1) node[pos=0.5, inner sep=0.5pt, anchor=north west] {$\gamma_i$};
\draw[-latex,thick] (0_2) -- (3_1) node[pos=0.5, inner sep=0.5pt, anchor=south west] {$\theta_i \text{ (if } 3|i)$};
\draw[-latex,thick] (0_2) -- (m3_1) node[pos=0.5, inner sep=0.5pt, anchor=south east] {$\eps_i \text{ (if } 2|i)$};

\end{tikzpicture}
\]

\caption{} \label{fig2cat}

\end{figure}

Applying shift maps to tail sets allows us to construct
\begin{align*}
\sigma^\alpha \beta \Lambda &= \{ \gamma_i : i \in \IN \} \\
\sigma^\mu \nu \Lambda &= \{ \gamma_i : i \in 2\IN \} \\
\sigma^\beta \alpha \sigma^\mu \nu \Lambda &= \{ \delta_i : i \in 2\IN \} \\
\sigma^\eta \xi \sigma^\beta \alpha \sigma^\mu \nu \Lambda &= \{ \theta_i : i \in 6\IN \} \\
\sigma^\alpha \beta \sigma^\eta \xi \sigma^\beta \alpha \sigma^\mu \nu \Lambda &= \{ \gamma_i : i \in 6\IN \}. 
\end{align*}

\end{Example}

We see from these examples that sets obtained from tail sets by applying a sequence of shift maps must be considered if we wish to build a ring of sets that is preserved by the shift maps (that is, in order to have a setting for the \textit{generalized symbolic dynamics} associated to a category of paths).  This is the motivation for the following definition.

\begin{Definition}
\label{d.zigzag}
%!?!\textit{(d.zigzag)} %%% erase label name
Let $\Lambda$ be a category of paths.  A \textit{zigzag} is an even tuple of the form
\[
\zeta = (\alpha_1,\beta_1,\ldots,\alpha_n,\beta_n),
\]
where $\alpha_i$, $\beta_i \in \Lambda$, $r(\alpha_i) = r(\beta_i)$, $1 \le i \le n$, and $s(\alpha_{i+1}) = s(\beta_i)$, $1 \le i < n$.  We might draw a zigzag like this (whence the name):
\[
\begin{tikzpicture}

\node (0_1) at (0,1) [circle] {};
\node (2_1) at (2,1) [circle] {};
\node (4_1) at (4,1) [circle] {};
\node (6_1) at (6,1) [circle] {};
\node (8_1) at (8,1) [circle] {};
\node (1_0) at (1,0) [circle] {};
\node (3_0) at (3,0) [circle] {};
\node (7_0) at (7,0) [circle] {};

\foreach \x in {0,2,4,6,8} \filldraw [black] (\x,1) circle (1pt);
\foreach \x in {1,3,7} \filldraw [black] (\x,0) circle (1pt);

\draw[-latex,thick] (0_1) -- (1_0) node[pos=0.5, inner sep=0.5pt, anchor=north east] {$\alpha_1$};
\draw[-latex,thick] (2_1) -- (1_0) node[pos=0.5, inner sep=0.5pt, anchor=south east] {$\beta_1$};
\draw[-latex,thick] (2_1) -- (3_0) node[pos=0.5, inner sep=0.5pt, anchor=north east] {$\alpha_2$};
\draw[-latex,thick] (4_1) -- (3_0) node[pos=0.5, inner sep=0.5pt, anchor=south east] {$\beta_2$};
\draw[-latex,thick] (6_1) -- (7_0) node[pos=0.5, inner sep=0.5pt, anchor=north east] {$\alpha_n$};
\draw[-latex,thick] (8_1) -- (7_0) node[pos=0.5, inner sep=0.5pt, anchor=south east] {$\beta_n$};

\node (dots) at (5,.5) [circle] {\textbf{. . .}};

\end{tikzpicture}
\]
We let $\CZ_{\Lambda}$ denote the set of all zigzags.  We may omit the subscript $\Lambda$ when it is clear from the context.  We define the maps $s$ and $r$ on $\CZ$ by $s(\zeta) = s(\beta_n)$ and $r(\zeta) = s(\alpha_1)$, and the \textit{reverse} of $\zeta$ as 
\[
\overline{\zeta} =
(\beta_n,\alpha_n,\ldots,\beta_1,\alpha_1).
\]
Each zigzag $\zeta \in \CZ_{\Lambda}$ defines a \textit{zigzag map} on $\Lambda$ that we denote $\pphi_\zeta \equiv \pphi^\Lambda_\zeta$, given by
\[
\pphi_\zeta =
\sigma^{\alpha_1} \beta_1 \cdots \sigma^{\alpha_n} \beta_n.
\]
We let $A(\zeta) \equiv A_\Lambda(\zeta)$ denote the domain of $\pphi_\zeta$.  Thus $A(\zeta) = \pphi_{\overline{\zeta}}(\Lambda)\subseteq s(\zeta)\Lambda$, and the range of $\pphi_\zeta$ equals $A(\overline{\zeta})$.  We call $A(\zeta)$ a \textit{zigzag set}.
\end{Definition}

\begin{Remark}
\label{r.zigzag}
%!?!\textit{(r.zigzag)} %%% erase label name
\begin{enumerate}
\item $\Lambda$ can be identified with a subset of $\CZ$ by the pairs $(r(\alpha),\alpha)$.  Then 
$\pphi_{(r(\alpha),\alpha)}$ is the right shift defined by $\alpha$ (Remark \ref{r.rightcancellation}), and $\alpha\Lambda = A\bigl((\alpha,r(\alpha))\bigr)$ is a zigzag set.
\label{r.zigzag.b}
\item $\CZ$ is clearly closed under concatenation, and $\pphi_{\zeta_1\zeta_2} = \pphi_{\zeta_1} \circ \pphi_{\zeta_2}$.  For $\zeta$, $\xi \in \CZ$ we denote by $\xi\CZ$, $\CZ\zeta$, and $\xi\CZ\zeta$ the obvious subsets of $\CZ$.
\label{r.zigzag.c}
\item Since left- and right-shifts are one-to-one, all zigzag maps are one-to-one.  The inverse of $\pphi_\zeta$ is $\pphi_{\overline{\zeta}}$.
\label{r.zigzag.d}
\item $A(\zeta) = A(\overline{\zeta}\zeta)$ for $\zeta \in \CZ$.  If $\zeta_1$, $\zeta_2 \in \CZ v$, then $A(\zeta_1) \cap A(\zeta_2) = A(\overline{\zeta_1}\zeta_1\overline{\zeta_2}\zeta_2)$.  (Thus the collection of zigzag sets is closed under intersection.)
\label{r.zigzag.e}
\item For $\zeta \in \CZ$ and $\alpha \in A(\zeta)$, we have $\alpha\Lambda \subseteq A(\zeta)$.  For $\beta \in s(\alpha) \Lambda$, $\pphi_\zeta(\alpha \beta) = \pphi_\zeta(\alpha) \beta$.
\label{r.zigzag.f}
\end{enumerate}
\end{Remark}

Much of what we wish to do remains valid in the following more general context.

\begin{Definition}
\label{d.relativepair}
%!?!\textit{(d.relativepair)}  %%% erase label name
Let $\Lambda$ be a category of paths, and let $\Lambda_0$ be a subcategory.  We call the pair $(\Lambda_0,\Lambda)$ a \textit{relative category of paths}.
\end{Definition}

\begin{Definition}
\label{d.ringofsets}
%!?!\textit{(d.ringofsets)} %%% erase label name
Let $(\Lambda_0,\Lambda)$ be a relative category of paths.  Write 
\[
\CZ_{(\Lambda_0,\Lambda)} = \{ \zeta \in \CZ_\Lambda : \zeta = (\alpha_1, \beta_1, \ldots, \alpha_n, \beta_n), \text{ where } \alpha_i, \beta_i \in \Lambda_0 \}.
\]
For $v\in\Lambda_0^0$, let $\CD^{(0)}_v$, (or $ \CD(\Lambda_0,\Lambda)^{(0)}_v$), denote the collection of all nonempty sets of the form $A_\Lambda(\zeta)$ for $\zeta \in \CZ_{(\Lambda_0,\Lambda)} v$.  (Note that while the set $A_\Lambda(\zeta)$ (and the map $\pphi_\zeta$) does not depend on the subcategory $\Lambda_0$, the collection $\CD(\Lambda_0,\Lambda)_v^{(0)}$ does depend on $\Lambda_0$.)  Let $\CA_v$, (or $\CA(\Lambda_0,\Lambda)_v$), denote the ring of sets generated by $\CD^{(0)}_v$.  We let $\CD_v$ denote the collection of nonempty sets of the form $E \setminus \bigcup_{i=1}^n F_i$, where $E$, $F_1$, $\ldots$, $F_n \in \CD^{(0)}_v$, and $F_i \subseteq E$.  It follows from Lemma \ref{l.setlemma} and Remark \ref{r.zigzag}\eqref{r.zigzag.e} that $\CA_v$ equals the collection of finite disjoint unions of sets from $\CD_v$.
\end{Definition}

\begin{Remark}
\label{r.subring}
%!?!\textit{(r.subring)} %%% erase label name
The subcategory of a relative category of paths serves to select a certain subcollection of the zigzag sets in $\Lambda$.  When $\Lambda_0 = \Lambda$, no such selection is made, and we will omit reference to the subcategory.  In general, we have $\CA(\Lambda_0,\Lambda)_v \subseteq \CA(\Lambda)_v$.
\end{Remark}

\begin{Lemma}
\label{l.ringofsets}
%!?!\textit{(l.ringofsets)} %%% erase label name
Let $(\Lambda_0,\Lambda)$ be a relative category of paths.  If $\zeta \in \CZ$ then $\pphi_\zeta (\CA_{s(\zeta)}) \subseteq \CA_{r(\zeta)}$.
\end{Lemma}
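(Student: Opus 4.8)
The plan is to use that each $\pphi_\zeta$ is a partial bijection in order to reduce the containment to a single computation on generators, and then to identify the image of a generating zigzag set as another zigzag set by composing zigzag maps.

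First I would record the formal properties that drive the proof. By Remark~\ref{r.zigzag}\eqref{r.zigzag.d}, $\pphi_\zeta$ is a bijection of $A(\zeta)$ onto $A(\overline{\zeta})$ with inverse $\pphi_{\overline{\zeta}}$; in particular $\pphi_\zeta$ is injective on its domain. Adopting the standing convention $\pphi_\zeta(S) = \pphi_\zeta(S \cap A(\zeta))$, injectivity makes $\pphi_\zeta$ commute with finite unions, finite intersections, and relative complements of subsets of $s(\zeta)\Lambda$, and carry disjoint sets to disjoint sets. By Definition~\ref{d.ringofsets}, every element of $\CA_{s(\zeta)}$ is a finite disjoint union of sets $E \setminus \bigcup_{i=1}^m F_i$ with $E, F_1, \ldots, F_m \in \CD^{(0)}_{s(\zeta)}$ and $F_i \subseteq E$. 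Applying $\pphi_\zeta$ and using injectivity and monotonicity gives $\pphi_\zeta(E \setminus \bigcup_i F_i) = \pphi_\zeta(E) \setminus \bigcup_i \pphi_\zeta(F_i)$ with $\pphi_\zeta(F_i) \subseteq \pphi_\zeta(E)$, and $\pphi_\zeta$ turns a disjoint union into a disjoint union. Hence the whole lemma reduces to showing that $\pphi_\zeta(E) \in \CD^{(0)}_{r(\zeta)} \cup \{\emptyset\}$ for every generator $E \in \CD^{(0)}_{s(\zeta)}$: granting this, each image $\pphi_\zeta(E \setminus \bigcup_i F_i)$ again has the defining form of a member of $\CD_{r(\zeta)}$ (or is empty), and a disjoint union of such sets lies in the ring $\CA_{r(\zeta)}$.

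The heart of the argument is the identity $\pphi_\zeta(A(\xi)) = A(\xi\overline{\zeta})$, valid for any zigzag $\xi$ with $s(\xi) = s(\zeta)$. To prove it I would write $A(\xi)$ as the domain of $\pphi_\xi$ and compute, for $y \in A(\overline{\zeta})$, that $y \in \pphi_\zeta\bigl(A(\xi) \cap A(\zeta)\bigr)$ if and only if $\pphi_{\overline{\zeta}}(y) \in A(\xi)$, i.e. if and only if $\pphi_\xi\bigl(\pphi_{\overline{\zeta}}(y)\bigr)$ is defined. By Remark~\ref{r.zigzag}\eqref{r.zigzag.c} we have $\pphi_\xi \circ \pphi_{\overline{\zeta}} = \pphi_{\xi\overline{\zeta}}$, so the set of such $y$ is precisely the domain $A(\xi\overline{\zeta})$, which is automatically contained in $A(\overline{\zeta})$. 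This identification is the step I expect to demand the most care: one must keep the domain conventions straight and check that $\xi$ and $\overline{\zeta}$ are composable as zigzags, which holds because $r(\overline{\zeta}) = s(\zeta) = s(\xi)$, so that $\xi\overline{\zeta}$ is a genuine zigzag with $s(\xi\overline{\zeta}) = r(\zeta)$.

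It then remains to see that $A(\xi\overline{\zeta})$ is a generator at the vertex $r(\zeta)$. Here I would use that the entries of $\xi$ and of $\zeta$ all lie in $\Lambda_0$, so that the concatenation $\xi\overline{\zeta}$ belongs to $\CZ_{(\Lambda_0,\Lambda)}$; combined with $s(\xi\overline{\zeta}) = r(\zeta)$, this shows $A(\xi\overline{\zeta}) \in \CD^{(0)}_{r(\zeta)}$ whenever it is nonempty, as required. This is the only point at which the entries of $\zeta$ are constrained; when $\Lambda_0 = \Lambda$ the constraint is vacuous, and in the relative case it is exactly the requirement that $\zeta \in \CZ_{(\Lambda_0,\Lambda)}$. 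Feeding this back into the reduction of the second paragraph yields $\pphi_\zeta(\CA_{s(\zeta)}) \subseteq \CA_{r(\zeta)}$.
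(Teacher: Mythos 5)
Your proof is correct and follows essentially the same route as the paper: both rest on the identity $\pphi_\zeta(A(\xi)) = A(\xi\overline{\zeta})$ for a generator $A(\xi)$ with $\xi \in \CZ_{(\Lambda_0,\Lambda)}s(\zeta)$, the paper deriving it from $A(\xi) = \pphi_{\overline{\xi}}(\Lambda)$ and the composition law, you deriving it pointwise via the inverse map $\pphi_{\overline{\zeta}}$. Your first paragraph merely makes explicit the reduction from $\CA_{s(\zeta)}$ to the generators $\CD^{(0)}_{s(\zeta)}$ using injectivity of $\pphi_\zeta$, a step the paper leaves implicit.
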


\begin{proof}
Let $\xi \in \CZ s(\zeta)$.  Then
\[
A(\xi) = \pphi_{\overline{\xi}}(\Lambda),
\]
hence
\[
\pphi_\zeta (A(\xi))
= \pphi_{\zeta \overline{\xi}} (\Lambda)
= \pphi_{\overline{\xi \overline{\zeta}}} (\Lambda)
= A(\xi \overline{\zeta})
\in \CA_{r(\zeta)}.
\]
\end{proof}

\begin{Proposition}
\label{p.booleanring}
%!?!\text{(p.booleanring)} %%% erase label name
Let $(\Lambda_0,\Lambda)$ be a relative category of paths.  For each $v \in \Lambda_0^0$, let $\CB_v$ be a Boolean ring of subsets of $v\Lambda$ such that 
\begin{enumerate}
\item $v\Lambda \in \CB_v$ for all $v \in \Lambda^0$.
\label{p.booleanring.a}
\item $\alpha\CB_{s(\alpha)} \subseteq \CB_{r(\alpha)}$ for all $\alpha \in \Lambda_0$.
\label{p.booleanring.b}
\item $\sigma^\alpha \CB_{r(\alpha)} \subseteq \CB_{s(\alpha)}$ for all $\alpha \in \Lambda_0$.
\label{p.booleanring.c}
\end{enumerate}
Then $\CA_v \subseteq \CB_v$ for all $v \in \Lambda_0^0$.  Moreover, $\alpha\CB_{s(\alpha)} = \{ E \cap \alpha\Lambda : E \in \CB_{r(\alpha)} \}$ and $\sigma^\alpha \CB_{r(\alpha)} = \CB_{s(\alpha)}$ for all $\alpha \in \Lambda_0$.
\end{Proposition}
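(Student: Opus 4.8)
The statement has two parts --- the containment $\CA_v\subseteq\CB_v$ and the two ``moreover'' identities --- and the plan is to reduce the first to a single membership fact and to deduce the second from the mutual invertibility of the shifts. For the containment, since $\CB_v$ is a ring of sets containing (by hypothesis) the generating family, while $\CA_v$ is by definition the ring of sets generated by $\CD^{(0)}_v$, it suffices to show that every zigzag set $A(\zeta)$ with $\zeta\in\CZ_{(\Lambda_0,\Lambda)}v$ already lies in $\CB_v$; the rest is the universal property of the generated ring.

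The key step is a $\CB$-analogue of Lemma \ref{l.ringofsets}: for every $\zeta\in\CZ_{(\Lambda_0,\Lambda)}$ one has $\pphi_\zeta(\CB_{s(\zeta)})\subseteq\CB_{r(\zeta)}$. Writing $\zeta=(\alpha_1,\beta_1,\ldots,\alpha_n,\beta_n)$ with all $\alpha_i,\beta_i\in\Lambda_0$, the map $\pphi_\zeta=\sigma^{\alpha_1}\beta_1\cdots\sigma^{\alpha_n}\beta_n$ is a composite of elementary right shifts $\beta_i$ and left shifts $\sigma^{\alpha_i}$. Hypotheses \eqref{p.booleanring.b} and \eqref{p.booleanring.c} say exactly that each such elementary shift carries the relevant Boolean ring into the next, and the source/range conditions $r(\alpha_i)=r(\beta_i)$, $s(\alpha_{i+1})=s(\beta_i)$ built into the definition of a zigzag guarantee that the indices match up along the composition; a short induction on $n$ (splitting off the leading pair $(\alpha_1,\beta_1)$ and using $\pphi_{\zeta_1\zeta_2}=\pphi_{\zeta_1}\circ\pphi_{\zeta_2}$ from Remark \ref{r.zigzag}) then gives the claim.

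To apply this I would use $A(\zeta)=\pphi_{\overline{\zeta}}(\Lambda)=\pphi_{\overline{\zeta}}\bigl(r(\zeta)\Lambda\bigr)$ from Definition \ref{d.zigzag} (the last equality because $\pphi_{\overline{\zeta}}$ depends only on its argument's intersection with the domain $A(\overline{\zeta})\subseteq r(\zeta)\Lambda$). Here $\overline{\zeta}\in\CZ_{(\Lambda_0,\Lambda)}$ with $s(\overline{\zeta})=r(\zeta)$ and $r(\overline{\zeta})=s(\zeta)=v$, and $r(\zeta)\Lambda\in\CB_{r(\zeta)}$ by \eqref{p.booleanring.a}. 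The key step applied to $\overline{\zeta}$ then yields $A(\zeta)=\pphi_{\overline{\zeta}}\bigl(r(\zeta)\Lambda\bigr)\in\CB_v$, as required, and hence $\CA_v\subseteq\CB_v$.

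For the two identities the only tool is that right shift by $\alpha$ and $\sigma^\alpha$ are mutually inverse bijections between $s(\alpha)\Lambda$ and $\alpha\Lambda$ (Definition \ref{d.leftshifts}, Remark \ref{r.rightcancellation}). For $\alpha\CB_{s(\alpha)}=\{E\cap\alpha\Lambda:E\in\CB_{r(\alpha)}\}$: the inclusion $\subseteq$ holds because $\alpha F\in\CB_{r(\alpha)}$ by \eqref{p.booleanring.b} and $\alpha F=(\alpha F)\cap\alpha\Lambda$; for $\supseteq$, given $E\in\CB_{r(\alpha)}$ put $F=\sigma^\alpha E\in\CB_{s(\alpha)}$ (by \eqref{p.booleanring.c}) and observe $\alpha F=\alpha\sigma^\alpha(E\cap\alpha\Lambda)=E\cap\alpha\Lambda$. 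Likewise $\sigma^\alpha\CB_{r(\alpha)}=\CB_{s(\alpha)}$: the inclusion $\subseteq$ is \eqref{p.booleanring.c}, while for $\supseteq$ any $G\in\CB_{s(\alpha)}$ equals $\sigma^\alpha(\alpha G)$ with $\alpha G\in\CB_{r(\alpha)}$ by \eqref{p.booleanring.b}. The one place that demands care is the index bookkeeping in the key step --- tracking which vertex labels each $\CB$ as one moves along the zigzag, so that the composed shifts genuinely land in $\CB_{r(\zeta)}$; everything else is formal manipulation of inverse bijections.
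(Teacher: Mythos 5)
Your proof is correct and follows essentially the same route as the paper's: both establish $\CD^{(0)}_v\subseteq\CB_v$ by iterating hypotheses \eqref{p.booleanring.b} and \eqref{p.booleanring.c} along a zigzag starting from $r(\zeta)\Lambda\in\CB_{r(\zeta)}$, then invoke the generated-ring property, and both derive the two identities from $E\cap\alpha\Lambda=\alpha\sigma^\alpha(E)$ and the mutual inverseness of the shifts. The only cosmetic differences are that you isolate the invariance statement $\pphi_\zeta(\CB_{s(\zeta)})\subseteq\CB_{r(\zeta)}$ as an explicit $\CB$-analogue of Lemma \ref{l.ringofsets}, and you prove $\sigma^\alpha\CB_{r(\alpha)}=\CB_{s(\alpha)}$ directly rather than by composing with the first identity as the paper does.
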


\begin{proof}
Since $\alpha\Lambda = \alpha\bigl(s(\alpha)\Lambda\bigr) \in \alpha \CB_{s(\alpha)} \subseteq \CB_{r(\alpha)}$, we have $\CD^{(0)}_v \subseteq \CB_v$ for all $v \in \Lambda_0^0$.  It now follows from \eqref{p.booleanring.b} and \eqref{p.booleanring.c} that $A(\zeta) \in \CB_{s(\zeta)}$ for all $\zeta \in \CZ$, and hence $\CA_v \subseteq \CB_v$ for all $v \in \Lambda_0^0$.  Next, we have for $E \in \CB_{r(\alpha)}$, $E \cap \alpha\Lambda = \alpha \sigma^\alpha (E) \in \alpha \CB_{s(\alpha)}$.  Hence $\{ E \cap \alpha\Lambda : E \in \CB_{r(\alpha)} \} \subseteq \alpha \CB_{s(\alpha)}$.  The reverse containment is clear.  Finally,
\[
\CB_{s(\alpha)}
= \sigma^\alpha \alpha \CB_{s(\alpha)}
= \sigma^\alpha \bigl( \{ E \cap \alpha\Lambda : E \in \CB_{r(\alpha)} \} \bigr)
= \sigma^\alpha \CB_{r(\alpha)}.
\]
\end{proof}

\begin{Corollary}
\label{c.booleanring}
%!?!\textit{(c.booleanring)} %%% erase label name
$\{\CA_v : v \in \Lambda_0^0\}$ is the family of Boolean rings generated by the collection $\{v \Lambda : v \in \Lambda^0_0\}$ and the family of left and right shift maps by elements of $\Lambda_0$.  Moreover, $\alpha\CA_{s(\alpha)} = \{ E \cap \alpha\Lambda : E \in \CA_{r(\alpha)} \}$ and $\sigma^\alpha \CA_{r(\alpha)} = \CA_{s(\alpha)}$ for all $\alpha \in \Lambda_0$.
\end{Corollary}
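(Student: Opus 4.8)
The plan is to reduce everything to Proposition \ref{p.booleanring} applied to the particular choice $\CB_v = \CA_v$. The only thing that needs to be checked is that the family $\{\CA_v : v \in \Lambda_0^0\}$ itself satisfies the three hypotheses \eqref{p.booleanring.a}--\eqref{p.booleanring.c} of that proposition. Once this is established, both assertions of the corollary follow at once. The ``moreover'' clause of the proposition, read with $\CB_v = \CA_v$, gives the two displayed identities $\alpha\CA_{s(\alpha)} = \{E \cap \alpha\Lambda : E \in \CA_{r(\alpha)}\}$ and $\sigma^\alpha\CA_{r(\alpha)} = \CA_{s(\alpha)}$ verbatim. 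The principal inclusion $\CA_v \subseteq \CB_v$ of the proposition says that $\{\CA_v\}$ is contained in \emph{every} family of Boolean rings satisfying \eqref{p.booleanring.a}--\eqref{p.booleanring.c}; since $\{\CA_v\}$ is itself such a family, it is the smallest one, and that is exactly what is meant by the family generated by $\{v\Lambda : v \in \Lambda_0^0\}$ together with the left and right shift maps by elements of $\Lambda_0$.

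To verify the hypotheses I would argue as follows. For \eqref{p.booleanring.a}, note that for $v \in \Lambda_0^0$ the pair $(v,v)$ is a zigzag in $\CZ_{(\Lambda_0,\Lambda)}v$ and, as in Remark \ref{r.zigzag}\eqref{r.zigzag.b}, $v\Lambda = A\bigl((v,v)\bigr) \in \CD^{(0)}_v \subseteq \CA_v$. For \eqref{p.booleanring.b} and \eqref{p.booleanring.c} I would invoke Lemma \ref{l.ringofsets} applied to the two single-pair zigzags attached to a given $\alpha \in \Lambda_0$. The zigzag $(r(\alpha),\alpha)$ has source $s(\alpha)$ and range $r(\alpha)$, and its map $\pphi_{(r(\alpha),\alpha)} = \sigma^{r(\alpha)}\alpha$ is precisely right shift by $\alpha$ (Remark \ref{r.zigzag}\eqref{r.zigzag.b}); hence the lemma yields $\alpha\CA_{s(\alpha)} = \pphi_{(r(\alpha),\alpha)}(\CA_{s(\alpha)}) \subseteq \CA_{r(\alpha)}$, which is \eqref{p.booleanring.b}. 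Dually, the reversed zigzag $(\alpha,r(\alpha))$ has source $r(\alpha)$ and range $s(\alpha)$, and its map is $\sigma^\alpha$, so the same lemma gives $\sigma^\alpha\CA_{r(\alpha)} \subseteq \CA_{s(\alpha)}$, which is \eqref{p.booleanring.c}.

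With the three hypotheses confirmed, the two conclusions are immediate as described in the first paragraph. I do not anticipate any real obstacle: the only points requiring attention are the routine bookkeeping of source and range for the two zigzags $(r(\alpha),\alpha)$ and $(\alpha,r(\alpha))$, and the remark that a pointwise intersection of any collection of families satisfying \eqref{p.booleanring.a}--\eqref{p.booleanring.c} again satisfies them (an intersection of Boolean rings of subsets of $v\Lambda$ is a Boolean ring, still contains $v\Lambda$, and remains closed under the shifts). This last observation is what makes ``the generated family'' well defined as the smallest family satisfying \eqref{p.booleanring.a}--\eqref{p.booleanring.c}, so that its identification with $\{\CA_v\}$ is legitimate. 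Everything else is a direct transcription of Proposition \ref{p.booleanring} and Lemma \ref{l.ringofsets}.
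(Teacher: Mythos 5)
Your proposal is correct and follows essentially the same route as the paper, which states the corollary without a separate proof precisely because, as you verify, the family $\{\CA_v\}$ itself satisfies hypotheses \eqref{p.booleanring.a}--\eqref{p.booleanring.c} of Proposition \ref{p.booleanring} (via $v\Lambda = A\bigl((v,v)\bigr) \in \CD^{(0)}_v$ and Lemma \ref{l.ringofsets} applied to the zigzags $(r(\alpha),\alpha)$ and $(\alpha,r(\alpha))$, which lie in $\CZ_{(\Lambda_0,\Lambda)}$ since $\alpha \in \Lambda_0$). Reading the proposition with $\CB_v = \CA_v$ then yields the two displayed identities, and the containment $\CA_v \subseteq \CB_v$ for an arbitrary admissible family, together with your closing observation that such families are closed under pointwise intersection, identifies $\{\CA_v\}$ as the generated (i.e.\ smallest) family.
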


\section{Finite alignment}
\label{s.finitealignment}
%!?!\textit{(s.finitealignment)} %%% erase label name

The finitely aligned case is particularly important.  For example, the generators of the \cstar-algebra satisfy a Wick ordering principle.  Moreover, the treatment is significantly simpler in several ways.  Therefore we will take special pains to work out the details of the finitely aligned case, taking it further than the general case.

\begin{Definition}
\label{d.finitelyalignedpair}
%!?!\textit{(d.finitelyalignedpair)} %%% erase label name
The relative category of paths $(\Lambda_0,\Lambda)$ is \textit{finitely aligned} if
\begin{enumerate}
\item For every pair of elements $\alpha$, $\beta \in \Lambda_0$, there is a finite subset $G$ of $\Lambda_0$ such that $\alpha\Lambda \cap \beta\Lambda = \bigcup_{\eps \in G} \eps\Lambda$,
\label{d.finitelyalignedpair.a}
\item For every $\alpha \in \Lambda_0$, $\alpha\Lambda \cap \Lambda_0 = \alpha\Lambda_0$.
\label{d.finitelyalignedpair.b}
\end{enumerate}
If $\Lambda_0 = \Lambda$ then \eqref{d.finitelyalignedpair.b} is vacuous.  In this case we say that $\Lambda$ is finitely aligned.
\end{Definition}

\begin{Lemma}
\label{l.finitelyalignedpair}
%!?!\textit{(l.finitelyalignedpair)} %%% erase label name
Let $(\Lambda_0,\Lambda)$ be a finitely aligned relative category of paths, and let $F$ be a finite subset of $\Lambda_0$.  Let $\bigvee F$ denote the set of minimal common extensions of $F$ in $\Lambda$.  Then $\bigvee F$ is finite, $\bigvee F \subseteq \Lambda_0$, and $\bigcap_{\alpha \in F} \alpha \Lambda = \bigcup_{\beta \in \bigvee F} \beta \Lambda$.
\end{Lemma}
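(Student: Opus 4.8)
The plan is to induct on the cardinality of $F$. The engine of the induction is condition \eqref{d.finitelyalignedpair.a} of finite alignment, which expresses each pairwise intersection $\gamma\Lambda\cap\delta\Lambda$ (for $\gamma,\delta\in\Lambda_0$) as a \emph{finite} union of tail sets indexed by a subset of $\Lambda_0$; the bookkeeping of minimal common extensions is then handled by a purely order-theoretic reduction. Throughout I would write $\mu\le\nu$ for the initial-segment order, i.e. $\mu\le\nu$ iff $\nu\in\mu\Lambda$, which by Definition~\ref{d.extensions} is a genuine partial order on $\Lambda$; note also that each tail set $\eps\Lambda$, and hence any union of tail sets, is upward closed for $\le$.

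First I would isolate the reduction step, valid for any finite $G\subseteq\Lambda$: writing $S=\bigcup_{\eps\in G}\eps\Lambda$ and letting $M$ be the set of $\le$-minimal elements of $G$, the set $M$ is finite, $M\subseteq G$, $M$ coincides with the set of minimal elements of $S$, and $S=\bigcup_{\mu\in M}\mu\Lambda$. The point is that finiteness of $G$ guarantees that every element of $G$ (and hence, by upward closure, every element of $S$) lies above some element of $M$; a minimal element of $S$ must lie in some $\eps\Lambda$, hence above some $\eps\in G\subseteq S$, and antisymmetry forces it to equal that $\eps$ and to be minimal in $G$; the converse inclusion is checked the same way.

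With the reduction step in hand the induction is routine. The case $|F|=1$, say $F=\{\alpha\}$ with $\alpha\in\Lambda_0$, is immediate since $\bigvee F=\{\alpha\}$. For the inductive step I would write $F=F'\cup\{\gamma\}$ and use the inductive hypothesis $\bigcap_{\alpha\in F'}\alpha\Lambda=\bigcup_{\delta\in\bigvee F'}\delta\Lambda$, with $\bigvee F'$ finite and contained in $\Lambda_0$, to get
\[
\bigcap_{\alpha\in F}\alpha\Lambda
=\gamma\Lambda\cap\bigcup_{\delta\in\bigvee F'}\delta\Lambda
=\bigcup_{\delta\in\bigvee F'}\bigl(\gamma\Lambda\cap\delta\Lambda\bigr).
\]
Applying Definition~\ref{d.finitelyalignedpair}\eqref{d.finitelyalignedpair.a} to each pair $\gamma,\delta\in\Lambda_0$ produces finite sets $G_\delta\subseteq\Lambda_0$ with $\gamma\Lambda\cap\delta\Lambda=\bigcup_{\eps\in G_\delta}\eps\Lambda$, so $G:=\bigcup_\delta G_\delta$ is a finite subset of $\Lambda_0$ with $\bigcap_{\alpha\in F}\alpha\Lambda=\bigcup_{\eps\in G}\eps\Lambda$. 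The reduction step then identifies $\bigvee F$ (the minimal elements of this union) with the finite set of minimal elements of $G$, giving $\bigvee F\subseteq G\subseteq\Lambda_0$ and $\bigcup_{\beta\in\bigvee F}\beta\Lambda=\bigcap_{\alpha\in F}\alpha\Lambda$, which closes the induction.

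I expect the only genuinely delicate point to be the reduction step — specifically the claim that the minimal elements of the union $S$ are exactly the minimal elements of the finite generating family $G$. This is where both the finiteness coming from \eqref{d.finitelyalignedpair.a} (so that minimal elements exist below everything) and the antisymmetry of the initial-segment order are essential. It is worth noting that, by contrast, condition \eqref{d.finitelyalignedpair.b} and the set-theoretic Lemma~\ref{l.setlemma} are not needed here: the inclusion $\bigvee F\subseteq\Lambda_0$ is forced simply because the minimal common extensions already appear among the generators in $G\subseteq\Lambda_0$.
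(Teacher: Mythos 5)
Your proposal is correct and follows essentially the same route as the paper: use condition \eqref{d.finitelyalignedpair.a} together with induction to write $\bigcap_{\alpha\in F}\alpha\Lambda$ as a finite union $\bigcup_{\eps\in G}\eps\Lambda$ with $G\subseteq\Lambda_0$, and then identify $\bigvee F$ with the minimal elements of $G$ using finiteness and the antisymmetry of the initial-segment order. The only (cosmetic) difference is that you carry the minimal-element extraction through each inductive step, whereas the paper performs the induction solely to obtain the union representation and then prunes $G$ to an antichain and proves $G=\bigvee F$ once at the end.
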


\begin{proof}
By Definition \ref{d.finitelyalignedpair}\eqref{d.finitelyalignedpair.a}, and induction, there is a finite set $G \subseteq \Lambda_0$ such that $(*) \ \bigcap_{\alpha \in F} \alpha \Lambda = \bigcup_{\beta \in G} \beta \Lambda$.  Replacing $G$ by a subset, if necessary, we may assume that $\beta \not\in \beta'\Lambda$ when $\beta$, $\beta'$ are distinct elements of $G$.  We will show that $G = \bigvee F$ to finish the proof.  By $(*)$, $G \subseteq \bigcap_{\alpha \in F} \alpha \Lambda$.  Suppose $\beta \in G$ and $\gamma \in \bigcap_{\alpha \in F} \alpha \Lambda$ are such that $\beta \in \gamma \Lambda$.  By $(*)$ there is $\beta' \in G$ such that $\gamma \in \beta' \Lambda$.  But then $\beta \in \beta' \Lambda$, so we have $\beta = \beta' = \gamma$.  Hence $G \subseteq \bigvee F$.  Conversely, if $\delta \in \bigvee F$ then $\delta \in \bigcap_{\alpha \in F} \alpha \Lambda$.  By $(*)$ there is $\beta \in G$ such that $\delta \in \beta \Lambda$.  Since $\delta$ is a minimal element of $\bigcap_{\alpha \in F} \alpha \Lambda$, we have $\delta = \beta \in G$.  Thus $\bigvee F \subseteq G$.
\end{proof}

In the next result we will use the following notation.  Let $\{ f_i : i \in I \}$ be a (finite) collection of functions.  Then $\bigcup_{i \in I} f_i$ is a function if and only if for all $i$, $j \in I$, $f_i$ and $f_j$ agree on the intersection of their domains.  We will write $\bigcup_{i \in I} f_i$ if and only if the $\{ f_i \}$ satisfy this condition.

\begin{Lemma}
\label{l.finitelyalignedzigzag}
%!?!\textit{(l.finitelyalignedzigzag)} %%% erase label name
If $(\Lambda_0,\Lambda)$ is finitely aligned, then every zigzag map is a finite union of maps of the form $\gamma \sigma^{\delta}$ with $\gamma$, $\delta \in \Lambda_0$.
\end{Lemma}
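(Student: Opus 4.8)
The plan is to call a partial map on $\Lambda$ \emph{reduced} if it is a finite union, in the sense of the convention stated just before the lemma, of maps of the form $\gamma\sigma^\delta$ with $\gamma,\delta\in\Lambda_0$. Since $\pphi_\zeta = (\sigma^{\alpha_1}\beta_1)(\sigma^{\alpha_2}\beta_2)\cdots(\sigma^{\alpha_n}\beta_n)$ is the composition of the $n$ elementary blocks $\sigma^{\alpha_i}\beta_i$, the lemma will follow from two facts: (1) each elementary block $\sigma^\alpha\beta$ (with $\alpha,\beta\in\Lambda_0$ and $r(\alpha)=r(\beta)$) is reduced, and (2) the reduced maps are closed under composition. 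Granting these, an immediate induction on $n$ finishes the proof.

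For (1) I would exploit finite alignment directly. By Lemma \ref{l.finitelyalignedpair} the set $\alpha\vee\beta$ is finite, is contained in $\Lambda_0$, and $\alpha\Lambda\cap\beta\Lambda = \bigcup_{\eps\in\alpha\vee\beta}\eps\Lambda$. For each $\eps\in\alpha\vee\beta$ write $\eps = \alpha\,\sigma^\alpha(\eps) = \beta\,\sigma^\beta(\eps)$; because $\eps\in\alpha\Lambda\cap\Lambda_0 = \alpha\Lambda_0$ and $\eps\in\beta\Lambda\cap\Lambda_0 = \beta\Lambda_0$ by Definition \ref{d.finitelyalignedpair}\eqref{d.finitelyalignedpair.b}, both straightening factors $\sigma^\alpha(\eps)$ and $\sigma^\beta(\eps)$ lie in $\Lambda_0$. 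A one-line computation shows that for $\mu\in\sigma^\beta(\eps)\Lambda$ one has $\beta\mu = \eps\,\sigma^{\sigma^\beta(\eps)}(\mu) = \alpha\,\sigma^\alpha(\eps)\,\sigma^{\sigma^\beta(\eps)}(\mu)$, whence $\sigma^\alpha(\beta\mu) = \sigma^\alpha(\eps)\,\sigma^{\sigma^\beta(\eps)}(\mu)$; that is, $\sigma^\alpha\beta$ agrees with $\sigma^\alpha(\eps)\,\sigma^{\sigma^\beta(\eps)}$ on $\sigma^\beta(\eps)\Lambda$. Since the domain of $\sigma^\alpha\beta$ is precisely $\{\mu : \beta\mu\in\alpha\Lambda\cap\beta\Lambda\} = \bigcup_\eps\sigma^\beta(\eps)\Lambda$, I obtain
\[
\sigma^\alpha\beta = \bigcup_{\eps\in\alpha\vee\beta}\sigma^\alpha(\eps)\,\sigma^{\sigma^\beta(\eps)},
\]
a finite union of maps of the desired form. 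The union is legitimate because every piece is a restriction of the single well-defined map $\sigma^\alpha\beta$, so that the required agreement on overlaps is automatic --- this is the delicate point, since in a general category of paths the minimal common extensions need not be disjoint and the domains $\sigma^\beta(\eps)\Lambda$ may genuinely overlap.

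For (2) it is enough to compose two single terms $\gamma_1\sigma^{\delta_1}$ and $\gamma_2\sigma^{\delta_2}$, as an arbitrary composition of reduced maps then distributes over the finite unions. Writing the composite as $\gamma_1(\sigma^{\delta_1}\gamma_2)\sigma^{\delta_2}$ and applying the identity of step (1) to the inner factor $\sigma^{\delta_1}\gamma_2$ (the composite is empty unless $r(\delta_1)=r(\gamma_2)$), I get $\sigma^{\delta_1}\gamma_2 = \bigcup_\eps\gamma_\eps'\,\sigma^{\delta_\eps'}$ with $\gamma_\eps',\delta_\eps'\in\Lambda_0$. Absorbing the outer factors via $\gamma_1\gamma_\eps'\in\Lambda_0$ and $\sigma^{\delta_\eps'}\sigma^{\delta_2} = \sigma^{\delta_2\delta_\eps'}$ with $\delta_2\delta_\eps'\in\Lambda_0$ (both being compositions inside the subcategory $\Lambda_0$), the composite becomes $\bigcup_\eps(\gamma_1\gamma_\eps')\,\sigma^{\delta_2\delta_\eps'}$, which is again reduced; here too the union is valid because all pieces restrict the single composite map. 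Assembling (1), (2), and the induction completes the argument. I expect the only genuine obstacle to be the bookkeeping in step (1): keeping the straightening factors inside $\Lambda_0$ --- which is exactly the role of Definition \ref{d.finitelyalignedpair}\eqref{d.finitelyalignedpair.b} --- and justifying the possibly-overlapping union of functions.
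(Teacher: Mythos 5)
Your proposal is correct and follows essentially the same route as the paper: the core identity $\sigma^\alpha\beta = \bigcup_{\eps\in\alpha\vee\beta}(\sigma^\alpha\eps)\,\sigma^{\sigma^\beta\eps}$, with Definition \ref{d.finitelyalignedpair}\eqref{d.finitelyalignedpair.b} keeping the factors in $\Lambda_0$, is exactly the paper's computation, and your steps (2) and the induction merely spell out what the paper compresses into ``repeated application of this reduces a zigzag map to the required form.'' Your explicit remark that the union of the pieces is a legitimate union of functions (since each piece restricts the single map $\sigma^\alpha\beta$) is a nice touch the paper leaves implicit, but it does not change the argument.
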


\begin{proof}
If $\alpha$, $\beta \in \Lambda_0$ with $r(\alpha) = r(\beta)$, then the domain of $\sigma^\beta \alpha$ is 
\[
\sigma^\alpha(\beta\Lambda) 
= \sigma^\alpha(\alpha\Lambda \cap \beta\Lambda)
= \sigma^\alpha \left( \bigcup_{\eps \in \alpha \vee \beta} \eps\Lambda \right)
= \bigcup_{\eps \in \alpha \vee \beta} (\sigma^\alpha \eps)\Lambda.
\]
Thus we see that
\[
\sigma^\beta \alpha
= \bigcup_{\eps \in \alpha \vee \beta} (\sigma^\beta \eps)(\sigma^{\sigma^\alpha \eps}).
\]
It follows from Definition \ref{d.finitelyalignedpair}\eqref{d.finitelyalignedpair.b} that $\sigma^\alpha \eps$, $\sigma^\beta \eps \in \Lambda^0$.  Repeated application of this reduces a zigzag map to the required form.
\end{proof}

\begin{Corollary}
\label{c.finitelyalignedzigzag}
%!?!\textit{(c.finitelyalignedzigzag)} %%% erase label name
Let $(\Lambda_0,\Lambda)$ be a finitely aligned relative category of paths, and let $\zeta \in \CZ$.  Then $A(\zeta)$ is a finite union of sets of the form $\alpha\Lambda$ with $\alpha \in \Lambda_0$.
\end{Corollary}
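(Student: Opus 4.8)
The plan is to deduce this directly from Lemma \ref{l.finitelyalignedzigzag}, which already expresses the zigzag map $\pphi_\zeta$ as a finite union of maps of the form $\gamma\sigma^{\delta}$ with $\gamma$, $\delta \in \Lambda_0$. Since $A(\zeta)$ is by definition the domain of $\pphi_\zeta$, the strategy is to compute the domain of each such building-block map and then take the union, reading off that each piece contributes a set of the form $\delta\Lambda$ with $\delta \in \Lambda_0$.

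First I would invoke the convention introduced just before Lemma \ref{l.finitelyalignedzigzag}: writing $\pphi_\zeta = \bigcup_{i} \gamma_i\sigma^{\delta_i}$ asserts both that the maps $\gamma_i\sigma^{\delta_i}$ agree on overlaps and that $\pphi_\zeta$ is their common extension. In particular, the domain of $\pphi_\zeta$ is the union of the domains of the $\gamma_i\sigma^{\delta_i}$. Thus the problem reduces to identifying the domain of a single map $\gamma\sigma^{\delta}$.

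Next I would carry out that elementary identification. The left shift $\sigma^{\delta}$ has domain $\delta\Lambda$ and range $s(\delta)\Lambda$, while the right shift $\gamma$ (Remark \ref{r.rightcancellation}) is defined on all of $s(\gamma)\Lambda$. For the composite $\gamma\sigma^{\delta}$ to be defined we must have $s(\gamma) = s(\delta)$, and then $s(\gamma)\Lambda = s(\delta)\Lambda$ contains the entire range of $\sigma^{\delta}$. Hence the domain of $\gamma\sigma^{\delta}$ coincides with the domain of $\sigma^{\delta}$, namely $\delta\Lambda$, and $\delta \in \Lambda_0$ by the conclusion of Lemma \ref{l.finitelyalignedzigzag}. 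Combining this with the previous paragraph gives $A(\zeta) = \bigcup_{i} \delta_i\Lambda$, a finite union of sets of the form $\alpha\Lambda$ with $\alpha \in \Lambda_0$, as required.

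I do not expect a genuine obstacle here, since the substantive work is already done in Lemma \ref{l.finitelyalignedzigzag}; the corollary is essentially a reformulation of that lemma at the level of domains rather than maps. The only points requiring care are the bookkeeping fact that the domain of a union of compatible partial maps equals the union of their domains, and the observation that each constituent map $\gamma\sigma^{\delta}$ has domain exactly $\delta\Lambda$. Both are routine, and together they yield the stated description of $A(\zeta)$.
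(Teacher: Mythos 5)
Your proof is correct and is essentially the paper's own argument: the paper also deduces the corollary from Lemma \ref{l.finitelyalignedzigzag} via the single observation that $A(\gamma\sigma^\delta) = \delta\Lambda$ for $\gamma$, $\delta \in \Lambda_0$. Your write-up simply makes explicit the two routine points (domains of compatible unions, and the computation of the domain of $\gamma\sigma^\delta$) that the paper leaves implicit.
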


\begin{proof}
For $\gamma$, $\delta \in \Lambda_0$, $A(\gamma \sigma^\delta) = \delta\Lambda$.
\end{proof}

\begin{Remark}
\label{r.finitelyalignedring}
%!?!\textit{(r.finitelyalignedring)} %%% erase label name
Let $(\Lambda_0,\Lambda)$ be a finitely aligned relative category of paths.  For $v \in \Lambda_0^0$ we let $\CE^{(0)}_v$ denote the collection of all sets of the form $\alpha\Lambda$ for $\alpha \in v\Lambda_0$, and $\CE_v$ the collection of nonempty sets of the form $\alpha\Lambda \setminus \bigcup_{i=1}^n \beta_i\Lambda$ for $\alpha \in v\Lambda_0$ and $\beta_i \in \alpha\Lambda_0$.  Thus,  $\CA_v$ is the collection of finite disjoint unions of sets in $\CE_v$, by Lemma \ref{l.setlemma} and Corollary \ref{c.finitelyalignedzigzag}.
\end{Remark}

\begin{Proposition}
\label{p.relativefinitelyaligned}
%!?!\textit{(p.relativefinitelyaligned)} %%% erase label name
Let $(\Lambda_0,\Lambda)$ be a finitely aligned relative category of paths.  The map $E \mapsto E \cap \Lambda_0$ from $\CP(\Lambda) \to \CP(\Lambda_0)$ restricts to give a ring isomorphism $\CA(\Lambda_0,\Lambda)_v \to \CA(\Lambda_0)_v$, for each $v \in \Lambda_0^0$.  These isomorphisms are equivariant for the shift maps defined by elements of $\Lambda_0$.
\end{Proposition}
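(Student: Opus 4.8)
The plan is to analyze the restriction of the Boolean-ring homomorphism $T\colon \CP(\Lambda)\to\CP(\Lambda_0)$, $T(E)=E\cap\Lambda_0$, since intersection with the fixed set $\Lambda_0$ automatically respects unions, intersections, and relative complements. The substantive work is to identify the image and kernel of $T$ on $\CA(\Lambda_0,\Lambda)_v$, and for this I would first record that $\Lambda_0$ is itself finitely aligned: intersecting the identity $\alpha\Lambda\cap\beta\Lambda=\bigcup_{\eps\in G}\eps\Lambda$ of Definition \ref{d.finitelyalignedpair}\eqref{d.finitelyalignedpair.a} with $\Lambda_0$ and using $\mu\Lambda\cap\Lambda_0=\mu\Lambda_0$ (Definition \ref{d.finitelyalignedpair}\eqref{d.finitelyalignedpair.b}) gives $\alpha\Lambda_0\cap\beta\Lambda_0=\bigcup_{\eps\in G}\eps\Lambda_0$, which is condition \eqref{d.finitelyalignedpair.a} for $(\Lambda_0,\Lambda_0)$. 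Consequently Remark \ref{r.finitelyalignedring} applies to both $(\Lambda_0,\Lambda)$ and $(\Lambda_0,\Lambda_0)$: every element of $\CA(\Lambda_0,\Lambda)_v$ is a finite disjoint union of nonempty sets $\alpha\Lambda\setminus\bigcup_{i}\beta_i\Lambda$ with $\alpha\in v\Lambda_0$ and $\beta_i\in\alpha\Lambda_0$, and every element of $\CA(\Lambda_0)_v$ is a finite disjoint union of nonempty sets $\alpha\Lambda_0\setminus\bigcup_i\beta_i\Lambda_0$ of the same shape.

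For surjectivity I would use the generator correspondence. Since $\mu\Lambda\cap\Lambda_0=\mu\Lambda_0$ for each $\mu\in\Lambda_0$, the map $T$ sends $\alpha\Lambda\setminus\bigcup_i\beta_i\Lambda$ to $\alpha\Lambda_0\setminus\bigcup_i\beta_i\Lambda_0$, and it carries disjoint unions to disjoint unions. Thus $T$ takes the normal-form description of $\CA(\Lambda_0,\Lambda)_v$ exactly onto that of $\CA(\Lambda_0)_v$; in particular $T(\CA(\Lambda_0,\Lambda)_v)\subseteq\CA(\Lambda_0)_v$ and every generator of $\CA(\Lambda_0)_v$ is hit, so the restriction is a surjective ring homomorphism onto $\CA(\Lambda_0)_v$.

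The main point is injectivity, which I would get by showing the kernel of $T$ is trivial: since $T$ is a Boolean-ring homomorphism it suffices to prove that $E\in\CA(\Lambda_0,\Lambda)_v$ with $E\cap\Lambda_0=\emptyset$ forces $E=\emptyset$. Write $E$ as a finite disjoint union of nonempty pieces $\alpha\Lambda\setminus\bigcup_i\beta_i\Lambda$; for such a piece to be nonempty each $\beta_i$ must be a \emph{proper} extension of $\alpha$ in $\Lambda_0$. I claim $\alpha\notin\beta_i\Lambda$: if $\alpha=\beta_i\delta$ and $\beta_i=\alpha\beta_i'$ with $\beta_i'\neq s(\alpha)$, then left-cancellation gives $\beta_i'\delta=s(\alpha)$, and Definition \ref{d.categoryofpaths}\eqref{d.categoryofpathsc} forces $\beta_i'=s(\alpha)$, a contradiction. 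Hence $\alpha$ lies in the piece, and since $\alpha\in\Lambda_0$ we get $\alpha\in E\cap\Lambda_0$; so a nonempty $E$ meets $\Lambda_0$. This is the step that genuinely uses the no-inverses axiom, and is where injectivity could fail for a more general cancellative structure.

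Finally, for equivariance I would record the pointwise fact that for $\alpha\in\Lambda_0$ and $y\in s(\alpha)\Lambda$ one has $\alpha y\in\Lambda_0$ if and only if $y\in\Lambda_0$ (again Definition \ref{d.finitelyalignedpair}\eqref{d.finitelyalignedpair.b} together with left-cancellation). From this, $T(\alpha E)=\{\alpha y:y\in E\cap\Lambda_0\}=\alpha\,T(E)$ and $T(\sigma^\alpha F)=\{y\in\Lambda_0:\alpha y\in F\}=\sigma^\alpha T(F)$ for all $\alpha\in\Lambda_0$, which together with the descriptions of the shift actions on the rings in Corollary \ref{c.booleanring} is exactly the asserted equivariance. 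The only real subtlety anywhere in the argument is the injectivity step above; surjectivity and equivariance are formal once the normal form of Remark \ref{r.finitelyalignedring} and the identity $\mu\Lambda\cap\Lambda_0=\mu\Lambda_0$ are in hand.
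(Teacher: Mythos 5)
Your proof is correct and follows essentially the same route as the paper's: both use the normal form of Remark \ref{r.finitelyalignedring} together with Definition \ref{d.finitelyalignedpair}\eqref{d.finitelyalignedpair.b} to see that $E \mapsto E \cap \Lambda_0$ carries $\bigsqcup_i (\alpha_i\Lambda \setminus \bigcup_j \beta_{ij}\Lambda)$ to $\bigsqcup_i (\alpha_i\Lambda_0 \setminus \bigcup_j \beta_{ij}\Lambda_0)$, get injectivity from the fact that $\alpha_i$ itself lies in a nonempty piece, and get surjectivity from the matching normal form for $\CA(\Lambda_0)_v$. The only difference is that you spell out steps the paper treats as clear --- that $\Lambda_0$ is itself finitely aligned (so the normal form applies to $\CA(\Lambda_0)_v$), the no-inverses argument showing $\alpha_i \notin \beta_{ij}\Lambda$, and the equivariance computation --- which is a matter of detail, not of method.
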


\begin{proof}
It follows from the fact that intersection distributes over union and difference that the given map is a ring homomorphism $\CP(\Lambda) \to \CP(\Lambda_0)$.  By Remark \ref{r.finitelyalignedring}, a typical (nonempty) set in $\CA(\Lambda_0,\Lambda)_v$ has the form $E = \bigsqcup_i (\alpha_i\Lambda \setminus \bigcup_j \beta_{ij}\Lambda)$, where $\alpha_i$, $\beta_{ij} \in \Lambda_0$, $\beta_{ij} \in \alpha_i\Lambda_0$, $\beta_{ij} \not= \alpha_i$, and the unions are finite.  Then $E \cap \Lambda_0 = \bigsqcup_i (\alpha_i\Lambda_0 \setminus \bigcup_j \beta_{ij}\Lambda_0)$ by Definition \ref{d.finitelyalignedpair}\eqref{d.finitelyalignedpair.b}.  Therefore the restriction of the given map to $\CA(\Lambda_0,\Lambda)_v$ does have range in $\CA(\Lambda_0)_v$.  It is clear that if $E \not= \emptyset$ then $E \cap \Lambda_0 \not= \emptyset$, since $\alpha_i \in E \cap \Lambda_0$ for all $i$, and hence the restriction is one-to-one.  Since every set in $\CA(\Lambda_0)_v$ is of the above form, the restriction is onto.
\end{proof}

\begin{Remark}
\label{r.relativefinitelyaligned}
%!?!\textit{(r.relativefinitelyaligned)} %%% erase label name
Because of Proposition \ref{p.relativefinitelyaligned}, we need not consider relative categories of paths in the finitely aligned case.
\end{Remark}

\begin{Remark}
We note that if $(\Lambda_0,\Lambda)$ is finitely aligned then the family of Boolean rings generated by the $\CE^{(0)}_v$ is already invariant under the shift maps.  If $(\Lambda_0,\Lambda)$ is not finitely aligned, we have seen that this need not be the case (Example \ref{ex_nonfinitelyaligned}).  Even if $\Lambda_0$ and $\Lambda$ are individually finitely aligned, the relative category of paths $(\Lambda_0,\Lambda)$ need not be.  For example, let $\Lambda$ be the subcategory of Figure~\ref{fig1cat} generated by $\alpha$, $\beta$, $\gamma_0$, $\gamma_1$, $\delta_0$, $\delta_1$, and let $\Lambda_0$ be the subcategory generated by $\alpha$, $\beta$, $\gamma_0$, $\delta_0$.  Then $\alpha\Lambda \cap \beta\Lambda = \{\alpha\gamma_0,\alpha\gamma_1\}$, so that Definition \ref{d.finitelyalignedpair}\eqref{d.finitelyalignedpair.a} fails.  It is easy to check that the ring generated by tail sets is not invariant under left shifts by elements of $\Lambda_0$.

In the above two examples, it was condition \eqref{d.finitelyalignedpair.a} of Definition \ref{d.finitelyalignedpair} that failed.  We present an example where condition \eqref{d.finitelyalignedpair.b} fails.  For this, let $\Lambda$ be the subcategory of the first example generated by $\alpha$, $\beta$, $\gamma_0$, $\delta_0$, and let $\Lambda_0 = \{\alpha,\beta,\alpha\gamma_0\} \cup \Lambda^0$.  In this example, Definition \ref{d.finitelyalignedpair}\eqref{d.finitelyalignedpair.a} holds, but \eqref{d.finitelyalignedpair.b} fails.  In this case, $\CE_v$ is an elementary family, but is not invariant under left shifts:  $\sigma^\alpha(\beta\Lambda) = \{\gamma_0\}$ is not in the ring generated by $\CE_{s(\alpha)}$.
\end{Remark}

\section{The groupoid of a relative category of paths}
\label{s.groupoid}
%!?!\textit{(s.groupoid)} %%% erase label name

Recall that a \textit{filter} in a ring of sets $\CA$ is a nonempty collection, $\CU$, of nonempty elements of $\CA$, which is closed under the formation of intersections and supersets.  An \textit{ultrafilter} is a maximal filter.  Ultrafilters are characterized by the property:  for each $E \in \CA$, either $E \in \CU$ or there is $F \in \CU$ with $E \cap F = \emptyset$.  A \textit{filter base} is a nonempty collection of nonempty subsets of $\CA$ such that the intersection of any two of its elements is a superset of a third element.  A filter base defines a unique filter by closing with respect to supersets.  An \textit{ultrafilter base} is a filter base such that the filter it defines is an ultrafilter.  Ultrafilter bases ($\CU_0$) are characterized by the property:  for each $E \in \CA$, either there is $F \in \CU_0$ with $E \supseteq F$, or there is $F \in \CU_0$ with $E \cap F = \emptyset$.  Each element $\alpha$ in the set underlying $\CA$ determines an ultrafilter $\CU_\alpha = \{ E \in \CA : \alpha \in E \}$, called a \textit{fixed ultrafilter}.

Let $(\Lambda_0,\Lambda)$ be a relative category of paths.
For $v \in \Lambda_0^0$ let $A_v = \overline{\text{span}} \{\chi_E : E \in \CA_v\} \subseteq \ell^\infty(v\Lambda)$, a commutative $C^*$-algebra.  As in \cite{spi_graphalg}, p. 250, we identify $\widehat{A_v}$ with the set of ultrafilters in $\CA_v$:  the complex homomorphism $\omega \in \widehat{A_v}$ corresponds to the ultrafilter $\{ E \in \CA_v : \omega(\chi_E) = 1 \}$.  We will write  $X_v$ for the spectrum, $\widehat{A_v}$, of $A_v$.  We let $\CU_x$ denote the ultrafilter corresponding to $x \in X_v$.  For $E \in \CA_v$ let $\widehat{E} \subseteq X_v$ denote the support of $\chi_E$.  Thus $x \in \widehat{E}$ if and only if $E \in \CU_x$.  The collection $\{\widehat{E} : E \in \CD_v\}$ is a base for the topology of $X_v$ consisting of compact-open sets.

In order to define the groupoid associated to a relative category of paths, we have to study the maps between the spaces $\{X_v\}$ that are induced by concatenation.

\begin{Lemma}
\label{l.concatenation}
%!?!\textit{(l.concatenation)} %%% erase label name
Let $(\Lambda_0,\Lambda)$ be a relative category of paths, and for each $v \in \Lambda_0^0$ let $\CB_v$ be a ring of subsets of $v\Lambda$ such that conditions \eqref{p.booleanring.a}, \eqref{p.booleanring.b} and \eqref{p.booleanring.c} of Proposition \ref{p.booleanring} are satisfied.  If $\alpha \in \Lambda_0$ and $\CU$ is an ultrafilter in $\CB_{s(\alpha)}$, then $\alpha\CU$ is an ultrafilter base in $\CB_{r(\alpha)}$. 
\end{Lemma}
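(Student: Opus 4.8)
The plan is to exploit the fact that right multiplication by $\alpha$ (the map $\beta \mapsto \alpha\beta$) and the left shift $\sigma^\alpha$ are mutually inverse bijections between $s(\alpha)\Lambda$ and $\alpha\Lambda$, so that $E \mapsto \alpha E$ is an isomorphism of Boolean rings from $\CB_{s(\alpha)}$ onto $\{F \cap \alpha\Lambda : F \in \CB_{r(\alpha)}\}$. Two identities carry the whole argument: $\alpha(E_1 \cap E_2) = \alpha E_1 \cap \alpha E_2$ (injectivity of right multiplication by $\alpha$), and $\alpha\sigma^\alpha(F) = F \cap \alpha\Lambda$ (established in the proof of Proposition \ref{p.booleanring}). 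Everything then reduces to transporting the ultrafilter condition on $\CU$ through this bijection.

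First I would verify that $\alpha\CU := \{\alpha E : E \in \CU\}$ is a filter base in $\CB_{r(\alpha)}$. Each $\alpha E$ lies in $\CB_{r(\alpha)}$ because $\alpha\CB_{s(\alpha)} \subseteq \CB_{r(\alpha)}$ by Proposition \ref{p.booleanring}\eqref{p.booleanring.b}, and each $\alpha E$ is nonempty since $E$ is nonempty and right multiplication is one-to-one. As $\CU$ is closed under intersection and $\alpha E_1 \cap \alpha E_2 = \alpha(E_1 \cap E_2) \in \alpha\CU$, the collection $\alpha\CU$ is itself closed under intersection, which gives the filter base property immediately.

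The substantive step is the ultrafilter base characterization. Given $F \in \CB_{r(\alpha)}$, I would set $E_F := \sigma^\alpha(F)$, which lies in $\CB_{s(\alpha)}$ by Proposition \ref{p.booleanring}\eqref{p.booleanring.c}, and apply the defining property of the ultrafilter $\CU$ to $E_F$. If $E_F \in \CU$, then $\alpha E_F = \alpha\sigma^\alpha(F) = F \cap \alpha\Lambda \subseteq F$, exhibiting an element of $\alpha\CU$ contained in $F$. Otherwise there is $E' \in \CU$ with $E' \cap E_F = \emptyset$ (this case also covers $E_F = \emptyset$, where any $E' \in \CU$ works), and then $\alpha E' \cap F = \alpha E' \cap (F \cap \alpha\Lambda) = \alpha E' \cap \alpha E_F = \alpha(E' \cap E_F) = \emptyset$, exhibiting an element of $\alpha\CU$ disjoint from $F$. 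In either case exactly one of the two alternatives defining an ultrafilter base holds, which finishes the proof.

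I do not expect a genuine obstacle here: once the identity $\alpha\sigma^\alpha(F) = F \cap \alpha\Lambda$ is available, the argument is a direct transport of structure along a Boolean isomorphism. The only point requiring care is to remember that sets of the form $\alpha E'$ live inside $\alpha\Lambda$, so one must intersect $F$ with $\alpha\Lambda$ before comparing; this is precisely what makes the disjointness computation in the second case valid.
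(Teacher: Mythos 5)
Your proof is correct and takes essentially the same route as the paper's: both arguments pull a given $F \in \CB_{r(\alpha)}$ back to $\sigma^\alpha(F) \in \CB_{s(\alpha)}$, apply the ultrafilter dichotomy for $\CU$ there, and push the conclusion forward using the identity $\alpha\sigma^\alpha(F) = F \cap \alpha\Lambda$ together with injectivity of $\beta \mapsto \alpha\beta$. The only difference is organizational — the paper argues by contrapositive (assuming no $\alpha E$ with $E \in \CU$ is contained in $F$), while you split directly on whether $\sigma^\alpha(F) \in \CU$ — which is the same argument in different packaging.
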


\begin{proof}
It is clear that $\alpha\CU$ is a filter base.  Let $E \in \CB_{r(\alpha)}$ be such that $E \not\supseteq \alpha F$ for all $F \in \CU$.  Then $\sigma^\alpha E \not\supseteq F$ for all $F \in \CU$.  Since $\CU$ is an ultrafilter, and since $\sigma^\alpha E \in \CB_{s(\alpha)}$, there is $F \in \CU$ with $\sigma^\alpha E \cap F = \emptyset$.  Then $E \cap \alpha F = \emptyset$.
\end{proof}

\begin{Theorem}
\label{t.concatenation}
%!?!\textit{(t.concatenation)} %%% erase label name
Let $(\Lambda_0,\Lambda)$ be a relative category of paths.  Let $\alpha \in \Lambda_0$.  The map 
$\alpha : \CA_{s(\alpha)} \to \CA_{r(\alpha)}$ induces a continuous one-to-one map $\widehat{\alpha} : X_{s(\alpha)} \to X_{r(\alpha)}$.  For $y \in X_{r(\alpha)}$, we have $y \in \widehat{\alpha}(X_{s(\alpha)})$ if and only if $\alpha\Lambda \in \CU_y$.
\end{Theorem}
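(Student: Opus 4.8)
The plan is to realize $\widehat\alpha$ directly on the level of ultrafilters, using Lemma \ref{l.concatenation}. Given $x \in X_{s(\alpha)}$ with associated ultrafilter $\CU_x$ in $\CA_{s(\alpha)}$, Lemma \ref{l.concatenation} (applied with $\CB_v = \CA_v$, which satisfies the hypotheses of Proposition \ref{p.booleanring} by Corollary \ref{c.booleanring}) tells us that $\alpha\CU_x = \{\alpha F : F \in \CU_x\}$ is an ultrafilter base in $\CA_{r(\alpha)}$. I define $\widehat\alpha(x)$ to be the point of $X_{r(\alpha)}$ whose ultrafilter $\CU_{\widehat\alpha(x)}$ is the one generated by this base; this is well-defined precisely because Lemma \ref{l.concatenation} guarantees the base generates an ultrafilter. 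Everything else is verification built on the single computational equivalence, valid for $E \in \CA_{r(\alpha)}$ and $F \in \CA_{s(\alpha)}$, that $E \supseteq \alpha F$ if and only if $\sigma^\alpha E \supseteq F$ (apply $\sigma^\alpha$ in one direction, and use $E \cap \alpha\Lambda = \alpha\sigma^\alpha E$ in the other).

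Since $\CU_{\widehat\alpha(x)}$ is generated as a filter by $\alpha\CU_x$, this equivalence gives $E \in \CU_{\widehat\alpha(x)}$ iff $E \supseteq \alpha F$ for some $F \in \CU_x$ iff $\sigma^\alpha E \in \CU_x$ (using that $\sigma^\alpha E \in \CA_{s(\alpha)}$ by Corollary \ref{c.booleanring} and that $\CU_x$ is closed under supersets). In other words $\widehat\alpha^{-1}(\widehat E) = \widehat{\sigma^\alpha E}$ for every $E \in \CA_{r(\alpha)}$. As $\widehat{\sigma^\alpha E}$ is open and the sets $\widehat E$ generate the topology of $X_{r(\alpha)}$, this shows $\widehat\alpha$ is continuous. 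One-to-one-ness is then immediate: if $\widehat\alpha(x) = \widehat\alpha(x')$ then $\sigma^\alpha E \in \CU_x \iff \sigma^\alpha E \in \CU_{x'}$ for all $E \in \CA_{r(\alpha)}$, and since $\sigma^\alpha \CA_{r(\alpha)} = \CA_{s(\alpha)}$ (Corollary \ref{c.booleanring}) the sets $\sigma^\alpha E$ exhaust $\CA_{s(\alpha)}$, forcing $\CU_x = \CU_{x'}$.

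For the image characterization, necessity is easy: every ultrafilter contains the top element $s(\alpha)\Lambda$ of $\CA_{s(\alpha)}$, so $\alpha\Lambda = \alpha\bigl(s(\alpha)\Lambda\bigr) \in \alpha\CU_x \subseteq \CU_{\widehat\alpha(x)}$. For sufficiency, given $y$ with $\alpha\Lambda \in \CU_y$, I will produce a preimage by setting $\CU_x = \{\sigma^\alpha E : E \in \CU_y\}$. I expect verifying that this $\CU_x$ is genuinely an ultrafilter in $\CA_{s(\alpha)}$ to be the main obstacle. The hypothesis $\alpha\Lambda \in \CU_y$ is exactly what makes it work: it forces $E \cap \alpha\Lambda \in \CU_y$, hence $\sigma^\alpha E \ne \emptyset$, so every listed set is nonempty; closure under intersection and supersets follows from $\sigma^\alpha$ being a surjective ring homomorphism $\CA_{r(\alpha)} \to \CA_{s(\alpha)}$; and the ultrafilter property is inherited from $\CU_y$ by testing an arbitrary $G \in \CA_{s(\alpha)}$ against $\alpha G \in \CA_{r(\alpha)}$ and applying $\sigma^\alpha$ to the disjointness alternative. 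Finally, using $\alpha\Lambda \in \CU_y$ once more, $\sigma^\alpha E \in \CU_x$ iff $E \cap \alpha\Lambda \in \CU_y$ iff $E \in \CU_y$, so by the formula of the previous paragraph $\widehat\alpha(x) \in \widehat E \iff y \in \widehat E$ for every $E \in \CA_{r(\alpha)}$, whence $\widehat\alpha(x) = y$.
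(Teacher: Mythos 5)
Your proposal is correct and takes essentially the same route as the paper: the map is defined via Lemma \ref{l.concatenation} applied to the rings $\CA_v$, and a preimage of any $y$ with $\alpha\Lambda \in \CU_y$ is produced as $\sigma^\alpha(\CU_y)$, verified to be an ultrafilter by exactly the paper's argument (superset closure via $\alpha F \supseteq E \cap \alpha\Lambda$, the disjointness alternative pushed forward by $\sigma^\alpha$). The only difference is organizational: you package continuity and injectivity into the single preimage formula $\widehat\alpha^{-1}(\widehat{E}) = \widehat{\sigma^\alpha E}$, using $\sigma^\alpha \CA_{r(\alpha)} = \CA_{s(\alpha)}$ from Corollary \ref{c.booleanring}, whereas the paper establishes the same containments by pointwise neighborhood arguments; this is a tidy but not essentially different treatment.
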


\begin{proof}
By Lemma \ref{l.concatenation}, $\alpha$ defines a map $\widehat{\alpha}$ as in the statement.
Let $x_1 \not= x_2$ in $X_{s(\alpha)}$.  Then there are $E_i \in \CU_{x_i}$ with $E_1 \cap E_2 = \emptyset$.  Hence $\alpha E_1 \cap \alpha E_2 = \emptyset$, so that $\alpha\CU_{x_1}$ and $\alpha\CU_{x_2}$ define distinct ultrafilters in $\CA_{r(\alpha)}$.  Therefore $\widehat{\alpha}$ is one-to-one.  For continuity, let $x_0 \in X_{s(\alpha)}$, and let $V$ be a neighborhood of $\widehat{\alpha}(x_0)$.  Then there is $B \in \CA_{r(\alpha)}$ such that $\widehat{\alpha}(x_0) \in \widehat{B} \subseteq V$.  Hence $B \supseteq \alpha E_0$ for some $E_0 \in \CU_{x_0}$.  Then $\sigma^\alpha(B) \supseteq E_0$, so $\sigma^\alpha(B) \in \CU_{x_0}$; i.e. $\widehat{\sigma^\alpha(B)}$ is a neighborhood of $x_0$.  We will show that $\widehat{\alpha}(\widehat{\sigma^\alpha(B)}) \subseteq \widehat{B}$.  Let $x \in \widehat{\sigma^\alpha(B)}$.  Then $\sigma^\alpha(B) \in \CU_x$, hence $\alpha\Lambda \cap B \in \alpha\CU_x$, hence $\widehat{\alpha}(x) \in \widehat{B}$.  Thus $\widehat{\alpha}$ is continuous.

Now let $y \in X_{r(\alpha)}$.  If $y = \widehat{\alpha}(x)$ for some $x \in X_{s(\alpha)}$, then $\CU_y \supseteq \alpha\CU_x$.  Since $\alpha\Lambda \in \alpha\CU_x$, we have $\alpha\Lambda \in \CU_y$.  Conversely, suppose that $\alpha\Lambda \in \CU_y$.  Then $\alpha\Lambda \cap \CU_y$ is a filter base for $\CU_y$, hence is an ultrafilter base.  Since $\alpha\Lambda \cap \CU_y = \bigl\{ \alpha\sigma^\alpha(E) : E \in \CU_y \bigr\}$, we have $\alpha\Lambda \cap \CU_y = \alpha\sigma^\alpha(\CU_y)$.  We claim that $\sigma^\alpha(\CU_y)$ is an ultrafilter in $\CA_{s(\alpha)}$.  To see that it is a filter (and not merely a filter base), let $F \in \CA_{s(\alpha)}$ be such that $F \supseteq \sigma^\alpha(E)$, where $E \in \CU_y$.  Then $\alpha F \supseteq \alpha\Lambda \cap E$, so $\alpha F \in \CU_y$.  Then $F \in \sigma^\alpha(\CU_y)$.  We now show that it is an ultrafilter.  Let $F \in \CA_{s(\alpha)}$ be such that $F \not\supseteq \sigma^\alpha(E)$ for all $E \in \CU_y$. Then $\alpha F \not\supseteq E$ for all $E \in \CU_y$.  Since $\CU_y$ is an ultrafilter, there exists $E \in \CU_y$ such that $\alpha F \cap E = \emptyset$.  Then $F \cap \sigma^\alpha(E) = \emptyset$.  Finally, let $x \in X_{s(\alpha)}$ with $\CU_x = \sigma^\alpha(\CU_y)$.  Then $\alpha\CU_x \subseteq \CU_y$.  Since $\alpha\CU_x$ is an ultrafilter base, we have $\widehat{\alpha}(x) = y$.
\end{proof}

\begin{Corollary}
\label{c.concatenation.two}
%!?!\textit{(c.concatenation.two)} %%% erase label name
With hypotheses as in Theorem \ref{t.concatenation}, we have $\widehat{\alpha}(X_{s(\alpha)})$ is a compact-open subset of $X_{r(\alpha)}$.
\end{Corollary}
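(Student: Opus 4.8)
The plan is to identify the image $\widehat{\alpha}(X_{s(\alpha)})$ explicitly as the support set of a single basic element of $\CA_{r(\alpha)}$, and then invoke the already-established fact that such supports are compact-open. By the last sentence of Theorem \ref{t.concatenation}, a point $y \in X_{r(\alpha)}$ lies in $\widehat{\alpha}(X_{s(\alpha)})$ if and only if $\alpha\Lambda \in \CU_y$. In the notation introduced just before Lemma \ref{l.concatenation}, this is precisely the statement that $y \in \widehat{\alpha\Lambda}$. Hence the first step is simply to record the set equality
\[
\widehat{\alpha}(X_{s(\alpha)}) = \widehat{\alpha\Lambda}.
\]

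The second step is to check that $\widehat{\alpha\Lambda}$ is compact-open. For this I would observe that $\alpha\Lambda$ is a basic set: by Remark \ref{r.zigzag}\eqref{r.zigzag.b}, $\alpha\Lambda = A\bigl((\alpha, r(\alpha))\bigr)$, and since $\alpha \in \Lambda_0$ (so that $r(\alpha) \in \Lambda_0^0$), the zigzag $(\alpha, r(\alpha))$ belongs to $\CZ_{(\Lambda_0,\Lambda)}$. Thus $\alpha\Lambda \in \CD^{(0)}_{r(\alpha)}$, and in particular $\alpha\Lambda \in \CD_{r(\alpha)}$ (it is the degenerate case in which no sets are subtracted, and it is nonempty since it contains $\alpha$). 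The remark that $\{\widehat{E} : E \in \CD_v\}$ is a base for the topology of $X_v$ consisting of compact-open sets then shows immediately that $\widehat{\alpha\Lambda}$ is compact-open in $X_{r(\alpha)}$, which completes the argument.

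I do not anticipate a genuine obstacle here: the corollary is essentially a direct reading of the image description in Theorem \ref{t.concatenation} against the definition of the topology on $X_{r(\alpha)}$. The only point requiring a moment's care is the bookkeeping that $\alpha\Lambda$ genuinely lands in the collection $\CD_{r(\alpha)}$ of basic sets (rather than being treated merely as a general element of $\CA_{r(\alpha)}$). This is harmless either way, since the support of any element of $\CA_{r(\alpha)}$ is a finite disjoint union of compact-open sets and hence still compact-open; but keeping the identification at the level of the single basic set $\alpha\Lambda$ makes the compact-openness entirely transparent.
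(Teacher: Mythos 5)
Your proof is correct and is exactly the argument the paper intends: the corollary is stated without a separate proof precisely because, as you observe, the last assertion of Theorem \ref{t.concatenation} gives the set equality $\widehat{\alpha}(X_{s(\alpha)}) = \widehat{\alpha\Lambda}$, and $\alpha\Lambda = A\bigl((\alpha,r(\alpha))\bigr) \in \CD^{(0)}_{r(\alpha)}$ has compact-open support by the stated description of the topology on $X_{r(\alpha)}$. Your closing remark is also the right safeguard: even viewing $\alpha\Lambda$ merely as an element of $\CA_{r(\alpha)}$, its support is a finite disjoint union of compact-open sets and hence compact-open.
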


\begin{Definition}
\label{d.concatenation}
%!?!\textit{(d.concatenation)} %%% erase label name
Let $(\Lambda_0,\Lambda)$ be a relative category of paths.  Let $\alpha \in \Lambda_0$.  In the sequel we will omit the caret, writing $x \in X_{s(\alpha)} \mapsto \alpha x \in X_{r(\alpha)}$.  We define $\sigma^\alpha : \alpha X_{r(\alpha)} \to X_{s(\alpha)}$ by the equation $\alpha \sigma^\alpha(y) = y$, for $y \in X_{r(\alpha)}$ with $\alpha\Lambda \in \CU_y$.  It now follows that for each $\zeta \in \CZ$ there is a homeomorphism $\Phi_\zeta : \widehat{A(\zeta)} \to \widehat{A(\overline{\zeta})}$, determined by the maps given above, and the formula $\Phi_{\zeta_1\zeta_2} = \Phi_{\zeta_1} \circ \Phi_{\zeta_2}$ holds.
\end{Definition}

We will now define the groupoid of a relative category of paths $(\Lambda_0,\Lambda)$ (we refer to \cite{pat}; see also \cite{arzren}).

\begin{Definition}
\label{d.groupoid.a}
%!?!\textit{(d.groupoid.a)} %%% erase label name
We let $X = \bigsqcup_{v \in \Lambda_0^0} X_v$.  We define $r : X \to \Lambda_0^0$ by $r(x) = v$ if $x \in X_v$.
We define a relation $\sim$ on
\[
\CZ * X = \bigcup_{v \in \Lambda_0^0} \CZ v \times X_v
\]
by:  $(\zeta,x) \sim (\zeta',x')$ if $x = x'$ and $\Phi_{\zeta}|_U = \Phi_{\zeta'}|_U$ for some neighborhood $U$ of $x$.
\end{Definition}

\begin{Remark}
It is clear that $\sim$ is an equivalence relation.
\end{Remark}

\begin{Definition}
\label{d.groupoid.b}
%!?!\textit{(d.groupoid.b)} %%% erase label name
The \textit{groupoid} of $(\Lambda_0,\Lambda)$ is the set
\[
G \equiv G(\Lambda_0,\Lambda) = (\CZ * X) / \sim.
\]
The set of composable pairs is
\[
G^2 = \bigl\{ \bigl([\zeta,x], \; [\zeta',x'] \bigr) : x = \Phi_{\zeta'}x' \bigr\},
\]
and inversion is given by $[\zeta,x]^{-1} = [\overline{\zeta},\zeta x]$ (where we use square brackets to denote equivalence classes).  Multiplication is given by
\[
[\zeta,\Phi_{\zeta'}x][\zeta',x] = [\zeta\zeta',x].
\]
\end{Definition}

It is clear that Definition \ref{d.groupoid.b} does not depend on the choices made of representatives of the equivalence classes.  It is elementary to check that the above does in fact define a groupoid (e.g. the conditions on page 7 of \cite{pat}).

\begin{Remark}
\label{r.groupoid.c}
%!?!\textit{(r.groupoid.c)} %%% erase label name
It is immediate that if $[ r(x),x ] = [ r(y),y ]$ then $x = y$.  Thus the map $x \in X \mapsto [ r(x),x ] \in G^0$ is bijective.  We will often identify $X$ and $G^0$ via this map.
\end{Remark}

\begin{Definition}
\label{d.topology}
%!?!\textit{(d.topology)} %%% erase label name
For $\zeta \in \CZ$ and $E \in \CA_{s(\zeta)}$, let
\[
[\zeta,E]
= \bigl\{ [\zeta,x] : x \in \widehat{E} \bigr\}.
\]
Let $\CB = \bigl\{ [\zeta,E] : E \in \CA_{s(\zeta)} \bigr\}$.
\end{Definition}

We note that since $[\zeta,E] = [\zeta, E \cap A(\zeta)]$, we may assume that $E \subseteq A(\zeta)$.

\begin{Proposition}
\label{p.topology}
%!?!\textit{(p.topology)} %%% erase label name
$\CB$ is a base for a locally compact topology on $G$ for which the elements of $\CB$ are compact and Hausdorff, and making $G$ into an ample \'etale groupoid.
\end{Proposition}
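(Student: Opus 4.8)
The plan is to verify directly that $\CB$ satisfies the axioms for a base of a groupoid topology, and then check the étale and ample conditions on top of that. I would organize the argument around the following claims, proved in order.

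First I would show that $\CB$ is a base for a topology, i.e. that the intersection of two basic sets $[\zeta_1, E_1]$ and $[\zeta_2, E_2]$ is a union of basic sets. The key observation is that a point $[\zeta_1, x]$ lies in both precisely when $x \in \widehat{E_1}$, $x' \in \widehat{E_2}$, and $[\zeta_1, x] = [\zeta_2, x']$; by Definition \ref{d.groupoid.a} the latter forces $x = x'$ and agreement of $\Phi_{\zeta_1}$, $\Phi_{\zeta_2}$ on a neighborhood of $x$. So the intersection is
\[
[\zeta_1, E_1] \cap [\zeta_2, E_2] = \bigl\{ [\zeta_1, x] : x \in \widehat{E_1} \cap \widehat{E_2}, \ \Phi_{\zeta_1} \text{ and } \Phi_{\zeta_2} \text{ agree near } x \bigr\}.
\]
The set of $x$ where $\Phi_{\zeta_1}$ and $\Phi_{\zeta_2}$ agree locally is open (agreement on a neighborhood is an open condition), and since the sets $\widehat{F}$ for $F \in \CD_{s(\zeta_1)}$ form a base of compact-open sets for $X_{s(\zeta_1)}$, this open set is a union of such $\widehat{F}$; intersecting with $\widehat{E_1} \cap \widehat{E_2}$ (itself of the form $\widehat{E}$ for $E = E_1 \cap \sigma^{\text{appropriate zigzag}}E_2 \in \CA_{s(\zeta_1)}$ by Lemma \ref{l.ringofsets}), I obtain that the intersection is a union of basic sets $[\zeta_1, F]$. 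This is the technical heart of the base property.

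Next I would establish that each $[\zeta, E]$ is compact and Hausdorff. Hausdorffness is inherited from $X_{s(\zeta)}$: the map $[\zeta, x] \mapsto x$ is a bijection of $[\zeta, E]$ onto $\widehat{E} \subseteq X_{s(\zeta)}$, and I would check it is a homeomorphism onto its image with the subspace topology from $G$, using the base-intersection computation above to see that the topology on $[\zeta, E]$ pulled back from $\CB$ matches the compact-open topology on $\widehat{E}$. Compactness then follows because $\widehat{E}$ is compact (for $E \in \CD_v$ it is compact-open by the discussion preceding Lemma \ref{l.concatenation}, and a general $E \in \CA_v$ is a finite disjoint union of such). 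I would also note local compactness of $G$ follows since every point lies in some compact Hausdorff basic neighborhood.

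For the groupoid-topological structure, I would verify that multiplication $G^2 \to G$ and inversion $G \to G$ are continuous. Inversion sends $[\zeta, E]$ to $[\overline{\zeta}, \Phi_\zeta(E)]$, which is again basic by Remark \ref{r.zigzag} and the fact that $\Phi_\zeta$ is a homeomorphism $\widehat{A(\zeta)} \to \widehat{A(\overline{\zeta})}$, so inversion is a homeomorphism. For continuity of multiplication I would take a basic neighborhood $[\zeta_1\zeta_2, E]$ of a product and pull it back to a product of basic sets using $\Phi_{\zeta_1\zeta_2} = \Phi_{\zeta_1}\circ\Phi_{\zeta_2}$. Finally, for \emph{étale} I would exhibit each $[\zeta, E]$ as an open bisection: the range and source maps restricted to $[\zeta, E]$ are homeomorphisms onto the compact-open sets $\widehat{A(\overline\zeta)} \cap (\text{image})$ and $\widehat{E}$ respectively, both identified with subsets of $X = G^0$. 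Since $G$ is covered by such compact-open bisections, it is étale, and having a base of compact-open sets makes it \emph{ample}. The main obstacle I anticipate is the base property in the first step — specifically, showing cleanly that the locus where two zigzag homeomorphisms agree is open and expressible within the ring $\CA$ — since everything afterward reduces to transporting the already-established topology and homeomorphism properties of the $\Phi_\zeta$ and $\widehat{E}$ through the quotient.
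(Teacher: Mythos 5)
Your proposal is correct and takes essentially the same route as the paper's proof: the base property via the local-agreement neighborhoods $\widehat{F}$, continuity of inversion and multiplication using $[\zeta,E]^{-1}=[\overline{\zeta},\pphi_\zeta(E)]$ and $\Phi_{\zeta_1\zeta_2}=\Phi_{\zeta_1}\circ\Phi_{\zeta_2}$, and the identification of each $[\zeta,E]$ as a compact Hausdorff open bisection (with $s$ and $r$ restricting to injective open maps) to conclude local compactness and the \'etale/ample structure. One minor slip: a nonempty intersection forces $s(\zeta_1)=s(\zeta_2)$, so $E_1$, $E_2$ both lie in the ring $\CA_{s(\zeta_1)}$ and $\widehat{E_1}\cap\widehat{E_2}=\widehat{E_1\cap E_2}$ directly --- no shift of $E_2$ by a zigzag is needed.
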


\begin{proof}
Suppose $[\zeta, E] \cap [\zeta', E'] \not= \emptyset$.  Then $s(\zeta) = s(\zeta')$.  Let $[\zeta,x] \in [\zeta, E] \cap [\zeta', E']$.  Then $x \in \widehat{E} \cap \widehat{E'}$, and there is $F \in \CA_{s(\zeta)}$ such that $\Phi_\zeta|_{\widehat{F}} = \Phi_{\zeta'}|_{\widehat{F}}$ and $x \in \widehat{F}$.  Then 
\[
[\zeta,x] \in [\zeta, E \cap E' \cap F] \subseteq [\zeta, E] \cap [\zeta', E'].
\]
Therefore $\CB$ is a base for a topology on $G$.

Next, we show that multiplication and inversion are continuous.  Let $\bigl([\zeta,\Phi_{\zeta'} x], \; [\zeta', x]\bigr) \in G^2$.  Let $E = A(\zeta\zeta')$.  Then $E \in \CA_{r(x)}$ with $x \in \widehat{E}$.  Then $[\zeta\zeta',E]$ is a basic neighborhood of the product $[\zeta\zeta',x]$.  We have that $\pphi_{\zeta'}(E) \in \CA_{r(\zeta')}$, and $\Phi_{\zeta'} x \in \widehat{\pphi_{\zeta'}(E)}$.  Then $[\zeta, \pphi_{\zeta'}(E)] [\zeta', E] = [\zeta\zeta', E]$, hence multiplication is continuous.  Since $[\zeta, E]^{-1} = [\overline{\zeta},\pphi_\zeta(E)]$, inversion is continuous.

Finally, note that $r|_{[\zeta,E]} : [\zeta, x] \mapsto \Phi_\zeta x$ is an injective open map, and similarly for $s$.  Therefore $r$ and $s$ are local homeomorphisms.  It follows that $[\zeta, E]$ is compact (since $\widehat{E}$ is compact in $X$), and that the sets in $\CB$ are Hausdorff.  Therefore $G$ is ample and \'etale (cf. \cite{pat}, Definition 2.2.4).
\end{proof}

In general, $G$ is not a Hausdorff groupoid.  We next give an example of this.

\begin{Example}
\label{(e.nonhausdorff.a)}
%!?!\textit{(e.nonhausdorff.a)} %%% erase label name

\begin{figure}[ht]

\[
\begin{tikzpicture}

\node (w) at (0,0) [circle] {$w$};
\node (u) at (-2,0) [circle] {$u$};
\node (v) at (2,0) [circle] {$v$};
\node (x) at (0,2) [circle] {$x$};
\node (y) at (0,-2) [circle] {$y$};

\draw[-latex,thick] (u) -- (x) node[pos=0.5, inner sep=0.5pt, anchor=south east] {$\alpha_1$};
\draw[-latex,thick] (v) -- (x) node[pos=0.5, inner sep=0.5pt, anchor=south west] {$\beta_1$};
\draw[-latex,thick] (v) -- (y) node[pos=0.5, inner sep=0.5pt, anchor=north west] {$\beta_2$};
\draw[-latex,thick] (u) -- (y) node[pos=0.5, inner sep=0.5pt, anchor=north east] {$\alpha_2$};
\draw[-latex,thick] (w) .. controls (-.3,.3) and (-.7,.3) .. (u) node[pos=0.5, inner sep=0.5pt, anchor=south] {$\gamma_i$};
\draw[-latex,thick] (w) .. controls (-.3,-.3) and (-.7,-.3) .. (u) node[pos=0.5, inner sep=0.5pt, anchor=north] {$\mu_i$};
\draw[-latex,thick] (w) .. controls (.3,.3) and (.7,.3) .. (v) node[pos=0.5, inner sep=0.5pt, anchor=south] {$\delta_i$};
\draw[-latex,thick] (w) .. controls (.3,-.3) and (.7,-.3) .. (v) node[pos=0.5, inner sep=0.5pt, anchor=north] {$\nu_i$};

\end{tikzpicture}
\]

\caption{} \label{fig3cat}

\end{figure}

In Figure~\ref{fig3cat}, $\Lambda$ is a non-finitely aligned 2-graph (where the subscript $i$ takes values in $\IZ$),
with identifications $\alpha_1\gamma_i = \beta_1 \delta_i$, $\alpha_1 \mu_i = \beta_1 \nu_i$, $\alpha_2\gamma_i = \beta_2 \delta_i$, $\alpha_2 \mu_i = \beta_2 \nu_{i+1}$.  (To realize $\Lambda$ as a 2-graph, let edges pointing right have degree $(1,0)$, and let edges pointing left have degree $(0,1)$.)  We let $\zeta_j = (\beta_j,\alpha_j) \in \CZ u$.  Note that $\pphi_{\zeta_j}(\gamma_i) = \delta_i$, $\pphi_{\zeta_1}(\mu_i) = \nu_i$, and $\pphi_{\zeta_2}(\mu_i) = \nu_{i+1}$.  Further, note that $X_u = \{\CU_u,\,\CU_{\gamma_i},\, \CU_{\mu_i},\, \CU_{u,\infty}\}$, where we let $\CU_\alpha = \{ E \in \CA_{r(\alpha)} : \alpha \in E \}$ denote the fixed ultrafilter at $\alpha$, and $\CU_{u,\infty}$ the ultrafilter generated by the cofinite subsets of $\{\gamma_i,\mu_i : i \in \IZ\}$. Then the domain of $\Phi_{\zeta_j}$ is $X_u \setminus \{\CU_u\}$, and we have $\Phi_{\zeta_j}(\CU_{u,\infty}) = \CU_{v,\infty}$, $\Phi_{\zeta_j}(\CU_{\gamma_i}) = \CU_{\delta_i}$, $\Phi_{\zeta_1}(\CU_{\mu_i}) = \CU_{\nu_i}$, and $\Phi_{\zeta_2}(\CU_{\mu_i}) = \CU_{\nu_{i+1}}$.  We note that basic neighborhoods of $\CU_{u,\infty}$ may be taken to be cofinite subsets of $\{\CU_{\gamma_i}, \CU_{\mu_i} : i \in \IZ\}$.  Thus $\Phi_{\zeta_1} \not= \Phi_{\zeta_2}$ in any neighborhood of $\CU_{u,\infty}$:  $[\zeta_1,\CU_{u,\infty}] \not= [\zeta_2,\CU_{u,\infty}]$.  We claim that these two points do not have disjoint neighborhoods.  To see this, let $U$ be a basic neighborhood of $\CU_{u,\infty}$, say $U = \widehat{E}$, where $E =  A(\zeta_1) \setminus \{\gamma_i,\mu_i : |i| \le n\}$.  Then $[\zeta_1,E] \cap [\zeta_2,E] \supseteq \{[\zeta_1,\CU_{\gamma_i}] : |i| > n\}$.
\end{Example}

In the case where $\Lambda$ is finitely aligned, however, $G$ is Hausdorff.  Because of its importance, we will next work out some details of the finitely aligned case.

Let $\Lambda$ be finitely aligned.  We know from Lemma \ref{l.finitelyalignedzigzag} that every element of $G$ can be expressed in the form $[\alpha\sigma^\beta, y]$, where $\alpha$, $\beta \in \Lambda$.  In this case, $y \in \widehat{A(\sigma^\beta)} = \beta X_{s(\beta)}$.  Thus we may write $[\alpha\sigma^\beta, y] = [\alpha\sigma^\beta, \beta x]$ for some $x \in X_{s(\beta)}$.

\begin{Lemma}
\label{l.commontail}
%!?!\textit{(l.commontail)} %%% erase label name
Let $\Lambda$ be a finitely aligned category of paths.
Let $\alpha$, $\alpha' \in \Lambda$, and let $x \in X_{s(\alpha)}$, $x' \in X_{s(\alpha')}$ with $\alpha x = \alpha' x'$.  Then there exist $\delta \in s(\alpha)\Lambda$, $\delta' \in s(\alpha')\Lambda$, and $z \in X_{s(\delta)}$ such that $x = \delta z$, $x' = \delta' z$, and $\alpha\delta = \alpha'\delta'$.  (We refer to $z$ as a \textit{common tail} of $x$ and $x'$.)  Moreover we may assume that $\alpha\delta \in \alpha \vee \alpha'$.
\end{Lemma}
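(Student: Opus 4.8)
The plan is to work with the single point $y = \alpha x = \alpha' x'$ of $X_{r(\alpha)}$ and to extract all the required data from its ultrafilter $\CU_y$. First I would translate the hypotheses into statements about $\CU_y$: since $y$ lies in the range of both $\widehat{\alpha}$ and $\widehat{\alpha'}$, Theorem \ref{t.concatenation} gives $\alpha\Lambda \in \CU_y$ and $\alpha'\Lambda \in \CU_y$. As $\CU_y$ is a filter, it is closed under intersection, so $\alpha\Lambda \cap \alpha'\Lambda \in \CU_y$; in particular this intersection is nonempty, so $\alpha \Cap \alpha'$.

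The next step is where finite alignment enters decisively. By Lemma \ref{l.finitelyalignedpair} the intersection $\alpha\Lambda \cap \alpha'\Lambda = \bigcup_{\eps \in \alpha \vee \alpha'} \eps\Lambda$ is a \emph{finite} union of tail sets indexed by the minimal common extensions. A finite union lies in an ultrafilter only if one of its members does (otherwise one could intersect the finitely many disjointness-witnessing sets to produce a member of $\CU_y$ disjoint from the union). Hence there is some $\eps \in \alpha \vee \alpha'$ with $\eps\Lambda \in \CU_y$; this choice already secures the final ``moreover'' clause. Since $\eps$ extends both $\alpha$ and $\alpha'$, I obtain $\delta \in s(\alpha)\Lambda$ and $\delta' \in s(\alpha')\Lambda$ with $\alpha\delta = \eps = \alpha'\delta'$, and note that $s(\eps) = s(\delta) = s(\delta')$.

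Finally I would produce the common tail $z$. Applying Theorem \ref{t.concatenation} once more to $\eps\Lambda \in \CU_y$ yields $z \in X_{s(\eps)}$ with $\eps z = y$. Using the composition formula for the induced maps (Definition \ref{d.concatenation}, which rests on $\pphi_{\zeta_1\zeta_2} = \pphi_{\zeta_1}\circ\pphi_{\zeta_2}$ from Remark \ref{r.zigzag}), the factorization $\eps = \alpha\delta$ gives $\alpha(\delta z) = \eps z = y = \alpha x$; since $\widehat{\alpha}$ is injective by Theorem \ref{t.concatenation}, we conclude $x = \delta z$. The identical argument applied to $\alpha'$ and $\delta'$ gives $x' = \delta' z$, which completes the proof.

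I expect the only genuine subtlety to be the ultrafilter step in the second paragraph: it is essential that finite alignment reduces $\alpha\Lambda \cap \alpha'\Lambda$ to a finite union of tail sets, so that the ``prime'' property of ultrafilters can single out a single minimal common extension $\eps$. Without finite alignment this intersection need not be such a finite union, and the argument would break down (indeed, Example \ref{(e.nonhausdorff.a)} shows the kind of pathology that appears in its absence). Everything else is bookkeeping with the injectivity and functoriality of the maps $\widehat{\alpha}$ already recorded in Theorem \ref{t.concatenation} and Definition \ref{d.concatenation}.
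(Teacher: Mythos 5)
Your proof is correct and follows essentially the same route as the paper's: establish $\alpha \Cap \alpha'$, use finite alignment plus the prime property of ultrafilters to find $\eps \in \alpha \vee \alpha'$ with $\eps\Lambda \in \CU_{\alpha x}$, and then apply Theorem \ref{t.concatenation} to produce the common tail $z$. The only cosmetic difference is that you invoke injectivity of $\widehat{\alpha}$ where the paper applies $\sigma^\alpha$ to cancel $\alpha$, which amounts to the same thing.
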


\begin{proof}
Since $\alpha\Lambda \in \CU_{\alpha x}$ and $\alpha'\Lambda \in \CU_{\alpha' x'}$, the equality $\alpha x = \alpha' x'$ implies that $\alpha \Cap \alpha'$.  Then there is $\eps \in \alpha \vee \alpha'$ such that $\eps\Lambda \in \CU_{\alpha x}$.  By Theorem \ref{t.concatenation} there is $z \in X_{s(\eps)}$ such that $\alpha x = \eps z$.  Write $\eps = \alpha \delta = \alpha' \delta'$.  Then $\alpha x = \alpha\delta z$, and hence $x = \sigma^\alpha(\alpha x) = \delta z$; similarly $x' = \delta' z$.
\end{proof}

\begin{Lemma}
\label{l.fa.equivalencerelation}
%!?!\textit{(l.fa.equivalencerelation)} %%% erase label name
Let $\Lambda$ be a finitely aligned category of paths.  Let $\alpha$, $\beta$, $\alpha'$, $\beta' \in \Lambda$ and $x$, $x' \in X$ define elements of $G$ as in the remarks before Lemma \ref{l.commontail}.  Then $(\alpha\sigma^\beta, \beta x) \sim (\alpha'\sigma^{\beta'}, \beta' x')$ if and only if there are $\delta$, $\delta' \in \Lambda$ and $z \in X$ such that $x = \delta z$, $x' = \delta' z$, $\alpha\delta = \alpha'\delta'$, and $\beta\delta = \beta'\delta'$.
\end{Lemma}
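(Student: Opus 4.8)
The statement is a biconditional, and I would establish each implication separately, treating the four conditions $x = \delta z$, $x' = \delta' z$, $\alpha\delta = \alpha'\delta'$, $\beta\delta = \beta'\delta'$ as the target data. Throughout I will use the fact that concatenation and cancellation act on fixed ultrafilters exactly as on paths, i.e. $\alpha\CU_\mu = \CU_{\alpha\mu}$ and $\sigma^\beta(\CU_{\beta\mu}) = \CU_\mu$, which is immediate from Theorem \ref{t.concatenation} and Definition \ref{d.concatenation}, together with the functoriality $\Phi_{\zeta_1\zeta_2} = \Phi_{\zeta_1}\circ\Phi_{\zeta_2}$.

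For the implication ($\Leftarrow$), suppose $\delta$, $\delta'$, $z$ with the four properties are given. First I would check that the base points agree: using functoriality of concatenation on the spectra and $\beta\delta = \beta'\delta'$, one has $\beta x = \beta\delta z = \beta'\delta' z = \beta' x'$. Next I would exhibit a neighborhood on which the two homeomorphisms coincide, taking $U = \widehat{\beta\delta\Lambda}$, which contains $\beta x = \beta\delta z$ since $\beta\delta\Lambda \in \CU_{\beta\delta z}$ by Theorem \ref{t.concatenation}. Any point of $U$ has the form $\beta\delta u = \beta'\delta' u$ for some $u \in X_{s(\delta)}$; applying $\Phi_{\alpha\sigma^\beta}$ gives $\alpha\delta u$, while applying $\Phi_{\alpha'\sigma^{\beta'}}$ gives $\alpha'\delta' u$, and these coincide because $\alpha\delta = \alpha'\delta'$. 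Hence the two maps agree on $U$, so $(\alpha\sigma^\beta,\beta x) \sim (\alpha'\sigma^{\beta'},\beta' x')$.

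For the implication ($\Rightarrow$), the equivalence gives $\beta x = \beta' x'$ together with a neighborhood $U$ of this point on which $\Phi_{\alpha\sigma^\beta}$ and $\Phi_{\alpha'\sigma^{\beta'}}$ agree. Applying Lemma \ref{l.commontail} to the equality $\beta x = \beta' x'$ (with $\beta$, $\beta'$ in the roles of $\alpha$, $\alpha'$) produces $\delta$, $\delta'$, $z$ with $x = \delta z$, $x' = \delta' z$, and $\beta\delta = \beta'\delta'$ --- three of the four required conclusions. To extract the last one, $\alpha\delta = \alpha'\delta'$, I would shrink $U$ to a basic neighborhood $\widehat{E} \subseteq U$ with $\emptyset \neq E \subseteq \beta\delta\Lambda$ (replacing $E$ by $E \cap \beta\delta\Lambda$, which still lies in $\CU_{\beta x}$), choose a path $\lambda \in E$, and write $\lambda = \beta\delta\eta = \beta'\delta'\eta$. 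Since $\lambda \in E$ we have $\CU_\lambda \in \widehat{E} \subseteq U$, so the two maps agree at $\CU_\lambda$. Computing on this fixed ultrafilter, $\Phi_{\alpha\sigma^\beta}(\CU_{\beta\delta\eta}) = \CU_{\alpha\delta\eta}$ and $\Phi_{\alpha'\sigma^{\beta'}}(\CU_{\beta'\delta'\eta}) = \CU_{\alpha'\delta'\eta}$, whence $\CU_{\alpha\delta\eta} = \CU_{\alpha'\delta'\eta}$.

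The crux is converting this equality of ultrafilters back into an equality of paths, which is the step I expect to be the main obstacle, since topological agreement a priori yields only equality of points in $X$. Here I would use that a fixed ultrafilter determines its underlying path: if $\CU_\mu = \CU_\nu$ then $\mu\Lambda$ and $\nu\Lambda$ belong to both, so $\nu \in \mu\Lambda$ and $\mu \in \nu\Lambda$, and antisymmetry of the partial order of Definition \ref{d.extensions} forces $\mu = \nu$. This gives $\alpha\delta\eta = \alpha'\delta'\eta$, and then right-cancellation (Definition \ref{d.categoryofpaths}\eqref{d.categoryofpathsb}) yields $\alpha\delta = \alpha'\delta'$, completing the four conditions. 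It is worth noting that this is exactly one of the occasions, anticipated in Remark \ref{r.rightcancellation}, where the right-cancellation hypothesis is genuinely needed.
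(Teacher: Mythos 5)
Your proof is correct, and the backward direction coincides with the paper's; but your forward direction takes a genuinely different route. The paper also begins by applying Lemma \ref{l.commontail} to $\beta x = \beta' x'$ (obtaining $\gamma$, $\gamma'$, $y$), but it then applies Lemma \ref{l.commontail} a \emph{second} time, against a basic set $E = \eta\Lambda \setminus \bigcup_i \theta_i\Lambda$ on which the zigzag maps agree, to produce $\eps$, $\eps'$, $z$ with $\beta\gamma\eps = \eta\eps' \in E$; evaluating both maps at the fixed ultrafilter at $\eta\eps'$ then yields $\alpha\gamma\eps = \alpha'\gamma'\eps$ directly, and the witnesses are taken to be the \emph{enlarged} paths $\delta = \gamma\eps$, $\delta' = \gamma'\eps$. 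You instead keep the original $\delta$, $\delta'$ from Lemma \ref{l.commontail}, evaluate at a fixed ultrafilter $\CU_{\beta\delta\eta} \in \widehat{E}$, and strip off the auxiliary path $\eta$ by right-cancellation to get $\alpha\delta = \alpha'\delta'$. Both arguments exploit the same mechanism (fixed ultrafilters inside the neighborhood of agreement detect equality of paths), and yours is shorter; what the paper's version buys is that it never invokes right-cancellation, by absorbing $\eta$ into the witnesses rather than cancelling it. This is deliberate: Remark \ref{r.rightgroupoid} records that right-cancellation enters the finitely aligned theory \emph{only} in the last sentence of the proof of Proposition \ref{p.fa.topology} (Hausdorffness), so that the groupoid and its \cstar-algebra still exist without that axiom. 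Consequently your closing remark --- that this lemma is one of the places where right-cancellation is genuinely needed --- is mistaken: your proof needs it, but the lemma does not, and the places where it is genuinely needed are those cited in Remark \ref{r.rightcancellation} (Hausdorffness, section \ref{s.aperiodicity}, and Theorem \ref{t.amalgamationdegree}), not here.
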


\begin{proof}
Let $(\alpha\sigma^\beta, \beta x) \sim (\alpha'\sigma^{\beta'}, \beta' x')$.  Then $\beta x = \beta' x'$ and $\alpha \sigma^\beta = \alpha' \sigma^{\beta'}$ near $\beta x$.  By Lemma \ref{l.commontail}, there are $\gamma$, $\gamma'$, and $y$ such that $x = \gamma y$, $x' = \gamma' y$, and $\beta\gamma = \beta'\gamma'$.  Let $E \in \CA_{r(\beta)}$ be such that $\beta x \in \widehat{E}$ and $\alpha \sigma^\beta = \alpha' \sigma^{\beta'}$ in $\widehat{E}$.  We may assume that $E \in \CE_{r(\beta)}$.  Write $E = \eta\Lambda \setminus \bigcup_{i=1}^n \theta_i\Lambda$.  Since $\beta\gamma y = \beta x \in \widehat{E} \subseteq \eta X_{s(\eta)}$, we can write $\beta\gamma y = \eta u$ for some $u \in X_{s(\eta)}$.  We again apply Lemma \ref{l.commontail} to obtain $\eps$, $\eps'$, and $z$ such that $y = \eps z$, $u = \eps' z$, and $\beta\gamma\eps = \eta\eps'$.  Then $\eta\eps' z = \eta u \in  \widehat{E}$.  It follows that $\eta\eps' \in E$:  for if not, there would be an $i$ such that $\eta\eps' \in \theta_i\Lambda$, and hence that $\eta\eps' z \in \widehat{\theta_i\Lambda} \subseteq \widehat{E}^c$, a contradiction.  Let $\CU_w$ be the fixed ultrafilter in $\CA_{r(\beta)}$ at $\eta\eps'$.  Then $E \in \CU_w$, so $w \in \widehat{E}$.  Hence $\alpha\sigma^\beta w = \alpha'\sigma^{\beta'} w$, and hence 
\[
\alpha\gamma\eps
= \alpha\sigma^\beta \beta\gamma\eps
= \alpha\sigma^\beta \eta\eps' 
= \alpha'\sigma^{\beta'}\eta\eps'
= \alpha'\sigma^{\beta'}\beta'\gamma'\eps
= \alpha'\gamma'\eps.
\]
We still have $\beta\gamma\eps = \beta'\gamma'\eps$, and moreover $x = \gamma y = \gamma\eps z$ and $x' = \gamma' y = \gamma' \eps z$.  Thus we may take $\delta = \gamma\eps$ and $\delta' = \gamma'\eps$.

Conversely, suppose that $\delta$, $\delta'$ and $z$ are as in the statement.  We must show that $(\alpha\sigma^\beta, \beta x) \sim (\alpha'\sigma^{\beta'}, \beta' x')$.  First, we have
\[
\beta x = \beta \delta z = \beta'\delta' z = \beta' x'.
\]
Second, let $y \in X_{r(z)}$.  Then
\[
\alpha\sigma^\beta \beta\delta y = \alpha\delta y = \alpha'\delta' y = \alpha' \sigma^{\beta'} \beta'\delta' y = \alpha' \sigma^{\beta'} \beta\delta y.
\]
Hence $\alpha\sigma^\beta = \alpha'\sigma^{\beta'}$ on $\beta\delta X_{r(z)}$, a neighborhood of $\beta\delta z = \beta x$.
\end{proof}

Using Lemma \ref{l.fa.equivalencerelation}, we may redefine the groupoid $G$ (when $\Lambda$ is finitely aligned) as follows.

\begin{Definition}
\label{d.fa.groupoid.one}
%!?!\textit{(d.fa.groupoid.one)} %%% erase label name
Let $\Lambda$ be a finitely aligned category of paths.  We define a relation $\sim$ on
\[
\Lambda * \Lambda * X = \bigcup_{v \in \Lambda^0} \Lambda v \times \Lambda v \times X_v
\]
by:  $(\alpha,\beta,x) \sim (\alpha',\beta',x')$ if there exist $z \in X$, and $\delta$, $\delta' \in \Lambda r(z)$, such that
\begin{enumerate}
\item $x = \delta z$.
\item $x' = \delta' z$.
\item $\alpha\delta = \alpha'\delta'$.
\item $\beta\delta = \beta'\delta'$.
\end{enumerate}
\end{Definition}

\begin{Lemma}
\label{l.fa.equivalencerelationtwo}
%!?!\textit{(l.fa.equivalencerelationtwo)} %%% erase label name
The relation $\sim$ in Definition \ref{d.fa.groupoid.one} is an equivalence relation.
\end{Lemma}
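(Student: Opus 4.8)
The plan is to verify reflexivity, symmetry, and transitivity in turn, with the first two immediate and transitivity carrying all of the content. Throughout, recall that for a triple $(\alpha,\beta,x)$ the constraints are $s(\alpha)=s(\beta)=r(x)=v$, so that the concatenations appearing in the four conditions are meaningful.

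For reflexivity I would exhibit the witnessing data $z=x$ and $\delta=\delta'=r(x)$ (the identity at $v$). Since $\delta,\delta'\in\Lambda r(z)$ and $r(x)$ acts as the identity, conditions (1)--(4) collapse to $x=x$, $\alpha=\alpha$, $\beta=\beta$, all trivially true. For symmetry, if $(\alpha,\beta,x)\sim(\alpha',\beta',x')$ is witnessed by $z,\delta,\delta'$, then the same $z$ together with the interchanged pair $\delta',\delta$ witnesses $(\alpha',\beta',x')\sim(\alpha,\beta,x)$: the four conditions are exactly the original four with their two sides swapped. No computation is required.

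The substance is transitivity. Suppose $(\alpha,\beta,x)\sim(\alpha',\beta',x')$ is witnessed by $z,\delta,\delta'$ and $(\alpha',\beta',x')\sim(\alpha'',\beta'',x'')$ is witnessed by $w,\eps',\eps''$. The key observation is that the middle point is expressed in two ways, $x'=\delta' z=\eps' w$, and this is precisely the hypothesis of Lemma~\ref{l.commontail}. Applying that lemma produces $\rho\in s(\delta')\Lambda$, $\rho'\in s(\eps')\Lambda$, and a common tail $t\in X$ with $z=\rho t$, $w=\rho' t$, and $\delta'\rho=\eps'\rho'$. This is the mechanism that re-expresses both witnesses over a single base point $t$, and it is the one genuinely nontrivial move in the whole argument (finite alignment enters only here, through Lemma~\ref{l.commontail}).

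I would then propose the witnessing data $t$, $\delta\rho$, $\eps''\rho'$ for $(\alpha,\beta,x)\sim(\alpha'',\beta'',x'')$; one checks $\delta\rho,\eps''\rho'\in\Lambda r(t)$ automatically from $s(\rho)=s(\rho')=r(t)$. Conditions (1) and (2) follow by substituting $z=\rho t$ into $x=\delta z$ and $w=\rho' t$ into $x''=\eps'' w$. Condition (3) is the chain
\[
\alpha(\delta\rho) = (\alpha\delta)\rho = (\alpha'\delta')\rho = \alpha'(\delta'\rho) = \alpha'(\eps'\rho') = (\alpha'\eps')\rho' = (\alpha''\eps'')\rho' = \alpha''(\eps''\rho'),
\]
using associativity of composition together with the given relations $\alpha\delta=\alpha'\delta'$, $\delta'\rho=\eps'\rho'$, and $\alpha'\eps'=\alpha''\eps''$; condition (4) is the identical computation with $\beta$, $\beta'$, $\beta''$ in place of $\alpha$, $\alpha'$, $\alpha''$. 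Thus the only obstacle is transitivity, and once Lemma~\ref{l.commontail} supplies the common tail, the remainder is a routine string of substitutions.
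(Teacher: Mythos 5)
Your proof is correct, but it takes a different route from the paper's. The paper's proof is a two-line transfer argument: it observes that the map $(\alpha\sigma^\beta,\beta x)\in\CZ * X\mapsto(\alpha,\beta,x)\in\Lambda *\Lambda * X$ is a bijection, and that by Lemma \ref{l.fa.equivalencerelation} this bijection carries the germ relation on $\CZ * X$ (which is trivially an equivalence relation, being defined by agreement of the homeomorphisms $\Phi_\zeta$ on neighborhoods) onto the relation of Definition \ref{d.fa.groupoid.one}; the conclusion is then immediate, with all the real work having been done in Lemma \ref{l.fa.equivalencerelation}. You instead verify the axioms directly at the level of the algebraic relation, using Lemma \ref{l.commontail} to splice the two witnesses $(z,\delta,\delta')$ and $(w,\eps',\eps'')$ over a common tail $t$, and your computations (including the composability checks $s(\delta)=r(\rho)$, $s(\eps'')=r(\rho')$, and the chains for conditions (3) and (4)) are all valid. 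What your approach buys is independence from the groupoid-of-germs machinery: you never invoke Lemma \ref{l.fa.equivalencerelation}, so the relation of Definition \ref{d.fa.groupoid.one} is seen to be an equivalence relation purely combinatorially, with finite alignment entering only through Lemma \ref{l.commontail}, exactly as you say. What the paper's approach buys is economy, since it reuses a lemma it needs anyway to identify the two models of $G(\Lambda)$. It is worth noting that your splicing argument is not foreign to the paper: it is essentially the transitivity argument the author gives later for Lemma \ref{l.groupbdyequivalence}, where no germ picture is available and a direct proof via Lemma \ref{l.commontail} is the natural (indeed the only stated) option.
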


\begin{proof}
Notice that the map $(\alpha \sigma^\beta, \beta x) \in \CZ * X \mapsto (\alpha, \beta, x) \in \Lambda * \Lambda * X$ is bijective.  By Lemma \ref{l.fa.equivalencerelation}, this map carries the equivalence relation on $\CZ * X$ to the relation of Definition \ref{d.fa.groupoid.one}.
\end{proof}

\begin{Remark}
\label{r.fa.groupoid.two}
%!?!\textit{(r.fa.groupoid.two)} %%% erase label name
The \textit{groupoid} of $\Lambda$ is the set 
\[
G \equiv G(\Lambda) = \Bigl( \bigcup_{v \in \Lambda^0} \Lambda v \times \Lambda v \times X_v \Bigr) \Bigm/ \sim.
\]
The set of composable pairs is
\[
G^2 = \bigl\{ \bigl([\alpha,\beta,x],\;
[\gamma,\delta,y]\bigr) : \beta x = \gamma y \bigr\},
\]
and inversion is given by $[\alpha,\beta,x]^{-1} = [\beta,\alpha,x]$ (where we use square brackets to denote equivalence classes).  Multiplication $G^2 \to G$ is given as follows.  Let $\bigl([\alpha,\beta,x], \; [\gamma,\delta,y]\bigr) \in G^2$.  Since $\beta x = \gamma y$, Lemma \ref{l.commontail} provides $z$, $\xi$, and $\eta$ such that $x = \xi z$, $y = \eta z$, and $\beta\xi = \gamma\eta$.  Then
\[
[\alpha,\beta,x] \, [\gamma,\delta,y]\ =\ [\alpha\xi,\delta\eta,z].
\]

The topology of $G$ is given as follows.  Let $v \in \Lambda^0$.  For $E \in \CA_v$ and $\alpha$, $\beta \in \Lambda v$, let
\[
[\alpha,\beta,E]
= \bigl\{ [\alpha,\beta,x] : x \in \widehat{E} \bigr\}.
\]
Thus $[\alpha\sigma^\beta, \beta E]$ corresponds to $[\alpha,\beta,E]$.  Then $\CB = \bigl\{ [\alpha,\beta,E] : s(\alpha) = s(\beta),\ E \in \CA_{s(\alpha)} \bigr\}$ is the base for the topology of $G$.
\end{Remark}

\begin{Proposition}
\label{p.fa.topology}
%!?!\textit{(p.fa.topology)} %%% erase label name
Let $\Lambda$ be a finitely aligned category of paths.  Then the topology of $G$ is Hausdorff.
\end{Proposition}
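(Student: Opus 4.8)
The plan is to use the standard criterion for \'etale groupoids built from a base of compact-open bisections: since the topology is generated by the sets $[\alpha,\beta,E]$ (Remark \ref{r.fa.groupoid.two}), and each such set is compact-open and Hausdorff by Proposition \ref{p.topology}, it suffices to show that whenever two distinct points of $G$ lie in a common basic set, they can be separated, and more to the point, that the intersection of two basic sets is again a union of basic sets \emph{in a way that lets non-equal points be separated}. Concretely, I would take two distinct elements $g = [\alpha,\beta,x]$ and $g' = [\alpha',\beta',x']$ of $G$ and produce disjoint basic neighborhoods. The contrast with the non-Hausdorff Example \ref{(e.nonhausdorff.a)} is instructive: there the failure came from an ultrafilter $\CU_{u,\infty}$ that was a limit of fixed ultrafilters on which two zigzag maps disagreed pointwise but agreed in the limit; finite alignment is precisely what rules this out, because by Lemma \ref{l.finitelyalignedzigzag} every zigzag map is a \emph{finite} union of maps $\gamma\sigma^\delta$, so agreement of two such maps is controlled by finitely much data and hence is an open \emph{and closed} condition near any point.

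First I would reduce to the case $r(g) = r(g')$ and $s(g) = s(g')$ (the source and range maps are continuous, so if these differ the points are trivially separated by preimages of the discrete sets $\Lambda^0$). Next, using Lemma \ref{l.commontail} applied to the common value $\beta x$ versus $\beta' x'$, or more directly the explicit description of $\sim$ in Definition \ref{d.fa.groupoid.one}, I would arrange both representatives over a common tail: find $z$, and $\delta,\delta'$ with $x = \delta z$, $x' = \delta' z$ — provided $\beta x$ and $\beta' x'$ share a point, which is the only nontrivial subcase (if $\beta\Lambda \perp \beta'\Lambda$ near the relevant ultrafilters they are separated at once). The heart of the matter is then this: the two basic bisections $[\alpha\sigma^\beta, E]$ and $[\alpha'\sigma^{\beta'}, E']$ intersect precisely where $\Phi_{\alpha\sigma^\beta} = \Phi_{\alpha'\sigma^{\beta'}}$, and I must show that the set of $x$ where these two finitely-generated zigzag maps agree is itself of the form $\widehat{F}$ for some $F \in \CA$, i.e. \emph{clopen} in the compact Hausdorff space $\widehat{E} \cap \widehat{E'}$. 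Granting this, the complement of the agreement set is also clopen, and choosing $F$ to lie inside it produces the required disjoint neighborhoods $[\alpha,\beta,F]$ and $[\alpha',\beta',F]$ of $g$ and $g'$.

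I expect the main obstacle to be exactly establishing that the agreement locus is clopen (equivalently, a zigzag set). In the general non-finitely-aligned setting this fails, as Example \ref{(e.nonhausdorff.a)} shows: the agreement condition can be satisfied only on a cofinite, non-basic set. Under finite alignment, however, I would argue as follows. By Lemma \ref{l.fa.equivalencerelation}, $(\alpha,\beta,x) \sim (\alpha',\beta',x')$ holds iff there exist $\delta,\delta',z$ with $x = \delta z$, $x' = \delta' z$, $\alpha\delta = \alpha'\delta'$, and $\beta\delta = \beta'\delta'$. Finite alignment forces the relevant common extensions to come from the finite sets $\bigvee\{\beta,\beta'\}$ and $\bigvee\{\alpha,\alpha'\}$ (Lemma \ref{l.finitelyalignedpair}), so the pairs $(\delta,\delta')$ that can witness equivalence range over a finite set determined by the minimal common extensions of $\alpha$ with $\alpha'$ and of $\beta$ with $\beta'$ simultaneously. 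Thus the set of $x$ for which $g = g'$ is a finite union of tail sets $\eta\Lambda$ (intersected with the relevant domain), hence lies in $\CA$ and is clopen. With the agreement locus clopen, I separate $g$ from $g'$ on its complement as above, completing the proof. I would write the argument to reuse the machinery of Lemma \ref{l.fa.equivalencerelation} directly rather than re-deriving the equivalence relation, so that the only genuinely new content is the finiteness-and-clopenness observation.
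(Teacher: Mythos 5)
Your overall skeleton (separate points with distinct range or source units, pass to a common tail, then separate the remaining case by basic bisections) matches the paper's, but the step you yourself call ``the heart of the matter'' contains a genuine gap. You claim that finite alignment alone forces the witnessing pairs $(\delta,\delta')$ --- those with $\alpha\delta=\alpha'\delta'$ and $\beta\delta=\beta'\delta'$ --- to ``range over a finite set,'' so that the agreement locus is a finite union of tail sets and hence clopen. As stated this is false: if $(\delta,\delta')$ is a witness then so is $(\delta\tau,\delta'\tau)$ for every $\tau\in s(\delta)\Lambda$, so the set of witnessing pairs is never finite. The charitable reading --- every witness extends a \emph{minimal} witness lying over $\alpha\vee\alpha'$, so the union of the corresponding tail sets is finite --- is correct, but proving it requires right-cancellation: writing $\alpha\delta=\eps\mu$ with $\eps=\alpha\delta_1=\alpha'\delta_1'\in\alpha\vee\alpha'$, left-cancellation gives $\delta=\delta_1\mu$ and $\delta'=\delta_1'\mu$, and one must then conclude from $\beta\delta_1\mu=\beta'\delta_1'\mu$ that $\beta\delta_1=\beta'\delta_1'$, i.e.\ that $(\delta_1,\delta_1')$ is again a witness --- and that is exactly Definition \ref{d.categoryofpaths}\eqref{d.categoryofpathsb}. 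Your sketch never invokes right-cancellation, and no argument from finite alignment alone can close this hole: Remark \ref{r.rightgroupoid} notes that without right-cancellation the groupoid of a finitely aligned category of paths need not be Hausdorff, so the property you attribute to finite alignment (``agreement is controlled by finitely much data'') genuinely fails in that setting. Concretely, after your common-tail reduction one has $\alpha\delta=\alpha'\delta'$, and by left-cancellation the agreement locus is $\bigcup\bigl\{\eps X_{s(\eps)} : \beta\delta\eps=\beta'\delta'\eps\bigr\}$; finite alignment says nothing about this equalizer set, while right-cancellation says it is empty.

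Once right-cancellation is inserted, your argument closes, but it then collapses to something simpler, which is the paper's proof: with $\alpha\delta=\alpha'\delta'$ and $\beta\delta\neq\beta'\delta'$, any equivalence $(\alpha\delta,\beta\delta,y)\sim(\alpha'\delta',\beta'\delta',y')$ forces, by left- and then right-cancellation, $\beta\delta=\beta'\delta'$, a contradiction; so the full basic sets $[\alpha\delta,\beta\delta,X_{r(z)}]$ and $[\alpha'\delta',\beta'\delta',X_{r(z)}]$ are already disjoint, and there is no need to manufacture a clopen agreement locus. A smaller point: your reduction to $r(g)=r(g')$ and $s(g)=s(g')$ should appeal to Hausdorffness of $X$ together with continuity of $r$ and $s$ (equivalently, the continuity-of-concatenation argument from Theorem \ref{t.concatenation} that the paper uses), not to ``preimages of the discrete sets $\Lambda^0$'': distinct units $\alpha x\neq\alpha'x'$ typically lie in the same $X_v$, where the vertex decomposition separates nothing.
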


\begin{proof}
Let $(\alpha,\beta,x) \not\sim (\alpha',\beta',x')$.  First suppose that $\alpha x \not= \alpha' x'$.  By continuity of concatenation (Theorem \ref{t.concatenation}), there are $E \in \CA_{r(x)}$ and $E' \in \CA_{r(x')}$ such that $\alpha\widehat{E} \cap \alpha'\widehat{E'} = \emptyset$.  Then $[\alpha,\beta,x] \in [\alpha,\beta,E]$, $[\alpha',\beta',x'] \in [\alpha',\beta',E']$, and $[\alpha,\beta,E] \cap [\alpha',\beta',E'] = \emptyset$.  A similar argument treats the case where $\beta x \not= \beta' x'$.  Suppose now that $\alpha x = \alpha' x'$ and $\beta x = \beta' x'$.  Since $\alpha x = \alpha' x'$, Lemma \ref{l.commontail} provides $z$, $\delta$, $\delta'$ such that $x = \delta z$, $x'= \delta' z$, and $\alpha\delta = \alpha'\delta'$.  We have
\[
(\alpha\delta,\beta\delta,z)
\sim (\alpha,\beta,x)
\not\sim (\alpha',\beta',x')
\sim (\alpha'\delta',\beta'\delta',z).
\]
Thus $\beta\delta \not= \beta'\delta'$.    We claim that 
\[
[\alpha\delta,\beta\delta,X_{r(z}]
\cap
[\alpha'\delta',\beta'\delta',X_{r(z}]
= \emptyset.
\]
For let $y$, $y' \in X_{r(z)}$ be such that $(\alpha\delta,\beta\delta,y) \sim (\alpha'\delta',\beta'\delta',y')$.  Then there are $w$, $\eps$, $\eps'$ such that $y = \eps w$, $y' = \eps' w$, $\alpha\delta\eps = \alpha'\delta'\eps'$, and $\beta\delta\eps = \beta'\delta'\eps'$.  Since $\alpha\delta = \alpha'\delta'$, we have $\eps = \eps'$, by left-cancellation.  But then right-cancellation implies that $\beta\delta = \beta'\delta'$, a contradiction.
\end{proof}

\begin{Remark}
\label{r.rightgroupoid}
%!?!\textit{(r.rightgroupoid)} %%% erase label name
We note that it is only in the last sentence of the above proof that right-cancellation has been used (cf. Remark \ref{r.rightcancellation}).  If right-cancellation is not assumed, the groupoid $G(\Lambda)$ need not be Hausdorff in the finitely aligned case.  However it will still be ample and \'etale, and will define a \cstar-algebra as in the general case.
\end{Remark}

\section{Boolean ring homomorphisms}
\label{s.boolean}
%!?!\textit{(s.boolean)} %%% erase label name

We now wish to characterize Boolean ring homomorphisms from $\CA_v$.  Our goal is the following theorem.

\begin{Theorem}
\label{t.ringhomomorphisms.one}
%!?!\textit{(t.ringhomomorphisms.one}) %%% erase label name
Let $(\Lambda_0,\Lambda)$ be a relative category of paths, $v \in \Lambda_0^0$, and $\CR$ a Boolean ring.  A map $\mu : \CD^{(0)}_v \to \CR$ extends to a Boolean ring homomorphism $\CA_v \to \CR$ if and only if the following conditions hold:
\begin{enumerate}
\item $\mu(E\cap F) = \mu(E) \cap \mu(F)$, for $E$, $F \in \CD^{(0)}_v$.
\label{t.ringhomomorphisms.one.a}
\item $\mu(E) = \bigcup_{i=1}^n \mu(F_i)$ for $E$, $F_1$, $\ldots$, $F_n \in \CD^{(0)}_v$ with $E = \bigcup_{i=1}^n F_i$.
\label{t.ringhomomorphisms.one.b}
\end{enumerate}
In this case, the extension to $\CA_v$ is unique.
\end{Theorem}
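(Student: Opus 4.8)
The plan is to prove the forward direction (necessity) quickly, and then focus the bulk of the effort on sufficiency, which amounts to showing that a well-defined extension exists. For necessity: if $\mu$ extends to a ring homomorphism $\widetilde{\mu} : \CA_v \to \CR$, then condition \eqref{t.ringhomomorphisms.one.a} is immediate since $E \cap F \in \CD^{(0)}_v$ (by Remark \ref{r.zigzag}\eqref{r.zigzag.e}) and ring homomorphisms preserve intersection, while condition \eqref{t.ringhomomorphisms.one.b} follows because homomorphisms preserve unions. So the real content is the converse.

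\textbf{Building the extension.}
For sufficiency, I would proceed in stages, following the structure of $\CA_v$ dictated by Lemma \ref{l.setlemma} and Definition \ref{d.ringofsets}. First I would extend $\mu$ to the collection $\CD_v$ of sets of the form $E \setminus \bigcup_{i=1}^n F_i$ (with $F_i \subseteq E$, all in $\CD^{(0)}_v$) by the forced formula
\[
\widetilde{\mu}\Bigl( E \setminus \bigcup_{i=1}^n F_i \Bigr) = \mu(E) \setminus \bigcup_{i=1}^n \mu(F_i),
\]
where the right-hand side is computed in $\CR$. Then I would extend to all of $\CA_v$ --- the finite disjoint unions of sets from $\CD_v$ --- by declaring $\widetilde{\mu}$ of a disjoint union to be the (hopefully disjoint) union of the images. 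The main work is to verify that these formulas are \emph{well-defined}: a given element of $\CA_v$ has many representations as a disjoint union of $\CD_v$-sets, and each $\CD_v$-set has many representations as a difference. I would handle well-definedness by first proving a ``refinement'' lemma: any two representations of the same set in $\CA_v$ have a common refinement obtainable by intersecting the constituent $\CD^{(0)}_v$-sets, and condition \eqref{t.ringhomomorphisms.one.a} guarantees that $\mu$ behaves correctly under these intersections while \eqref{t.ringhomomorphisms.one.b} controls how unions are split. Once well-definedness is established, checking that $\widetilde{\mu}$ respects intersection, union, and set-difference is a routine (if tedious) bookkeeping exercise using the distributive laws, reduced to the level of $\CD^{(0)}_v$ where the hypotheses apply directly.

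\textbf{The main obstacle.}
The hard part is the well-definedness of $\widetilde{\mu}$ on $\CD_v$, and the compatibility of the two extension formulas on the overlap where a set lies in both $\CD_v$ and (as a one-term disjoint union) in $\CA_v$. The subtlety is that $\CR$ is an abstract Boolean ring: I cannot reason pointwise about ``elements'' of $\mu(E)$, only through the Boolean operations. So every manipulation --- for instance, showing $\mu(E) \setminus \bigcup \mu(F_i) = \mu(E') \setminus \bigcup \mu(F'_j)$ when the underlying sets agree --- must be rephrased as an identity in $\CR$ and derived purely from \eqref{t.ringhomomorphisms.one.a}--\eqref{t.ringhomomorphisms.one.b}. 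The key technical device I expect to need is that whenever $A = \bigcup_k C_k$ holds among $\CD^{(0)}_v$-sets, condition \eqref{t.ringhomomorphisms.one.b} lets me pass this union-relation through $\mu$, and whenever $C \subseteq A$ holds (equivalently $C = C \cap A$), condition \eqref{t.ringhomomorphisms.one.a} gives $\mu(C) \subseteq \mu(A)$ in $\CR$; combining these monotonicity and additivity facts should let me reduce any comparison of two representations to a statement that both sides equal the image of a common refinement. Establishing uniqueness is then immediate, since $\CA_v$ is generated as a ring by $\CD^{(0)}_v$ and a homomorphism is determined by its values on generators.
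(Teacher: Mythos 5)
Your overall architecture is the paper's: necessity and uniqueness are dispatched exactly as you say, and the extension is built in the same two stages --- first to $\CD_v$ by the forced difference formula, then to $\CA_v$ on disjoint unions --- with well-definedness reduced, via common refinements, to the level of $\CD^{(0)}_v$ where hypotheses \eqref{t.ringhomomorphisms.one.a} and \eqref{t.ringhomomorphisms.one.b} apply. The $\CD_v$-stage itself is fine on your plan: it follows from an order-preservation statement (the paper's Lemma \ref{l.ringhomomorphisms.d}), which your monotonicity-plus-additivity toolkit delivers via the identity that $X \setminus Y \subseteq Z \setminus W$ if and only if $X \subseteq Y \cup Z$ and $X \cap W \subseteq Y$.

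There is, however, one step your stated devices cannot reach, and it is the crux of the paper's proof: if $A \in \CD_v$ decomposes as a finite disjoint union $A = \bigsqcup_{k} A_k$ with $A_k \in \CD_v$, then $\mu(A) = \bigsqcup_k \mu(A_k)$ (the paper's Lemma \ref{l.ringhomomorphisms.g}). Your common-refinement device is powerless here: refining $\{A\}$ against $\{A_k\}$ just returns $\{A_k\}$, so ``both sides equal the image of a common refinement'' is exactly the statement to be proved, not a reduction of it. Moreover this is precisely what your $\CA_v$-level well-definedness reduces to after refining, since one must compare $\mu(A_i)$ with $\bigsqcup_j \mu(A_i \cap B_j)$. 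The inclusion $\mu(A) \subseteq \bigcup_k \mu(A_k)$ is not routine distributive-law bookkeeping; the paper proves it with the identity
\[
X \setminus Y \subseteq \bigcup_{k=1}^p (Z_k \setminus W_k)
\quad\Longleftrightarrow\quad
X \cap \bigcap_{k \in I} W_k \subseteq Y \cup \bigcup_{\ell \notin I} Z_\ell
\ \text{ for every } I \subseteq \{1,\ldots,p\},
\]
which converts the difference-decomposition into a family of containments among unions of intersections of $\CD^{(0)}_v$-sets, each of which can then be pushed through $\mu$ by \eqref{t.ringhomomorphisms.one.a} and \eqref{t.ringhomomorphisms.one.b}. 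Without this (or an equivalent inclusion-exclusion maneuver over which subtracted parts a point avoids), your $\widetilde{\mu}$ on $\CA_v$ is not yet well-defined. Relatedly, you locate the main difficulty at the $\CD_v$ stage; in fact that is the easy stage, and the disjoint-union lemma above is where the real work lies.
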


This is truly a set-theoretic result: it relies only on Lemma \ref{l.setlemma} and the fact that $\CD^{(0)}_v$ is closed under intersection.  Before proving the theorem, we will present several lemmas.

\begin{Lemma}
\label{l.ringhomomorphisms.a}
%!?!\textit{(l.ringhomomorphisms.a)} %%% erase label name
Let $(\Lambda_0,\Lambda)$, $v$, $\CR$ and $\mu$ be as in the statement of Theorem \ref{t.ringhomomorphisms.one}, and suppose that \eqref{t.ringhomomorphisms.one.a} and \eqref{t.ringhomomorphisms.one.b} hold.  If $E$, $F_1$, $\ldots$, $F_n \in \CD^{(0)}_v$, and $E \subseteq \bigcup_{i=1}^n F_i$, then $\mu(E) \subseteq \bigcup_{i=1}^n \mu(F_i)$.
\end{Lemma}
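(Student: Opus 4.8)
The plan is to reduce $E$ to a union of sets from $\CD^{(0)}_v$ that are individually contained in the $F_i$, so that hypotheses \eqref{t.ringhomomorphisms.one.a} and \eqref{t.ringhomomorphisms.one.b} apply directly. Since $E \subseteq \bigcup_{i=1}^n F_i$, intersecting with $E$ yields the decomposition $E = \bigcup_{i=1}^n (E \cap F_i)$. The crucial point is that $\CD^{(0)}_v$ is closed under intersection (Remark \ref{r.zigzag}\eqref{r.zigzag.e}), so each $E \cap F_i$ is again a member of $\CD^{(0)}_v$---except for those indices where the intersection is empty, which must be discarded, since by definition the sets in $\CD^{(0)}_v$ are nonempty. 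Setting $J = \{ i : E \cap F_i \neq \emptyset \}$, we obtain $E = \bigcup_{i \in J} (E \cap F_i)$, a decomposition of $E$ into finitely many sets from $\CD^{(0)}_v$.

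Next I would apply hypothesis \eqref{t.ringhomomorphisms.one.b} to this decomposition to get $\mu(E) = \bigcup_{i \in J} \mu(E \cap F_i)$. Then hypothesis \eqref{t.ringhomomorphisms.one.a} gives $\mu(E \cap F_i) = \mu(E) \cap \mu(F_i)$ for each $i \in J$, which is in particular contained in $\mu(F_i)$. Combining these, $\mu(E) = \bigcup_{i \in J} \bigl( \mu(E) \cap \mu(F_i) \bigr) \subseteq \bigcup_{i \in J} \mu(F_i) \subseteq \bigcup_{i=1}^n \mu(F_i)$, which is the desired conclusion.

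I do not anticipate a genuine obstacle here; the only point requiring care is the nonemptiness convention just noted, which ensures that each intersection used in the decomposition actually lies in $\CD^{(0)}_v$ before \eqref{t.ringhomomorphisms.one.b} is invoked. Conceptually, the lemma merely upgrades the statement that $\mu$ respects intersections and exact covers to the statement that $\mu$ respects subset-covers, and the closure of $\CD^{(0)}_v$ under intersection is precisely what permits the reduction of an arbitrary cover of $E$ to an exact cover.
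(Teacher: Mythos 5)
Your proof is correct and follows essentially the same route as the paper: decompose $E = \bigcup_i (E \cap F_i)$ using closure of $\CD^{(0)}_v$ under intersection, apply \eqref{t.ringhomomorphisms.one.b} to this exact cover, and then use \eqref{t.ringhomomorphisms.one.a} to bound each $\mu(E \cap F_i)$ by $\mu(F_i)$ (the paper packages this last step as a monotonicity observation, which is the same use of \eqref{t.ringhomomorphisms.one.a}). Your explicit handling of empty intersections via the index set $J$ is a small point of care that the paper's proof leaves implicit.
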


\begin{proof}
We first note that $\mu$ is monotone on $\CD^{(0)}_v$.  Let $E$, $F \in \CD^{(0)}_v$ with $E \subseteq F$.  Then $E = E \cap F$, so by \eqref{t.ringhomomorphisms.one.a} we have $\mu(E) = \mu(E) \cap \mu(F) \subseteq \mu(F)$.  Now, from $E \subseteq \bigcup_{i=1}^n F_i$ we have $E = \bigcup_{i=1}^n (E \cap F_i)$, and $E \cap F_i \in \CD^{(0)}_v$.  By \eqref{t.ringhomomorphisms.one.b}, and monotonicity,
\[
\mu(E) = \bigcup_{i=1}^n \mu(E \cap F_i)
\subseteq \bigcup_{i=1}^n \mu(F_i). \qedhere
\]
\end{proof}

\begin{Corollary}
\label{c.ringhomomorphisms.b}
%!?!\textit{(c.ringhomomorphsms.b)} %%% erase label name
If $E_1$, $\ldots$, $E_m$, $F_1$, $\ldots$, $F_n \in \CD^{(0)}_v$, and $\bigcup_{i=1}^m E_i \subseteq \bigcup_{j=1}^n F_j$, then $\bigcup_{i=1}^m \mu(E_i) \subseteq \bigcup_{j=1}^n \mu(F_j)$.
\end{Corollary}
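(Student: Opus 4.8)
The plan is to reduce immediately to Lemma~\ref{l.ringhomomorphisms.a} by treating each $E_i$ one at a time. First I would observe that for each fixed index $i \in \{1, \ldots, m\}$ we have
\[
E_i \subseteq \bigcup_{k=1}^m E_k \subseteq \bigcup_{j=1}^n F_j,
\]
since $E_i$ is simply one of the sets comprising the union on the left-hand side of the hypothesis. Thus $E_i$ is a single element of $\CD^{(0)}_v$ contained in a finite union of elements of $\CD^{(0)}_v$, which is precisely the setting of Lemma~\ref{l.ringhomomorphisms.a}. Applying that lemma gives $\mu(E_i) \subseteq \bigcup_{j=1}^n \mu(F_j)$.

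The conclusion then follows by taking the union over $i$: since the containment $\mu(E_i) \subseteq \bigcup_{j=1}^n \mu(F_j)$ holds for every $i$, we obtain $\bigcup_{i=1}^m \mu(E_i) \subseteq \bigcup_{j=1}^n \mu(F_j)$, as required.

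I do not expect any serious obstacle here; the corollary is a direct consequence of the lemma. The only substantive point is the elementary observation that each $E_i$ lies in the left-hand union, which is what permits the one-summand-at-a-time application of Lemma~\ref{l.ringhomomorphisms.a}. The real content—monotonicity and the respecting of unions encoded in conditions \eqref{t.ringhomomorphisms.one.a} and \eqref{t.ringhomomorphisms.one.b}—has already been extracted in establishing that lemma, so nothing beyond this bookkeeping is needed.
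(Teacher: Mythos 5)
Your proof is correct and is exactly the paper's argument: the paper likewise notes that Lemma~\ref{l.ringhomomorphisms.a} gives $\mu(E_i) \subseteq \bigcup_{j=1}^n \mu(F_j)$ for each $i$ (since each $E_i$ is contained in the right-hand union), and then takes the union over $i$. Nothing is missing.
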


\begin{proof}
By Lemma \ref{l.ringhomomorphisms.a}, $\mu(E_i) \subseteq \bigcup_{j=1}^n \mu(F_j)$ for all $i$.
\end{proof}

\begin{Remark}
\label{r.ringhomomorphisms.c}
%!?!\textit{(r.ringhomomorphisms.c)} %%% erase label name
It follows from Corollary \ref{c.ringhomomorphisms.b} that the analgous result with equalities in place of containments also holds, and hence that $\mu$ can be extended to finite unions of sets in $\CD^{(0)}_v$ by setting $\mu(\cup E_i) = \cup_i \mu(E_i)$.
\end{Remark}

\begin{Lemma}
\label{l.ringhomomorphisms.d}
%!?!\textit{(l.ringhomorphisms.d)} %%% erase label name
We retain the hypotheses of Lemma \ref{l.ringhomomorphisms.a}.  Let $E_i$, $F_j \in \CD^{(0)}_v$ for $0 \le i \le m$ and $0 \le j \le n$, and suppose that $E_0 \setminus \bigcup_{i=1}^m E_i \subseteq F_0 \setminus \bigcup_{j=1}^n F_j$.  Then $\mu(E_0) \setminus \bigcup_{i=1}^m \mu(E_i) \subseteq \mu(F_0) \setminus \bigcup_{j=1}^n \mu(F_j)$.
\end{Lemma}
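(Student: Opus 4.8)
The plan is to reduce the asserted containment of set differences to statements involving only unions and intersections, since those are the only operations $\mu$ is known to respect (through Lemma \ref{l.ringhomomorphisms.a} and Corollary \ref{c.ringhomomorphisms.b}). The hard part is precisely that $\CD^{(0)}_v$ is not closed under differences and $\mu$ carries no a priori information about them, so one cannot simply apply $\mu$ to both sides of the hypothesis. The way around this is the elementary observation that the hypothesis
\[
E_0 \setminus \bigcup_{i=1}^m E_i \subseteq F_0 \setminus \bigcup_{j=1}^n F_j
\]
is equivalent to the conjunction of two purely ``covering'' relations: (i) $E_0 \subseteq F_0 \cup \bigcup_{i=1}^m E_i$, and (ii) $E_0 \cap F_j \subseteq \bigcup_{i=1}^m E_i$ for each $j$. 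The crucial point is that each $E_0 \cap F_j$ again lies in $\CD^{(0)}_v$, because $\CD^{(0)}_v$ is closed under intersection (Remark \ref{r.zigzag}\eqref{r.zigzag.e}); thus (i) and (ii) are genuine containments among members of $\CD^{(0)}_v$ to which the earlier results apply.

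First I would apply Corollary \ref{c.ringhomomorphisms.b} to (i) to obtain
\[
\mu(E_0) \subseteq \mu(F_0) \cup \bigcup_{i=1}^m \mu(E_i).
\]
Next, for each $j$, I would use condition \eqref{t.ringhomomorphisms.one.a} to write $\mu(E_0) \cap \mu(F_j) = \mu(E_0 \cap F_j)$, and then apply Lemma \ref{l.ringhomomorphisms.a} to the covering (ii) to conclude
\[
\mu(E_0) \cap \mu(F_j) \subseteq \bigcup_{i=1}^m \mu(E_i), \qquad 1 \le j \le n.
\]

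Finally I would reverse the set-theoretic reduction, now inside the target Boolean ring $\CR$. The two displayed inclusions above are, respectively, equivalent to $\mu(E_0) \setminus \bigcup_{i} \mu(E_i) \subseteq \mu(F_0)$ and to $\bigl(\mu(E_0) \setminus \bigcup_{i} \mu(E_i)\bigr) \cap \mu(F_j) = \emptyset$ for each $j$. Combining these yields exactly
\[
\mu(E_0) \setminus \bigcup_{i=1}^m \mu(E_i) \subseteq \mu(F_0) \setminus \bigcup_{j=1}^n \mu(F_j),
\]
which is the desired conclusion. This last step uses only formal identities valid in any generalized Boolean algebra, which one may justify by working inside the Boolean algebra of subsets of the fixed element $\mu(E_0) \cup \mu(F_0) \cup \bigcup_i \mu(E_i) \cup \bigcup_j \mu(F_j)$ of $\CR$.
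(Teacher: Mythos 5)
Your proposal is correct and follows essentially the same route as the paper: both rest on the set-theoretic equivalence $X \setminus Y \subseteq Z \setminus W \iff X \subseteq Y \cup Z$ and $X \cap W \subseteq Y$, apply Corollary \ref{c.ringhomomorphisms.b} (equivalently Lemma \ref{l.ringhomomorphisms.a}) together with condition \eqref{t.ringhomomorphisms.one.a} to the resulting covering relations among elements of $\CD^{(0)}_v$, and then reverse the equivalence inside $\CR$. The only cosmetic difference is that the paper bundles your per-$j$ conditions into the single inclusion $\bigcup_{j}(E_0 \cap F_j) \subseteq \bigcup_i E_i$ before applying $\mu$.
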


\begin{proof}
We note the set-theoretic identity:
\[
X \setminus Y \subseteq Z \setminus W \text{ if and only if } X \subseteq Y \cup Z \text{ and } X \cap W \subseteq Y.
\]
Thus
\[
E_0 \setminus \bigcup_{i=1}^m E_i \subseteq F_0 \setminus \bigcup_{j=1}^n F_j,
\]
hence
\[
E_0 \subseteq \bigcup_{i=1}^m E_i \cup F_0
\text{ and }
E_0 \cap \bigcup_{j=1}^n F_j \subseteq \bigcup_{i=1}^m E_i,
\]
hence
\[
E_0 \subseteq \bigcup_{i=1}^m E_i \cup F_0
\text{ and }
\bigcup_{j=1}^n (E_0 \cap F_j) \subseteq \bigcup_{i=1}^m E_i,
\]
hence
\[
\mu(E_0) \subseteq \bigcup_{i=1}^m \mu(E_i) \cup \mu(F_0)
\text{ and }
\bigcup_{j=1}^n \mu(E_0 \cap F_j) \subseteq \bigcup_{i=1}^m \mu(E_i),
\text{ by Corollary \ref{c.ringhomomorphisms.b},}
\]
hence
\[
\mu(E_0) \setminus \bigcup_{i=1}^m \mu(E_i) \subseteq \mu(F_0) \setminus \bigcup_{j=1}^n \mu(F_j),
\text{ by \eqref{t.ringhomomorphisms.one.a}.} \qedhere
\]
\end{proof}

\begin{Remark}
\label{r.ringhomomorphisms.e}
%!?!\textit{(r.ringhomomorphisms.e)} %%% erase label name
It follows from Lemma \ref{l.ringhomomorphisms.d} that $\mu$ can be extended to $\CD_v$ by setting $\mu(E \setminus \bigcup_i E_i) = \mu(E) \setminus \bigcup_i \mu(E_i)$.
\end{Remark}

\begin{Lemma}
\label{l.ringhomomorphisms.f}
%!?!\textit{(l.ringhomorphisms.f)} %%% erase label name
We retain the hypotheses of Lemma \ref{l.ringhomomorphisms.a}.  Let $\mu$ be extended to $\CD_v$ as in Remark \ref{r.ringhomomorphisms.e}.  Let $A$, $B \in \CD_v$ be disjoint.  Then $\mu(A)$ and $\mu(B)$ are disjoint.
\end{Lemma}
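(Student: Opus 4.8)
The plan is to reduce the disjointness of $\mu(A)$ and $\mu(B)$ to a single containment among finite unions of sets from $\CD^{(0)}_v$, and then invoke Lemma \ref{l.ringhomomorphisms.a}. Using the definition of $\CD_v$, I would write $A = E_0 \setminus \bigcup_{i=1}^m E_i$ and $B = F_0 \setminus \bigcup_{j=1}^n F_j$ with all sets in $\CD^{(0)}_v$, $E_i \subseteq E_0$, and $F_j \subseteq F_0$. By Lemma \ref{l.ringhomomorphisms.d} the extension of $\mu$ to $\CD_v$ is independent of the chosen representation, so fixing such representations is harmless, and $\mu(A) = \mu(E_0) \setminus \bigcup_i \mu(E_i)$ while $\mu(B) = \mu(F_0) \setminus \bigcup_j \mu(F_j)$.

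Next comes a routine computation of intersections of set-differences, valid both in $\CP(\Lambda)$ and in the Boolean ring $\CR$. On one side, $A \cap B = (E_0 \cap F_0) \setminus \bigl(\bigcup_i E_i \cup \bigcup_j F_j\bigr)$, so the hypothesis $A \cap B = \emptyset$ is exactly the containment $E_0 \cap F_0 \subseteq \bigcup_i E_i \cup \bigcup_j F_j$. On the other side, the identical Boolean manipulation gives $\mu(A) \cap \mu(B) = \bigl(\mu(E_0) \cap \mu(F_0)\bigr) \setminus \bigl(\bigcup_i \mu(E_i) \cup \bigcup_j \mu(F_j)\bigr)$, so it suffices to prove the containment $\mu(E_0) \cap \mu(F_0) \subseteq \bigcup_i \mu(E_i) \cup \bigcup_j \mu(F_j)$.

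To finish, I would observe that $E_0 \cap F_0 \in \CD^{(0)}_v$, since $\CD^{(0)}_v$ is closed under intersection, so condition \eqref{t.ringhomomorphisms.one.a} of Theorem \ref{t.ringhomomorphisms.one} gives $\mu(E_0) \cap \mu(F_0) = \mu(E_0 \cap F_0)$. Feeding the containment $E_0 \cap F_0 \subseteq \bigcup_i E_i \cup \bigcup_j F_j$ into Lemma \ref{l.ringhomomorphisms.a} yields $\mu(E_0 \cap F_0) \subseteq \bigcup_i \mu(E_i) \cup \bigcup_j \mu(F_j)$, which is precisely the required containment. Hence $\mu(A) \cap \mu(B) = \emptyset$.

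There is essentially no hard step: the entire content is the bookkeeping that converts disjointness into a containment of unions, after which everything is packaged in Lemma \ref{l.ringhomomorphisms.a} and the intersection-preservation hypothesis. The only points worth watching are that $E_0 \cap F_0$ be a genuine element of $\CD^{(0)}_v$ so that \eqref{t.ringhomomorphisms.one.a} applies --- this is exactly where closure under intersection is used --- and that the extension of $\mu$ to $\CD_v$ be representation-independent, which is guaranteed by Lemma \ref{l.ringhomomorphisms.d}.
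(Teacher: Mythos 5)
Your proof is correct and is essentially the paper's own argument: both fix representations $A = E_0 \setminus \bigcup_{i} E_i$, $B = F_0 \setminus \bigcup_{j} F_j$, convert the hypothesis $A \cap B = \emptyset$ into the containment $E_0 \cap F_0 \subseteq \bigcup_i E_i \cup \bigcup_j F_j$ via the same Boolean identity, and transfer it to $\CR$ by combining condition \eqref{t.ringhomomorphisms.one.a} of Theorem \ref{t.ringhomomorphisms.one} with Lemma \ref{l.ringhomomorphisms.a}. The only cosmetic difference is that you make explicit the intermediate equality $\mu(E_0) \cap \mu(F_0) = \mu(E_0 \cap F_0)$, which the paper leaves implicit in its citation of \eqref{t.ringhomomorphisms.one.a}.
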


\begin{proof}
We note the set-theoretic identity:
\[
(X \setminus Y) \cap (Z \setminus W) = \emptyset
\text{ if and only if }
X \cap Z \subseteq Y \cup W.
\]
Let $A = E_0 \setminus \bigcup_{i=1}^m E_i$ and $B = F_0 \setminus \bigcup_{j=1}^n F_j$, with $E_i$, $F_j \in \CD^{(0)}_v$.  Since $A \cap B = \emptyset$, we have
\[
E_0 \cap F_0 \subseteq \bigcup_{i=1}^m E_i \cup \bigcup_{j=1}^n F_j,
\]
hence
\[
\mu(E_0) \cap \mu(F_0) \subseteq \bigcup_{i=1}^m \mu(E_i) \cup \bigcup_{j=1}^n \mu(F_j),
\]
by \eqref{t.ringhomomorphisms.one.a} and Lemma \ref{l.ringhomomorphisms.a}; hence
\[
\left[ \mu(E_0) \setminus \bigcup_{i=1}^m \mu(E_i) \right] \cap \left[ \mu(F_0) \setminus \bigcup_{j=1}^n \mu(F_j) \right] = \emptyset. \qedhere
\] 
\end{proof}

\begin{Lemma}
\label{l.ringhomomorphisms.g}
%!?!\textit{(l.ringhomorphisms.g)} %%% erase label name
We retain the hypotheses of Lemma \ref{l.ringhomomorphisms.a}.  Let $\mu$ be extended to $\CD_v$ as in Remark \ref{r.ringhomomorphisms.e}.  Let $A$, $A_1$, $\ldots$, $A_p \in \CD_v$ with $A = \bigsqcup_{k=1}^p A_k$.  Then $\mu(A) = \bigsqcup_{k=1}^p \mu(A_k)$.
\end{Lemma}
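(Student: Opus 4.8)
The plan is to prove the two inclusions $\bigsqcup_{k=1}^p \mu(A_k) \subseteq \mu(A)$ and $\mu(A) \subseteq \bigcup_{k=1}^p \mu(A_k)$ separately, together with the observation that the union on the right is disjoint. The disjointness of $\mu(A_1),\ldots,\mu(A_p)$ is immediate from Lemma \ref{l.ringhomomorphisms.f}, since the $A_k$ are pairwise disjoint; this justifies writing $\bigsqcup_k \mu(A_k)$. The inclusion $\bigsqcup_k \mu(A_k)\subseteq\mu(A)$ is equally easy: each $A_k\subseteq A$, so monotonicity of the extended $\mu$ on $\CD_v$ (Lemma \ref{l.ringhomomorphisms.d}) gives $\mu(A_k)\subseteq\mu(A)$. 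Thus the entire content of the lemma is the reverse inclusion $\mu(A)\subseteq\bigcup_k\mu(A_k)$.

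For this I would first isolate a transfer principle at the level of the generators. Fix representations $A = E_0\setminus\bigcup_{i=1}^m E_i$ and $A_k = F_k\setminus\bigcup_{j=1}^{n_k}F_{k,j}$ with all sets in $\CD^{(0)}_v$; since $A_k\subseteq A\subseteq E_0$ we may intersect everything with $E_0$ and assume $F_k\subseteq E_0$, $E_i\subseteq E_0$, and $F_{k,j}\subseteq F_k$. The principle I want is: whenever $X_1,\ldots,X_s,Y_1,\ldots,Y_t\in\CD^{(0)}_v$ satisfy $\bigcap_a X_a\subseteq\bigcup_b Y_b$, then $\bigcap_a\mu(X_a)\subseteq\bigcup_b\mu(Y_b)$ in $\CR$. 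This follows from \eqref{t.ringhomomorphisms.one.a}: if $\bigcap_a X_a$ is nonempty it again lies in $\CD^{(0)}_v$, so that $\bigcap_a\mu(X_a)=\mu(\bigcap_a X_a)$ by iterating \eqref{t.ringhomomorphisms.one.a}, whence Lemma \ref{l.ringhomomorphisms.a} applied to $\bigcap_a X_a\subseteq\bigcup_b Y_b$ finishes; if $\bigcap_a X_a=\emptyset$, then some partial intersection first becomes empty and Lemma \ref{l.ringhomomorphisms.f} forces $\bigcap_a\mu(X_a)=\emptyset$.

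The reverse inclusion is then obtained by a pointwise argument. Representing the Boolean ring $\CR$ as a ring of subsets of a set $W$ (its Stone space), I fix $w\in\mu(A)$ and must produce $k$ with $w\in\mu(A_k)$. Consider the subset $P_w$ of $v\Lambda$ cut out by requiring membership in exactly those generators $X\in\{E_0,E_i,F_k,F_{k,j}\}$ with $w\in\mu(X)$, and nonmembership in all the rest. If $P_w=\emptyset$, then the containment $\bigcap_{w\in\mu(X)}X\subseteq\bigcup_{w\notin\mu(X)}X$ holds and transfers through the principle above to a containment in $\CR$ that $w$ visibly violates; hence $P_w\neq\emptyset$. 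Any genuine $q\in P_w$ lies in $A$, because $w\in\mu(E_0)$ and $w\notin\mu(E_i)$ force $q\in E_0$ and $q\notin E_i$; so $q\in A_{k_0}$ for some $k_0$, i.e. $q\in F_{k_0}$ and $q\notin F_{k_0,j}$ for all $j$, and reading the membership of $q$ back through $P_w$ yields $w\in\mu(F_{k_0})$ and $w\notin\mu(F_{k_0,j})$, that is, $w\in\mu(A_{k_0})$.

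The main obstacle is that the exact covering $A=\bigsqcup_k A_k$ does not respect the outer sets $F_k$ used to present the pieces: a point of $A$ lying in $F_k$ may belong to some $A_l$ with $l\neq k$, so one cannot simply match $\mu(A)\cap\mu(F_k)$ with $\mu(A_k)$, and a naive union estimate overshoots. Passing to the Stone space and reducing each hypothetical witness $w$ to an actual point $q$ of $\Lambda$, where the set-theoretic partition $A=\bigsqcup_k A_k$ is available, is precisely what circumvents this difficulty, and the transfer principle is what is needed to move the resulting emptiness statements back into $\CR$.
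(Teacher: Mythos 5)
Your proof is correct, and it reaches the crucial inclusion by a genuinely different route than the paper. The parts you call easy are handled exactly as in the paper: disjointness of the $\mu(A_k)$ comes from Lemma \ref{l.ringhomomorphisms.f}, and $\bigsqcup_k\mu(A_k)\subseteq\mu(A)$ from the monotonicity in Lemma \ref{l.ringhomomorphisms.d}. For $\mu(A)\subseteq\bigcup_k\mu(A_k)$, the paper never leaves the abstract ring: it records the set-theoretic equivalence that $X\setminus Y\subseteq\bigcup_{k=1}^p(Z_k\setminus W_k)$ holds if and only if $X\cap\bigcap_{k\in I}W_k\subseteq Y\cup\bigcup_{\ell\notin I}Z_\ell$ for every $I\subseteq\{1,\ldots,p\}$; the forward direction applied to $A\subseteq\bigcup_kA_k$ produces $2^p$ containments among the generating sets, each is pushed through $\mu$ (the left-hand side $E\cap\bigcap_{k\in I}\bigcup_jF_{kj}$ distributes into a union of intersections, which Corollary \ref{c.ringhomomorphisms.b} and condition \eqref{t.ringhomomorphisms.one.a} handle), and the equivalence is then applied in reverse to the $\mu$-images inside $\CR$. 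You instead realize $\CR$ as a ring of sets via Stone representation and argue pointwise: for $w\in\mu(A)$, nonemptiness of the atom $P_w$ --- which your transfer principle correctly establishes, its two cases being covered by iterated condition \eqref{t.ringhomomorphisms.one.a} plus Lemma \ref{l.ringhomomorphisms.a}, respectively Lemma \ref{l.ringhomomorphisms.f} (applicable since sets of $\CD^{(0)}_v$ lie in $\CD_v$) --- yields an honest path $q\in A$; the genuine partition $A=\bigsqcup_kA_k$ places $q$ in some $A_{k_0}$, and the membership pattern carries this back to $w\in\mu(A_{k_0})$. The trade-off: the paper's argument is representation-free and self-contained, but hinges on a rather opaque combinatorial identity quantified over all subsets $I$; yours is the standard, more transparent ``verify Boolean statements on points'' argument, at the cost of invoking Stone's theorem (i.e.\ the prime ideal theorem), which is harmless here since the paper already uses Zorn's lemma elsewhere. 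Two cosmetic remarks: your preliminary normalization (cutting all generators down by $E_0$) is never actually used in the argument, and note that both your proof and the paper's establish the inclusion $\mu(A)\subseteq\bigcup_k\mu(A_k)$ using only $A\subseteq\bigcup_kA_k$, with disjointness entering solely through Lemma \ref{l.ringhomomorphisms.f}.
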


\begin{proof}
The containment ``$\supseteq$'' follows from Lemma \ref{l.ringhomomorphisms.d}, and the disjointness from Lemma \ref{l.ringhomomorphisms.f}.  For ``$\subseteq$'' we first note the set-theoretic identity:
\[
X \setminus Y \subseteq \bigcup_{k=1}^p (Z_k \setminus W_k)
\]
if and only if for every $I \subseteq \{1, \ldots, p\}$,
\[
X \cap \bigcap_{k\in I} W_k \subseteq Y \cup \bigcup_{\ell \not\in I} Z_\ell.
\]
Let $A = E \setminus \bigcup_{i=1}^m E_i$ and $A_k = F_k \setminus \bigcup_{j=1}^{n_k} F_{kj}$, where $E$, $E_i$, $F_k$, $F_{kj} \in \CD^{(0)}_v$.  Let $I \subseteq \{1, \ldots, p\}$.  We have
\[
E \cap \bigcap_{k \in I} \bigcup_{j=1}^{n_k} F_{kj} \subseteq \bigcup_{i=1}^m E_i \cup \bigcup_{\ell \not\in I} F_\ell.
\]
Since the left-hand side is a union of intersections, it follows from Corollary \ref{c.ringhomomorphisms.b} and \eqref{t.ringhomomorphisms.one.a} that
\[
\mu(E) \cap \bigcap_{k \in I} \bigcup_{j=1}^{n_k} \mu(F_{kj}) \subseteq \bigcup_{i=1}^m \mu(E_i) \cup \bigcup_{\ell \not\in I} \mu(F_\ell).
\]
Since this is true for every set $I \subseteq \{1, \ldots, p\}$, we have
\[
\mu(E) \setminus \bigcup_{i=1}^m \mu(E_i) \subseteq \bigcup_{k=1}^p \left( \mu(F_k) \setminus \bigcup_{j=1}^{n_k} \mu(F_{kj}) \right),
\]
that is $\mu(A) \subseteq \bigcup_{k=1}^p \mu(A_k)$.
\end{proof}

\begin{Corollary}
\label{c.ringhomomorphisms.h}
%!?!\textit{(c.ringhomomorphisms.h)} %%% erase label name
Let $\{A_1, \ldots, A_m\}$ and $\{B_1, \ldots, B_n\}$ be families of pairwise disjoint elements of $\CD_v$, and suppose that $\sqcup_i A_i = \sqcup_j B_j$.  Then $\sqcup_i \mu(A_i) = \sqcup_j \mu(B_j)$.
\end{Corollary}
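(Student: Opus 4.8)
The plan is to pass to the common refinement of the two families and reduce everything to Lemma \ref{l.ringhomomorphisms.g}. Set $C_{ij} = A_i \cap B_j$ for each pair $(i,j)$, discarding those that are empty. First I would record that $\CD_v$ is closed under nonempty intersection: if $A = E \setminus \bigcup_i E_i$ and $B = F \setminus \bigcup_j F_j$ with $E, E_i, F, F_j \in \CD^{(0)}_v$, then
\[
A \cap B = (E \cap F) \setminus \Bigl( \bigcup_i E_i \cup \bigcup_j F_j \Bigr).
\]
Since $\CD^{(0)}_v$ is closed under intersection (Remark \ref{r.zigzag}\eqref{r.zigzag.e}), we have $E \cap F \in \CD^{(0)}_v$, and replacing each subtracted set by its intersection with $E \cap F$ (again a member of $\CD^{(0)}_v$) exhibits $A \cap B$, when nonempty, in the normal form required by the definition of $\CD_v$. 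Hence each nonempty $C_{ij} \in \CD_v$, and the whole collection $\{C_{ij}\}$ is pairwise disjoint because both $\{A_i\}$ and $\{B_j\}$ are.

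Next I would decompose each member of the two families along the refinement. Because $\bigsqcup_i A_i = \bigsqcup_j B_j$, each $A_i$ is contained in $\bigsqcup_j B_j$, so
\[
A_i = A_i \cap \bigsqcup_j B_j = \bigsqcup_j C_{ij},
\]
a disjoint union of members of $\CD_v$ (the empty terms omitted). Lemma \ref{l.ringhomomorphisms.g} then yields $\mu(A_i) = \bigsqcup_j \mu(C_{ij})$, and symmetrically $\mu(B_j) = \bigsqcup_i \mu(C_{ij})$.

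Finally I would assemble the two double unions. Since the $C_{ij}$ are pairwise disjoint, Lemma \ref{l.ringhomomorphisms.f} shows that the images $\mu(C_{ij})$ are pairwise disjoint, so every union written below is genuinely disjoint; the same lemma applied to $\{A_i\}$ and to $\{B_j\}$ shows the outer families $\{\mu(A_i)\}$ and $\{\mu(B_j)\}$ are each pairwise disjoint as well. Then
\[
\bigsqcup_i \mu(A_i) = \bigsqcup_i \bigsqcup_j \mu(C_{ij}) = \bigsqcup_j \bigsqcup_i \mu(C_{ij}) = \bigsqcup_j \mu(B_j),
\]
which is the assertion. The only point requiring genuine care is the closure of $\CD_v$ under intersection together with the bookkeeping of empty refinement pieces; once the $C_{ij}$ are seen to lie in $\CD_v$, the conclusion follows formally from Lemmas \ref{l.ringhomomorphisms.g} and \ref{l.ringhomomorphisms.f} and the trivial interchange in the order of summation, so I expect no real obstacle beyond verifying these set-theoretic normal forms.
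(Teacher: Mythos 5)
Your proof is correct and takes essentially the same route as the paper: refine both families by the intersections $A_i \cap B_j$, apply Lemma \ref{l.ringhomomorphisms.g} to each $A_i$ and each $B_j$, and interchange the two double disjoint unions. The only cosmetic difference is that the paper writes each $A_i \cap B_j$ as a finite disjoint union $\sqcup_k C_{ijk}$ of elements of $\CD_v$ (available because $\CA_v$ consists of exactly such unions), whereas you note that $\CD_v$ is itself closed under nonempty intersection; both justifications are valid.
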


\begin{proof}
Let $A_i \cap B_j = \sqcup_k C_{ijk}$, where $C_{ijk} \in \CD_v$ and the union is finite.  Since $A_i = \sqcup_j  (A_i \cap B_j) = \sqcup_{j,k} C_{ijk}$, Lemma \ref{l.ringhomomorphisms.g} gives
\[
\sqcup_i \mu(A_i) = \sqcup_i \mu( \sqcup_{j,k} C_{ijk}) = \sqcup_{i,j,k} \mu(C_{ijk}).
\]
Similarly, $\sqcup_j \mu(B_j) = \sqcup_{i,j,k} \mu(C_{ijk})$.
\end{proof}

\begin{Definition}
\label{d.ringhomomorphisms.i}
%!?!\textit{(d.ringhomomorphisms.i)} %%% erase label name
Let $(\Lambda_0,\Lambda)$, $v$, $\CR$ and $\mu$ be as in the statement of Theorem \ref{t.ringhomomorphisms.one}, and suppose that \eqref{t.ringhomomorphisms.one.a} and \eqref{t.ringhomomorphisms.one.b} hold.  We define $\mu$ on $\CD_v$ as in Remark \ref{r.ringhomomorphisms.e}, and then on $\CA_v$ by setting $\mu(\sqcup_i A_i) = \sqcup_i \mu(A_i)$ (which is unambiguous by Corollary \ref{c.ringhomomorphisms.h}).
\end{Definition}

\begin{proof} \textit{(of Theorem \ref{t.ringhomomorphisms.one}.)}
The necessity and uniqueness are clear.  For the sufficiency, first note that it follows from Lemma \ref{l.ringhomomorphisms.f} and Definition \ref{d.ringhomomorphisms.i} that if $A$, $B \in \CA_v$ are disjoint, then $\mu(A)$ and $\mu(B)$ are disjoint, and $\mu(A \sqcup B) = \mu(A) \sqcup \mu(B)$.  Now, for arbitrary $A$, $B \in \CA_v$ we have
\begin{align*}
\mu(A) &= \mu(A \setminus B) \sqcup \mu(A \cap B) \\
\mu(B) &= \mu(B \setminus A) \sqcup \mu(A \cap B) \\
\mu(A \setminus B) \cap \mu(B \setminus A) &= \emptyset.
\end{align*}
It follows that 
\[
\mu(A \setminus B) = \mu(A) \setminus \mu(B),
\]
and hence that
\[
\mu(A \cup B)
= \mu\bigl((A \setminus B) \sqcup B\bigr)
= \mu(A \setminus B) \sqcup \mu(B)
= \bigl(\mu(A) \setminus \mu(B)\bigr) \sqcup \mu(B)
= \mu(A) \cup \mu(B). \qedhere
\]
\end{proof}

In the case that $\Lambda$ is finitely aligned, the hypotheses of the theorem may be relaxed considerably.  It is the following result that fundamentally explains why the (Toeplitz) Cuntz-Krieger relations have their usual form.

\begin{Theorem}
\label{t.ringhomomorphisms.two}
%!?!\textit{(t.ringhomomorphisms.two)} %%% erase label name
Let $\Lambda$ be a finitely aligned category of paths, $v \in \Lambda^0$, $\CR$ a Boolean ring, and $\mu : \CE^{(0)}_v \to \CR$.  Then $\mu$ extends to a Boolean ring homomorphism $\CA_v \to \CR$ if and only if the following condition holds:
\begin{enumerate}
\setcounter{enumi}{2}
\item $\mu(\alpha\Lambda) \cap \mu(\beta\Lambda) = \bigcup_{\eps \in \alpha \vee \beta} \mu(\eps\Lambda)$ for all $\alpha$, $\beta \in v\Lambda$.
\label{t.ringhomomorphisms.two.c}
%!?!\textit{(t.ringhomomorphisms.two.c}) %%% erase label name
\end{enumerate}
In this case, the extension is unique.
\end{Theorem}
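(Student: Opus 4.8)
The plan is to reduce everything to Theorem \ref{t.ringhomomorphisms.one} by first extending $\mu$ from the tail sets $\CE^{(0)}_v$ to all of $\CD^{(0)}_v$ and then verifying conditions \eqref{t.ringhomomorphisms.one.a} and \eqref{t.ringhomomorphisms.one.b} for that extension. The necessity of \eqref{t.ringhomomorphisms.two.c} is immediate: a Boolean ring homomorphism preserves intersections and finite unions, and applying Lemma \ref{l.finitelyalignedpair} to $\{\alpha,\beta\}$ gives $\alpha\Lambda \cap \beta\Lambda = \bigcup_{\eps \in \alpha \vee \beta} \eps\Lambda$, so evaluating a homomorphic extension on this identity yields \eqref{t.ringhomomorphisms.two.c}. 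Uniqueness will follow at the end, once the values on $\CA_v$ are seen to be forced.

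For sufficiency, I would first record a monotonicity property built from \eqref{t.ringhomomorphisms.two.c}. If $\alpha\Lambda \subseteq \beta\Lambda$, then $\alpha\Lambda \cap \beta\Lambda = \alpha\Lambda$, whose unique minimal element is $\alpha$, so $\alpha \vee \beta = \{\alpha\}$ and \eqref{t.ringhomomorphisms.two.c} reads $\mu(\alpha\Lambda) = \mu(\alpha\Lambda) \cap \mu(\beta\Lambda) \subseteq \mu(\beta\Lambda)$. The crucial upgrade is to finite covers: if $\alpha\Lambda \subseteq \bigcup_{l} \beta_l\Lambda$, then $\alpha$ itself lies in the union, so $\alpha \in \beta_{l_0}\Lambda$ for some $l_0$; but then $\alpha\Lambda \subseteq \beta_{l_0}\Lambda$, and hence $\mu(\alpha\Lambda) \subseteq \mu(\beta_{l_0}\Lambda) \subseteq \bigcup_l \mu(\beta_l\Lambda)$. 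By Corollary \ref{c.finitelyalignedzigzag} every $E \in \CD^{(0)}_v$ is a finite union $E = \bigcup_k \alpha_k\Lambda$ of tail sets, so I would define $\mu(E) = \bigcup_k \mu(\alpha_k\Lambda)$; applying the cover monotonicity in both directions to two representations of the same set shows this is well-defined, and taking the one-term representation shows it agrees with the given $\mu$ on $\CE^{(0)}_v \subseteq \CD^{(0)}_v$.

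It then remains to check the two hypotheses of Theorem \ref{t.ringhomomorphisms.one} for the extended $\mu$. Condition \eqref{t.ringhomomorphisms.one.b} holds essentially by construction: if $E = \bigcup_i F_i$ with each $F_i = \bigcup_k \gamma_{ik}\Lambda$, then $E$ is the union of all the $\gamma_{ik}\Lambda$ and well-definedness gives $\mu(E) = \bigcup_{i,k}\mu(\gamma_{ik}\Lambda) = \bigcup_i \mu(F_i)$. Condition \eqref{t.ringhomomorphisms.one.a} is exactly where \eqref{t.ringhomomorphisms.two.c} is used: writing $E = \bigcup_j \alpha_j\Lambda$ and $F = \bigcup_k \beta_k\Lambda$, Lemma \ref{l.finitelyalignedpair} gives $E \cap F = \bigcup_{j,k}\bigcup_{\eps \in \alpha_j \vee \beta_k} \eps\Lambda$, so
\[
\mu(E \cap F) = \bigcup_{j,k} \bigcup_{\eps \in \alpha_j \vee \beta_k} \mu(\eps\Lambda) = \bigcup_{j,k} \bigl( \mu(\alpha_j\Lambda) \cap \mu(\beta_k\Lambda) \bigr) = \mu(E) \cap \mu(F),
\]
using \eqref{t.ringhomomorphisms.two.c} and distributivity. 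Theorem \ref{t.ringhomomorphisms.one} now supplies a unique homomorphic extension to $\CA_v$; since the values on $\CD^{(0)}_v$ are forced by those on $\CE^{(0)}_v$ and the values on $\CA_v$ are in turn forced by Theorem \ref{t.ringhomomorphisms.one}, the extension is unique as a function of the original data.

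I expect the only genuinely delicate point to be the well-definedness of the extension to $\CD^{(0)}_v$, that is, the monotonicity lemma. The observation that tames it is that a tail set contained in a finite union of tail sets is already contained in one of them, because the generating path $\alpha$ itself lies in the union; this converts a cover relation into a single containment, to which the one-step monotonicity from \eqref{t.ringhomomorphisms.two.c} applies. Once this is in hand, conditions \eqref{t.ringhomomorphisms.one.a} and \eqref{t.ringhomomorphisms.one.b} drop out directly, and the theorem follows.
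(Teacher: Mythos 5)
Your proposal is correct and follows essentially the same route as the paper's proof: the paper likewise derives monotonicity from condition \eqref{t.ringhomomorphisms.two.c}, uses the observation that a tail set contained in a finite union of tail sets lies in a single one of them to extend $\mu$ unambiguously to finite unions (hence to $\CD^{(0)}_v$ via Corollary \ref{c.finitelyalignedzigzag}), and then verifies hypotheses \eqref{t.ringhomomorphisms.one.a} and \eqref{t.ringhomomorphisms.one.b} of Theorem \ref{t.ringhomomorphisms.one} exactly as you do.
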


\begin{proof}
The necessity and uniqueness are clear.  For the sufficiency, first note that \eqref{t.ringhomomorphisms.two.c} implies monotonicity of $\mu$ on $\CE^{(0)}_v$.  Next, we observe that the sets in $\CE^{(0)}_v$ have the following property:  if $E$, $F_1$, $\ldots$, $F_n \in \CE^{(0)}_v$ and $E \subseteq \cup_j F_j$, then there is $j_0$ such that $E \subseteq F_{j_0}$.  (For, letting $E = \alpha\Lambda$ and $F_j = \beta_j\Lambda$, $\alpha \in E$ implies $\alpha \in \beta_{j_0}\Lambda$ for some $j_0$, and hence $\alpha\Lambda \subseteq \beta_{j_0}\Lambda$.)  It follows that $\mu$ can be defined unambiguously on finite unions of sets in $\CE^{(0)}_v$, and that $\mu$ remains monotone when so extended.  By Corollary \ref{c.finitelyalignedzigzag} it follows that $\mu$ is defined on $\CD^{(0)}_v$, and that \eqref{t.ringhomomorphisms.one.b} holds.  Let $A$, $B \in \CD^{(0)}_v$.  Write $A = \bigcup_{i=1}^m E_i$ and $B = \bigcup_{j=1}^n F_j$, with $E_i = \alpha_i\Lambda$, $F_j = \beta_j\Lambda$ in $\CE^{(0)}_v$.  Then $E_i \cap F_j = \bigcup_{\eps \in \alpha_i \vee \beta_j} G_{ij\eps}$, where $G_{ij\eps} = \eps\Lambda$, $\eps \in \alpha_i \vee \beta_j$.  Then 
\[
\mu(A \cap B) = \bigcup_{ij\eps} \mu(G_{ij\eps}).
\]
On the other hand,
\[
\mu(A) \cap \mu(B) = \bigcup_{ij} \mu(E_i) \cap \mu(F_j)
= \bigcup_{ij} \bigcup_{\eps \in \alpha_i \vee \beta_j} \mu(G_{ij\eps}),
\]
by \eqref{t.ringhomomorphisms.two.c}.  Thus \eqref{t.ringhomomorphisms.one.a} holds.  By Theorem \ref{t.ringhomomorphisms.one}, $\mu$ has an extension to $\CA_v$.
\end{proof}

\section{Generators and relations}
\label{s.gensandrels}
%!?!\textit{(s.gensandrels)} %%% erase label name

Let $G$ be the groupoid of a relative category of paths $(\Lambda_0,\Lambda)$.  For $\zeta \in \CZ$ we let $t_\zeta = \chi_{[\zeta,A(\zeta)]}$.  The collection $\{ t_\zeta : \zeta \in \CZ \}$ is a total set in $C_c(G) \subseteq C^*(G)$.  Since $\pphi_{\zeta_2}^{-1} (A(\zeta_1)) = \pphi_{\overline{\zeta_2}} (\pphi_{\overline{\zeta_1}}(\Lambda)) = \pphi_{\overline{\zeta_1 \zeta_2}}(\Lambda) = A(\zeta_1 \zeta_2)$, we see that $t_{\zeta_1} t_{\zeta_2} = t_{\zeta_1 \zeta_2}$.  It is clear that $t_{\overline{\zeta}} = t_\zeta^*$.  Since $A(\zeta) \mapsto \chi_{[\overline{\zeta} \zeta, A(\zeta)]} = t_{\overline{\zeta} \zeta}$ is a Boolean ring homomorphism, conditions \eqref{t.ringhomomorphisms.one.a} and \eqref{t.ringhomomorphisms.one.b} of Theorem \ref{t.ringhomomorphisms.one} hold for this map.  In fact, \eqref{t.ringhomomorphisms.one.a} follows from the previous two properties and Remark \ref{r.zigzag}\eqref{r.zigzag.e}.  
Notice also that $\pphi_\zeta = \text{id}_{A(\zeta)}$ if and only if $\Phi_\zeta = \text{id}_{\widehat{A(\zeta)}}$ (for the only if, consider the fixed ultrafilter determined by a path moved by $\pphi_\zeta$).  Therefore, if $\pphi_\zeta = \text{id}_{A(\zeta)}$ we have $t_\zeta = t_\zeta^* t_\zeta$.
These are enough to characterize representations of $C^*(G)$.
Before presenting the characterization, we wish to comment on the last property mentioned above, as it is special to the non-finitely aligned case.  (We thank Allan Donsig and David Milan for noticing a gap in our earlier proof of Theorem \ref{t.gensandrels}, which brought to our attention the need for the relation \eqref{t.gensandrels.d} of the theorem.)  We give an example.  In Figure \ref{fig1gens}, $i = 1$, 2, 3, $\ldots$, and we have the identifications $\alpha_j \gamma_i = \beta_j \delta_i$ for $j = 1$, 2, and for all $i$.  Letting $\zeta_j = (\alpha_j,\beta_j)$, we find that $A(\zeta_j) = \{ \delta_i : i \ge 1 \}$, and that $\pphi_{\zeta_j}(\delta_i) = \gamma_i$ for $j = 1$, 2, and for all $i$.  Thus $\Phi_{\zeta_1} = \Phi_{\zeta_2}$: the groupoid does not distinguish $\zeta_1$ and $\zeta_2$.

\begin{figure}[ht]

\[
\begin{tikzpicture}[scale=1]

\node (0_0) at (0,0) [circle] {};
\node (2_0) at (2,0) [circle] {};
\node (m2_0) at (-2,0) [circle] {};
\node (0_2) at (0,2) [circle] {};
\node (0_m2) at (0,-2) [circle] {};

\foreach \x in {0,-2,2} \filldraw [black] (\x,0) circle (1pt);
\foreach \x in {0,-2,2} \filldraw [black] (0,\x) circle (1pt);

\draw[-latex,thick] (0_0) -- (m2_0) node[pos=0.5, inner sep=0.5pt, anchor=north] {$\gamma_i$};
\draw[-latex,thick] (m2_0) -- (0_2) node[pos=0.5, inner sep=0.5pt, anchor=south east] {$\alpha_2$};
\draw[-latex,thick] (m2_0) -- (0_m2) node[pos=0.5, inner sep=0.5pt, anchor=north east] {$\alpha_1$};
\draw[-latex,thick] (0_0) -- (2_0) node[pos=0.5, inner sep=0.5pt, anchor=north] {$\delta_i$};
\draw[-latex,thick] (2_0) -- (0_2) node[pos=0.5, inner sep=0.5pt, anchor=south west] {$\beta_2$};
\draw[-latex,thick] (2_0) -- (0_m2) node[pos=0.5, inner sep=0.5pt, anchor=north west] {$\beta_1$};

\end{tikzpicture}
\]

\caption{} \label{fig1gens}

\end{figure}

\begin{Theorem}
\label{t.gensandrels}
%!?!\textit{(t.gensandrels)} %%% erase label name
Let $G$ be the groupoid of a relative category of paths $(\Lambda_0,\Lambda)$.  The representations of $C^*(G)$ are in one-to-one correspondence with the families $\{ T_\zeta : \zeta \in \CZ \}$ of Hilbert space operators satisfying the relations
\begin{enumerate}
\item $T_{\zeta_1} T_{\zeta_2} = T_{\zeta_1 \zeta_2}$.
\label{t.gensandrels.a}
\item $T_{\overline{\zeta}} = T_\zeta^*$.
\label{t.gensandrels.b}
\item $T_\zeta^* T_\zeta = \bigvee_{i=1}^n T_{\zeta_i}^* T_{\zeta_i}$ if $A(\zeta) = \bigcup_{i=1}^n A(\zeta_i)$.
\label{t.gensandrels.c}
\item $T_\zeta = T^*_\zeta T_\zeta$ if $\pphi_\zeta = \text{id}_{A(\zeta)}$.
\label{t.gensandrels.d}
\end{enumerate}
\end{Theorem}

\begin{proof}
First, let $\pi : C^*(G) \to B(H)$ be a representation.  Let $T_\zeta = \pi\bigl(\chi_{[\zeta,A(\zeta)]}\bigr)$.  As indicated in the discussion before the statement of the theorem, conditions \eqref{t.gensandrels.a} - \eqref{t.gensandrels.d} hold.

Conversely, let $\{T_\zeta : \zeta \in \CZ \} \subseteq B(H)$ satisfy \eqref{t.gensandrels.a} - \eqref{t.gensandrels.d}.  
Since $A(\zeta) = A(\overline{\zeta}\zeta)$, we have
\begin{align*}
T_{\overline{\zeta}\zeta} &= T_\zeta^* T_\zeta, \text{ by \eqref{t.gensandrels.a} and \eqref{t.gensandrels.b}}, \\
&= T_{\overline{\zeta}\zeta}^* T_{\overline{\zeta}\zeta}, \text{ by \eqref{t.gensandrels.c},} \\
&= T_{\overline{\zeta}\zeta} T_{\overline{\zeta}\zeta}, \text{ since } T_{\overline{\zeta}\zeta} = T_\zeta^* T_\zeta \text{ is self-adjoint,} \\
&= T_{\overline{\zeta}\zeta \overline{\zeta}\zeta}, \text{ by \eqref{t.gensandrels.a}}.
\end{align*}
Therefore $T_\zeta$ is a partial isometry.  
Next we use \eqref{t.gensandrels.d}.  We claim that
\begin{equation}
\text{if } \pphi_{\zeta_1} = \pphi_{\zeta_2} \text{ then } T_{\zeta_1} = T_{\zeta_2}.
\tag{5}
\end{equation}
To see this, note that if $\pphi_{\zeta_1} = \pphi_{\zeta_2}$, then $\pphi_{\overline{\zeta_1}\zeta_2} = \text{id}_{A(\zeta_2)} = \text{id}_{A(\zeta_1)} = \pphi_{\overline{\zeta_1}\zeta_1}= \pphi_{\overline{\zeta_2}\zeta_2}$.  By \eqref{t.gensandrels.d} we then have $T_{\zeta_1}^* T_{\zeta_2} = T_{\overline{\zeta_1}\zeta_2} = T_{\zeta_1}^*T_{\zeta_1}= T_{\zeta_2}^*T_{\zeta_2}$.  We also have $\pphi_{\zeta_2\overline{\zeta_1}} = \text{id}_{A(\overline{\zeta_1})} = \text{id}_{A(\overline{\zeta_2})} = \pphi_{\zeta_1\overline{\zeta_1}}= \pphi_{\zeta_1\overline{\zeta_1}}$, and hence similarly, that $T_{\zeta_2} T_{\zeta_1}^* = T_{\zeta_2\overline{\zeta_1}} = T_{\zeta_1} T_{\zeta_1}^*= T_{\zeta_2} T_{\zeta_2}^*$.  Now we have
\[
T_{\zeta_1} = T_{\zeta_1} T_{\zeta_1}^* T_{\zeta_1} = T_{\zeta_2} T_{\zeta_1}^* T_{\zeta_1} = T_{\zeta_2} T_{\zeta_2}^* T_{\zeta_2} = T_{\zeta_2},
\]
proving (5).

We define $\mu : \CD^{(0)} \to B(H)$ by $\mu(A(\zeta)) = T_\zeta^* T_\zeta$.  Since $ T_{\overline{\zeta_1} \zeta_1}  T_{\overline{\zeta_2} \zeta_2} =  T_{\overline{\zeta_1} \zeta_1 \overline{\zeta_2} \zeta_2}$, condition \eqref{t.ringhomomorphisms.one.a} of Theorem \ref{t.ringhomomorphisms.one} holds.  Condition \eqref{t.ringhomomorphisms.one.b} of Theorem \ref{t.ringhomomorphisms.one} holds by \eqref{t.gensandrels.c}.  Then  by Theorem \ref{t.ringhomomorphisms.one} we obtain a $*$-homomorphism $\pi_0 : C_0(G^0) \to B(H)$ such that $\pi_0(\chi_{A(\zeta)}) = T_\zeta^* T_\zeta$.

In order to extend $\pi_0$ to all of $C^*(G)$, we proceed locally.  Let $f \in C_c(G)$ be such that there is $\zeta \in \CZ$ with $\text{supp}\,(f) \subseteq [\zeta,A(\zeta)]$.  Define $\widetilde{f} \in C\bigl(\widehat{A(\zeta)}\bigr)$ by $\widetilde{f}(x) = f([\zeta,x])$.  To show that $\widetilde{f}$ is well-defined, suppose that $\text{supp}\,(f) \subseteq [\zeta',A(\zeta')]$.  For each $w \in \text{supp}\,(f)$, $w = [\zeta,x] = [\zeta',x]$ for some $x \in \widehat{A(\zeta)} \cap \widehat{A(\zeta')}$.  Therefore $\text{supp}\,(f) \subseteq [\zeta, A(\zeta) \cap A(\zeta')] \cap [\zeta', A(\zeta) \cap A(\zeta')]$.  Let $E \in \CA$ with $\widehat{E} = \{x \in \widehat{A(\zeta)} \cap \widehat{A(\zeta')} : \Phi_\zeta = \Phi_{\zeta'} \text{ near } x \}$.  Then $\text{supp}\,(f) \subseteq [\zeta,E] \cap [\zeta',E]$.  Thus $\widetilde{f}$ doesn't depend on the choice of $\zeta$.

Now we define $\pi(f) = T_\zeta \pi_0(\widetilde{f})$ if $\text{supp}\,(f) \subseteq [\zeta,A(\zeta)]$.
To see that this is well-defined, let supp$(f) \subseteq [\zeta_1, A(\zeta_1)] \cap [\zeta_2, A(\zeta_2)]$.  First suppose that $f = \chi_{[\xi,A(\xi)]}$.  Then $A(\xi) \subseteq A(\zeta_1) \cap A(\zeta_2)$, and $\Phi_{\zeta_j}|_{A(\xi)} = \Phi_\xi$, $j = 1$, 2.  But then $\pphi_{\zeta_1 \overline{\xi} \xi} = \pphi_{\zeta_2 \overline{\xi} \xi}$, and hence $T_{\zeta_1} T_\xi^* T_\xi = T_{\zeta_2} T_\xi^* T_\xi$, by (5).  We have $\widetilde{f} = \chi_{\widehat{A(\xi)}}$, and hence $\pi_0(\widetilde{f}) = T_\xi^* T_\xi$.  But then $T_{\zeta_1} \pi_0(\widetilde{f}) = T_{\zeta_1}T_\xi^* T_\xi = T_{\zeta_2} T_\xi^* T_\xi = T_{\zeta_2} \pi_0(\widetilde{f})$.  It follows that this also holds for $f$ in the span of such characteristic functions.  By continuity of $\pi_0$, it follows for all $f$ supported in basic sets $[\zeta,A(\zeta)]$.

For an arbitrary $f \in C_c(G)$, we may find $\zeta_i \in \CZ$ and $A_i \in \CA$, $1 \le i \le n$, such that $\text{supp}\,(f) \subseteq \bigsqcup_i [\zeta_i,A_i]$.  Then $f = \sum_i f |_{[\zeta_i,A_i]}$.  If also $\text{supp}\,(f) \subseteq \bigsqcup_j [\xi_j,B_j]$, then since $[\zeta_i,A_i] \cap [\xi_j,B_j] = [\zeta_i,A_i \cap B_j] = [\xi_j, A_i \cap B_j]$, we have
\[
\sum_i f |_{[\zeta_i,A_i]} = \sum_{i,j} f |_{[\zeta_i,A_i \cap B_j]} = \sum_{i,j} f |_{[\xi_j,A_i \cap B_j]} = \sum_j f |_{[\xi_j,B_j]}.
\]
Thus $\sum_i \pi \bigl( f |_{[\zeta_i,A_i]} \bigr) = \sum_j \bigl( f |_{[\xi_j,B_j]} \bigr)$.  Therefore this last expression is a well-defined extension of $\pi$ to all of $C_c(G)$, and is a self-adjoint linear map.  We note that $\pi$ is continuous for the inductive limit topology, since by the above it reduces to uniform convergence on the sets $\widehat{A(\zeta)}$.  Finally, since $\pi$ is multiplicative on the characteristic functions of the basic sets $[\zeta,A(\zeta)]$, the continuity implies that $\pi$ is multiplicative on $C_c(G)$.  Therefore $\pi$ extends to all of $C^*(G)$, by Renault's disintegration theorem (\cite{pat}, Theorem 3.1.1).
\end{proof}

The third relation in Theorem \ref{t.gensandrels} is fairly complicated, reflecting the complexity of general non-finitely aligned categories of paths.  Our use of relative categories of paths gives a nesting relation for subcategories.

\begin{Corollary}
\label{c.nestedcategories}
%!?!\textit{(t.nestedcategories)} %%% erase label name
Let $\Lambda$ be a category of paths, and let $\Lambda_0 \subseteq \Lambda_1 \subseteq \Lambda$ be subcategories.  For $\zeta \in \Lambda_j$ let $t_{\zeta,j} \in C^*(G(\Lambda_j,\Lambda))$ be the generators described in the remarks before Theorem \ref{t.gensandrels}.  There is a $*$-homomorphism $C^*(G(\Lambda_0,\Lambda)) \to C^*(G(\Lambda_1,\Lambda))$ given by $t_{\zeta,0} \mapsto t_{\zeta,1}$ for $\zeta \in \CZ(\Lambda_0,\Lambda)$.  Moreover, if $G(\Lambda_0,\Lambda)$ is amenable then this map is injective.
\end{Corollary}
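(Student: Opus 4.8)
The plan is to obtain $\Psi$ from the universal property encoded in Theorem \ref{t.gensandrels}, and then to establish injectivity by comparing the two canonical diagonal conditional expectations, using amenability to pass to the reduced algebra. Throughout write $G_i = G(\Lambda_i,\Lambda)$, with generators $\{t_{\zeta,i}\}_{\zeta\in\CZ(\Lambda_i,\Lambda)}$.

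For existence, note that since $\Lambda_0\subseteq\Lambda_1$ we have $\CZ(\Lambda_0,\Lambda)\subseteq\CZ(\Lambda_1,\Lambda)$, an inclusion closed under concatenation and reversal, and that the zigzag set $A(\zeta)$ does not depend on the subcategory (Definition \ref{d.ringofsets}). Hence the subfamily $\{t_{\zeta,1}\}_{\zeta\in\CZ(\Lambda_0,\Lambda)}$ inherits relations \eqref{t.gensandrels.a}--\eqref{t.gensandrels.c} from the full family: (a) and (b) are immediate, and (c) holds because any decomposition $A(\zeta)=\bigcup_i A(\zeta_i)$ with $\zeta,\zeta_i\in\CZ(\Lambda_0,\Lambda)$ is in particular a decomposition by zigzag sets of $\CZ(\Lambda_1,\Lambda)$. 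Applying Theorem \ref{t.gensandrels} as a universal property (via a faithful representation of $C^*(G_1)$) yields the $*$-homomorphism $\Psi\colon C^*(G_0)\to C^*(G_1)$ with $\Psi(t_{\zeta,0})=t_{\zeta,1}$.

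For injectivity I would first record the diagonal behaviour of $\Psi$. The inclusion $\CA(\Lambda_0,\Lambda)_v\subseteq\CA(\Lambda_1,\Lambda)_v$ (Remark \ref{r.subring}) is an inclusion of Boolean rings of subsets of $v\Lambda$, so restriction of ultrafilters defines a continuous map $\rho\colon G_1^0\to G_0^0$; it is surjective because every ultrafilter of $\CA(\Lambda_0,\Lambda)_v$ extends to one of $\CA(\Lambda_1,\Lambda)_v$, and $\rho^{-1}(\widehat E)=\widehat E$ for $E\in\CA(\Lambda_0,\Lambda)_v$. Since $t_{\overline\zeta\zeta,i}=\chi_{\widehat{A(\zeta)}}$ lies in $C_0(G_i^0)$ (as $\pphi_{\overline\zeta\zeta}=\mathrm{id}$ on $A(\zeta)$), the map $\Psi$ carries $C_0(G_0^0)$ into $C_0(G_1^0)$ and there agrees with the pullback $\rho^{*}$; surjectivity of $\rho$ makes $\rho^{*}$ isometric, so $\Psi$ is injective on the diagonal. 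Let $q\colon C^*(G_1)\to C^*_r(G_1)$ be the canonical surjection onto the reduced algebra and $\Psi_r=q\circ\Psi$; it suffices to prove $\Psi_r$ injective. By amenability of $G_0$ one has $C^*(G_0)=C^*_r(G_0)$, so the canonical conditional expectation $P_0\colon C^*(G_0)\to C_0(G_0^0)$ is faithful; let $P_1\colon C^*_r(G_1)\to C_0(G_1^0)$ be the (faithful) canonical conditional expectation. The standard argument then yields injectivity of $\Psi_r$ once I prove the intertwining
\[
P_1\circ\Psi_r \;=\; \rho^{*}\circ P_0,
\]
for then $\Psi_r(a)=0$ forces $\rho^{*}P_0(a^{*}a)=P_1\Psi_r(a^{*}a)=0$, whence $P_0(a^{*}a)=0$ and $a=0$.

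Both sides of the intertwining are continuous, and $\{t_{\zeta,0}\}$ is total, so it suffices to check equality on a generator $t_{\zeta,0}$, where it reads $\chi_{F^{(1)}_\zeta}=\chi_{\rho^{-1}(F^{(0)}_\zeta)}$ with $F^{(i)}_\zeta=\{x\in G_i^0:\Phi_\zeta=\mathrm{id}\text{ near }x\}$ the open set on which the bisection $[\zeta,A(\zeta)]$ meets the unit space. The whole problem thus reduces to the combinatorial claim that the fixed–point set $\mathrm{Fix}(\pphi_\zeta)=\{\gamma\in A(\zeta):\pphi_\zeta(\gamma)=\gamma\}$ lies in $\CA(\Lambda_0,\Lambda)_{s(\zeta)}$: granting this, $F^{(i)}_\zeta=\widehat{\mathrm{Fix}(\pphi_\zeta)}$ for $i=0,1$, and $\rho^{-1}\widehat E=\widehat E$ finishes the intertwining. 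This reduction, together with the verification of $\mathrm{Fix}(\pphi_\zeta)\in\CA(\Lambda_0,\Lambda)$, is where I expect the real work to lie, and it is the main obstacle. The key input is right–cancellation: by Remark \ref{r.zigzag}\eqref{r.zigzag.f}, $\pphi_\zeta(\gamma\beta)=\pphi_\zeta(\gamma)\beta$, and $\pphi_\zeta(\gamma)\beta=\gamma\beta$ forces $\pphi_\zeta(\gamma)=\gamma$; hence both $\mathrm{Fix}(\pphi_\zeta)$ and its complement in $A(\zeta)$ are closed under extension. Combining this with $A(\zeta)\in\CA(\Lambda_0,\Lambda)$ and the fact that $\pphi_\zeta$ preserves $\CA(\Lambda_0,\Lambda)$ (Lemma \ref{l.ringofsets}), I would unwind the fixed–point condition through the zigzag as a finite system of equations $\alpha_i\gamma^{(i-1)}=\beta_i\gamma^{(i)}$ and so exhibit $\mathrm{Fix}(\pphi_\zeta)$ as a finite union of zigzag sets from $\CZ(\Lambda_0,\Lambda)$. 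In the finitely aligned case this last step is immediate, since $\pphi_\zeta$ is then a finite union of maps $\gamma\sigma^{\delta}$ (Lemma \ref{l.finitelyalignedzigzag}) and $\mathrm{Fix}(\pphi_\zeta)$ is the union of those $\delta\Lambda$ with $\gamma=\delta$; the non–finitely aligned case is the delicate point.
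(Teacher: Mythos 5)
Your existence argument is correct and is the same as the paper's: both obtain the homomorphism from Theorem \ref{t.gensandrels} by observing that the subfamily $\{t_{\zeta,1}\}_{\zeta\in\CZ(\Lambda_0,\Lambda)}$ satisfies relations \eqref{t.gensandrels.a}--\eqref{t.gensandrels.c}. The injectivity half, however, has a genuine gap, and in fact its central identity is false in the generality of the corollary. The unit (germ) condition is not the pointwise fixed-point condition you reduce to: since the fixed ultrafilters $\{\CU_\gamma:\gamma\in E\}$ are dense in $\widehat E$ and $X$ is Hausdorff, one has $\Phi_\zeta=\mathrm{id}$ on $\widehat E$, computed in $X(\Lambda_j,\Lambda)$, if and only if for every $\gamma\in E$ the paths $\gamma$ and $\pphi_\zeta(\gamma)$ generate the \emph{same fixed ultrafilter} of $\CA(\Lambda_j,\Lambda)$ --- i.e.\ are not separated by that ring --- not that $\pphi_\zeta(\gamma)=\gamma$. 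The ring $\CA(\Lambda_0,\Lambda)_v$ contains $\gamma\Lambda$ only for $\gamma\in\Lambda_0$, so it can fail to separate points of $v\Lambda$, and passing to the smaller ring can therefore \emph{create} units. Concretely, take $\Lambda$ as in Figure~\ref{fig3cat}, $\Lambda_1=\Lambda$, $\Lambda_0$ the subcategory generated by $\{\alpha_1,\beta_1,\alpha_2,\beta_2\}$, and $\zeta=(\alpha_2,\beta_2,\beta_1,\alpha_1)\in\CZ(\Lambda_0,\Lambda)$, so that $\pphi_\zeta=\sigma^{\alpha_2}\beta_2\sigma^{\beta_1}\alpha_1$ fixes every $\gamma_i$ and sends $\mu_i\mapsto\mu_{i-1}$. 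Here every nontrivial zigzag set over $\Lambda_0$ at $u$ equals $A(\zeta)=\{\gamma_i,\mu_i:i\in\IZ\}$, so $\CA(\Lambda_0,\Lambda)_u=\{\emptyset,\{u\},A(\zeta),u\Lambda\}$ and $X(\Lambda_0,\Lambda)_u$ consists of two isolated points; consequently $[\zeta,\rho(\CU_{\mu_0})]$ is a unit of $G_0$, whereas $[\zeta,\CU_{\mu_0}]$ is not a unit of $G_1$ because $\Phi_\zeta(\CU_{\mu_0})=\CU_{\mu_{-1}}$. Thus $F^{(1)}_\zeta\subsetneq\rho^{-1}(F^{(0)}_\zeta)$, and your intertwining $P_1\circ\Psi_r=\rho^*\circ P_0$ fails on the generator $t_{\zeta,0}$: the two sides take the values $0$ and $1$ at $\CU_{\mu_0}$. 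So the obstacle is not merely verifying $\mathrm{Fix}(\pphi_\zeta)\in\CA(\Lambda_0,\Lambda)$ in the non--finitely aligned case; $\mathrm{Fix}(\pphi_\zeta)$ is the wrong set, and the identity you would need it to satisfy does not hold.

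There is a second, structural problem: in this generality $G_1$ need not be Hausdorff (the example above is exactly the paper's non-Hausdorff example), and then the conditional expectation $P_1\colon C^*_r(G_1)\to C_0(G_1^0)$ you invoke does not exist: in that example the restriction of $t_{\zeta,1}$ to $G_1^0$ is $\chi_{\{\CU_{\gamma_i}:i\in\IZ\}}$, which is discontinuous at $\CU_{u,\infty}$ and hence not in $C_0(G_1^0)$. The paper's proof avoids both difficulties. It uses the same restriction map $\rho$ on ultrafilters, but instead of expectations it shows that the regular representation $\text{Ind}\,\CU_0$ of $G_0$ is a restriction (to an invariant subspace) of $\text{Ind}\,\CU\circ\Psi$ for $\CU$ lying over $\CU_0$ --- a one-sided comparison of representations that does not require units of $G_0$ to pull back to units of $G_1$ --- and then amenability of $G_0$ gives $\|a\|_{C^*(G_0)}=\|a\|_{C^*_r(G_0)}=\sup_{\CU_0}\|\text{Ind}\,\CU_0(a)\|\le\|\Psi(a)\|$, so $\Psi$ is isometric.
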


\begin{proof}
The existence of the map follows from Theorem \ref{t.gensandrels}.  Now let $v \in \Lambda_0^0$.  If $\CU$ is an ultrafilter in $\CA(\Lambda_1,\Lambda)_v$, let $\CU_0 = \CU \cap \CA(\Lambda_0,\Lambda)_v$.  It is easy to see that $\CU_0$ is a filter.  Moreover, if $E \in \CA(\Lambda_0,\Lambda)_v \setminus \CU_0$, then $E \not\in \CU$.  But then $v\Lambda \setminus E \in \CU \cap \CA(\Lambda_0,\Lambda)_v$.  Therefore $\CU_0$ is an ultrafilter.  Thus we obtain a map $X(\Lambda_1,\Lambda)_v \to X(\Lambda_0,\Lambda)_v$ given by $\CU \mapsto \CU \cap \CA(\Lambda_0,\Lambda)_v$.  It is easy to check that this is a continuous proper surjection.  It extends to the groupoids as follows.  Let $\zeta \in \CZ(\Lambda_0,\Lambda) v$, and $\CU \in X(\Lambda_1,\Lambda)_v$.  Let $\CU_0 = \CU \cap \CA(\Lambda_0,\Lambda)_v$.  Then the map on groupoids is given by $[\zeta,\CU] \mapsto [\zeta,\CU_0]$.  It follows that the regular representation of $G(\Lambda_0,\Lambda)$ induced from the point mass at $\CU_0$ is the restriction of the regular representation of $G(\Lambda_1,\Lambda)$ induced from the point mass at $\CU$ (\cite{pat}, section 3.1).  If $G(\Lambda_0,\Lambda)$ is amenable, then the norm in $C^*(G(\Lambda_0,\Lambda))$ is determined by these regular representations.  Therefore in this case, the map on $C^*$-algebras is isometric.
\end{proof}

We next present the simplification occurring when $\Lambda$ is finitely aligned.  In the next result we will use the following notations.  Recall the convention on unions of functions stated before Lemma \ref{l.finitelyalignedzigzag}.  In a similar way, let $\{ u_i : i \in I \}$ be a finite collection of partial isometries (in a $C^*$-algebra), whose initial and final projections form a commuting family.  The $\{ u_i \}$ determine a partial isometry, with initial projection $\bigvee_{i \in I} u_i^* u_i$, and final projection $\bigvee_{i \in I} u_i u_i^*$, if and only if for all $i$, $j \in I$, $u_i u_j^* u_j = u_j u_i^* u_i$.  We will write $\bigvee_{i \in I} u_i$ for the partial isometry so determined if and only if the $\{ u_i \}$ satisfy this condition.

\begin{Theorem}
\label{t.toeplitzrelations}
%!?!\textit{(t.toeplitzrelations)} %%% erase label name
Let $\Lambda$ be a finitely aligned category of paths, and let $G = G(\Lambda)$.  The representations of $C^*(G)$ are in one-to-one correspondence with the families $\{T_\alpha : \alpha \in \Lambda\}$ of Hilbert space operators satisfying the relations
\begin{enumerate}
\item $T_\alpha^* T_\alpha = T_{s(\alpha)}$.
\label{t.toeplitzrelations.a}
\item $T_\alpha T_\beta = T_{\alpha\beta}$, if $s(\alpha) = r(\beta)$.
\label{t.toeplitzrelations.b}
\item $T_\alpha T_\alpha^* T_\beta T_\beta^*
= \bigvee_{\gamma \in \alpha \vee \beta} T_\gamma T_\gamma^*$.
\label{t.toeplitzrelations.c}
\end{enumerate}
\end{Theorem}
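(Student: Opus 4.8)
The plan is to reduce this to the zigzag presentation of Theorem \ref{t.gensandrels}, exhibiting the families $\{T_\alpha : \alpha \in \Lambda\}$ of this theorem as parametrizing the same representations as the families $\{T_\zeta : \zeta \in \CZ\}$ there. In one direction, given a representation $\pi$ of $C^*(G)$ I would set $T_\alpha = \pi\bigl(t_{(r(\alpha),\alpha)}\bigr)$, using the zigzag $(r(\alpha),\alpha)$ whose map is the right shift by $\alpha$ (Remark \ref{r.zigzag}\eqref{r.zigzag.b}). Relations \eqref{t.toeplitzrelations.a} and \eqref{t.toeplitzrelations.b} are then immediate from Theorem \ref{t.gensandrels}\eqref{t.gensandrels.a}, \eqref{t.gensandrels.b} together with the fact that $T_v$ is a projection for $v \in \Lambda^0$, while \eqref{t.toeplitzrelations.c} is the image under $\pi$ of an identity already holding among the generators $t_\zeta$, which I would verify from the groupoid description (or from Theorem \ref{t.ringhomomorphisms.two}). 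The content of the theorem is the reverse direction, which I describe now.

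Suppose $\{T_\alpha\}$ satisfies \eqref{t.toeplitzrelations.a}--\eqref{t.toeplitzrelations.c}. Each $T_\alpha$ is a partial isometry, since $T_\alpha^* T_\alpha = T_{s(\alpha)}$ is a projection by \eqref{t.toeplitzrelations.a} and \eqref{t.toeplitzrelations.b}; moreover \eqref{t.toeplitzrelations.c} is symmetric in $\alpha$, $\beta$ (as $\alpha \vee \beta = \beta \vee \alpha$), so the range projections $\{T_\gamma T_\gamma^* : \gamma \in v\Lambda\}$ commute and generate a Boolean ring $\CR$ of projections. Relation \eqref{t.toeplitzrelations.c} is precisely condition \eqref{t.ringhomomorphisms.two.c} of Theorem \ref{t.ringhomomorphisms.two} for the assignment $\mu(\alpha\Lambda) = T_\alpha T_\alpha^*$, so by that theorem $\mu$ extends to a Boolean ring homomorphism $\mu : \CA_v \to \CR$ for each $v$. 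I would then define $T_\zeta = T_{\alpha_1}^* T_{\beta_1} \cdots T_{\alpha_n}^* T_{\beta_n}$ for a zigzag $\zeta = (\alpha_1,\beta_1,\ldots,\alpha_n,\beta_n)$; relations \eqref{t.gensandrels.a} and \eqref{t.gensandrels.b} hold by inspection, since concatenation of zigzags corresponds to the product and reversal to the adjoint.

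The crux is relation \eqref{t.gensandrels.c}, which I would reduce to the single identity $T_\zeta^* T_\zeta = \mu(A(\zeta))$ for every zigzag: granting this, if $A(\zeta) = \bigcup_i A(\zeta_i)$ then $T_\zeta^* T_\zeta = \mu(A(\zeta)) = \bigvee_i \mu(A(\zeta_i)) = \bigvee_i T_{\zeta_i}^* T_{\zeta_i}$, because $\mu$ is a ring homomorphism into a commuting family of projections. To prove $T_\zeta^* T_\zeta = \mu(A(\zeta))$ I would induct on the length of $\zeta$, using two compatibility formulas, each checked first on the generators $\alpha\Lambda$ and then extended by the homomorphism property of $\mu$: namely $T_\alpha\,\mu(E)\,T_\alpha^* = \mu(\alpha E)$ for $E \in \CA_{s(\alpha)}$, and $T_\beta^*\,\mu(F)\,T_\beta = \mu(\sigma^\beta F)$ for $F \in \CA_{r(\beta)}$. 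Writing $\zeta = \eta\,(\alpha_n,\beta_n)$ and combining the inductive hypothesis $T_\eta^* T_\eta = \mu(A(\eta))$ with the identity $A(\zeta) = \sigma^{\beta_n}\bigl(\alpha_n A(\eta)\bigr)$ (the computation of the domain of $\pphi_\zeta = \pphi_\eta \circ \sigma^{\alpha_n}\beta_n$) then yields the result. Finally I would invoke Theorem \ref{t.gensandrels} to produce the representation, and check that the two constructions are mutually inverse, using that $t_\zeta$ depends only on the zigzag map so that the relevant products telescope correctly back to $T_\alpha$.

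I expect the main obstacle to be the second compatibility formula $T_\beta^*\,\mu(F)\,T_\beta = \mu(\sigma^\beta F)$, i.e.\ that conjugation by $T_\beta^*$ implements the left shift $\sigma^\beta$ at the level of projections. This is exactly where finite alignment is essential: on a generator $F = \gamma\Lambda$ one has $\sigma^\beta(\gamma\Lambda) = \bigcup_{\eps \in \gamma \vee \beta} (\sigma^\beta \eps)\Lambda$ by Lemma \ref{l.finitelyalignedpair}, and matching this against $T_\beta^* T_\gamma T_\gamma^* T_\beta$ requires relation \eqref{t.toeplitzrelations.c} to rewrite $T_\gamma T_\gamma^* T_\beta T_\beta^* = \bigvee_{\eps \in \gamma \vee \beta} T_\eps T_\eps^*$, followed by the factorization $\eps = \beta(\sigma^\beta \eps)$ to collapse $T_\beta^* T_\eps = T_{\sigma^\beta \eps}$.
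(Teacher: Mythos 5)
Your proposal is correct, and while its frame matches the paper's (both directions reduce to Theorem \ref{t.gensandrels}, with the same assignments $T_\alpha = T_{(r(\alpha),\alpha)}$ and $T_\zeta = T_{\alpha_1}^* T_{\beta_1} \cdots T_{\alpha_n}^* T_{\beta_n}$), the crux --- verifying relation \eqref{t.gensandrels.c} for the constructed family $\{T_\zeta\}$ --- is handled by a genuinely different argument. The paper never invokes Theorem \ref{t.ringhomomorphisms.two} in its proof; instead it establishes two combinatorial claims about presentations of zigzag maps as finite unions $\bigcup_i \gamma_i \sigma^{\delta_i}$: first, that two presentations of the same map force equality of the operators $\bigvee_i T_{\gamma_i} T_{\delta_i}^*$ (a reduction-to-canonical-form argument, deleting redundant terms until the presentations coincide), and second, by induction on zigzag length, that $T_\zeta = \bigvee_i T_{\gamma_i} T_{\delta_i}^*$ whenever $\pphi_\zeta = \bigcup_i \gamma_i \sigma^{\delta_i}$; relation \eqref{t.gensandrels.c} then falls out. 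You instead route everything through the Boolean ring homomorphisms $\mu$ supplied by Theorem \ref{t.ringhomomorphisms.two} and the single identity $T_\zeta^* T_\zeta = \mu(A(\zeta))$, proved by induction via the covariance formulas $T_\alpha \mu(E) T_\alpha^* = \mu(\alpha E)$ and $T_\beta^* \mu(F) T_\beta = \mu(\sigma^\beta F)$ together with $A(\zeta) = \sigma^{\beta_n}(\alpha_n A(\eta))$. This is cleaner, and it makes explicit the assertion the paper makes just before Theorem \ref{t.ringhomomorphisms.two} --- that this result ``fundamentally explains'' the form of the Toeplitz relations --- which the paper's own proof leaves implicit; the one point you should write out carefully is that the two conjugation maps $E \mapsto T_\alpha \mu(E) T_\alpha^*$ and $F \mapsto T_\beta^* \mu(F) T_\beta$ are themselves Boolean ring homomorphisms (so that checking them on the generators $\gamma\Lambda$ suffices, by uniqueness of extensions); this is routine, using $\mu(\beta\Lambda) = T_\beta T_\beta^*$ and $T_\alpha^* T_\alpha = T_{s(\alpha)}$. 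What the paper's longer argument buys is the explicit Wick-ordering principle --- every $T_\zeta$ is a join of monomials $T_\gamma T_\delta^*$, independent of the chosen decomposition of $\pphi_\zeta$ --- which is the engine behind Remark \ref{r.toeplitzrelations}\eqref{r.toeplitzrelations.four} and the total-set results (Lemma \ref{l.totalsetone}, Proposition \ref{p.totalset}) later in the section; your proof does not recover that formula.
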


\begin{proof}
First suppose that we have a representation of $C^*(G)$.  By Theorem \ref{t.gensandrels} we have a family of Hilbert space operators $\{ T_\zeta : \zeta \in \CZ \}$.  For $\alpha \in \Lambda$ we define $T_\alpha = T_{(r(\alpha),\alpha)}$.  Note that $A(r(\alpha),\alpha) = s(\alpha) \Lambda = A(s(\alpha),s(\alpha))$.  Then 
\[
T_\alpha^* T_\alpha = T_{(r(\alpha),\alpha)}^* T_{(r(\alpha),\alpha)} = T_{\overline{(r(\alpha),\alpha)} (r(\alpha),\alpha)} = T_{(s(\alpha),s(\alpha))} = T_{s(\alpha)},
\]
verifying \eqref{t.toeplitzrelations.a}.  Next, let $s(\alpha) = r(\beta)$.  Then
\[
T_\alpha T_\beta = T_{(r(\alpha),\alpha)} T_{(r(\beta),\beta)} = T_{(r(\alpha),\alpha,r(\beta),\beta)} = T_{(r(\alpha),\alpha\beta)} = T_{\alpha\beta},
\]
verifying \eqref{t.toeplitzrelations.b}.  Finally, let $\alpha$, $\beta \in \Lambda$.  Then $T_\alpha T_\alpha^* T_\beta T_\beta^* = T_{(r(\alpha),\alpha) \overline{(r(\alpha),\alpha)} (r(\beta),\beta) \overline{(r(\beta),\beta)}}$.  Note that
\begin{align*}
\pphi_{(r(\alpha),\alpha) \overline{(r(\alpha),\alpha)} (r(\beta),\beta) \overline{(r(\beta),\beta)}}
&= \pphi_{(r(\alpha),\alpha,\alpha,r(\alpha),r(\beta),\beta,\beta,r(\beta))}
= \alpha \sigma^\alpha \beta \sigma^\beta \\
&= \bigcup_{\eps \in \alpha \vee \beta} \alpha (\sigma^\alpha \eps)  \sigma^{\displaystyle (\sigma^\beta \eps)} \sigma^\beta
= \bigcup_{\eps \in \alpha \vee \beta} (\alpha \sigma^\alpha \eps) \sigma^{\displaystyle (\beta \sigma^\beta \eps)}
= \bigcup_{\eps \in \alpha \vee \beta} \eps \sigma^\eps. \\
\noalign{Hence}
A((r(\alpha),\alpha) \overline{(r(\alpha),\alpha)} (r(\beta),\beta) \overline{(r(\beta),\beta)})
&= \bigcup_{\eps \in \alpha \vee \beta} \eps \Lambda
= \bigcup_{\eps \in \alpha \vee \beta} A(\eps,r(\eps)).
\end{align*}
Therefore $T_{(r(\alpha),\alpha) \overline{(r(\alpha),\alpha)} (r(\beta),\beta) \overline{(r(\beta),\beta)}} = \bigvee_{\eps \in \alpha \vee \beta} T_\eps T_\eps^*$, verifying \eqref{t.toeplitzrelations.c}.

Conversely, let $\{T_\alpha : \alpha \in \Lambda \}$ be given satisfying \eqref{t.toeplitzrelations.a}, \eqref{t.toeplitzrelations.b} and \eqref{t.toeplitzrelations.c} of the theorem.  For $\zeta = (\alpha_1, \beta_1, \ldots, \alpha_n, \beta_n) \allowbreak \in \CZ$ define $T_\zeta = T_{\alpha_1}^* T_{\beta_1} \cdots T_{\alpha_n}^* T_{\beta_n}$.  Then Theorem \ref{t.gensandrels}\eqref{t.gensandrels.a} and \eqref{t.gensandrels.b} clearly hold.  We will verify Theorem \ref{t.gensandrels}\eqref{t.gensandrels.c}.  

We first prove the following claim.  If $\gamma_i$, $\delta_i$, $\xi_j$, $\eta_j \in \Lambda$ are finite collections such that $\bigcup_i \gamma_i \sigma^{\delta_i} = \bigcup_j \xi_j \sigma^{\eta_j}$, then $\bigvee_i T_{\gamma_i} T_{\delta_i}^* = \bigvee_j T_{\xi_j} T_{\eta_j}^*$.  To prove this claim, first fix $i_0$.  Since $\delta_{i_0}$ is in the domain of $\bigcup_i \gamma_i \sigma^{\delta_i}$, there exists $j_0$ such that $\eta_{j_0} \in [\delta_{i_0}]$.  Similarly, there is $i_1$ such that $\delta_{i_1} \in [\eta_{j_0}]$.  Therefore $\delta_{i_1} \in [\delta_{i_0}]$.  Let $\delta_{i_0} = \delta_{i_1} \mu$.  Since any two terms of $\bigcup_i \gamma_i \sigma^{\delta_i}$ must agree on the intersection of their domains, we have 
\[
\gamma_{i_0} = \gamma_{i_0} \sigma^{\delta_{i_0}}(\delta_{i_0}) = \gamma_{i_1} \sigma^{\delta_{i_1}}(\delta_{i_0}) = \gamma_{i_1} \mu.
\]
Therefore $\gamma_{i_0} \sigma^{\delta_{i_0}} = \gamma_{i_1} \sigma^{\delta_{i_1}} |_{\mu \Lambda}$.  Thus the $i_0$ term may be deleted from $\bigcup_i \gamma_i \sigma^{\delta_i}$.  We repeat this process until we have that $\delta_i \not\in [\delta_{i'}]$ for all $i \not= i'$.  Moreover, we have $T_{\gamma_{i_1}} T_{\delta_{i_1}}^* = T_{\gamma_{i_0}} T_{\delta_{i_0}}^* + T_{\gamma_{i_1}} (T_{s(\gamma_{i_1})} + T_\mu T_\mu^*) T_{\delta_{i_1}}^*$.  Therefore $T_{\gamma_{i_0}} T_{\delta_{i_0}}^*$ can be deleted from $\bigvee_i T_{\gamma_i} T_{\delta_i}^*$.  Repeating this for the other map and operator, we may also assume that $\eta_j \not\in [\eta_{j'}]$ for all $j \not= j'$.  Now for each $i$ there is $j$ such that $\eta_j \in [\delta_i]$.  Then there is $i'$ such that $\delta_{i'} \in [\eta_j]$.  Hence $\delta_{i'} \in [\delta_i]$, so we must have $\delta_i = \delta_{i'} = \eta_j$.  Applying both maps to $\delta_i = \eta_j$ we find that $\gamma_i = \xi_j$.  Thus the two presentations of the map are identical, and thus so are the operators.  This finishes the proof of the claim.  

Next we claim that if $\zeta \in \CZ$ and $\pphi_\zeta = \bigcup_i \gamma_i \sigma^{\delta_i}$ is a finite union, then $T_\zeta = \bigvee_i T_{\gamma_i} T_{\delta_i}^*$.  We prove this by induction on the length of $\zeta$.  First suppose that $\zeta = (\alpha,\beta)$.  Then $\sigma^\alpha \beta = \bigcup_{\eps \in \alpha \vee \beta} (\sigma^\alpha \eps) \sigma^{(\sigma^\beta \eps)}$.  Moreover,
\[
T_\alpha^* T_\beta = T_\alpha^* (T_\alpha T_\alpha^* T_\beta T_\beta^*) T_\beta = \bigvee_{\eps \in \alpha \vee \beta} T_\alpha^* T_\eps T_\eps^* T_\beta = \bigvee_{\eps \in \alpha \vee \beta} T_{\sigma^\alpha \eps} T_{\sigma^\beta \eps}^*.
\]
By the previous claim, we know that this doesn't depend on the decomposition chosen for $\pphi_\zeta$.  Now suppose that the current claim is true for zigzags of length at most $n$.  Let $\zeta = (\alpha_1, \beta_1, \ldots, \alpha_{n+1}, \beta_{n+1})$.  Let $\zeta_0 = (\alpha_1, \beta_1, \ldots, \alpha_n, \beta_n)$.  Write $\pphi_{\zeta_0} = \bigcup_i \gamma_i \sigma^{\delta_i}$ and $\pphi_{(\alpha_{n+1},\beta_{n+1})} = \bigcup_j \mu_j \sigma^{\nu_j}$.  Then
\[
\pphi_\zeta = \pphi_{\zeta_0} \circ \pphi_{(\alpha_{n+1},\beta_{n+1})} = \bigcup_{i,j} \gamma_i \sigma^{\delta_i} \mu_j \sigma^{\nu_j} = \bigcup_{i,j,k} \gamma_i \xi_k \sigma^{\eta_k} \sigma^{\nu_j} = \bigcup_{i,j,k} \gamma_i\xi_k \sigma^{\nu_j\eta_k}.
\]
Then the inductive hypothesis gives
\[
T_\zeta = T_{\zeta_0} T_{(\alpha_{n+1},\beta_{n+1})} = \bigvee_{i,j} T_{\gamma_i} T_{\delta_i}^* T_{\mu_j} T_{\nu_j}^* = \bigvee_{i,j,k} T_{\gamma_i} T_{\xi_k} T_{\eta_k}^* T_{\nu_j}^* = \bigvee_{i,j,k} T_{\gamma_i \xi_k} T_{\nu_j \eta_k}^*.
\]
Again, the first claim shows that this is independent of the choice of decomposition of $\pphi_\zeta$.
It is clear that Theorem \ref{t.gensandrels}\eqref{t.gensandrels.c} follows from the last claim.
Finally, Theorem \ref{t.gensandrels}\eqref{t.gensandrels.d} also follows from the last claim.
\end{proof}

\begin{Remark}
\label{r.toeplitzrelations}
%!?!\textit{(r.toeplitzrelations)} %%% erase label name
Because of the importance of the finitely aligned case, we formally present some simple consequences of the relations in Theorem \ref{t.toeplitzrelations} (some of these were used in the proof of the theorem).  Let $\{T_\alpha : \alpha \in \Lambda\}$ be as in the statement of Theorem \ref{t.toeplitzrelations}.
\begin{enumerate}
\item From Theorem \ref{t.toeplitzrelations}\eqref{t.toeplitzrelations.a} we see that $T_u$ is a self-adjoint projection when $u \in \Lambda^0$.  Hence $T_\alpha$ is a partial isometry.
\label{r.toeplitzrelations.one}
\item From Theorem \ref{t.toeplitzrelations}\eqref{t.toeplitzrelations.b}, letting $\alpha = r(\beta)$, we see that $T_\beta T_\beta^* \le T_{r(\beta)}$.
\label{r.toeplitzrelations.two}
\item From Theorem \ref{t.toeplitzrelations}\eqref{t.toeplitzrelations.c} we see that $T_u T_v = 0$ if $u \not= v$, for $u$, $v \in \Lambda^0$.  It follows that $T_\alpha T_\beta =0$ if $s(\alpha) \not= r(\beta)$.
\label{r.toeplitzrelations.three}
\item From Theorem \ref{t.toeplitzrelations}\eqref{t.toeplitzrelations.c} (or its proof),  we see that $T_\alpha^* T_\beta = \bigvee_{\eps \in \alpha \vee \beta} T_{\sigma^\alpha \eps} T_{\sigma^\beta \eps}^*$.  Thus if the elements of $\alpha \vee \beta$ are pairwise disjoint, for example, in the case of a higher-rank graph, we recover the formula (iii), Definition 2.5, of \cite{raesimyee}.
\label{r.toeplitzrelations.four}
\item From Theorem \ref{t.toeplitzrelations}\eqref{t.toeplitzrelations.c} we see that $\{T_\alpha T_\alpha^* : \alpha \in \Lambda\}$ is commutative.
\label{r.toeplitzrelations.five}
\end{enumerate}
\end{Remark}

We end this section with some elementary algebraic consequences of Theorem \ref{t.toeplitzrelations}.  Because distinct minimal common extensions need not be disjoint in general, the monomials of the form $T_\alpha T_\beta^*$ do not necessarily span a dense $*$-subalgebra of the Toeplitz algebra.  However, there is a total set that is not too far from such monomials.

In the following three results, we assume that $\Lambda$ is a finitely aligned category of paths.  Let $P_0 = \{ T_\alpha T_\alpha^* : \alpha \in \Lambda \}$, and $P = \{ p_1 \cdots p_k : p_i \in P_0,\ 1 \le i \le k,\ k \in \IN \}$.  Let $B = \text{span}\, \{ T_\mu T_\nu^* q : \mu, \nu \in \Lambda,\ q \in P \}$.

\begin{Lemma}
\label{l.totalsetone}
%!?!\textit{(l.totalsetone)} %%% erase label name
For $\alpha$, $\beta \in \Lambda$, $T_\alpha^* T_\beta \in B$.
\end{Lemma}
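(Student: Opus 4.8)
The plan is to reduce everything to the explicit expansion of $T_\alpha^* T_\beta$ recorded in Remark \ref{r.toeplitzrelations}\eqref{r.toeplitzrelations.four}, namely
\[
T_\alpha^* T_\beta \;=\; \bigvee_{\eps \in \alpha \vee \beta} T_{\sigma^\alpha \eps}\, T_{\sigma^\beta \eps}^*,
\]
and then to rewrite the join on the right as a finite linear combination of elements of the spanning set of $B$. If $\alpha \vee \beta = \emptyset$ (in particular if $\alpha \perp \beta$ or $r(\alpha) \ne r(\beta)$) the join is empty and $T_\alpha^* T_\beta = 0 \in B$, so I may assume $\alpha \vee \beta \ne \emptyset$.

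First I would set up the family of partial isometries and record the compatibility they enjoy. Write $u_\eps = T_{\sigma^\alpha \eps} T_{\sigma^\beta \eps}^*$ for $\eps \in \alpha \vee \beta$. Since $\eps = \alpha(\sigma^\alpha \eps) = \beta(\sigma^\beta \eps)$, Theorem \ref{t.toeplitzrelations}\eqref{t.toeplitzrelations.a} and \eqref{t.toeplitzrelations.b} give $u_\eps^* u_\eps = T_{\sigma^\beta \eps} T_{\sigma^\beta \eps}^* =: p_\eps$ and $u_\eps u_\eps^* = T_{\sigma^\alpha \eps} T_{\sigma^\alpha \eps}^*$, so each $p_\eps$ lies in $P_0$ and, by Remark \ref{r.toeplitzrelations}\eqref{r.toeplitzrelations.five}, the initial projections $\{p_\eps\}$ (and likewise the final projections) form a commuting family. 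Because the join $\bigvee_\eps u_\eps$ is defined, the family satisfies the compatibility condition $u_\eps u_{\eps'}^* u_{\eps'} = u_{\eps'} u_\eps^* u_\eps$, that is $u_\eps p_{\eps'} = u_{\eps'} p_\eps$, for all $\eps, \eps' \in \alpha \vee \beta$.

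The key step is the inclusion-exclusion expansion of the join. For $\emptyset \ne S \subseteq \alpha \vee \beta$ put $w_S = u_{\eps_0} \prod_{\eps \in S} p_\eps$ for some $\eps_0 \in S$; using the compatibility identity together with $u_{\eps_0} p_{\eps_0} = u_{\eps_0}$ one checks that $w_S$ is independent of the choice of $\eps_0 \in S$. Since the $u_\eps$ pairwise agree on the intersections of their domains, the standard inclusion-exclusion argument (reading $\bigvee_\eps p_\eps = \sum_{\emptyset \ne S} (-1)^{|S|+1}\prod_{\eps \in S} p_\eps$ on the level of projections) yields
\[
T_\alpha^* T_\beta \;=\; \bigvee_{\eps \in \alpha \vee \beta} u_\eps \;=\; \sum_{\emptyset \ne S \subseteq \alpha \vee \beta} (-1)^{|S|+1}\, w_S.
\]

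To finish, I would observe that each summand already has the required shape: writing $q = \prod_{\eps \in S} p_\eps = \prod_{\eps \in S} T_{\sigma^\beta \eps} T_{\sigma^\beta \eps}^*$, which is a finite product of elements of $P_0$ and hence lies in $P$, we have $w_S = T_{\sigma^\alpha \eps_0} T_{\sigma^\beta \eps_0}^*\, q = T_\mu T_\nu^* q$ with $\mu = \sigma^\alpha \eps_0 \in \Lambda$, $\nu = \sigma^\beta \eps_0 \in \Lambda$ and $q \in P$; thus $w_S \in B$. As $B$ is a linear span, the finite combination above lies in $B$, giving $T_\alpha^* T_\beta \in B$. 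I expect the only genuine obstacle to be the bookkeeping for the join of partial isometries — verifying that $w_S$ is well defined independently of $\eps_0$ and that inclusion-exclusion correctly reconstructs $\bigvee_\eps u_\eps$ — both of which rest on the compatibility identity $u_\eps p_{\eps'} = u_{\eps'} p_\eps$ and the commutativity of the projections $\{p_\eps\}$.
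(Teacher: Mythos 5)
Your proof is correct and follows essentially the same route as the paper: both start from the Wick-type expansion $T_\alpha^* T_\beta = \bigvee_{\eps \in \alpha \vee \beta} T_{\sigma^\alpha \eps} T_{\sigma^\beta \eps}^*$ and then rewrite the join as a signed sum of terms $T_\mu T_\nu^* q$ with $q$ a product of the projections $p_\eps = T_{\sigma^\beta\eps}T_{\sigma^\beta\eps}^* \in P_0$. The only difference is bookkeeping --- the paper uses the telescoping form $\sum_i T_{\mu_i}T_{\nu_i}^*\prod_{j<i}(1-p_j)$ while you use the symmetric inclusion--exclusion over subsets $S \subseteq \alpha\vee\beta$ --- and these agree by the same compatibility identity.
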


\begin{proof}
Let $\alpha \vee \beta = \{ \eps_1, \ldots, \eps_k \}$.  For each $i$, write $\eps_i = \alpha \mu_i = \beta \nu_i$.  Let $p_i = T_{\nu_i} T_{\nu_i}^* \in P_0$.  Then
\begin{align*}
T_\alpha^* T_\beta
&= \bigvee_{i=1}^k T_{\mu_i} T_{\nu_i}^* \\
&= T_{\mu_1} T_{\nu_1}^* + T_{\mu_2} T_{\nu_2}^* (1 - p_1) + T_{\mu_3} T_{\nu_3}^* (1 - p_1 - p_2 + p_1 p_2) \\
& \qquad + \cdots + T_{\mu_k} T_{\nu_k}^* \left( 1 - \sum_{i_1 < k} p_{i_1} + \sum_{i_1 < i_2 < k} p_{i_1} p_{i_2} - \cdots + (-1)^{k - 1} p_1 p_2 \cdots p_{k-1} \right) \\
&\in B.
\end{align*}
\end{proof}

\begin{Lemma}
\label{l.totalsettwo}
%!?!\textit{(l.totalsettwo)} %%% erase label name
Let $\alpha \in \Lambda$ and $q \in P$.  Then there is $q' \in P$ such that $q T_\alpha^* = T_\alpha^* q'$.
\end{Lemma}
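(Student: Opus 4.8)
The plan is to reduce the statement to the case where $q$ is a single generator $T_\beta T_\beta^*$, and then to produce $q'$ explicitly. First I would record two structural facts about $P$. By Remark \ref{r.toeplitzrelations}\eqref{r.toeplitzrelations.five} the elements of $P_0$ are commuting self-adjoint projections, so every $q \in P$ is itself a self-adjoint projection, and $P$ is visibly closed under products (a product of two finite products from $P_0$ is again such a product). Since $q^* = q$, the asserted identity $qT_\alpha^* = T_\alpha^* q'$ is equivalent, after taking adjoints, to $T_\alpha q = q' T_\alpha$, which is occasionally the more convenient form to inspect.

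Next I would carry out the reduction by induction on the number $k\ge 1$ of factors in $q = p_1 \cdots p_k$ (with $p_i \in P_0$). Writing $q = (p_1 \cdots p_{k-1})\,p_k$, the $k=1$ case applied to $p_k$ yields $p_k T_\alpha^* = T_\alpha^* r_k$ with $r_k \in P$, and the inductive hypothesis applied to $p_1 \cdots p_{k-1}$ yields $(p_1 \cdots p_{k-1})T_\alpha^* = T_\alpha^* \tilde r$ with $\tilde r \in P$. Pushing $T_\alpha^*$ leftward through the product then gives $qT_\alpha^* = T_\alpha^* \tilde r\, r_k$, and $\tilde r\, r_k \in P$ because $P$ is closed under products. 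Thus it suffices to treat $q = T_\beta T_\beta^*$.

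For the base case I split on whether $s(\alpha) = r(\beta)$. If $s(\alpha) = r(\beta)$, I claim $q' = T_{\alpha\beta}T_{\alpha\beta}^* \in P_0$ works: using Theorem \ref{t.toeplitzrelations}\eqref{t.toeplitzrelations.a}, \eqref{t.toeplitzrelations.b} together with $T_{\alpha\beta} = T_\alpha T_\beta$ and $T_{s(\alpha)}T_\beta = T_\beta$,
\[
T_\alpha^* q' = T_\alpha^* T_\alpha T_\beta T_{\alpha\beta}^* = T_{s(\alpha)} T_\beta T_{\alpha\beta}^* = T_\beta T_\beta^* T_\alpha^* = q T_\alpha^*.
\]
If instead $s(\alpha) \ne r(\beta)$, then $T_\alpha T_\beta = 0$ by Remark \ref{r.toeplitzrelations}\eqref{r.toeplitzrelations.three}, so $qT_\alpha^* = T_\beta (T_\alpha T_\beta)^* = 0$; here I take $q'$ to be a vertex projection $T_v \in P_0$ with $v \ne r(\alpha)$, for which $T_\alpha^* q' = (T_v T_\alpha)^* = 0 = qT_\alpha^*$. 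Such a $v$ is available among $r(\beta)$ and $s(\alpha)$, since these cannot both equal $r(\alpha)$ (that would force $r(\beta) = s(\alpha)$, contrary to assumption).

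The step I expect to require the most care is precisely this degenerate subcase $s(\alpha) \ne r(\beta)$. The uniform candidate $q' = T_\alpha q T_\alpha^*$ (which one checks satisfies $q' T_\alpha = T_\alpha q$ using $T_\alpha = T_\alpha T_{s(\alpha)}$ and the centrality of $T_{s(\alpha)}$ in $P$) collapses to $0$ when the sources fail to match, and $0$ need not lie in $P$; the point is therefore to exhibit a genuine element of $P$ annihilated on the left by $T_\alpha^*$, which is what the vertex projection above accomplishes. By contrast, the matching subcase is a direct application of relations \eqref{t.toeplitzrelations.a} and \eqref{t.toeplitzrelations.b}, and the reduction to $k=1$ is purely formal once closure of $P$ under products has been noted.
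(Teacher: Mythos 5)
Your proof is correct and follows essentially the same route as the paper: the paper's one-line argument notes that $T_\alpha T_\gamma T_\gamma^* = T_{\alpha\gamma}T_{\alpha\gamma}^*T_\alpha$ for $T_\gamma T_\gamma^* \in P_0$, iterates this across the factors of $q$, and takes adjoints at the end --- which is precisely your base case plus induction, written in adjoint form. The only difference is that you explicitly treat the degenerate case $s(\alpha) \neq r(\beta)$, where $\alpha\beta$ is undefined and the paper's formula is read as vacuous; your vertex-projection choice of $q'$ there is a legitimate (and slightly more careful) patch of a point the paper passes over silently.
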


\begin{proof}
Note that if $p = T_\gamma T_\gamma^* \in P_0$, then $T_\alpha p = T_{\alpha \gamma} T_{\alpha \gamma}^* T_\alpha$.  Iterating this produces $q' \in P$ such that $T_\alpha q = q' T_\alpha$.  The lemma follows by taking adjoints.
\end{proof}

\begin{Proposition}
\label{p.totalset}
%!?!\textit{(p.totalset)} %%% erase label name
$B$ equals the $*$-algebra generated by $\{T_\alpha : \alpha \in \Lambda\}$.
\end{Proposition}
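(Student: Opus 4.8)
The plan is to prove the two inclusions separately. Write $\mathfrak{A}$ for the $*$-algebra generated by $\{T_\alpha : \alpha \in \Lambda\}$. The inclusion $B \subseteq \mathfrak{A}$ is immediate, since every spanning element $T_\mu T_\nu^* q$ of $B$ is a product of generators and their adjoints. For the reverse inclusion I would not try to show directly that $B$ is multiplicatively closed; instead I would verify the two facts that $B$ contains every generator and that $B$ is stable under left multiplication by each $T_\alpha$ and each $T_\alpha^*$. An induction on word length then forces every word in the $T_\alpha$ and $T_\alpha^*$ into $B$, giving $\mathfrak{A} \subseteq B$.

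First I would record that the generators lie in $B$. Taking $\nu = s(\alpha)$ and $q = T_{s(\alpha)} \in P_0 \subseteq P$ in the definition of $B$, relations \eqref{t.toeplitzrelations.a} and \eqref{t.toeplitzrelations.b} give $T_\alpha T_{s(\alpha)}^* T_{s(\alpha)} = T_\alpha$ and, symmetrically, $T_{s(\alpha)} T_\alpha^* T_{s(\alpha)} = T_\alpha^*$; hence $T_\alpha, T_\alpha^* \in B$. Left multiplication by $T_\alpha$ is then easy: for a spanning element $b = T_\mu T_\nu^* q$ we have $T_\alpha b = (T_\alpha T_\mu) T_\nu^* q$, which equals $T_{\alpha\mu} T_\nu^* q \in B$ when $s(\alpha) = r(\mu)$ and vanishes otherwise by Remark \ref{r.toeplitzrelations}\eqref{r.toeplitzrelations.three}.

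The substantive step, which I expect to be the main obstacle, is stability under left multiplication by $T_\alpha^*$, since here a creation adjoint must be pushed through the whole expression. I would write $T_\alpha^* b = (T_\alpha^* T_\mu) T_\nu^* q$ and invoke Lemma \ref{l.totalsetone} to replace $T_\alpha^* T_\mu$ by a finite sum $\sum_\ell T_{\rho_\ell} T_{\tau_\ell}^* q_\ell$ of spanning elements of $B$. For each $\ell$ I would then use Lemma \ref{l.totalsettwo} to commute $q_\ell$ past $T_\nu^*$, obtaining $q_\ell T_\nu^* = T_\nu^* q_\ell'$ with $q_\ell' \in P$, and fuse the two adjoints using \eqref{t.toeplitzrelations.b}, namely $T_{\tau_\ell}^* T_\nu^* = (T_\nu T_{\tau_\ell})^* = T_{\nu\tau_\ell}^*$, the term vanishing when $\nu$ and $\tau_\ell$ are not composable by Remark \ref{r.toeplitzrelations}\eqref{r.toeplitzrelations.three}. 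Since $P$ is closed under products, $q_\ell' q \in P$, so each resulting term $T_{\rho_\ell} T_{\nu\tau_\ell}^* (q_\ell' q)$ lies in $B$, whence $T_\alpha^* b \in B$.

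With these stability properties in hand, I would finish by induction on the length $m$ of a word $w = w_1 \cdots w_m$, each $w_i \in \{T_\alpha, T_\alpha^* : \alpha \in \Lambda\}$. The base case $m = 1$ is exactly the statement $T_\alpha, T_\alpha^* \in B$ established above, and the inductive step factors off the leftmost letter $w_1$ and applies the appropriate left-multiplication stability to the length-$(m-1)$ tail, which lies in $B$ by the inductive hypothesis. Thus every word, and hence all of $\mathfrak{A}$, is contained in $B$, giving $B = \mathfrak{A}$. The only delicate bookkeeping is tracking compositions and identifying which products vanish, and this is entirely controlled by Remark \ref{r.toeplitzrelations}\eqref{r.toeplitzrelations.three}.
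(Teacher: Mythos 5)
Your base-case computation contains a genuine error: the identity $T_{s(\alpha)}T_\alpha^*T_{s(\alpha)} = T_\alpha^*$ is false unless $s(\alpha) = r(\alpha)$. The left multiplication is fine, since $T_{s(\alpha)}T_\alpha^* = (T_\alpha T_{s(\alpha)})^* = T_\alpha^*$ by relation \eqref{t.toeplitzrelations.b}; but the right multiplication kills the element: $T_\alpha^* T_{s(\alpha)} = (T_{s(\alpha)}T_\alpha)^*$, and $T_{s(\alpha)}T_\alpha = 0$ whenever $s(\alpha) \not= r(\alpha)$ by Remark \ref{r.toeplitzrelations}\eqref{r.toeplitzrelations.three}. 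Equivalently, $T_\alpha^* = T_\alpha^*(T_\alpha T_\alpha^*) = T_\alpha^* T_{r(\alpha)}$, and $T_{r(\alpha)}T_{s(\alpha)} = 0$ for distinct vertices. The ``symmetry'' you invoke fails precisely because $q$ multiplies on the right in both spanning expressions, while the final projection of $T_\alpha^*$ lives at the vertex $r(\alpha)$, not $s(\alpha)$. The repair is immediate: take $\mu = s(\alpha)$, $\nu = \alpha$, and $q = T_{r(\alpha)} = T_{r(\alpha)}T_{r(\alpha)}^* \in P_0$ (or $q = T_\alpha T_\alpha^*$), so that $T_\alpha^* = T_{s(\alpha)}T_\alpha^*T_{r(\alpha)}$ exhibits $T_\alpha^*$ as a spanning element of $B$.

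With that fixed, the rest of your argument is correct, and it is essentially the paper's proof in different packaging. The paper inducts on the number of factors in monomials $T_{\alpha_1}T_{\beta_1}^* \cdots T_{\alpha_k}T_{\beta_k}^*$, and its pivotal length-two computation --- expand $T_{\beta_1}^*T_{\alpha_2}$ by Lemma \ref{l.totalsetone}, commute the resulting projection across the next adjoint by Lemma \ref{l.totalsettwo}, then fuse $T_{\alpha_1}T_\mu = T_{\alpha_1\mu}$ and $T_{\tau}^*T_{\beta_2}^* = T_{\beta_2\tau}^*$ --- is exactly your closure of $B$ under left multiplication by $T_\alpha^*$. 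Your reorganization (stability of $B$ under left multiplication by each generator and each adjoint, then induction on word length) is only cosmetically different from the paper's induction on monomial length, so once the base case is corrected the proof goes through.
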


\begin{proof}
It is clear that $B$ is contained in the $*$-algebra.  The $*$-algebra is spanned by monomials of the form $T_{\alpha_1} T_{\beta_1}^* \cdots T_{\alpha_k} T_{\beta_k}^*$, so it suffices to prove that such monomials are in $B$.  This is clear for monomials of length one.  We consider $T_{\alpha_1} T_{\beta_1}^* T_{\alpha_2} T_{\beta_2}^*$.  By Lemma \ref{l.totalsetone} we know that $T_{\beta_1}^* T_{\alpha_2} \in B$.  So it is enough to consider $T_{\alpha_1} ( T_\mu T_\nu^* q) T_{\beta_2}^*$, with $q \in P$.  By Lemma \ref{l.totalsettwo} we know that there is $q' \in P$ such that $q T_{\beta_2}^* = T_{\beta_2}^* q'$.  Thus $T_{\alpha_1} ( T_\mu T_\nu^* q) T_{\beta_2}^* = T_{\alpha_1 \mu} T_{\beta_2 \nu}^* q' \in B$.  Finally, assume inductively that the result holds for monomials of length less than $k$.  Given $T_{\alpha_1} T_{\beta_1}^* \cdots T_{\alpha_k} T_{\beta_k}^*$, we then know that $T_{\alpha_2} T_{\beta_2}^* \cdots T_{\alpha_k} T_{\beta_k}^* \in B$.  Therefore it suffices to show that $(T_{\alpha_1} T_{\beta_1}^*) (T_\mu T_\nu^* q) \in B$.  This follows from the case of length two, which was already proved.
\end{proof}

\section{The boundary of a finitely aligned category of paths}
\label{s.boundary}
%!?!\textit{(s.boundary)} %%% erase label name

Our next task is to identify the ultrafilters in the ring $\CA_v$, for $v \in \Lambda^0$.  We have not carried this out in the general case, so for the rest of the paper (except for the first part of section \ref{s.amalgamation}), we assume that all categories of paths are finitely aligned.

\begin{Definition}
\label{d.directedset}
%!?!\textit{(d.directedset)} %%% erase label name
Let $\Lambda$ be a category of paths.  A subset $C \subseteq \Lambda$ is \textit{directed} if the partial order it inherits from $\Lambda$ (Definition \ref{d.extensions}) is directed:  for all $\alpha$, $\beta \in C$, $\alpha\Lambda \cap \beta\Lambda \cap C \not= \emptyset$.  $C$ is \textit{hereditary} if $[\gamma] \subseteq C$ whenever $\gamma \in C$ (recall from Definition \ref{d.extensions} that $[\gamma]$ is the set of initial segments of $\gamma$).  A directed set $C$ is \textit{finite} if it contains a maximal element; otherwise it is \textit{infinite}.
\end{Definition}

\begin{Remark}
\label{r.directedset}
%!?!\textit{(r.directedset)} %%% erase label name
We note that if $C$ is a directed set, then $\widetilde{C} = \bigcup_{\alpha \in C} [\alpha]$ is directed and hereditary and contains $C$.  We also note that the set of all directed subsets of $\Lambda$, partially ordered by inclusion, satisfies the hypotheses of Zorn's lemma, so that every directed set is contained in a maximal directed set.  Any maximal directed set is necessarily hereditary.
\end{Remark} 

For the next few results we assume that $\Lambda$ is a countable category.  We do not know if this is necessary (other than for our proofs).

\begin{Lemma}
\label{l.directedsets}
%!?!\textit{(l.directedsets)} %%% erase label names
Let $\Lambda$ be a countable finitely aligned category of paths, and let $C$ be a directed subset of $\Lambda$.  Suppose that $\beta \in \Lambda$ is such that $\beta \Cap \alpha$ for all $\alpha \in C$.  Then there exists a directed set $\widetilde{C}$ containing both $C$ and $\beta$. 
\end{Lemma}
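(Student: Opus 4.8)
The plan is to realize the desired set as $C\cup\{\beta\}$ augmented by a single coherent increasing sequence of minimal common extensions of $\beta$ with a cofinal chain in $C$. First I would replace $C$ by such a chain. Since $\Lambda$ is countable, enumerate $C=\{c_1,c_2,\ldots\}$ and build, using directedness two elements at a time, an increasing sequence $\delta_1\le\delta_2\le\cdots$ in $C$ with $\delta_n$ an upper bound for $\{c_1,\ldots,c_n\}$; then $\{\delta_n\}$ is cofinal in $C$ and $c_k\le\delta_n$ whenever $n\ge k$. (If $C$ has a greatest element the chain is eventually constant and the construction collapses to a single application of finite alignment; if $C=\emptyset$ take $\widetilde C=\{\beta\}$.)

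Next, for each $n$ set $M_n=\delta_n\vee\beta$. Because $\beta\Cap\delta_n$ and $\Lambda$ is finitely aligned, Lemma \ref{l.finitelyalignedpair} gives that $M_n$ is nonempty and finite, with $\delta_n\Lambda\cap\beta\Lambda=\bigcup_{\eps\in M_n}\eps\Lambda$. The crux is to extract a single chain $\eps_1\le\eps_2\le\cdots$ with $\eps_n\in M_n$. I would obtain it by König's lemma applied to the tree $T$ whose nodes are the finite increasing sequences $(\eps_1,\ldots,\eps_n)$ with $\eps_i\in M_i$ and $\eps_i\le\eps_{i+1}$. Finiteness of each $M_i$ makes $T$ finitely branching, and $T$ is infinite because, given any $\eta_n\in M_n$, the inclusion $\delta_{n-1}\le\delta_n$ forces $\eta_n\in\delta_{n-1}\Lambda\cap\beta\Lambda$, so $\eta_n$ extends some $\eta_{n-1}\in M_{n-1}$; iterating this downward produces a branch of length $n$ for every $n$. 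An infinite branch of $T$ is the chain sought, and each $\eps_n$ extends both $\delta_n$ and $\beta$.

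Finally I would set $\widetilde C=C\cup\{\beta\}\cup\{\eps_n:n\ge1\}$ and check directedness by cases. Two elements of $C$ are handled by directedness of $C$ itself. For $c_k\in C$ together with $\beta$, or with some $\eps_m$, the element $\eps_{\max(k,m)}$ lies above both: the $\eps_n$ form a chain, $\eps_n\ge\beta$ for all $n$, and $\eps_n\ge\delta_n\ge c_k$ whenever $n\ge k$. Two elements $\eps_m,\eps_{m'}$ have common extension $\eps_{\max(m,m')}$. In every case the common extension lies in $\widetilde C$, so $\widetilde C$ is directed and contains $C$ and $\beta$.

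The step I expect to be the main obstacle is the extraction of the coherent chain in the second paragraph. Because minimal common extensions in a finitely aligned category need not be disjoint, the assignment sending $\eps\in M_n$ to an element of $M_{n-1}$ that it extends is only a relation, not a well-defined map, so one cannot simply pass to an inverse limit. Recasting the coherence requirement as an infinite, finitely branching tree and invoking König's lemma is the device that circumvents this, and it is precisely here that countability of $\Lambda$ (hence finiteness of the levels) enters.
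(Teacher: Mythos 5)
Your proof is correct, but it takes a genuinely different route from the paper's. The paper attacks the coherence problem head-on: for each $\alpha \in C$ it first shows (by contradiction, using directedness of $C$ and finite alignment) that one can choose $\eps_\alpha \in \beta\Lambda \cap \alpha\Lambda$ with the extra \emph{forward-compatibility} property that $\eps_\alpha \Cap \gamma$ for every $\gamma \in C \cap \alpha\Lambda$; then, along a cofinal chain $\alpha_1 \le \alpha_2 \le \cdots$ obtained from a countable enumeration of $C$, a second contradiction argument shows inductively that $\eps_{\alpha_i}$ may moreover be chosen inside $\eps_{\alpha_{i-1}}\Lambda$, so that the chosen elements form a chain. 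You instead front-load the cofinal chain and replace both contradiction arguments by a single compactness argument: since Lemma \ref{l.finitelyalignedpair} gives $\delta_{n-1}\Lambda \cap \beta\Lambda = \bigcup_{\eps \in \delta_{n-1} \vee \beta} \eps\Lambda$, every element of $M_n$ extends some element of $M_{n-1}$, so your selection tree has nodes of every length and is finitely branching, and K\"onig's lemma extracts the coherent chain in one stroke. This is sound, and your diagnosis of the obstacle is accurate: an element of $M_{n-1}$ need not have \emph{any} extension in $M_n$ (it may be disjoint from $\delta_n$), so a naive upward induction fails --- this is exactly the difficulty that the paper's forward-compatibility property is designed to overcome, and that K\"onig's lemma sidesteps. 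What each approach buys: yours isolates the combinatorial core (finiteness of $\delta_n \vee \beta$, which requires finite alignment, plus countability) inside a standard compactness principle, shortening the verification and avoiding the delicate choice bookkeeping; the paper's argument is more elementary in that it invokes no external combinatorial lemma, and its technique of producing elements of $\beta\Lambda \cap \alpha\Lambda$ compatible with all of $C \cap \alpha\Lambda$ is reused later --- a variant of the same argument appears in the proof of Theorem \ref{t.ultrafilters}.
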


\begin{proof}
Fix $\alpha \in C$.  We claim that there exists $\eps_\alpha \in \beta\Lambda \cap \alpha\Lambda$ such that $\eps_\alpha \Cap \gamma$ for all $\gamma \in C \cap \alpha\Lambda$.  For suppose not.  By finite alignment, $\beta\Lambda \cap \alpha\Lambda = \bigcup_{\eps \in \beta \vee \alpha} \eps\Lambda$.  The assumption means that for all $\eps \in \beta \vee \alpha$, there exists $\gamma_\eps \in C \cap \alpha\Lambda$ such that $\gamma_\eps \Lambda \cap \eps\Lambda = \emptyset$.  Since $C$ is directed there exits $\gamma \in C \cap \bigcap_{\eps \in \beta \vee \alpha} \gamma_\eps \Lambda$.  We have $\beta \Cap \gamma$, so let $\eta \in \beta\Lambda \cap \gamma\Lambda$.  Since $\eta \in \alpha\Lambda$ we have $\eta \in \beta\Lambda \cap \alpha\Lambda$, so there is $\eps \in \beta \vee \alpha$ with $\eta \in \eps\Lambda$.  Since $\eta \in \gamma\Lambda \subseteq \gamma_\eps\Lambda$, we have $\eta \in \gamma_\eps \Lambda \cap \eps\Lambda$, a contradiction.

Now let $C = \{ \xi_1,\ \xi_2,\ \ldots \}$.  Let $\alpha_1 = \xi_1$, and choose $\eps_{\alpha_1}$ as in the previous claim.  Let $\alpha_2 \in \alpha_1\Lambda \cap \xi_2\Lambda \cap C$.  Note that since $\alpha_1$, $\alpha_2 \in C$ we have $\alpha_1 \Lambda \cap \alpha_2 \Lambda \cap C \not= \emptyset$.  If $\gamma \in \alpha_1 \Lambda \cap \alpha_2 \Lambda \cap C$ then $\eps_{\alpha_1} \Cap \gamma$, and hence $\eps_{\alpha_1} \Cap \alpha_2$.  We now claim that there exists $\eps_{\alpha_2}$ as in the previous claim such that $\eps_{\alpha_2} \in \eps_{\alpha_1}\Lambda$.  Again, suppose not.  Then since $\eps_{\alpha_1} \in \beta \Lambda$, we have that for each $\delta \in \alpha_2\Lambda \cap \eps_{\alpha_1}\Lambda$, there exists $\eta_\delta \in C \cap \alpha_2\Lambda$ such that $\delta \perp \eta_\delta$.  Let $\eta \in C \cap \bigcap_{\delta \in \alpha_2 \vee \eps_{\alpha_1}} \eta_\delta \Lambda$, which is possible because $C$ is directed.  Now we have
\[
\eta\Lambda \cap \eps_{\alpha_1}\Lambda = \eta\Lambda \cap \alpha_2\Lambda \cap \eps_{\alpha_1}\Lambda = \bigcup_{\delta \in \alpha_2 \vee \eps_{\alpha_1}} \eta \Lambda \cap \delta \Lambda \subseteq \bigcup_{\delta \in \alpha_2 \vee \eps_{\alpha_1}} \eta_\delta \Lambda \cap \delta \Lambda = \emptyset.
\]
But since $\eta$, $\alpha_1 \in C$, there is $\eta' \in C \cap \eta \Lambda \cap \alpha_1 \Lambda$.  But then $\eps_{\alpha_1} \Cap \eta'$, so $\eps_{\alpha_1} \Cap \eta$, a contradiction, and the current claim is proved.

Inductively, we let $\alpha_i \in \alpha_{i-1}\Lambda \cap \xi_i\Lambda \cap C$, and choose $\eps_{\alpha_i}$ (as in the first claim above) so that $\eps_{\alpha_i} \in \eps_{\alpha_{i-1}}\Lambda$.  Then $\widetilde{C} = C \cup \{ \eps_{\alpha_i} : i \in \IN \} \cup \{\beta\}$ is directed.  
\end{proof}

The definitions of filters and ultrafilters were recalled at the beginning of section \ref{s.groupoid}.

\begin{Definition}
\label{d.ultrafilters}
%!?!\textit{(d.ultrafilters)} %%% erase label name
Let $\Lambda$ be a finitely aligned category of paths, and let $v \in \Lambda^0$.  For a hereditary directed set $C \subseteq v\Lambda$, let
\begin{align*}
\CU_{C,0}
&= \{ E \in \CA_v : E \supseteq \alpha\Lambda \text{ for some } \alpha \in C \} \\
\CU_C
&= \{ E \in \CA_v : E \supseteq C \cap \alpha\Lambda \text{ for some } \alpha \in C \}.
\end{align*}
\end{Definition}

It is clear that $\CU_{C,0}$ and $\CU_C$ are filters in $\CA_v$ and that $\CU_{C,0} \subseteq \CU_C$.

\begin{Definition}
Let $\Lambda$ be a finitely aligned category of paths, and let $v \in \Lambda^0$.  We let $v\Lambda^*$ denote the collection of all hereditary directed subsets of $v\Lambda$, and $v\Lambda^{**}$ the collection of all maximal directed subsets of $v\Lambda$.  We write $\Lambda^* = \cup_{v \in \Lambda^0} v\Lambda^*$, and $\Lambda^{**} = \cup_{v \in \Lambda^0} v\Lambda^{**}$.
\end{Definition}

\begin{Theorem}
\label{t.ultrafilters}
%!?!\textit{(t.ultrafilters)} %%% erase label name
Let $\Lambda$ be a countable finitely aligned category of paths, let $v \in \Lambda^0$, and let $C \in v\Lambda^*$.
\begin{enumerate}
\item\label{t.ultrafilters.a}
 $\CU_{C,0}$ is an ultrafilter if and only if $C \in v\Lambda^{**}$.
\item\label{t.ultrafilters.b}
 $\CU_C$ is an ultrafilter, fixed if and only if $C$ is finite.
\item\label{t.ultrafilters.c}
 Every ultrafilter in $\CA_v$ is of the form $\CU_C$ for a unique $C \in v\Lambda^*$.
\end{enumerate}
\end{Theorem}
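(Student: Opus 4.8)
The plan is to push every assertion down to the level of the generating tail sets $\{\gamma\Lambda : \gamma \in v\Lambda\}$, using two simple facts. The first is a heredity observation: since $C$ is hereditary and directed, for any $\delta \in v\Lambda$ we have $C \cap \delta\Lambda \neq \emptyset$ if and only if $\delta \in C$, the nontrivial direction being that $\beta \in C \cap \delta\Lambda$ forces $\delta \in [\beta] \subseteq C$. The second is that, by Remark \ref{r.finitelyalignedring}, every $E \in \CA_v$ is a finite disjoint union $E = \bigsqcup_k E_k$ with $E_k = \gamma_k\Lambda \setminus \bigcup_i \delta_{ki}\Lambda \in \CE_v$ and $\delta_{ki} \in \gamma_k\Lambda$, so that membership questions for ultrafilters can be analyzed piece by piece. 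I will also use that an ultrafilter is \emph{prime} (a finite union lies in it only if some term does), which is immediate from the characterization recalled at the start of Section \ref{s.groupoid}, and that finite subsets of a directed set have a common extension inside the set.

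For \eqref{t.ultrafilters.a}, the forward implication is by contraposition: if $C$ is not maximal there is $\beta \in v\Lambda \setminus C$ lying in a directed set containing $C$, so $\beta \Cap \alpha$ for all $\alpha \in C$; then $\beta\Lambda \notin \CU_{C,0}$ (as $\beta \notin C$), yet every $F \in \CU_{C,0}$ contains some $\alpha\Lambda$ with $\alpha\Lambda \cap \beta\Lambda \neq \emptyset$, so no member of $\CU_{C,0}$ is disjoint from $\beta\Lambda$ and $\CU_{C,0}$ is not an ultrafilter. For the converse, maximality together with Lemma \ref{l.directedsets} yields the key claim that $\delta \notin C$ implies $\alpha \perp \delta$ for some $\alpha \in C$. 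Given $E = \bigsqcup_k E_k$, I first note that $\alpha\Lambda \subseteq E$ forces $\alpha\Lambda \subseteq E_k$ for the single piece containing $\alpha$, whence $E \in \CU_{C,0}$ exactly when some $E_k$ has $\gamma_k \in C$ and all $\delta_{ki} \notin C$. If this fails, then for each $k$ either the claim (when $\gamma_k \notin C$) or the presence of some $\delta_{ki} \in C$ produces $a_k \in C$ with $a_k\Lambda \cap E_k = \emptyset$; a common extension $\alpha \in C$ of the $a_k$ then gives $\alpha\Lambda \in \CU_{C,0}$ disjoint from $E$, proving $\CU_{C,0}$ is an ultrafilter.

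For \eqref{t.ultrafilters.b}, I first check the dichotomy that for each $E \in \CE_v$ either $E \in \CU_C$ or some $C \cap \alpha\Lambda$ ($\alpha \in C$) misses $E$: the cases $\gamma \notin C$, $\gamma \in C$ with some $\delta_i \in C$, and $\gamma \in C$ with all $\delta_i \notin C$ are all settled by the heredity fact (the last gives $C \cap \gamma\Lambda \subseteq E$). Passing to $E = \bigsqcup_k E_k$ and taking a common extension in $C$ of the finitely many witnesses shows $E \in \CU_C$ or $E^c \in \CU_C$, so $\CU_C$ is an ultrafilter; it is proper since $\alpha \in C \cap \alpha\Lambda$. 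For ``fixed iff finite'': if $C$ is finite it has a maximal element $\delta$, so $C = [\delta]$, $C \cap \delta\Lambda = \{\delta\}$, and $\CU_C = \CU_\delta$ is fixed. If $C$ is infinite and $\CU_C = \CU_\delta$, then since $\beta\Lambda \in \CU_C$ for every $\beta \in C$ (by directedness) $\delta$ extends every element of $C$; as $\delta \notin C$ (else $C$ would have a maximal element), the heredity fact gives $C \cap \delta\Lambda = \emptyset$, so $\delta\Lambda \notin \CU_C$, contradicting $\delta \in \delta\Lambda$. Hence $\CU_C$ is free.

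For \eqref{t.ultrafilters.c}, the characterization $\gamma\Lambda \in \CU_C \iff \gamma \in C$ (from the cases above) gives $C = \{\gamma : \gamma\Lambda \in \CU_C\}$, hence uniqueness. For existence, given an ultrafilter $\CU$ set $C = \{\gamma \in v\Lambda : \gamma\Lambda \in \CU\}$; it is hereditary since $\CU$ is closed under supersets, and directed because $\gamma\Lambda \cap \gamma'\Lambda = \bigcup_{\eps \in \gamma \vee \gamma'} \eps\Lambda \in \CU$ together with primeness produces $\eps \in \gamma \vee \gamma'$ with $\eps\Lambda \in \CU$, i.e. $\eps \in C \cap \gamma\Lambda \cap \gamma'\Lambda$. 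Finally $\CU$ and $\CU_C$ agree on every $\gamma\Lambda$, and for any ultrafilter primeness and the defining characterization determine membership of a general $E = \bigsqcup_k(\gamma_k\Lambda \setminus \bigcup_i \delta_{ki}\Lambda)$ from that of the generators $\gamma_k\Lambda$ and $\delta_{ki}\Lambda$; hence the two ultrafilters coincide. The main obstacle throughout is exactly this reduction from arbitrary elements of $\CA_v$ to the tail sets: it is what forces the bookkeeping with the finitely aligned normal form of Remark \ref{r.finitelyalignedring}, primeness, and the heredity fact, and it is the only place (via Lemma \ref{l.directedsets} in \eqref{t.ultrafilters.a}) where countability of $\Lambda$ is used.
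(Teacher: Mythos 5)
Your parts \eqref{t.ultrafilters.b} and \eqref{t.ultrafilters.c} are correct, and your proof of \eqref{t.ultrafilters.c} takes a genuinely different (and simpler) route than the paper's; but your part \eqref{t.ultrafilters.a} leans on an auxiliary claim that is false as stated, so it needs a small repair.

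The false claim is that $\alpha\Lambda \subseteq E = \bigsqcup_k E_k$ forces $\alpha\Lambda \subseteq E_{k_0}$, where $E_{k_0}$ is the piece containing $\alpha$. This fails whenever $v\Lambda$ contains a non-vertex path $e$: decompose $E = v\Lambda$ as $e\Lambda \sqcup (v\Lambda \setminus e\Lambda)$ and take $\alpha = v$; the piece containing $v$ is $v\Lambda \setminus e\Lambda$, which does not contain $\alpha\Lambda = v\Lambda$. Consequently your ``exactly when'' equivalence cannot be obtained from this claim. What saves the argument is that you never actually need the implication the claim was meant to supply ($E \in \CU_{C,0}$ implies the combinatorial condition); the only implication your dichotomy uses is the converse: if some $E_k = \gamma_k\Lambda \setminus \bigcup_i \delta_{ki}\Lambda$ has $\gamma_k \in C$ and all $\delta_{ki} \notin C$, then $E \in \CU_{C,0}$. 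That implication is true, and its proof is exactly the construction you already carry out in the other horn: by your key claim choose $a_i \in C$ with $a_i \perp \delta_{ki}$ for each $i$, let $\alpha \in C$ be a common extension in $C$ of $\gamma_k$ and the $a_i$, and observe that $\alpha\Lambda \subseteq \gamma_k\Lambda$ while $\alpha\Lambda \cap \delta_{ki}\Lambda \subseteq a_i\Lambda \cap \delta_{ki}\Lambda = \emptyset$, so $\alpha\Lambda \subseteq E_k \subseteq E$. Delete the false sentence, insert this, and part \eqref{t.ultrafilters.a} is correct --- and is then essentially the paper's own proof, reorganized as a dichotomy over the pieces of $E$ rather than a contradiction.

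Part \eqref{t.ultrafilters.c} is where you genuinely depart from the paper, to your advantage. The paper produces $C$ by applying Zorn's lemma to $\CS = \{ C \in \Lambda^* : \CU_{C,0} \subseteq \CU \}$ and then, to show $\CU = \CU_C$, runs an inductive construction that enumerates $C$ (this is where countability enters) and repeatedly extracts tail sets belonging to $\CU$. You instead define $C = \{ \gamma : \gamma\Lambda \in \CU \}$ outright, verify it is hereditary and directed using finite alignment and primeness, and then invoke the observation that an ultrafilter in $\CA_v$ is determined by its trace on the tail sets: by primeness, $E = \bigsqcup_k E_k$ lies in an ultrafilter $\CV$ iff some $E_k$ does, and $E_k = \gamma_k\Lambda \setminus \bigcup_i \delta_{ki}\Lambda \in \CV$ iff $\gamma_k\Lambda \in \CV$ and no $\delta_{ki}\Lambda \in \CV$, the nontrivial direction holding because each $\delta_{ki}\Lambda \notin \CV$ yields $F_i \in \CV$ disjoint from it, so that $\gamma_k\Lambda \cap \bigcap_i F_i$ is a member of $\CV$ contained in $E_k$. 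Since $\CU_C$ is an ultrafilter by part \eqref{t.ultrafilters.b} and agrees with $\CU$ on all tail sets by construction, $\CU = \CU_C$ follows. This avoids Zorn's lemma entirely and removes countability from part \eqref{t.ultrafilters.c}, confirming your closing remark that in your argument countability is used only through Lemma \ref{l.directedsets} in part \eqref{t.ultrafilters.a}; in the paper's proof it is used in \eqref{t.ultrafilters.c} as well. Given the paper's explicit remark that it does not know whether countability is necessary, this sharpening is worth recording.
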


\begin{proof}
\eqref{t.ultrafilters.a} ($\Rightarrow$) Let $C$, $C' \in \Lambda^*$ with $C \subsetneq C'$.  Let $\beta \in C' \setminus C$.  If $\beta\Lambda \in \CU_{C,0}$, then there exists $\alpha \in C$ such that $\beta\Lambda \supseteq \alpha\Lambda$.  But then $\alpha \in \beta\Lambda$, so that $\beta \in C$, since $C$ is hereditary.  From this contradiction we see that $\CU_{C,0} \subsetneq \CU_{C',0}$, and hence $\CU_{C,0}$ is not an ultrafilter.

\vspace*{.1 in}

\noindent
($\Leftarrow$) Let $C \in v\Lambda^{**}$.  Let $E \in \CE_v \setminus \CU_{C,0}$.  Write $E = \beta\Lambda \setminus \cup_{i=1}^n \gamma_i\Lambda$ with $\gamma_i \in \beta\Lambda$.  We must find a set in $\CU_{C,0}$ disjoint from $E$.  If there exists $\alpha \in C$ with $\alpha \perp \beta$, then $\alpha\Lambda$ is such a set.  So let us suppose that $\alpha \Cap \beta$ for all $\alpha \in C$.  Then since $C$ is maximal, Lemma \ref{l.directedsets} implies that $\beta \in C$.  Now suppose that $\gamma_i \not\in C$ for all $i$.  Again by the maximality of $C$, and Lemma \ref{l.directedsets}, for each $i$ there is $\alpha_i \in C$ with $\alpha_i \perp \gamma_i$.  Let $\alpha \in C \cap \bigcap_{i=1}^n \alpha_i\Lambda \cap \beta\Lambda$.  Then $\alpha \perp \gamma_i$ for all $i$, so $\alpha\Lambda \subseteq E$, contradicting the assumption that $E \not\in \CU_{C,0}$.  Therefore there exists $i_0$ such that $\gamma_{i_0} \in C$.  Then $\gamma_{i_0}\Lambda \in \CU_{C,0}$, and $\gamma_{i_0}\Lambda \cap E = \emptyset$.  Finally, if $E \in \CA_v$ with $E \not\in \CU_{C,0}$, then $E = \sqcup_{j=1}^p E_j$, where $E_j \in \CE_v$.  Since $\CU_{C,0}$ is a filter, $E_j \not\in \CU_{C,0}$ for all $j$.  By the above, there is $F_j \in \CU_{C,0}$ with $E_j \cap F_j = \emptyset$.  Then $F = \cap_j F_j \in \CU_{C,0}$, and $E \cap F = \emptyset$. 

\vspace*{.1 in}

\noindent
\eqref{t.ultrafilters.b}  As in the proof of part \eqref{t.ultrafilters.a}, it suffices to consider $E \in \CE_v$ with $E \not \in \CU_C$.  Write $E = \beta\Lambda \setminus \cup_{i=1}^n \gamma_i\Lambda$ with $\gamma_i \in \beta\Lambda$.  If $C \cap \beta\Lambda \not= \emptyset$, let $\alpha \in C \cap \beta\Lambda$.  Then $\alpha\Lambda \subseteq \beta\Lambda$.  Since $E \not\in \CU_C$, there is $\alpha' \in C \cap \alpha\Lambda$ such that $\alpha' \not\in E$.  Then there is $i$ such that $\alpha' \in \gamma_i\Lambda$.  Then $\alpha'\Lambda \subseteq \gamma_i\Lambda \subseteq E^c$, and $\alpha'\Lambda \in \CU_C$.  Now suppose that $C \cap \beta\Lambda = \emptyset$.  Then $v\Lambda \setminus \beta\Lambda \in \CU_C$, and $(v\Lambda \setminus \beta\Lambda) \cap E = \emptyset$. 

If $C$ has a maximal element $\alpha_0$, then $\{ \alpha_0 \} \in \CU_C$, so that $\CU_C$ is fixed.  Conversely, if $\CU_C$ is fixed there is $\alpha_0 \in v\Lambda$ such that $\{ \alpha_0 \} \in \CU_C$.  From the definition of $\CU_C$ it follows that $\alpha_0 \in C$, and must be the maximal element of $C$. 

\vspace*{.1 in}

\noindent
\eqref{t.ultrafilters.c} For the uniqueness, let $C$, $C'$ be distinct elements of $v\Lambda^*$.  Then, say, there exists $\alpha \in C \setminus C'$.  Since $C'$ is hereditary, $C' \cap \alpha\Lambda = \emptyset$.  But then $\alpha\Lambda \in \CU_C \setminus \CU_{C'}$.

Now let $\CU$ be an ultrafilter in $\CA_v$.  Let 
\[
\CS = \{ C \in \Lambda^* : \CU_{C,0} \subseteq \CU \}.
\]
$\CS$ is partially ordered by inclusion, and satisfies the hypotheses of Zorn's lemma (note that $\CS \not= \emptyset$ since $\{v\} \in \CS$).  Let $C$ be a maximal element of $\CS$.  Then $C$ is hereditary.  (Of course, $C$ need not be maximal as a directed set.)  Suppose $\CU \not= \CU_C$.  Let $E \in \CU \setminus \CU_C$.  We write $E = \sqcup_{j=1}^p E_j$ with $E_j \in \CE_v$.  Since $\CU$ is an ultrafilter, one of the $E_j$ is in $\CU$.  Since $\CU_C$ is a filter, none of the $E_j$ is in $\CU_C$.  Thus we may assume that $E \in \CE_v$.  Write $E = \beta\Lambda \setminus \cup_{i=1}^n \gamma_i\Lambda$ with $\gamma_i \in \beta\Lambda$.  Since $E \in \CU$ we have that $\beta\Lambda \in \CU$.  Then $\beta\Lambda \cap \alpha\Lambda \in \CU$, and hence $\beta \Cap \alpha$, for all $\alpha \in C$.  We give a variant of the argument in Lemma \ref{l.directedsets}.  For each $\alpha \in C$,
\[
\bigcup_{\eps \in \beta \vee \alpha} \eps\Lambda
= \beta\Lambda \cap \alpha\Lambda
\in \CU.
\]
Thus there exists $\eps_\alpha \in \beta\Lambda \cap \alpha\Lambda$ with $\eps_\alpha \Lambda \in \CU$.  

Now let $C = \{ \xi_1,\ \xi_2,\ \ldots \}$.  Let $\alpha_1 = \xi_1$, and choose $\eps_{\alpha_1} \in \beta\Lambda \cap \alpha_1\Lambda$ with $\eps_{\alpha_1} \Lambda \in \CU$.  Let $\alpha_2 \in \alpha_1\Lambda \cap \xi_2\Lambda \cap C$.  We claim there exists $\eps_{\alpha_2} \in \eps_{\alpha_1} \Lambda \cap \alpha_2\Lambda$ such that $\eps_{\alpha_2}\Lambda \in \CU$.  For suppose not.  Then for each $\delta \in \alpha_2\Lambda \cap \eps_{\alpha_1}\Lambda$, we have $\delta\Lambda \not\in \CU$.  By finite alignment we have
\[
\alpha_2\Lambda \cap \eps_{\alpha_1}\Lambda
= \bigcup_{\delta \in \alpha_2 \vee \eps_{\alpha_1}} \delta\Lambda 
 \not \in \CU,
\]
since $\CU$ is an ultrafilter, contradicting the fact that $\alpha_2\Lambda$, $\eps_{\alpha_1}\Lambda \in \CU$.  This verifies the claim.  Inductively, there are $\alpha_1$, $\alpha_2$, $\ldots \in C$, and $\eps_{\alpha_i} \in \beta\Lambda \cap \alpha_i\Lambda$, such that $\{\alpha_i\}$ is cofinal in $C$ (in the sense of a directed set), $\eps_{\alpha_i}\Lambda \in \CU$ for all $i$, and $\eps_{\alpha_{i+1}} \in \eps_{\alpha_i}\Lambda$ for all $i$.  Then $C' = C \cup \{\beta\} \cup \{ \eps_\alpha : \alpha \in C \}$ is a directed set, and $\widetilde{C'} \in \CS$ (see Remark \ref{r.directedset}).  By maximality, $\widetilde{C'} = C$, so that $\beta \in C$.  Since $E \not\in \CU_C$ there is $\eta \in C \cap \beta\Lambda$ with $\eta \not\in E$.  Then $\eta \in \gamma_i\Lambda$ for some $i$, and hence $\eta\Lambda \subseteq \gamma_i\Lambda \subseteq E^c$.  But then $\emptyset = \eta\Lambda \cap E \in \CU$, a contradiction.  Therefore $\CU = \CU_C$.
\end{proof}

By this result we may identify $X_v = v\Lambda^*$.  In the next theorem we describe the closure (in $X_v$) of $v\Lambda^{**}$. The next definition was discovered in \cite{raesimyee}.

\begin{Definition}
\label{d.exhaustiveset}
%!?!\textit{(d.exhaustiveset)} %%% erase label name
Let $\Lambda$ be a category of paths, and $v \in \Lambda^0$.  A subset $F \subseteq v\Lambda$ is \textit{exhaustive (at $v$)} if for every $\alpha \in v\Lambda$ there exists $\beta \in F$ with $\alpha \Cap \beta$.  $F$ is \textit{trivially exhaustive} if $v \in F$.  We let $\text{FE}(v)$ denote the collection of finite exhaustive sets at $v$.
\end{Definition}

\begin{Theorem}
\label{t.boundary}
%!?!\textit{(t.boundary)}) %%% erase label name
Let $\Lambda$ be a countable finitely aligned category of paths, and let $C \in v \Lambda^*$.  Then $C$ is in the closure of $v \Lambda^{**}$ if and only if the following condition holds:
for every $\alpha \in C$, there exists $\alpha' \in C \cap \alpha\Lambda$ such that for every finite exhaustive set $F \subseteq s(\alpha')\Lambda$, we have $\sigma^{\alpha'}C \cap F \not= \emptyset$.
\end{Theorem}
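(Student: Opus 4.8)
The plan is to work with the identification $X_v = v\Lambda^*$ from Theorem \ref{t.ultrafilters}, under which a point $C \in v\Lambda^*$ lies in $\widehat{E}$ exactly when $E \in \CU_C$, and the subset $v\Lambda^{**} \subseteq X_v$ consists of those $C$ for which $\CU_C = \CU_{C,0}$. Since $\CU_{C,0} \subseteq \CU_C$ always and $\CU_{C,0}$ is an ultrafilter precisely when $C$ is maximal (Theorem \ref{t.ultrafilters}\eqref{t.ultrafilters.a}), this identification is immediate. The first step is to record a convenient neighborhood base at $C$: I claim the sets $\widehat{E}$ with $E = \alpha\Lambda \setminus \bigcup_{i=1}^n \gamma_i\Lambda$, where $\alpha \in C$ and each $\gamma_i \in \alpha\Lambda \setminus C$, form such a base. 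Because $C$ is hereditary, $\gamma_i \notin C$ forces $C \cap \gamma_i\Lambda = \emptyset$, so each such $E$ contains the tail $C \cap \alpha\Lambda$ and hence $C \in \widehat{E}$; conversely, given any basic $\widehat{\beta\Lambda \setminus \bigcup_j \delta_j\Lambda}$ containing $C$, hereditariness gives $\beta \in C$, and choosing $\alpha \in C \cap \beta\Lambda$ whose tail $C \cap \alpha\Lambda$ sits inside this set, I may replace the $\delta_j$ by the finitely many minimal common extensions in $\bigcup_j(\alpha \vee \delta_j)$ (each lying outside $C$, by finite alignment) to shrink it to one of the stated form.

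With this base in hand, the second step translates membership in the closure. For a maximal directed set $D$ we have $D \in \widehat{E}$ iff $E \in \CU_{D,0}$, i.e. iff some $\delta \in D$ satisfies $\delta\Lambda \subseteq E$, which for $E = \alpha\Lambda \setminus \bigcup_i \gamma_i\Lambda$ means $\delta \in \alpha\Lambda$ with $\delta \perp \gamma_i$ for all $i$. Since every path extends to a maximal directed set (Remark \ref{r.directedset}), the neighborhood $\widehat{E}$ meets $v\Lambda^{**}$ iff there exists $\delta \in \alpha\Lambda$ disjoint from every $\gamma_i$. Applying $\sigma^\alpha$ and writing $\gamma_i = \alpha\gamma_i'$, this says that the finite set $\{\gamma_i'\} \subseteq s(\alpha)\Lambda$, which is disjoint from $\sigma^\alpha C$, fails to be exhaustive at $s(\alpha)$. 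Negating and quantifying over all basic neighborhoods, I obtain that $C \in \overline{v\Lambda^{**}}$ is equivalent to the condition $(\dagger)$: for every $\alpha \in C$ and every $F \in \mathrm{FE}(s(\alpha))$, one has $\sigma^\alpha C \cap F \neq \emptyset$.

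The final step is to reconcile $(\dagger)$ with the stated condition, which only asks that for each $\alpha$ some extension $\alpha' \in C \cap \alpha\Lambda$ have the property. One implication is trivial (take $\alpha' = \alpha$). For the converse, given $\alpha \in C$ and $F \in \mathrm{FE}(s(\alpha))$, I choose the good $\alpha' = \alpha\mu \in C$ and transport $F$ through $\mu = \sigma^\alpha\alpha'$: set $F' = \{\sigma^\mu\eps : \beta \in F,\ \eps \in \mu \vee \beta\}$, a finite subset of $s(\alpha')\Lambda$. A short computation using finite alignment (Lemma \ref{l.finitelyalignedpair}) shows $F'$ is exhaustive at $s(\alpha')$, so by hypothesis $\sigma^{\alpha'}C \cap F' \neq \emptyset$; unwinding $\alpha'\sigma^\mu\eps = \alpha\eps$ gives $\eps \in \sigma^\alpha C$ for some $\eps \in \mu \vee \beta$ with $\beta \in F$, and since $\sigma^\alpha C$ is hereditary and $\beta \in [\eps]$, I conclude $\beta \in \sigma^\alpha C \cap F$. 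I expect the main obstacle to be precisely this last step: verifying that the shifted set $F'$ is exhaustive and then pulling the conclusion back through the hereditariness of $\sigma^\alpha C$, since this is where finite alignment is genuinely used and where the gap between the ``$\exists\,\alpha'$'' form of the hypothesis and the ``$\forall\,\alpha$'' form $(\dagger)$ forced by the topology is bridged.
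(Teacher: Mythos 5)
Your proof is correct; each step checks out. In your first step the minimal common extensions $\eps \in \alpha \vee \delta_j$ do lie outside $C$ (otherwise they would belong to $C \cap \alpha\Lambda \subseteq E$ while also lying in some $\delta_j\Lambda \subseteq E^c$); in your second step the equivalence between meeting $v\Lambda^{**}$ and non-exhaustiveness is right; and in your third step the exhaustiveness of $F' = \{\sigma^\mu\eps : \beta \in F,\ \eps \in \mu \vee \beta\}$ holds precisely because finite alignment (Lemma \ref{l.finitelyalignedpair}) makes every common extension of $\mu$ and $\beta$ factor through some $\eps \in \mu \vee \beta$. Where you genuinely diverge from the paper is in how the mismatch between the theorem's ``$\exists\,\alpha'$'' condition and your uniform condition $(\dagger)$ (that $\sigma^\alpha C \cap F \neq \emptyset$ for \emph{every} $\alpha \in C$ and every $F \in \text{FE}(s(\alpha))$) is bridged. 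The paper never proves ``condition $\Rightarrow (\dagger)$'' combinatorially: its forward implication is proved in contrapositive form via Lemma \ref{l.boundary}, whose hypothesis is extracted from the negated condition simply by specializing to $\alpha' = \alpha$ --- which is exactly $\neg(\dagger)$ --- so the two forms of the condition are reconciled by the topology a posteriori rather than by hand; your transport construction does this reconciliation directly, and is a standalone combinatorial fact the paper does not state. In the other direction you obtain a real simplification: to show that an exhaustive set $\Delta$ produces a neighborhood of $C$ missing $v\Lambda^{**}$, you use that $\CU_{D,0} = \CU_D$ for maximal $D$, so membership of $D$ in $\widehat{E}$ is witnessed by an entire tail set $\delta\Lambda \subseteq E$, which contradicts exhaustiveness in one line; the paper's Lemma \ref{l.boundary} instead works with the sets $C' \cap \xi\Lambda$ and therefore needs the more delicate argument through Lemma \ref{l.directedsets} and maximality of $C'$. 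The trade-off is this: the paper's organization records the explicit separating sets $\alpha_0\Lambda \setminus \bigcup_{\delta \in \Delta}\alpha_0\delta\Lambda$ of Lemma \ref{l.boundary}, which are reused verbatim in the proof of Theorem \ref{t.restrictionboundary}, while your organization yields the cleaner intermediate characterization of the closure by $(\dagger)$ together with the purely combinatorial equivalence of $(\dagger)$ with the stated condition.
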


\begin{proof}
($\Leftarrow$) Let $E \in \CA_v$ with $C \in \widehat{E}$; i.e. $E \in \CU_C$.  Writing $E$ as a disjoint union from $\CE_v$, only one of these sets is in $\CU_C$.  Thus we may assume $E \in \CE_v$.  Write $E = \beta\Lambda \setminus \bigcup_{i=1}^n \gamma_i\Lambda$, with $\gamma_i \in \beta\Lambda$.  Let $\alpha \in C$ with $C \cap \alpha\Lambda \subseteq E$.  By hypothesis there is $\alpha' \in C \cap \alpha\Lambda$ such that for every $F \in \text{FE}\bigl(s(\alpha')\bigr)$, we have $\sigma^{\alpha'}C \cap F \not= \emptyset$.  Since $C \cap \alpha'\Lambda \subseteq E$, we may replace $E$ by $\alpha'\Lambda \setminus \bigcup_{i=1}^n \gamma_i\Lambda$, and we may rewrite it so that $\gamma_i \in \alpha'\Lambda$.

Now, for each $i$, $\gamma_i\Lambda \subseteq E^c$, so $\gamma_i \not\in C \cap \alpha'\Lambda$, hence $\sigma^{\alpha'}\gamma_i \not\in \sigma^{\alpha'}C$.  By hypothesis, $\{ \sigma^{\alpha'}\gamma_1, \ldots, \sigma^{\alpha'}\gamma_n \}$ is not exhaustive.  Hence there is $\delta \in s(\alpha')\Lambda$ such that $\delta \perp \sigma^{\alpha'}\gamma_i$ for all $i$.  Then $\alpha'\delta \perp \gamma_i$ for all $i$.  Let $C' \in v\Lambda^{**}$ with $\alpha'\delta \in C'$.  Then $\gamma_i\Lambda \not\in \CU_{C'}$ for all $i$.  But $\alpha'\Lambda \in \CU_{C'}$, so $E \in \CU_{C'}$; i.e. $C' \in \widehat{E}$.

We present a slight sharpening of the forward direction as a sublemma.

\begin{Lemma}
\label{l.boundary}
%!?!\textit{(l.boundary)} %%% erase label name
Suppose that the condition in the statement of Theorem \ref{t.boundary} fails.  There are $\alpha_0 \in C$ and $\Delta \in \text{FE}\bigl(s(\alpha_0)\bigr)$ such that if we let $E = \alpha_0\Lambda \setminus \bigcup_{\delta \in \Delta} \alpha_0\delta\Lambda$, then $C \in \widehat{E} \subseteq v\Lambda^* \setminus v\Lambda^{**}$.
\end{Lemma}

\begin{proof}
The failure of the condition implies in particular (with $\alpha' = \alpha$) that there are $\alpha_0 \in C$ and $\Delta \in \text{FE}\bigl(s(\alpha_0)\bigr)$ such that $\Delta \cap \sigma^{\alpha_0}C = \emptyset$.  Then $\alpha_0\Delta \cap C \cap \alpha_0\Lambda = \emptyset$.  Let $E = \alpha_0\Lambda \setminus \bigcup_{\delta \in \Delta} \alpha_0\delta\Lambda$.  Then $C \cap \alpha_0\Lambda \subseteq E$, hence $E \in \CU_C$, hence $C \in \widehat{E}$.  We claim that $\widehat{E} \cap v\Lambda^{**} = \emptyset$, which will finish the proof.  For suppose that $C' \in \widehat{E} \cap v\Lambda^{**}$.  Then $E \in \CU_{C'}$, so there is $\xi \in C'$ such that $C' \cap \xi\Lambda \subseteq E$.  Then $\xi \in E$, and hence $\xi \in \alpha_0\Lambda$.  We claim that there exists $\delta_0 \in \Delta$ such that $\sigma^{\alpha_0}\eta \Cap \delta_0$ for all $\eta \in C' \cap \xi\Lambda$.  For if not, then for each $\delta \in \Delta$ there is $\eta_\delta \in C' \cap \xi\Lambda$ with $\sigma^{\alpha_0} \eta_\delta \perp \delta$.  Let $\eta \in C' \cap \bigcap_{\delta \in \Delta} \eta_\delta\Lambda$.  Then $\eta \in C' \cap \xi\Lambda$, and $\sigma^{\alpha_0}\eta \in \sigma^{\alpha_0}\eta_\delta\Lambda$ for all $\delta$.  Therefore $\sigma^{\alpha_0}\eta \perp \delta$ for all $\delta \in \Delta$, contradicting exhaustiveness of $\Delta$.  Thus such a $\delta_0$ exists.  But then, $\eta \Cap \alpha_0\delta_0$ for all $\eta \in C'$.  By Lemma \ref{l.directedsets} we have $\alpha_0\delta_0 \in C'$, since $C'$ is maximal.  Then $C' \cap \alpha_0\delta_0\Lambda \cap \xi\Lambda \not= \emptyset$.  But $(C' \cap \xi\Lambda) \cap \alpha_0\delta_0\Lambda \subseteq E \cap E^c = \emptyset$, a contradiction.
\end{proof}

\noindent
($\Rightarrow$) (of Theorem \ref{t.boundary})  This follows immediately from Lemma \ref{l.boundary}.
\end{proof}

\begin{Definition}
\label{d.boundary}
%!?!\textit{(d.boundary)} %%% erase label name
Let $\Lambda$ be a countable finitely aligned category of paths.  The \textit{boundary} of $\Lambda$, denoted $\partial\Lambda$, is the closure in $X$ of the set $\Lambda^{**}$ of maximal directed sets.  We will write $v\partial \Lambda$ for $\partial\Lambda \cap v\Lambda^*$.
\end{Definition}

\section{Restriction to the boundary}
\label{s.restriction}
%!?!\textit{(s.restriction)} %%% erase label name

\begin{Lemma}
\label{l.finiteexhaustive}
%!?!\text{(l.finiteexhaustive)} %%% erase label name
Let $\Lambda$ be a countable finitely aligned category of paths, $v \in \Lambda^0$, and $\Delta \subseteq v\Lambda$ a finite set.  Then $\Delta$ is exhaustive if and only if
\[
v\partial \Lambda \subseteq \bigcup_{\alpha \in \Delta} \alpha\Lambda^*.
\]
\end{Lemma}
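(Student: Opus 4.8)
The plan is to reduce everything to one identification. Under the identification $X_v = v\Lambda^*$ of Theorem \ref{t.ultrafilters} and the concatenation homeomorphism of Theorem \ref{t.concatenation}, one has $\alpha\Lambda^* = \widehat{\alpha\Lambda}$, and for $C \in v\Lambda^*$ the membership $C \in \widehat{\alpha\Lambda}$ (equivalently $\alpha\Lambda \in \CU_C$) is equivalent to $\alpha \in C$. Indeed, if $\alpha \in C$ then $C\cap\alpha\Lambda \subseteq \alpha\Lambda$ exhibits $\alpha\Lambda \in \CU_C$; conversely, if $\alpha\Lambda \supseteq C\cap\alpha'\Lambda$ for some $\alpha'\in C$, then $\alpha'\in\alpha\Lambda$, and since $C$ is hereditary we get $\alpha\in[\alpha']\subseteq C$. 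Thus the asserted inclusion $v\partial\Lambda\subseteq\bigcup_{\alpha\in\Delta}\alpha\Lambda^*$ says precisely that every hereditary directed set $C$ in the boundary contains some element of $\Delta$. I would record at the outset that, because $v\Lambda\in\CA_v$ makes $A_v$ unital, each $X_v$ is a compact-open, hence clopen, piece of the Hausdorff space $X$, so that $v\partial\Lambda$ is exactly the closure of $v\Lambda^{**}$ taken inside $X_v$.

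For the forward direction I would first prove the combinatorial statement that if $\Delta$ is exhaustive then every maximal directed set $C\in v\Lambda^{**}$ contains an element of $\Delta$. Suppose not. For each $\alpha\in\Delta$, since $\alpha\notin C$ and $C$ is a maximal directed set, Lemma \ref{l.directedsets} rules out $\alpha\Cap\gamma$ for all $\gamma\in C$ (otherwise $C\cup\{\alpha\}$ would lie in a strictly larger directed set), so there is $\gamma_\alpha\in C$ with $\alpha\perp\gamma_\alpha$. As $\Delta$ is finite and $C$ is directed, choose $\gamma\in C\cap\bigcap_{\alpha\in\Delta}\gamma_\alpha\Lambda$; then $\gamma\Lambda\subseteq\gamma_\alpha\Lambda$ forces $\alpha\perp\gamma$ for every $\alpha\in\Delta$, contradicting exhaustiveness tested at $\gamma$. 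Hence $v\Lambda^{**}\subseteq\bigcup_{\alpha\in\Delta}\widehat{\alpha\Lambda}$. Since each $\widehat{\alpha\Lambda}$ is compact-open in the Hausdorff space $X$, it is closed, and a finite union of closed sets is closed; as this closed set contains $v\Lambda^{**}$ it contains its closure $v\partial\Lambda$, which is the desired inclusion.

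For the converse I would argue contrapositively. If $\Delta$ is not exhaustive, pick $\mu\in v\Lambda$ with $\mu\perp\alpha$ for every $\alpha\in\Delta$. The initial-segment set $[\mu]$ is hereditary and directed, so by Remark \ref{r.directedset} it is contained in some maximal directed set $C\in v\Lambda^{**}\subseteq v\partial\Lambda$. If some $\alpha\in\Delta$ belonged to $C$, directedness would produce a common extension of $\mu$ and $\alpha$ in $C$, forcing $\mu\Cap\alpha$, contrary to the choice of $\mu$. Hence $C$ contains no element of $\Delta$, so $C\notin\bigcup_{\alpha\in\Delta}\alpha\Lambda^*$ while $C\in v\partial\Lambda$, contradicting the assumed inclusion.

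The only genuinely delicate point is the forward direction's passage from maximal directed sets to the whole boundary, and I expect this to be the main obstacle. It rests on two facts that must be pinned down: that $\alpha\Lambda^*$ is literally the compact-open set $\widehat{\alpha\Lambda}$ (so a finite union of such sets is closed in the Hausdorff unit space $X$), and that $v\partial\Lambda$ is the closure of $v\Lambda^{**}$. Once the combinatorial lemma about maximal directed sets is established, closedness does the remaining work; notably Theorem \ref{t.boundary} is not required, and countability enters only through Lemma \ref{l.directedsets} in the forward direction.
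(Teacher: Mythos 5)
Your proof is correct and follows essentially the same route as the paper's: both directions reduce to maximal directed sets via the clopen-ness of the sets $\alpha\Lambda^{*}$ (so that it suffices to test points of $v\Lambda^{**}$), use Lemma \ref{l.directedsets} together with directedness and finiteness of $\Delta$ to produce a single path disjoint from all of $\Delta$, and in the converse direction extend $[\mu]$ to a maximal directed set avoiding $\Delta$. The only differences are organizational (the paper states both directions contrapositively, and you make explicit the bookkeeping facts $\alpha\Lambda\in\CU_{C}\iff\alpha\in C$ and $v\partial\Lambda=\overline{v\Lambda^{**}}$ inside $X_{v}$, which the paper uses implicitly).
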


\begin{proof}
First suppose that $v \partial \Lambda \not\subseteq \bigcup_{\alpha \in \Delta} \alpha \Lambda^*$.  Since each $\alpha \Lambda^*$ is a clopen set, there is $C \in v \Lambda^{**} \setminus \bigcup_{\alpha \in \Delta} \alpha \Lambda^*$.  Then $C \cap \Delta = \emptyset$.  Since $C$ is a maximal directed hereditary set, Lemma \ref{l.directedsets} implies that for each $\alpha \in \Delta$ there is $\eta_\alpha \in C$ such that $\eta_\alpha \perp \alpha$.  Let $\eta \in C \cap \bigcap_{\alpha \in \Delta} \eta_\alpha \Lambda$.  Then $\eta \perp \Delta$.  Therefore $\Delta$ is not exhaustive.

Now suppose that $\Delta$ is not exhaustive.  Then there is $\beta \in v \Lambda$ such that $\beta \perp \Delta$.  Let $C \in v \partial \Lambda$ with $\beta \in C$.  We claim that $C \not\in \bigcup_{\alpha \in \Delta} \alpha \partial \Lambda$.  For otherwise, there is $\alpha \in \Delta$ such that $C \in \alpha \partial \Lambda$.  Then $\alpha \in C$.  Since $\beta \in C$, it follows that $\alpha \Cap \beta$, a contradiction.
\end{proof}

\begin{Theorem}
\label{t.restrictionboundary}
%!?!\text{(t.restrictionboundary)} %%% erase label name
Let $\Lambda$ be a countable finitely aligned category of paths.  Let $G = G(\Lambda)$, and assume that $G$ is amenable.  The representations of $C^*(G|_{\partial\Lambda})$ are in one-to-one correspondence with the families $\{S_\alpha : \alpha \in \Lambda\}$ of Hilbert space operators satisfying the relations
\begin{enumerate}
\item $S_\alpha^* S_\alpha = S_{s(\alpha)}$.
\label{t.restrictionboundary.a}
\item $S_\alpha S_\beta = S_{\alpha\beta}$, if $s(\alpha) = r(\beta)$.
\label{t.restrictionboundary.b}
\item $S_\alpha S_\alpha^* S_\beta S_\beta^*
= \bigvee_{\gamma \in \alpha \vee \beta} S_\gamma S_\gamma^*$.
\label{t.restrictionboundary.c}
\item $S_v = \bigvee_{\beta \in F} S_\beta S_\beta^*$ if $F \in$ FE$(v)$.  (Equivalently, $0 = \prod_{\delta \in F} (S_v - S_\delta S_\delta^*)$.)
\label{t.restrictionboundary.d}
\end{enumerate}
\end{Theorem}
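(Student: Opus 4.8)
The plan is to realize $C^*(G|_{\partial\Lambda})$ as a quotient of $C^*(G)$ and to show that relation \eqref{t.restrictionboundary.d} is exactly the condition that cuts out the kernel. First I would record that $\partial\Lambda$ is a closed $G$-invariant subset of the unit space $X$: invariance of $\Lambda^{**}$ under the generating partial homeomorphisms follows from the observation that $C \in s(\alpha)\Lambda^{**}$ if and only if $\alpha C \in v\Lambda^{**}$ (a maximal directed set remains maximal under $\widehat\alpha$ and under $\sigma^\alpha$), and the closure of an invariant set is invariant. Writing $U = X \setminus \partial\Lambda$ for the open invariant complement, the standard theory of \'etale groupoids (cf. \cite{pat}) gives a restriction homomorphism $q : C^*(G) \to C^*(G|_{\partial\Lambda})$ whose kernel is the ideal $C^*(G|_U)$; since $G$ is amenable, so are $G|_U$ and $G|_{\partial\Lambda}$, and full and reduced algebras coincide throughout, so this sequence is exact at the level of full $C^*$-algebras. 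Moreover $C^*(G|_U)$ is generated as an ideal by $C_0(U) \subseteq C_0(G^0)$. Thus a representation of $C^*(G)$ factors through $q$ precisely when it annihilates $\chi_W$ for every compact-open $W \subseteq U$.

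By Theorem \ref{t.toeplitzrelations}, representations of $C^*(G)$ are exactly the families $\{S_\alpha\}$ satisfying \eqref{t.restrictionboundary.a}--\eqref{t.restrictionboundary.c}, and under this correspondence the diagonal representation $\pi_0$ of $C_0(G^0)$ sends $\chi_{\alpha\Lambda}$ to $S_\alpha S_\alpha^*$ and, more generally, $\chi_{\alpha F}$ to $S_\alpha \pi_0(\chi_F) S_\alpha^*$ for $F \in \CA_{s(\alpha)}$ (both sides being ring homomorphisms in $F$ that agree on the generators $\gamma\Lambda$). It then remains to identify the compact-open subsets of $U$ in these terms. For the necessity of \eqref{t.restrictionboundary.d}, let $F \in \text{FE}(v)$ and set $W_F = \widehat{v\Lambda \setminus \bigcup_{\beta \in F}\beta\Lambda}$; then $\pi_0(\chi_{W_F}) = S_v - \bigvee_{\beta\in F}S_\beta S_\beta^*$, while Lemma \ref{l.finiteexhaustive} gives $W_F \cap \partial\Lambda = \emptyset$, i.e. $W_F \subseteq U$. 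Hence any representation factoring through $q$ kills $\chi_{W_F}$, forcing $S_v = \bigvee_{\beta\in F}S_\beta S_\beta^*$; the equivalent product form follows because the projections $S_\beta S_\beta^*$ commute and lie below $S_v$ (Remark \ref{r.toeplitzrelations}), so the complement of their join is the product of their complements.

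For sufficiency, suppose $\{S_\alpha\}$ satisfies \eqref{t.restrictionboundary.a}--\eqref{t.restrictionboundary.d} and let $\pi$ be the associated representation of $C^*(G)$. I must show $\pi_0(\chi_W) = 0$ for every compact-open $W \subseteq U$. Since such a $W$ is a finite disjoint union of basic sets $\widehat E$ with $E \in \CE_v$, and each summand again lies in $U$, it suffices to treat $E = \alpha\Lambda \setminus \bigcup_{i=1}^n \alpha\beta_i'\Lambda \in \CE_v$ with $\widehat E \cap \partial\Lambda = \emptyset$. Applying $\sigma^\alpha$ identifies $\widehat E$ with $\widehat{F}$ for $F = s(\alpha)\Lambda \setminus \bigcup_i \beta_i'\Lambda$, and the boundary-preservation of $\widehat\alpha$ noted above shows $\widehat E \cap \partial\Lambda = \emptyset$ is equivalent to $\widehat F \cap s(\alpha)\partial\Lambda = \emptyset$; by Lemma \ref{l.finiteexhaustive} this says exactly that $\{\beta_1', \ldots, \beta_n'\} \in \text{FE}(s(\alpha))$. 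Now relation \eqref{t.restrictionboundary.d} at $s(\alpha)$ gives $S_{s(\alpha)} = \bigvee_i S_{\beta_i'}S_{\beta_i'}^*$, so $\pi_0(\chi_F) = S_{s(\alpha)} - \bigvee_i S_{\beta_i'}S_{\beta_i'}^* = 0$ and therefore $\pi_0(\chi_E) = S_\alpha \pi_0(\chi_F) S_\alpha^* = 0$. Hence $\pi$ annihilates $C_0(U)$, factors through $q$, and yields a representation of $C^*(G|_{\partial\Lambda})$; this inverts the construction of the preceding paragraph and establishes the claimed bijection.

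The main obstacle, and the reason amenability is hypothesized, is the exactness of the sequence $0 \to C^*(G|_U) \to C^*(G) \to C^*(G|_{\partial\Lambda}) \to 0$ for the full $C^*$-algebras together with the identification of the ideal as the one generated by $C_0(U)$: without amenability the full algebra need not detect the kernel correctly, so full and reduced must be forced to agree. The remaining delicate point is purely combinatorial --- matching the projection-theoretic content of relation \eqref{t.restrictionboundary.d} with the compact-open subsets of $U$ via Lemma \ref{l.finiteexhaustive}, which is precisely what compels the Cuntz--Krieger relation to take the finite-exhaustive-set form.
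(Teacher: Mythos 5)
Your proposal is correct, and its overall architecture is the same as the paper's: relations \eqref{t.restrictionboundary.a}--\eqref{t.restrictionboundary.c} are handled by Theorem \ref{t.toeplitzrelations}, amenability is used to identify representations of $C^*(G|_{\partial\Lambda})$ with representations of $C^*(G)$ that kill $C_0(\partial\Lambda^c)$ (the paper cites \cite{ren} II.4.5 rather than spelling out the exact sequence, but the content is identical), and relation \eqref{t.restrictionboundary.d} is matched with that vanishing condition. Where you genuinely diverge is the sufficiency step. The paper invokes Lemma \ref{l.boundary} (the sharpening proved inside Theorem \ref{t.boundary}): for each point $C \in v\Lambda^* \setminus v\partial\Lambda$ it produces a neighborhood $\widehat{E}$ with $E = \alpha_0\Lambda \setminus \bigcup_{\delta \in \Delta}\alpha_0\delta\Lambda$, $\Delta \in \text{FE}(s(\alpha_0))$, disjoint from $\partial\Lambda$, and then computes $\pi(\chi_E) = S_{\alpha_0}\bigl(S_{s(\alpha_0)} - \bigvee_{\delta\in\Delta}S_\delta S_\delta^*\bigr)S_{\alpha_0}^* = 0$. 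You instead decompose an arbitrary compact-open subset of $\partial\Lambda^c$ into sets $\widehat{E}$ with $E = \alpha\Lambda\setminus\bigcup_i \alpha\beta_i'\Lambda \in \CE_v$, shift down by $\alpha$, and apply the converse direction of Lemma \ref{l.finiteexhaustive} to conclude $\{\beta_1',\ldots,\beta_n'\} \in \text{FE}(s(\alpha))$; the resulting computation of $\pi(\chi_E)$ is then the same as the paper's. The price of your route is that it needs the invariance of $\partial\Lambda$ under the maps $\widehat{\alpha}$ and $\sigma^\alpha$, so that $\widehat{E}\cap\partial\Lambda = \emptyset$ is equivalent to $\widehat{F}\cap s(\alpha)\partial\Lambda = \emptyset$; your sketch (maximality of directed hereditary sets is preserved in both directions, and closures of invariant sets are invariant) is correct, though the closure step deserves the observation that for open $O$ one has $\overline{A\cap O}\cap O = \overline{A}\cap O$, which is what lets invariance of $\Lambda^{**}$ pass to its closure. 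What your route buys is that Lemma \ref{l.boundary} is not needed at all --- Lemma \ref{l.finiteexhaustive}, used in both directions, together with invariance suffices --- whereas the paper's Lemma \ref{l.boundary} does the pointwise work in general position without transporting the boundary along shifts.
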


\begin{proof}
Suppose that $\{S_\alpha\}$ satisfy (1) -- (4).  By (1) -- (3), and Theorem \ref{t.toeplitzrelations}, there is a unique representation $\pi : C^*(G) \to B(H)$ with $\pi\bigl(\chi_{[\alpha,s(\alpha),X_{s(\alpha)}]}\bigr) = S_\alpha$.  We claim that $\pi|_{C_0(\partial\Lambda^c)} = 0$.  Let $C \in v\Lambda^* \setminus v\partial\Lambda$.  Let $\alpha_0$, $\Delta$ and $E$ be as in Lemma \ref{l.boundary}.  Then
\[
\pi(\chi_E)
= S_{\alpha_0}S_{\alpha_0}^* - \bigvee_{\delta \in \Delta} S_{\alpha_0\delta}S_{\alpha_0\delta}^* 
= S_{\alpha_0} \left( S_{s(\alpha_0)} - \bigvee_{\delta \in \Delta} S_\delta S_\delta^* \right) S_{\alpha_0}^* 
= 0,
\]
by (4).  Thus $\pi\bigr|_{C_0(\partial\Lambda^c)} = 0$.  By amenability, and \cite{ren} II.4.5, $\pi$ factors through $C^*(G\bigr|_{\partial\Lambda})$.

Conversely, let $\pi$ be a representation of $C^*(G\bigr|_{\partial\Lambda})$.  Let $S_\alpha = \pi\bigl(\chi_{[\alpha,s(\alpha),X_{s(\alpha)}]}\bigr)$.  Composing $\pi$ with the quotient map gives a representation of $C^*(G)$, so (1) -- (3) hold.  We prove (4).  Let $v \in \Lambda^0$ and $F \in \text{FE}(v)$.  Let $E = v\Lambda \setminus \bigcup_{\beta \in F} \beta\Lambda$.  Lemma \ref{l.finiteexhaustive} implies that $\widehat{E} \cap v\partial\Lambda = \emptyset$.  Thus $\pi(\chi_E) = 0$.  But $\pi(\chi_E) = S_v - \bigvee_{\beta \in F} S_\beta S_\beta^*$.
\end{proof}

\begin{Definition}
\label{d.cstaralgebra}
%!?!\text{(d.cstaralgebra)} %%% erase label name
Let $\Lambda$ be a countable finitely aligned category of paths, and let $G = G(\Lambda)$.  We denote by $\CT C^*(\Lambda)$ the $C^*$-algebra $C^*(G)$, and by $C^*(\Lambda)$ the algebra $C^*(G|_{\partial\Lambda})$.
\end{Definition}

\begin{Remark}
If $\Lambda$ is a (finitely aligned) higher-rank graph, then the elements of $\alpha \vee \beta$ are pairwise disjont.  In this case, the join of projections in Theorem \ref{t.restrictionboundary}\eqref{t.restrictionboundary.c} is a sum.  Thus we recover the generators and relations obtained in \cite{raesimyee}.
\end{Remark}

We next describe a family of examples arising from ordered groups.  If this is applied to a free group, with the order determined by the usual presentation, the result is the well-known Morita equivalence between the Cuntz algebra $\CO_n$ and the crossed product algebra associated to the action of the group on the boundary of its directed Cayley graph (this boundary can also be viewed as an unstable equivalence class of the Bernoulli shift; see \cite{cunkri}).  A more complicated example, that of the Baumslag-Solitar groups, is worked out in \cite{spi_baumsol}.

\begin{Example}
\label{e.groupboundary}
%!?!\textit{(e.groupboundary)} %%% erase label name
Let $Y$ be a countable group, and let $\Lambda$ be a submonoid of $Y$ such that $\Lambda \cap \Lambda^{-1} = \{ e \}$.  Then $(Y,\Lambda)$ is an \textit{ordered group}.  We note that this assumption implies that $\Lambda$ has no inverses.  Since cancellation follows from the group law, it follows that $\Lambda$ is a category of paths.

Nica studied this situation in the special case that he termed a \textit{quasi-lattice order} (\cite{nica}).  We generalize this definition as follows.

\begin{Definition}
\label{d.finitelyalignedgroup}
%!?!\textit{(d.finitelyalignedgroup)} %%% erase label name
The ordered group $(Y,\Lambda)$ is \textit{finitely aligned} if for every finite set $D \subseteq Y$ there is a finite set $F \subseteq \bigcap_{t \in D} t\Lambda$ such that $\bigcap_{t \in D} t\Lambda = \bigcup_{u \in F} u\Lambda$.
\end{Definition} 

\begin{Remark}
\label{r.finitelyalignedgroup}
%!?!\textit{(r.finitelyalignedgroup)} %%% erase label name
As in \cite{nica}, it is easy to see that finite alignment can be checked two elements at a time:  if it holds whenever $D$ contains two elements, then it holds for all finite $D$.  Moreover, it is enough to verify when one of the two elements is the identity $e$.
\end{Remark}

\begin{Remark}
Nica's definition of quasi-lattice ordered group is the special case of finitely aligned ordered group in which the set $F$ of Definition \ref{d.finitelyalignedgroup} can always be taken to have cardinality one or zero.  Note also that if $(Y,\Lambda)$ is finitely aligned, then in particular, $\Lambda$ is a finitely aligned category of paths.  The converse is not true, even if minimal common extensions in $\Lambda$ are unique (Example \ref{e.groupexamples}).
\end{Remark}

Nica considers the following construction, which he calls the \textit{Wiener-Hopf algebra} of the ordered group.  Let $\pi_\ell$ be the left regular representation of $Y$ on $\ell^2(Y)$, let $H = \ell^2(\Lambda) \subseteq \ell^2(Y)$, and let $P$ be the projection of $\ell^2(Y)$ onto $H$.  For $t \in Y$ let $S_t = P \pi_\ell(t)|_H$.  (Then $S_t \not= 0$ if and only if $t \in \Lambda \Lambda^{-1}$.)  Note that $S_\alpha$ is an isometry if $\alpha \in \Lambda$.  The Wiener-Hopf algebra is defined by $W(Y,\Lambda) = C^* \{ S_t : t \in Y \}$.  Nica shows that $W(Y,\Lambda) = C^* \bigl( \{ S_\alpha : \alpha \in \Lambda \} \bigr)$ if $(Y,\Lambda)$ is quasi-lattice ordered (and hence that the Wiener-Hopf algebra is generated by isometries).  We next show how this idea fits into the finitely aligned situation.

\begin{Lemma}
\label{l.finitelyalignedwienerhopf}
%!?!\textit{(r.finitelyalignedwienerhopf)} %%% erase label name
Let $(Y,\Lambda)$ be a ordered group.  Suppose that $\Lambda$ is a finitely aligned category of paths.  Then  $W(Y,\Lambda) = C^* \bigl( \{ S_\alpha : \alpha \in \Lambda \} \bigr)$ if and only if $(Y,\Lambda)$ is finitely aligned.
\end{Lemma}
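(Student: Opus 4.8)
The plan is to prove the equivalence by relating the Wiener-Hopf operators $S_t$ to products of the generators $S_\alpha$ for $\alpha \in \Lambda$, and then using finite alignment of the ordered group to control the products $S_\alpha^* S_\beta$.

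First I would establish the easy direction. Suppose $(Y,\Lambda)$ is finitely aligned. For $t \in Y$ we have $S_t \neq 0$ only when $t \in \Lambda\Lambda^{-1}$, so write $t = \alpha\beta^{-1}$ with $\alpha, \beta \in \Lambda$. I would check the fundamental computation that for $\alpha, \beta \in \Lambda$, $S_\alpha^* S_\beta = S_{\alpha^{-1}\beta}$ whenever $\alpha^{-1}\beta \in \Lambda\Lambda^{-1}$, obtained from the Wiener-Hopf picture: for $\mu \in \Lambda$, $S_\beta \delta_\mu = \delta_{\beta\mu}$ and $S_\alpha^* \delta_\nu = P_\Lambda \pi_\ell(\alpha^{-1})\delta_\nu = \delta_{\alpha^{-1}\nu}$ when $\alpha^{-1}\nu \in \Lambda$ and $0$ otherwise. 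From this one sees that $S_t = S_\alpha S_\beta^*$ lies in $C^*(\{S_\alpha : \alpha \in \Lambda\})$, giving $W(Y,\Lambda) \subseteq C^*(\{S_\alpha\})$; the reverse containment is immediate since each $S_\alpha$ with $\alpha \in \Lambda$ is one of the $S_t$. Notably this direction does not actually require finite alignment, so the content of the lemma is really in the converse.

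Next I would prove the contrapositive of the hard direction: assuming $(Y,\Lambda)$ is not finitely aligned, I would show $W(Y,\Lambda) \neq C^*(\{S_\alpha : \alpha \in \Lambda\})$. By Remark \ref{r.finitelyalignedgroup}, failure of finite alignment can be detected with $D = \{e, t\}$ for some $t \in Y$, and indeed with one of the two elements equal to $e$; so there is $t$ with $\Lambda \cap t\Lambda$ nonempty but not expressible as a finite union $\bigcup_{u \in F}u\Lambda$ with $F$ finite. The key point is that even though $\Lambda$ is a finitely aligned category of paths (so $\alpha\Lambda \cap \beta\Lambda$ is a finite union of tail sets for $\alpha, \beta \in \Lambda$), the set $\Lambda \cap t\Lambda$ for general $t \in \Lambda\Lambda^{-1}$ need not be finitely generated as a union of tail sets. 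I would translate this combinatorial failure into the statement that $S_t S_t^*$, which is the projection onto $\overline{\text{span}}\{\delta_\mu : \mu \in \Lambda \cap t\Lambda\}$ in the Wiener-Hopf picture, cannot be obtained as a finite expression in the $S_\alpha$ and their adjoints. More precisely, I would argue that $C^*(\{S_\alpha : \alpha \in \Lambda\})$ is spanned (using the analog of Proposition \ref{p.totalset} and the product formula $S_\alpha^* S_\beta = \bigvee_{\eps \in \alpha \vee \beta} S_{\sigma^\alpha \eps}S_{\sigma^\beta\eps}^*$ from Remark \ref{r.toeplitzrelations}\eqref{r.toeplitzrelations.four}) by elements whose ``diagonal'' parts correspond to sets in the ring $\CA_v$ generated by tail sets, whereas $S_t S_t^*$ has diagonal part corresponding to $\Lambda \cap t\Lambda \notin \CA_{s(t)}$, yielding $S_t \notin C^*(\{S_\alpha\})$.

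The main obstacle I anticipate is making the last separation argument rigorous, namely showing that the diagonal subalgebra of $C^*(\{S_\alpha : \alpha \in \Lambda\})$ consists exactly of (norm limits of) operators supported on sets in $\CA_e$ generated by tail sets, and that $\Lambda \cap t\Lambda$ genuinely falls outside this ring when finite alignment fails. For this I would exploit the faithful representation on $\ell^2(\Lambda)$ together with a conditional-expectation or diagonal-restriction argument: the map sending an operator to its diagonal (its compression to the multiplication operators $\{\chi_E : E \subseteq \Lambda\}$) should carry $C^*(\{S_\alpha\})$ into $\overline{\text{span}}\{\chi_E : E \in \CA_e\}$, and since the diagonal of $S_t S_t^*$ is $\chi_{\Lambda \cap t\Lambda}$, a non-element of $\CA_e$ by the failure of finite alignment, the two algebras must differ. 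Care is needed to verify that the diagonal map is well-defined and continuous on the Wiener-Hopf algebra, and to confirm that finitely many tail-set operations can never reconstruct the infinitely-generated set $\Lambda \cap t\Lambda$; this combinatorial bookkeeping, rather than any deep analytic input, is where the real work lies.
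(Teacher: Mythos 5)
Your forward direction contains a genuine error, and it sits exactly at the point of the lemma. The computation you verify, $S_\alpha^* S_\beta = S_{\alpha^{-1}\beta}$, is correct, but it (and its adjoint $S_\beta^* S_\alpha = S_{\beta^{-1}\alpha}$) only produces the operators $S_t$ for $t \in \Lambda^{-1}\Lambda$; it does not give $S_{\alpha\beta^{-1}} = S_\alpha S_\beta^*$, and that identity is false in general. Indeed, $S_\alpha S_\beta^*$ is the partial isometry $e_{\beta\nu} \mapsto e_{\alpha\nu}$ with initial space $\ell^2(\beta\Lambda)$, whereas for $t = \alpha\beta^{-1}$ the operator $S_t$ is the partial isometry $e_\mu \mapsto e_{t\mu}$ with initial space $\ell^2(\Lambda \cap t^{-1}\Lambda)$, which contains $\ell^2(\beta\Lambda)$ but is in general strictly larger; thus $S_\alpha S_\beta^*$ is merely a restriction of $S_t$, with equality if and only if $\Lambda \cap t^{-1}\Lambda = \beta\Lambda$ (essentially Nica's quasi-lattice condition). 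Finite alignment of $(Y,\Lambda)$ is precisely what repairs this: one writes $t\Lambda \cap \Lambda = \bigcup_{i=1}^n \alpha_i\Lambda$ with $\alpha_i = t\beta_i$, and then $S_t = \bigvee_{i=1}^n S_{\alpha_i}S_{\beta_i}^*$, which is the paper's argument. Your remark that this direction "does not actually require finite alignment" cannot be salvaged: by Example \ref{e.groupexamples}, items (1) and (3), there are ordered groups in which $\Lambda$ is a finitely aligned category of paths but $(Y,\Lambda)$ is not finitely aligned; there your easy direction would yield $W(Y,\Lambda) = C^*\bigl(\{S_\alpha : \alpha \in \Lambda\}\bigr)$ while your hard direction (and the lemma itself) yields the opposite, so the two halves of your proposal contradict each other.

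Your converse direction, by contrast, is viable and genuinely different from the paper's. The paper takes $x$ in the $*$-algebra with $\|S_t - x\| < 1$, expands $x = \sum_i c_i S_{\alpha_i}S_{\beta_i}^* q_i$ via Proposition \ref{p.totalset}, and reads off from matrix coefficients that $F = \{\alpha_i : \alpha_i\beta_i^{-1} = t\}$ satisfies $t\Lambda \cap \Lambda = F\Lambda$. Your diagonal-compression route can be made rigorous along the following lines: the map $T \mapsto \operatorname{diag}\bigl(\langle Te_\mu, e_\mu\rangle\bigr)$ is linear and contractive; it sends $S_\mu S_\nu^* q$ to $0$ when $\mu \neq \nu$ (cancellation in $Y$) and to $\chi_{\mu\Lambda \cap \operatorname{supp} q}$ when $\mu = \nu$, which lies in $\operatorname{span}\{\chi_E : E \in \CA_e\}$ since $\Lambda$ is a finitely aligned category of paths; hence by Proposition \ref{p.totalset} it carries $C^*\bigl(\{S_\alpha\}\bigr)$ into the closed span. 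A characteristic function in that closed span has support in $\CA_e$ (approximate within $1/3$ by a disjoint sum from $\CA_e$ and collect the terms with coefficient near $1$), and a set in $\CA_e$ that is closed under right extension, such as $\Lambda \cap t\Lambda = \operatorname{supp}\bigl(S_tS_t^*\bigr)$, is automatically a finite union of tail sets (each $\alpha_j$ appearing in its disjoint-union decomposition lies in the set, so each $\alpha_j\Lambda$ is contained in it). With Remark \ref{r.finitelyalignedgroup} this recovers finite alignment of $(Y,\Lambda)$. So the converse stands, at the cost of more bookkeeping than the paper's one-line estimate; but the forward direction must be rewritten using the join formula above.
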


\begin{proof}
First suppose that $(Y,\Lambda)$ is finitely aligned.  Let $t \in \Gamma$ with $S_t \not= 0$.  Then $t \in \Lambda \Lambda^{-1}$.  There are $\alpha_1$, $\ldots$, $\alpha_n \in \Lambda$ such that $t\Lambda \cap \Lambda = \bigcup_{i=1}^n \alpha_i \Lambda$.  For each $i$ there is $\beta_i \in \Lambda$ such that $\alpha_i = t \beta_i$.  Then $S_t = \bigvee_{i=1}^n S_{\alpha_i} S_{\beta_i}^*$.

For the converse, let $t \in \Lambda \Lambda^{-1}$.  Our assumption implies that there is $x$ in the $*$-algebra generated by $\{ S_\alpha : \alpha \in \Lambda \}$ with $\| S_t - x \| < 1$.  By Proposition \ref{p.totalset} we may write $x = \sum_{i=1}^n c_i S_{\alpha_i} S_{\beta_i}^* q_i$ with $\alpha_i$, $\beta_i \in \Lambda$ and $q_i \in P$ (where $P$ is as in the remarks before Lemma \ref{l.totalsetone}).  Let $F = \{ \alpha_i : t = \alpha_i \beta_i^{-1} \}$.  We will show that $t \Lambda \cap \Lambda = F \Lambda$.  Let $\gamma \in t\Lambda \cap \Lambda$.  Then there is $\delta \in \Lambda$ such that $t \delta = \gamma$.  Letting $\{ e_\mu : \mu \in \Lambda \}$ be the standard orthonormal basis for $\ell^2(\Lambda)$, we have $S_t e_\delta = e_\gamma$.  Then $\langle x e_\delta, e_\gamma \rangle \not= 0$.  Thus there is $i$ such that $\langle S_{\alpha_i} S_{\beta_i}^* q_i e_\delta , e_\gamma \rangle \not= 0$.  Since $q_i e_\delta \not= 0$ if and only if $q_i e_\delta = e_\delta$, we have  $S_{\beta_i}^* e_\delta \not= 0$.  Then we must have $\delta = \beta_i \mu$ for some $\mu \in \Lambda$.  Then $S_{\alpha_i} S_{\beta_i}^* e_\delta = e_{\alpha_i \mu}$.  It then follows that $\alpha_i \mu = \gamma$.  But then $\alpha_i \beta_i^{-1} = \gamma \delta^{-1} = t$.  Then $\alpha_i \in F$, and hence $\gamma \in F \Lambda$.
\end{proof}

We next consider the construction of the \textit{boundary} of $(\Gamma, \Lambda)$.  Our motivation for this is as follows.  Let us suppose that $Y$ is generated as a group by $\Lambda$.  Let $A \subseteq \Lambda$ be a subset that generates $\Lambda$ as a monoid.  The Cayley graph of $Y$ with respect to the generating set $A$ can be identified with a subset of $Y \times \Lambda$:  the vertices are $Y \times \{ e \}$ and the (oriented) edges are $Y \times A$.  Then $Y \times \Lambda$ can be identified with the directed paths in the Cayley graph.  The ``infinite path space at $t$'' can be identified with $\{ t \} \times \partial \Lambda$.  In order to obtain what we wish to consider as the \textit{directed boundary} of the Cayley graph, we have to make some identifications.   (Note that the following does not require a choice of generating subset $A$, and the consequent interpretation of the Cayley graph).  

We first assume only that $\Lambda$ is finitely aligned as a category of paths.  (We will see in a moment what additional hypothesis is necessary.)  Let $Z = Y \times \Lambda$.  We make $Z$ into a category by defining $Z^0 = Y \times \{e \}$, $s(t,\alpha) = (t \alpha, e)$, $r(t, \alpha) = (t, e)$, and $(t,\alpha) (t \alpha, \beta) = (t, \alpha \beta)$.  It is clear that $Z$ is also a category of paths.  We note that $(t,\alpha) \Cap (t', \alpha')$ iff and only if there are $\beta$, $\beta' \in \Lambda$ such that $(t, \alpha \beta) = (t', \alpha' \beta')$, hence if and only if $t = t'$ and $\alpha \Cap  \alpha'$.  Thus $Z$ is finitely aligned if and only if $\Lambda$ is.  It is clear that $\partial Z = Y \times \partial \Lambda$.  We note that $Y$ acts on $Z$ by automorphisms via left-multiplication in the first coordinate (and hence also by homeomorphisms of $\partial Z$).

For the rest of this section, we will interpret elements of the boundary of a category of paths as directed hereditary subsets of the category.

\begin{Lemma}
\label{l.groupbdyequivalence}
%!?!\textit{(l.groupbdyequivalence)} %%% erase label name
Let the relation $\sim$ on $\partial Z$ be defined by $(t,x) \sim (t', x')$ if there exist $\alpha$, $\alpha' \in \Lambda$ and $y \in \partial \Lambda$ such that $x = \alpha y$, $x' = \alpha' y$, and $t \alpha = t' \alpha'$.  Then $\sim$ is an equivalence relation.  Moreover, the quotient topology on $\partial Z / \sim$ is locally compact.
\end{Lemma}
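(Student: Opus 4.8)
The plan is to treat the two assertions separately: first that $\sim$ is an equivalence relation, where only transitivity requires work, and then that the orbit map onto $\partial Z/\sim$ is open, from which local compactness follows formally. Reflexivity is witnessed by $\alpha=\alpha'=e$, $y=x$ (recall $e\in\Lambda$ and $ex=x$, $te=t$), and symmetry is built into the symmetric form of the definition. For transitivity, suppose $(t,x)\sim(t',x')$ is witnessed by $\alpha,\alpha',y$ and $(t',x')\sim(t'',x'')$ by $\beta',\beta'',z$. Since $e$ lies in every nonempty hereditary directed set, $e\in y$ and $e\in z$, so $\alpha',\beta'\in x'$, whence $\alpha'\Lambda,\beta'\Lambda\in\CU_{x'}$ and therefore $\alpha'\Lambda\cap\beta'\Lambda\in\CU_{x'}$. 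By finite alignment this intersection equals $\bigcup_{\eps\in\alpha'\vee\beta'}\eps\Lambda$, so the ultrafilter property furnishes some $\eps\in\alpha'\vee\beta'$ with $\eps\in x'$. Writing $\eps=\alpha'\mu=\beta'\nu$ and setting $w=\sigma^\eps(x')$, I will invoke the shift-invariance of the boundary to conclude $w\in\partial\Lambda$; this follows from the characterization in Theorem \ref{t.boundary}, since the defining condition for $\sigma^\eps(x')$ at $\gamma$ is precisely the condition for $x'$ at $\eps\gamma$.

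With $w$ in hand, from $x'=\eps w=\alpha'(\mu w)=\alpha' y$ and the injectivity of concatenation by $\alpha'$ (Theorem \ref{t.concatenation}) I obtain $y=\mu w$, and likewise $z=\nu w$. Hence $x=\alpha y=(\alpha\mu)w$ and $x''=\beta''z=(\beta''\nu)w$, while $t(\alpha\mu)=(t\alpha)\mu=(t'\alpha')\mu=t'\eps=(t'\beta')\nu=(t''\beta'')\nu=t''(\beta''\nu)$. This exhibits the common tail $w$ together with the paths $\alpha\mu,\beta''\nu$ witnessing $(t,x)\sim(t'',x'')$, completing the proof that $\sim$ is an equivalence relation.

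For local compactness, the key observation is that $\sim$ is exactly the orbit equivalence relation of the restricted path groupoid $R=G(Z)|_{\partial Z}$. Indeed, a groupoid element $[(t,\alpha),(t',\alpha'),\mathbf{x}]$ (with $t\alpha=t'\alpha'$ and $\mathbf{x}$ in the boundary fibre over $(t\alpha,e)$, identified with $\partial\Lambda$) has range $(t,\alpha\mathbf{x})$ and source $(t',\alpha'\mathbf{x})$; comparing with the definition of $\sim$ shows that two boundary points are equivalent exactly when they lie in a single $R$-orbit. Since $\Lambda$ is finitely aligned, $G(Z)$ is Hausdorff, ample and \'etale (Proposition \ref{p.fa.topology}), its unit space $X=G(Z)^0$ is locally compact Hausdorff, and $\partial Z$ is a closed invariant subset, so $R$ is \'etale with locally compact unit space $\partial Z$. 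For an \'etale groupoid $r$ and $s$ are local homeomorphisms, in particular open, so for open $U\subseteq\partial Z$ the saturation $q^{-1}(q(U))=s(r^{-1}(U))$ is open; thus the quotient map $q\colon\partial Z\to\partial Z/\sim$ is open. Local compactness is then formal: given $[x]$, choose $x\in V\subseteq K$ with $V$ open and $K$ compact in $\partial Z$; then $q(V)\subseteq q(K)$ with $q(V)$ open and $q(K)$ compact, so $q(K)$ is a compact neighborhood of $[x]$.

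I expect the genuine obstacle to be the common-tail construction inside transitivity, namely producing the refinement $w$ and verifying $w\in\partial\Lambda$; this is the only place where finite alignment and the shift-invariance of the boundary are essential. Once $\sim$ is recognized as the orbit relation of an \'etale groupoid, the local compactness argument reduces to standard bookkeeping, so no serious difficulty is anticipated there.
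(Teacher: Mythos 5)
Your proof is correct, and the two halves relate to the paper differently. For transitivity you are doing essentially what the paper does, except that where the paper simply applies its common-tail lemma (Lemma \ref{l.commontail}) to the equality $\alpha' y = \beta' z$, you re-derive that lemma inline: your ultrafilter/finite-alignment argument producing $\eps \in \alpha' \vee \beta'$ with $\eps \in x'$ is precisely its proof. You are in fact more careful than the paper on one point: Lemma \ref{l.commontail} only yields a common tail $w \in X$, and the assertion $w \in \partial\Lambda$ needs shift-invariance of the boundary, which you correctly extract from Theorem \ref{t.boundary}; the paper asserts $w \in \partial\Lambda$ without comment. Where you genuinely diverge is the local compactness half. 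The paper proves openness of the quotient map by an explicit computation, exhibiting the saturation of a basic open set $\{t\}\times\widehat{E}\cap\partial\Lambda$ as $\bigcup_{\eps\in E}\bigcup_{\delta\in\Lambda s(\eps)}\{t\eps\delta^{-1}\}\times\delta\sigma^\eps(\widehat{E})\cap\partial\Lambda$, which is visibly open. You instead identify $\sim$ with the orbit equivalence relation of the restricted groupoid $G(Z)|_{\partial Z}$ --- a legitimate identification, since finite alignment of $Z$ puts every groupoid element in the form $[(t,\alpha),(t',\alpha'),\mathbf{x}]$, whose range and source match the witnesses in the definition of $\sim$ --- and then get openness of saturations from $s(r^{-1}(U))$ with $s$ a local homeomorphism. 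Your route is shorter and more conceptual: it reuses Proposition \ref{p.fa.topology} and Remark \ref{r.fa.groupoid.two} rather than redoing a computation, and it anticipates Theorem \ref{t.transversal}, where this groupoid picture becomes the main point. The cost is that it leans on invariance of $\partial Z$ under $G(Z)$ (equivalently, invariance of $\partial\Lambda$ under concatenations and shifts), a fact the paper uses implicitly elsewhere but never isolates as a lemma; the paper's hands-on saturation computation needs no such input. Both arguments are sound.
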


\begin{proof}
It is clear that $\sim$ is reflexive and symmetric.  Transitivity is proved much as in the proof of Lemma \ref{l.fa.equivalencerelation}:  let $(t,x) \sim (t',x')$ and $(t',x') \sim (t'',x'')$.  Then there are $\alpha$, $\alpha'$, $\beta'$, $\beta'' \in \Lambda$ and $y$, $z \in \partial \Lambda$ such that $x = \alpha y$, $x' = \alpha' y$, $t \alpha = t' \alpha'$, $x' = \beta' z$, $x'' = \beta'' z$, and $t' \beta' = t'' \beta''$.  Since $\alpha' y = \beta' z$, Lemma \ref{l.commontail} implies that there are $\gamma'$, $\delta' \in \Lambda$ and $w \in \partial \Lambda$ such that $y =  \gamma' w$, $z = \delta' w$, and $\alpha' \gamma' = \beta ' \delta'$.  Then $x = \alpha \gamma' w$, $x'' = \beta'' \delta' w$, and $t \alpha \gamma' = t' \alpha' \gamma' = t' \beta' \delta' = t'' \beta'' \delta'$.  Thus $(t,x) \sim (t'', x'')$.

To show that $\partial Z / \sim$ is locally compact, we must show that the quotient map is open (\cite{wil}, Theorem 18.5).  For this we show that the saturation of a basic open set in $\partial Z$ is open.  Let $E \in \CA(\Lambda)$ be such that $\widehat{E} \cap \partial \Lambda \not= \emptyset$, and let $t \in Y$.  Then $\{ t \} \times \widehat{E} \cap \partial \Lambda$ is a basic open set in $\partial Z$.  Suppose that $(s,x) \sim (t,y)$ for some $y \in \widehat{E} \cap \partial \Lambda$.  Then there are $\alpha$, $\beta \in \Lambda$ and $z \in \partial \Lambda$ such that $x = \alpha z$, $y = \beta z$ and $s \alpha = t \beta$.  Thinking of elements of $\partial \Lambda$ as directed hereditary subsets of $\Lambda$, we have from $y = \beta z$ that $\beta \in y$, and from $y \in \widehat{E}$ that $E$ contains all sufficiently large elements of $y$.  Hence there is $\eps \in y \cap \beta \Lambda \cap E$.  Let $\delta = \alpha \sigma^\beta (\eps)$.  Then $s = t \beta \alpha^{-1} = t \eps \delta^{-1}$ and $x = \alpha \sigma^\beta (y) = \delta \sigma^\eps (y)$.  Conversely, if $\eps \in E$ and $s(\delta) = s(\eps)$, and if $y \in (\eps \Lambda \cap E)^{\wedge} \cap \partial \Lambda$, then $(t \eps \delta^{-1}, \delta \sigma^\eps (y) ) \sim (t,y)$.  Thus the saturation of $\{ t \} \times \widehat{E} \cap \partial \Lambda$ equals
\[
\bigcup_{\eps \in E} \bigcup_{\delta \in \Lambda s(\eps)} \{ t \eps \delta^{-1} \} \times \delta \sigma^\eps (\widehat{E}) \cap \partial \Lambda,
\]
an open subset of $\partial Z$.
\end{proof}

We need an additional hypothesis in order to obtain a Hausdorff quotient space.

\begin{Definition}
\label{d.locallyfinitelyexhaustible}
%!?!\textit{(d.locallyfinitelyexhaustible)} %%% erase label name
The ordered group $(Y, \Lambda)$ is \textit{locally finitely exhaustible} if for each $t \in Y$ there is a finite set $F \subseteq t \Lambda \cap \Lambda$ such that for each $\alpha \in t \Lambda \cap \Lambda$ there is $\mu \in F$ with $\alpha \Cap \mu$.
\end{Definition}

\begin{Remark}
It is clear that if $(Y,\Lambda)$ is finitely aligned then it is locally finitely exhaustible.
\end{Remark}

\begin{Theorem}
\label{t.groupbdyequivalence}
%!?!\textit{(t.groupbdyequivalence)} %%% erase label name
Let $(Y,\Lambda)$ be a countable ordered group such that $\Lambda$ is finitely aligned as a category of paths.  The following are equivalent.
\begin{enumerate}

\item $\partial Z/ \sim$ is Hausdorff.
\label{t.groupbdyequivalenceone}
%!?!\textit{(t.groupbdyequivalenceone)} %%% erase label name

\item For each $t \in Y$, there is a finite set $F \subseteq t \Lambda \cap \Lambda$ such that $(t \Lambda \cap \Lambda) \partial \Lambda = F \partial \Lambda$.
\label{t.groupbdyequivalencetwo}
%!?!\textit{(t.groupbdyequivalencetwo)} %%% erase label name

\item
$(Y,\Lambda)$ is locally finitely exhaustible.
\label{t.groupbdyequivalencethree}
%!?!\textit{(t.groupbdyequivalencethree)} %%% erase label name

\end{enumerate}
\end{Theorem}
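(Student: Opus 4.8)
The plan is to prove $(2)\Leftrightarrow(3)$ directly, and then to reach $(1)$ through the standard fact that, since the quotient map $q\colon\partial Z\to\partial Z/\!\sim$ is open (Lemma \ref{l.groupbdyequivalence}), the quotient is Hausdorff if and only if the graph $R=\{((t,x),(t',x')):(t,x)\sim(t',x')\}$ is closed in $\partial Z\times\partial Z$. Because $Y$ is discrete and $\sim$ is invariant under the diagonal $Y$-action, $R$ is closed iff for every $s\in Y$ the slice $R_s=\{(x,x')\in\partial\Lambda\times\partial\Lambda:(e,x)\sim(s,x')\}$ is closed; writing out the relation, $(x,x')\in R_s$ exactly when there is $\alpha\in s\Lambda\cap\Lambda$ with $\alpha\in x$, $s^{-1}\alpha\in x'$ and $\sigma^\alpha x=\sigma^{s^{-1}\alpha}x'$. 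Throughout I will use that $\partial\Lambda$ is compact (each $\CA_v$ is unital, so $X_v=v\Lambda^*$ is compact and $\partial\Lambda$ is closed in it) and that $\partial\Lambda$ is invariant under the shift maps $\sigma^\alpha$; establishing this last invariance is one point that must be checked, as it is what lets me form the boundary partners needed below.

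For $(2)\Rightarrow(3)$, given the finite $F\subseteq t\Lambda\cap\Lambda$ with $(t\Lambda\cap\Lambda)\partial\Lambda=F\partial\Lambda$ and any $\alpha\in t\Lambda\cap\Lambda$, I would extend the directed hereditary set $[\alpha]$ to a maximal directed set $C\in\Lambda^{**}$ (Remark \ref{r.directedset}); then $C\in\alpha\partial\Lambda\subseteq F\partial\Lambda$, so some $\mu\in F$ lies in $C$, and directedness of $C$ forces $\alpha\Cap\mu$. For $(3)\Rightarrow(2)$ the inclusion $F\partial\Lambda\subseteq(t\Lambda\cap\Lambda)\partial\Lambda$ is trivial; for the reverse, suppose $z\in(t\Lambda\cap\Lambda)\partial\Lambda$ contains $\alpha\in t\Lambda\cap\Lambda$ but $\mu\notin z$ for all $\mu\in F$. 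Then $z$ lies in the clopen set $\widehat{\alpha\Lambda}\setminus\bigcup_{\mu\in F}\widehat{\mu\Lambda}$, which meets $\Lambda^{**}$ since $z\in\partial\Lambda=\overline{\Lambda^{**}}$; choosing $C\in\Lambda^{**}$ there with $\alpha\in C$ and $\mu\notin C$ for all $\mu$, Lemma \ref{l.directedsets} gives $\eta_\mu\in C$ with $\eta_\mu\perp\mu$, and a common extension $\eta\in C\cap\alpha\Lambda\cap\bigcap_\mu\eta_\mu\Lambda$ then lies in $t\Lambda\cap\Lambda$ and is disjoint from every element of $F$, contradicting the exhaustiveness in $(3)$.

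For $(2)\Rightarrow(1)$ I would fix $s$; when $s\Lambda\cap\Lambda=\emptyset$ one has $R_s=\emptyset$, so assume $s\in\Lambda\Lambda^{-1}$ and take the finite $F$ from $(2)$ applied to $t=s$, setting $W_\mu=\{(x,x'):\mu\in x,\ s^{-1}\mu\in x',\ \sigma^\mu x=\sigma^{s^{-1}\mu}x'\}$ for $\mu\in F$. Each $W_\mu$ is closed: on the clopen set where $\mu\in x$ and $s^{-1}\mu\in x'$ the assignment $(x,x')\mapsto(\sigma^\mu x,\sigma^{s^{-1}\mu}x')$ is continuous into $\partial\Lambda\times\partial\Lambda$ (Theorem \ref{t.concatenation}), and $W_\mu$ is the preimage of the diagonal, closed because $\partial\Lambda$ is Hausdorff. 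I claim $R_s=\bigcup_{\mu\in F}W_\mu$. The containment $\supseteq$ is immediate. For $\subseteq$, given $(x,x')\in R_s$ with witness $\alpha$, the point $x\in\alpha\partial\Lambda\subseteq(s\Lambda\cap\Lambda)\partial\Lambda=F\partial\Lambda$, so some $\mu\in F$ has $\mu\in x$; choosing $\nu\in x\cap\alpha\Lambda\cap\mu\Lambda$ by directedness and transporting the witness relation along the shift by the initial segment joining $\mu$ to $\nu$ shows $(x,x')\in W_\mu$. Thus $R_s$, and hence $R$, is closed, giving Hausdorffness.

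For $(1)\Rightarrow(3)$ I argue the contrapositive. If $(3)$ fails for some $s$ (necessarily $s\in\Lambda\Lambda^{-1}$), then by $(2)\Leftrightarrow(3)$ the set $P\partial\Lambda$, where $P=s\Lambda\cap\Lambda$, is a union of clopen sets $\widehat{\alpha\Lambda}\cap\partial\Lambda$ admitting no finite subcover, hence is not closed in the compact space $\partial\Lambda$. Picking $x^*\in\overline{P\partial\Lambda}\setminus P\partial\Lambda$, no $\alpha\in P$ lies in $x^*$, so $(x^*,x')\notin R_s$ for every $x'$. Taking a net $x_i\to x^*$ with $x_i\in P\partial\Lambda$, witnesses $\alpha_i\in P$ with $\alpha_i\in x_i$, and partners $x_i'=(s^{-1}\alpha_i)\sigma^{\alpha_i}x_i\in\partial\Lambda$, compactness of $\partial\Lambda$ lets me pass to a subnet with $x_i'\to x'$; then $(x_i,x_i')\in R_s$ converges to $(x^*,x')\notin R_s$, so $R_s$ is not closed and the quotient is not Hausdorff. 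The main obstacle throughout is organizing the Hausdorff question via the closed-graph criterion for open quotients and, within that, verifying invariance of $\partial\Lambda$ under the shifts together with the passage $(2)\Rightarrow(1)$, where condition $(2)$ is precisely what is needed to replace the unbounded witness $\alpha$ by one of the finitely many $\mu\in F$.
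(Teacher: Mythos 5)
Your proposal is correct, and while it rests on the same two pillars as the paper's proof --- the openness of the quotient map (Lemma \ref{l.groupbdyequivalence}) together with the closed-relation criterion for Hausdorffness (\cite{wil}, Theorem 13.12), and the compactness of $\partial\Lambda$ (which holds here because $\Lambda^0 = \{e\}$) --- it routes the equivalences differently and the difference is instructive. The paper proves \eqref{t.groupbdyequivalenceone}$\Leftrightarrow$\eqref{t.groupbdyequivalencetwo} directly, both directions by sequential limit arguments, and then \eqref{t.groupbdyequivalencetwo}$\Leftrightarrow$\eqref{t.groupbdyequivalencethree}; you prove \eqref{t.groupbdyequivalencetwo}$\Leftrightarrow$\eqref{t.groupbdyequivalencethree} first (essentially as the paper does, with Lemma \ref{l.directedsets} and maximal directed sets playing the same role) and then handle Hausdorffness via \eqref{t.groupbdyequivalencetwo}$\Rightarrow$\eqref{t.groupbdyequivalenceone} and $\neg$\eqref{t.groupbdyequivalencethree}$\Rightarrow\neg$\eqref{t.groupbdyequivalenceone}. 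The genuine novelty is your positive direction: where the paper takes convergent sequences of related pairs and uses \eqref{t.groupbdyequivalencetwo} to pass to a subsequence on which the connecting element $\mu_i \in F$ is constant, you make the same pigeonhole static, slicing $R$ into the sets $R_s$ and exhibiting each $R_s$ as the finite union $\bigcup_{\mu \in F} W_\mu$ of equalizers of continuous shift maps, which are closed with no limit argument at all; this isolates exactly what condition \eqref{t.groupbdyequivalencetwo} buys, namely the conversion of the existential quantifier over the witness $\alpha$ into a finite disjunction. Your transport step in the inclusion $R_s \subseteq \bigcup_{\mu \in F} W_\mu$ is stated tersely but does check out: writing $\nu = \alpha\beta = \mu\gamma \in x$, the group law gives $(s^{-1}\mu)\gamma = (s^{-1}\alpha)\beta \in x'$, whence $\sigma^\mu x = \gamma\,\sigma^\beta(\sigma^\alpha x) = \gamma\,\sigma^\beta(\sigma^{s^{-1}\alpha}x') = \sigma^{s^{-1}\mu}x'$, which is the same computation the paper performs inside its sequence argument. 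Likewise your negative direction replaces the paper's explicit enumeration-and-diagonal construction by the cleaner observation that failure of \eqref{t.groupbdyequivalencetwo} at $s$ makes $(s\Lambda\cap\Lambda)\partial\Lambda$ non-compact, hence non-closed, in $\partial\Lambda$, after which compactness supplies a net of related pairs converging to an unrelated pair. The two facts you flag without proof --- invariance of $\partial\Lambda$ under the shifts $\sigma^\alpha$ and under concatenation, and compactness of $\partial\Lambda$ --- are used by the paper equally without comment, so this is not a gap relative to the paper's own standard of detail.
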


\begin{proof}
\eqref{t.groupbdyequivalenceone} $\implies$ \eqref{t.groupbdyequivalencetwo}:  We prove the contrapositive.  Suppose that there is $t \in Y$ such that for each finite set $F \subseteq t\Lambda \cap \Lambda$ there are $\alpha \in t\Lambda \cap \Lambda$ and $z \in \partial  \Lambda$ such that $\alpha z \not\in F \partial \Lambda$.  Let $t \Lambda \cap \Lambda = \{\alpha_1, \alpha_2, \ldots \}$.  Let $\beta_n = t^{-1} \alpha_n \in \Lambda$.  Then $(t, \beta_n z) \sim (e,\alpha_n z)$ for each $z \in \partial \Lambda$.  By assumption, for each $n$ there is $m_n > n$ and $z_n \in \partial \Lambda$ such that $\alpha_{m_n} z_n \not\in \bigcup_{j=1}^n \alpha_j \partial \Lambda$.  Since $\partial \Lambda$ is compact, we may pass to a subsequence so as to assume that $\alpha_{m_n} z_n \to x$ and $\beta_{m_n} z_n \to y$.  We claim that $(t,y) \not\sim (e,x)$.  For otherwise, there is $k$ such that $x = \alpha_k z$ and $y = \beta_k z$, for some $z \in \partial \Lambda$.  We have $\alpha_{m_n} z_n \to \alpha_k z \in \alpha_k \partial \Lambda$.  This means that eventually, $\alpha_{m_n} z_n \in \alpha_k \partial \Lambda$, and hence that $\alpha_k \in \alpha_{m_n} z_n$ (thought of as a directed hereditary subset of $\Lambda$).  But then $\alpha_{m_n} z_n = \alpha_k \sigma^{\alpha_k} ( \alpha_{m_n} z_n ) \in \alpha_k \partial \Lambda$ eventually, contradicting our choice of $m_n$ and $z_n$.  Since $(t, \beta_{m_n} z_n) \sim (e, \alpha_{m_n} z_n)$, we see that $\sim$ is not closed, and hence $\partial Z / \sim$ is not Hausdorff (\cite{wil}, Theorem 13.12).

\noindent
\eqref{t.groupbdyequivalencetwo} $\implies$ \eqref{t.groupbdyequivalenceone}:  Let $(s_i,x_i) \to (s,x)$ and $(t_i, y_i) \to (t,y)$, and assume that $(s_i,x_i) \sim (t_i,y_i)$.  Passing to a subsequence, we may assume that $s_i = s$ and $t_i = t$ for all $i$.  Let $\alpha_i$, $\beta_i \in \Lambda$ and $z_i \in \partial \Lambda$ be such that $s \alpha_i = t \beta_i$, $x_i = \alpha_i z_i$, and $y_i = \beta_i z_i$.  Let $F \subseteq (s^{-1} t) \Lambda \cap \Lambda$ be as in \eqref{t.groupbdyequivalencetwo}.  Since $\alpha_i \in s^{-1} t \Lambda \cap \Lambda$, there is $\mu_i \in F$ such that $x_i = \alpha_i z_i \in \mu_i \partial \Lambda$.  Let $\alpha_i z_i = \mu_i w_i$.  There is $\nu_i \in \Lambda$ such that $s^{-1} t \nu_i = \mu_i$, or $s \mu_i = t \nu_i$.  Then $\beta_i z_i = t^{-1} s \alpha_i z_i = t^{-1} s \mu_i w_i = \nu_i w_i$.  Passing to a subsequence, we may assume that $\mu_i = \mu \in F$ for all $i$.  We now have $x_i = \mu w_i \to x$ and $y_i = \nu w_i \to y$.  Since $\mu \partial \Lambda$ is a clopen set, it must contain $x$, and hence $\mu \in x$.  Similarly, $\nu \in y$.  Then $\sigma^\mu x = \lim_i \sigma^\mu x_i = \lim_i w_i = \lim_i \sigma^\nu y_i = \sigma^\nu y$.  Denoting this common value by $z$, we have $s \mu = t \nu$, $x = \mu z$, and $y = \nu z$.  Therefore $(s,x) \sim (t,y)$. Thus $\sim$ is closed, and so $\partial Z / \sim$ is Hausdorff.

\noindent
\eqref{t.groupbdyequivalencetwo} $\implies$ \eqref{t.groupbdyequivalencethree}:  Let $t \in Y$, and choose $F$ as in \eqref{t.groupbdyequivalencetwo}. Now let $\alpha \in t \Lambda \cap \Lambda$.  Choose any $x \in \partial \Lambda$.  There is $\mu \in F$, and $z \in \partial \Lambda$, such that $\alpha x = \mu z$.  But then $\alpha \Cap \mu$.  Therefore $F$ satisfies Definition  \ref{d.locallyfinitelyexhaustible} for $t$.

\noindent
\eqref{t.groupbdyequivalencethree} $\implies$ \eqref{t.groupbdyequivalencetwo}:  Let $t \in Y$.  Let $F \subseteq t\Lambda \cap \Lambda$ be as Definition \ref{d.locallyfinitelyexhaustible}.  Let $\alpha \in t \Lambda \cap \Lambda$.  First let $x \in \Lambda^{**}$.  For each $\gamma \in \alpha x$ there is $\mu_\gamma \in F$ such that $\gamma \Cap \mu_\gamma$.  Let $(\gamma_n)$ be a cofinal sequence in $\alpha x$.  Passing to a subsequence, we may assume that $(\mu_{\gamma_n})$ is constant, equal to $\mu \in F$.  Then $\mu \Cap \gamma$ for all $\gamma \in \alpha x$.  Since $\alpha x$ is maximal, Lemma \ref{l.directedsets} implies that $\mu \in \alpha x$.  But then $\alpha x \in \mu \partial \Lambda \subseteq F \partial \Lambda$.  Thus $\alpha \Lambda^{**} \subseteq F \partial \Lambda$.  Since $F$ is finite, $F \partial \Lambda$ is closed, so that $\alpha \partial \Lambda \subseteq F \partial \Lambda$.
\end{proof}

\begin{Definition}
\label{d.directedboundary}
%!?!\textit{(d.directedboundary)} %%% erase label name
Let $(Y, \Lambda)$ be a locally finitely exhaustible ordered group, and suppose that $\Lambda$ is finitely aligned as a category of paths.  We denote by $\partial(Y,\Lambda)$ the (locally compact Hausdorff) quotient space $\partial Z / \sim$, and call it the \textit{directed boundary} of $Y$ (with respect to $\Lambda$), or the \textit{boundary} of $(Y, \Lambda)$.
\end{Definition}

We note that left-multiplication by $Y$ in the first coordinate of $\partial Z$ preserves $\sim$, as well as the basic open sets of $\partial(Y,\Lambda)$ given in the proof of Lemma \ref{l.groupbdyequivalence}.  Thus $Y$ acts by homeomorphisms of $\partial(Y,\Lambda)$.  Let $T = (\{ e \} \times \partial \Lambda) / \sim$, a compact-open subset of $\partial(Y,\Lambda)$.

\begin{Lemma}
\label{l.transversal}
%!?!\textit{(l.transversal)} %%% erase label name
\begin{enumerate}
\item The quotient map is one-to-one from $\{ e \} \times \partial \Lambda$ onto $T$.
\label{l.transversalone}
%!?!\textit{(l.transversalone)} %%% erase label name
\item $T$ meets every orbit of the action of $Y$.
\label{l.transversaltwo}
%!?!\textit{(l.transversaltwo)} %%% erase label name
\end{enumerate}
\end{Lemma}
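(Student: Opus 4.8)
The plan is to read off both assertions directly from the definition of $\sim$ given in Lemma \ref{l.groupbdyequivalence} and from the description of the $Y$-action; neither part requires any of the finer combinatorics of the boundary. For part \eqref{l.transversalone}, surjectivity onto $T$ is immediate, since by definition $T = (\{e\}\times\partial\Lambda)/\sim$ is precisely the image of $\{e\}\times\partial\Lambda$ under the quotient map. For injectivity I would suppose $(e,x)\sim(e,x')$ with $x,x'\in\partial\Lambda$ and unwind the definition: there are $\alpha,\alpha'\in\Lambda$ and $y\in\partial\Lambda$ with $x=\alpha y$, $x'=\alpha' y$, and $e\alpha = e\alpha'$. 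The final equation says $\alpha=\alpha'$ in $Y$, hence in $\Lambda$, and applying the common concatenation map to $y$ gives $x=\alpha y = \alpha' y = x'$. Thus the restriction of the quotient map to $\{e\}\times\partial\Lambda$ is a bijection onto $T$.

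For part \eqref{l.transversaltwo}, recall that $Y$ acts on $\partial(Y,\Lambda)$ by left-multiplication in the first coordinate, $s\cdot[(t,x)] = [(st,x)]$. Given an arbitrary point $[(t,x)]\in\partial(Y,\Lambda)$, represented by $(t,x)\in Y\times\partial\Lambda = \partial Z$, I would simply act by $s=t^{-1}$, obtaining
\[
t^{-1}\cdot[(t,x)] = [(t^{-1}t,x)] = [(e,x)],
\]
which lies in $T$ since $(e,x)\in\{e\}\times\partial\Lambda$. Hence the orbit of every point of $\partial(Y,\Lambda)$ meets $T$, as required.

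There is essentially no genuine obstacle in this proof: both statements are formal consequences of the definition of $\sim$ and of the $Y$-action, which is exactly why $T$ functions as a transversal for the action. The one place deserving a word of care is the injectivity in part \eqref{l.transversalone}, where it is important to note that the hypothesis $t=t'=e$ collapses the defining condition $t\alpha=t'\alpha'$ all the way down to $\alpha=\alpha'$; once the two representatives share both the same leading segment and the same tail $y$, equality of $x$ and $x'$ follows. (This is precisely the feature that fails for general $t,t'$, which is why distinct group elements can give equivalent pairs.)
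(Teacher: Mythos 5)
Your proof is correct and takes essentially the same approach as the paper: your injectivity argument in part (1) — unwinding the definition of $\sim$ with $t = t' = e$ to force $\alpha = \alpha'$ and hence $x = x'$ — is exactly the paper's argument, and part (2), which the paper simply declares clear, is precisely your observation that acting by $t^{-1}$ carries $[(t,x)]$ to $[(e,x)] \in T$.
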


\begin{proof}
\eqref{l.transversalone} If $(e,x) \sim (e,y)$ then there are $\alpha$, $\beta \in \Lambda$ and $z \in \partial \Lambda$ such that $x = \alpha z$, $y = \beta z$, and $e \alpha = e \beta$.  Then $\alpha = \beta$ and $x = y$.

\noindent
\eqref{l.transversaltwo} This is clear.
\end{proof}

\begin{Theorem}
\label{t.transversal}
%!?!\textit{(t.transversal)} %%% erase label name
Let $(Y, \Lambda)$ be a locally finitely exhaustible ordered group, such that $\Lambda$ is a finitely aligned category of paths.  Let $Y \times \partial(Y,\Lambda)$ denote the transformation groupoid associated with the action of $Y$ on $\partial (Y,\Lambda)$.  Then $Y \times \partial(Y,\Lambda) \bigr|_T$ is isomorphic to $G(\Lambda)|_{\partial \Lambda}$.
\end{Theorem}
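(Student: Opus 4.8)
The plan is to write down an explicit isomorphism $\Theta$ from $G(\Lambda)|_{\partial\Lambda}$ to $(Y\times\partial(Y,\Lambda))|_T$ and then verify both the algebraic and the topological structure. Let $q:\partial Z\to\partial(Y,\Lambda)$ be the quotient map. Since $\Lambda$ is a monoid, $\Lambda^0=\{e\}$ and $X=X_e$; moreover $e\Lambda=A(e,e)\in\CD^{(0)}_e$, so $\widehat{e\Lambda}=X_e$ is compact-open and $\partial\Lambda\subseteq X_e$ is compact. Hence $q$ restricts to a continuous bijection of the compact set $\{e\}\times\partial\Lambda$ onto $T$ (Lemma \ref{l.transversal}\eqref{l.transversalone}), and since $\partial(Y,\Lambda)$ is Hausdorff (Theorem \ref{t.groupbdyequivalence}) this restriction is a homeomorphism, which I use to identify $T$ with $\partial\Lambda$. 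Recalling that a germ $[\alpha,\beta,x]\in G(\Lambda)$ has source $\beta x$ and range $\alpha x$, I define
\[
\Theta[\alpha,\beta,x]=\bigl(\alpha\beta^{-1},\,q(e,\beta x)\bigr).
\]
The group element $\alpha\beta^{-1}$ records the displacement, and the elementary congruence $(t,\beta x)\sim(e,\alpha x)$ for $t=\alpha\beta^{-1}$ shows that $t\cdot q(e,\beta x)=q(e,\alpha x)\in T$, so $\Theta$ lands in $(Y\times\partial(Y,\Lambda))|_T$.

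First I would check well-definedness and the homomorphism property. If $(\alpha,\beta,x)\sim(\alpha',\beta',x')$ in the sense of Definition \ref{d.fa.groupoid.one}, with witnesses $\delta,\delta',z$, then $\beta x=\beta\delta z=\beta'\delta' z=\beta'x'$ and $\alpha\beta^{-1}=(\alpha\delta)(\beta\delta)^{-1}=(\alpha'\delta')(\beta'\delta')^{-1}=\alpha'\beta'^{-1}$, so both coordinates of $\Theta$ are unchanged. For multiplicativity, the repeated use of $(\gamma\delta^{-1},\delta y)\sim(e,\gamma y)$ rewrites ranges and inverses in the transformation groupoid as the expected boundary points; the product then reduces to the two identities $\beta^{-1}\gamma=\xi\eta^{-1}$ (from the common-tail relation $\beta\xi=\gamma\eta$ furnished by Lemma \ref{l.commontail}) for the first coordinate, and $\delta y=\delta\eta z$ for the second. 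Inversion is the special case $[\alpha,\beta,x]^{-1}=[\beta,\alpha,x]\mapsto(\beta\alpha^{-1},q(e,\alpha x))$.

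For injectivity, suppose $\Theta[\alpha,\beta,x]=\Theta[\alpha',\beta',x']$. Then $\alpha\beta^{-1}=\alpha'\beta'^{-1}$, and $q(e,\beta x)=q(e,\beta'x')$ forces $\beta x=\beta'x'$ by Lemma \ref{l.transversal}\eqref{l.transversalone}. Applying Lemma \ref{l.commontail} to $\beta x=\beta'x'$ yields $\delta,\delta',z$ with $x=\delta z$, $x'=\delta'z$, $\beta\delta=\beta'\delta'$; combined with $\alpha\beta^{-1}=\alpha'\beta'^{-1}$ this gives $\alpha\delta=\alpha'\delta'$, which is exactly the data certifying $[\alpha,\beta,x]=[\alpha',\beta',x']$. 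For surjectivity, take $(t,w)\in(Y\times\partial(Y,\Lambda))|_T$, write $w=q(e,x)$, and use $t\cdot w\in T$ to unravel $(t,x)\sim(e,x')$: this produces $\alpha_1,\alpha_1'\in\Lambda$ and $y\in\partial\Lambda$ with $x=\alpha_1 y$, $x'=\alpha_1' y$, $t=\alpha_1'\alpha_1^{-1}$, so that $[\alpha_1',\alpha_1,y]\in G(\Lambda)|_{\partial\Lambda}$ maps to $(t,w)$.

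Finally I would promote $\Theta$ to a homeomorphism of topological groupoids. On a basic bisection $[\alpha,\beta,E]\cap G|_{\partial\Lambda}$, $\Theta$ has the local form $x\mapsto(\alpha\beta^{-1},q(e,\beta x))$, which is continuous since concatenation $x\mapsto\beta x$ is continuous (Theorem \ref{t.concatenation}) and $q$ is continuous; as these bisections cover, $\Theta$ is continuous. For openness, the image of $[\alpha,\beta,E]\cap G|_{\partial\Lambda}$ is $\{\alpha\beta^{-1}\}\times\beta(\widehat E\cap\partial\Lambda)$ under $T\cong\partial\Lambda$, and since concatenation by $\beta$ is a homeomorphism onto a clopen set (Theorem \ref{t.concatenation}, Corollary \ref{c.concatenation.two}), $\beta(\widehat E\cap\partial\Lambda)$ is open in $\partial\Lambda$, so this image is a basic open bisection of the transformation groupoid. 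I expect the main obstacle to be precisely this topological bookkeeping: confirming that $T$ carries the subspace topology genuinely identified with $\partial\Lambda$, that the $\sim$-saturations behave well enough for the quotient map to be open (already supplied by Lemma \ref{l.groupbdyequivalence}), and that the group coordinate is locally constant on bisections so that no continuity is lost when passing to the \'etale transformation groupoid. By contrast, the algebraic verifications are routine once the congruence $(\gamma\delta^{-1},\delta y)\sim(e,\gamma y)$ and Lemma \ref{l.commontail} are in hand.
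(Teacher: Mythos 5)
Your proposal is correct and follows essentially the same route as the paper: the same map $[\alpha,\beta,x]\mapsto(\alpha\beta^{-1},[e,\beta x])$, injectivity via Lemma \ref{l.transversal}\eqref{l.transversalone} and Lemma \ref{l.commontail}, and the same surjectivity argument from the definition of $\sim$. The only difference is that you explicitly carry out the steps the paper dismisses as ``routine'' or ``clear'' (well-definedness, multiplicativity, and the identification of $T$ with $\partial\Lambda$ via compactness and Hausdorffness), all of which check out.
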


\begin{proof}
We define a map $G(\Lambda)|_{\partial \Lambda} \to Y \times \partial(Y,\Lambda) \bigr|_T$ by $[\alpha, \beta, x] \mapsto (\alpha \beta^{-1}, [e,\beta x] )$ (where we use square brackets to denote the equivalence classes relative to $\sim$).  To see that the map is one-to-one, suppose that $(\alpha \beta^{-1}, [e,\beta x]) = (\gamma \delta^{-1}, [e,\delta y])$.  Then $\alpha \beta^{-1} = \gamma \delta^{-1}$ and $(e,\beta x) \sim (e,\delta y)$ in $Y \times \partial \Lambda$.  By Lemma \ref{l.transversal}\eqref{l.transversalone} we have $\beta x = \delta y$, so Lemma \ref{l.commontail} gives $\mu$, $\nu \in \Lambda$ and $z \in \partial \Lambda$ such that $x = \mu z$, $y = \nu z$, and $\beta \mu = \delta \nu$.  Then $\alpha \mu = \alpha \beta^{-1} \beta \mu = \gamma \delta^{-1} \delta \nu = \gamma \nu$.  Thus $(\alpha, \beta, x) \sim (\gamma, \delta, y)$.  To see that the map is onto, let $(t,[e,x]) \in Y \times \partial(Y,\Lambda)|_T$.  Then $[t,x] \in T$, so 
$(t,x) \sim (e,y)$ for some $y \in \partial \Lambda$.  Hence there are $\xi$, $\eta \in \Lambda$ and $w \in \partial \Lambda$ such that $x = \xi w$, $y = \eta w$, and $t \xi = e \eta$.  Thus $(t,[e,x]) = (\eta \xi^{-1}, [e,\xi w])$ is the image of $[\eta, \xi, w]$.  Continuity of the map is clear.  Since there is a basis of compact-open sets, it is a homeomorphism.  It is routine to check that the map is a homomorphism.
\end{proof}

\begin{Corollary}
\label{c.groupboundary}
%!?!\textit{(c.groupboundary)} %%% erase label name
Let $Y$ and $\Lambda$  be as in the theorem.  Then $C^*(\Lambda)$ is Morita equivalent to the crossed product algebra $C_0(\partial(Y,\Lambda)) \times Y$.
\end{Corollary}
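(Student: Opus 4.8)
The plan is to realize both algebras as groupoid $C^*$-algebras and then to invoke the equivalence theorem for the reduction of a groupoid to a full open subset of its unit space. First I would observe that the crossed product $C_0(\partial(Y,\Lambda)) \times Y$ is precisely the $C^*$-algebra of the transformation groupoid $H = Y \times \partial(Y,\Lambda)$ appearing in Theorem \ref{t.transversal}: since $Y$ is countable and discrete and acts by homeomorphisms on the locally compact Hausdorff space $\partial(Y,\Lambda)$ (Lemma \ref{l.groupbdyequivalence} and Definition \ref{d.directedboundary}), the groupoid $H$ is Hausdorff and \'etale, with unit space $H^0 = \partial(Y,\Lambda)$, and $C^*(H) = C_0(\partial(Y,\Lambda)) \times Y$ in the usual way (\cite{ren}).

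Next I would bring in the compact-open subset $T = (\{e\} \times \partial\Lambda)/\sim$ of $\partial(Y,\Lambda)$. This is an open subset of $H^0$, and by Lemma \ref{l.transversal}\eqref{l.transversaltwo} it meets every orbit of the $Y$-action. Hence the reduction $H|_T$ is the restriction of $H$ to a full open subset of its unit space. By the general equivalence theorem for groupoid $C^*$-algebras, the reduction of a (Hausdorff, \'etale) locally compact groupoid to an open subset of the unit space that meets every orbit is equivalent to the original groupoid --- the equivalence being implemented by $s^{-1}(T)$, a space carrying commuting free and proper actions of $H$ and $H|_T$. Consequently $C^*(H|_T)$ is strongly Morita equivalent to $C^*(H) = C_0(\partial(Y,\Lambda)) \times Y$.

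Finally, Theorem \ref{t.transversal} provides a topological groupoid isomorphism $G(\Lambda)|_{\partial\Lambda} \cong H|_T$, whence an isomorphism of $C^*$-algebras
\[
C^*(\Lambda) = C^*\bigl(G(\Lambda)|_{\partial\Lambda}\bigr) \cong C^*(H|_T).
\]
Chaining this isomorphism with the Morita equivalence of the previous step yields that $C^*(\Lambda)$ is Morita equivalent to $C_0(\partial(Y,\Lambda)) \times Y$, as asserted.

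I expect the main point requiring care to be the verification that $T$ genuinely satisfies the hypotheses of the reduction theorem. Its openness follows from its being compact-open in $\partial(Y,\Lambda)$ (established via the openness of the quotient map in Lemma \ref{l.groupbdyequivalence} and local compactness of $\partial(Y,\Lambda)$), and the fact that it meets every orbit is exactly Lemma \ref{l.transversal}\eqref{l.transversaltwo}. Because $H$ is Hausdorff and \'etale, the equivalence theorem applies cleanly; the only residual subtlety is matching the orbit structure of the transformation groupoid with the $\sim$-classes, but this is precisely what the construction of $\partial(Y,\Lambda)$ and the isomorphism of Theorem \ref{t.transversal} encode.
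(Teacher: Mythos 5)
Your proposal is correct and is essentially the paper's own argument: the paper proves the corollary by combining Theorem \ref{t.transversal} with the groupoid-equivalence theorem of Muhly--Renault--Williams \cite{muhrenwil}, which is exactly the reduction-to-a-full-open-subset equivalence you invoke (with $T$ full by Lemma \ref{l.transversal}). You have merely spelled out the details that the paper leaves to the citation.
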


\begin{proof}
This follows from Theorem \ref{t.transversal} and \cite{muhrenwil}.
\end{proof}

In the case where $(Y,\Lambda)$ is quasi-lattice ordered, $C^*(\Lambda)$ equals the \textit{boundary quotient} of \cite{crilac}.  Thus we give a locally compact space with an action of $Y$, rather than a compact space with a partial action, having crossed product Morita equivalent to the boundary quotient.

\begin{Example}
\label{e.groupexamples}
%!?!\textit{(e.groupexamples)} %%% erase label name
We give some examples.  The details are routine, and are omitted.
\begin{enumerate}
\item Let $\Gamma = \langle a, b, c \mid ac = ca,\ bc = cb \rangle$, and let $\Lambda$ be the submonoid generated by $\{ac^n, bc^n, c^m : n \in \IZ, m \in \IN \}$.  ($\Gamma = \IZ^2 *_A \IZ^2$, where $A = \{0\} \times \IZ$, with the product order determined by the lexicographic order on $\IZ^2$.)  Then $\Lambda$ is finitely aligned, with unique minimal common extensions, and $(\Gamma, \Lambda)$ is locally finitely exhaustible, but not finitely aligned.  ($\Gamma$ is right-angled Artinian, but $\Lambda$ is not the submonoid considered in \cite{crilac}.)
\item Let $\Gamma = \langle a, b, c_1, c_2, \ldots \mid - \rangle$, and let $\Lambda$ be the submonoid generated by $\{a_n, b_n, c_{n+1} : n \in \IN \}$, where $a_0 = a$ and $a_n = a c_1^{-1} \cdots c_n^{-1}$ for $n > 0$, and similarly for $b_n$.  Then $\Lambda$ is finitely aligned, with unique minimal common extensions, but $(\Gamma, \Lambda)$ is not locally finitely exhaustible.
\item Let $c$, $d \in \IZ^+$ with $c > 1$, let $\Gamma = \langle a, b \mid a b^c = b^{-d} a \rangle$, and let $\Lambda$ be the submonoid generated by $a$ and $b$.  Then $\Lambda$ is finitely aligned, with unique minimal common extensions, and $(\Gamma, \Lambda)$ is locally finitely exhaustible, but not finitely aligned.  (This example is treated in detail in \cite{spi_baumsol}.)
\end{enumerate}
\end{Example}

\end{Example}

\section{Gauge Actions}
\label{s.gaugeactions}
%!?!\textit{(s.gaugeactions)} %%% erase label name

\begin{Definition}
\label{d.gaugegroups}
%!?!\textit{(d.gaugegroups)} %%% erase label name
Let $\Lambda$ be a category of paths.  Set
\[
H(\Lambda) = C_c(\Lambda,\IZ) / \langle e_\alpha + e_\beta - e_{\alpha\beta} : s(\alpha) = r(\beta), \alpha,\,\beta \in \Lambda \rangle,
\]
and let $\theta \equiv \theta_\Lambda : \Lambda \to H(\Lambda)$ be defined by $\theta(\alpha) = [e_\alpha]$.
\end{Definition}

Thus $H(\Lambda)$ is an abelian group, $\theta$ is a homomorphism, and if $\psi : \Lambda \to Q$ is any homomorphism from $\Lambda$ to an abelian group $Q$, then there exists a unique homomorphism making the following diagram commute:
\[
\begin{tikzpicture}

\node (0_0) at (0,0) [circle] {$Q$};
\node (0_2) at (0,2) [circle] {$\Lambda$};
\node (2_2) at (2,2) [circle] {$H(\Lambda)$};

\draw[-latex,thick,dashed] (2_2) -- (0_0) node[pos=0.5, inner sep=0.5pt, anchor=north west] {$\exists !$};
\draw[-latex,thick] (0_2) -- (2_2) node[pos=0.5, inner sep=0.5pt, anchor=south] {$\theta$};
\draw[-latex,thick] (0_2) -- (0_0) node[pos=0.5, inner sep=0.5pt, anchor=east] {$\psi$};

\end{tikzpicture}
\]
We think of such a homomorphism $\psi$ as a \textit{generalized degree functor}, and $\theta$ as the \textit{maximal degree functor}.  In general, $H(\Lambda)$ might be the trivial group.  However, if $H(\Lambda)$ is large it may provide a useful decomposition of $G(\Lambda)$.  A weak restriction in this direction is the following.

\begin{Definition}
\label{d.nondegeneratedegree}
%!?!\textit{(d.nondegeneratedegree)} %%% erase label name
Let $\Lambda$ be a category of paths, and let $\psi$ be a generalized degree functor on $\Lambda$.  We call $\psi$ \textit{nondegenerate} if $\psi(\alpha) \not= 0$ for $\alpha \not\in \Lambda^0$.
\end{Definition}

Let $(\Lambda_0, \Lambda)$ be a relative category of paths, and let $\psi : \Lambda_0 \to Q$ be a generalized degree functor.  We wish to define a cocycle $c_\psi : G(\Lambda_0,\Lambda) \to Q$ by
\[
c_\psi([\zeta,x]) = \sum_{i=1}^n \psi(\beta_i) - \psi(\alpha_i),
\]
where $\zeta = (\alpha_1,\beta_1,\ldots,\alpha_n,\beta_n)$.  To see that $c_\psi$ is well-defined, let $(\zeta,x) \sim (\zeta',x')$.  Then $x = x'$ and $\Phi_\zeta$ equals $\Phi_{\zeta'}$ near $x$.  There is $\xi \in \CZ$ such that $A(\xi) \subseteq A(\zeta) \cap A(\zeta')$, $x \in \widehat{A(\xi)}$, and $\Phi_\zeta$, $\Phi_{\zeta'}$ agree on $\widehat{A(\xi)}$.  Write $x = \mu z$, where $\mu \in A(\xi)$ and $z \in X_{s(\mu)}$.  Then $\mu \in A(\zeta) \cap A(\zeta')$, and $\Phi_{\zeta}|_{\mu X_{s(\mu)}} = \Phi_{\zeta'}|_{\mu X_{s(\mu)}}$.  Moreover, $\Phi_\zeta(\mu y) = \pphi_\zeta(\mu)y$ and $\Phi_{\zeta'}(\mu y) = \pphi_{\zeta'}(\mu)y$ for all $y \in X_{s(\mu)}$.  Let $y$ correspond to the fixed ultrafilter at $s(\mu)$.  Then we find that $\pphi_\zeta(\mu) = \pphi_{\zeta'}(\mu)$.  Let $\zeta = (\alpha_1, \beta_1, \ldots, \alpha_n, \beta_n)$.  Define $\mu_0 = \mu$, and $\mu_{i+1} = \sigma^{\alpha_{n-i}} \beta_{n-i} \cdots \sigma^{\alpha_n} \beta_n (\mu)$ for $0 \le i < n$.  Then $\mu_n = \pphi_\zeta(\mu)$, and $\sigma^{\alpha_{n-i}} \beta_{n-i} (\mu_i) = \mu_{i+1}$.  Thus $\beta_{n-i} \mu_i = \alpha_{n-i} \mu_{i+1}$, so
\[
\sum_{i=1}^n \psi(\beta_i) - \psi(\alpha_i)= \sum_{i=0}^{n-1} \psi(\mu_{i+1}) - \psi(\mu_i) = \psi(\pphi_\zeta(\mu)) - \psi(\mu).
\]
Hence if $\zeta' = (\gamma_1, \delta_1, \ldots, \gamma_m, \delta_m)$, then
\[
\sum_{j=1}^m \psi(\delta_j) - \psi(\gamma_j) = \psi(\pphi_{\zeta'} (\mu)) - \psi(\mu) = \psi(\pphi_\zeta (\mu)) - \psi(\mu) = \sum_{i=1}^n \psi(\beta_i) - \psi(\alpha_i).
\]
It is clear that $c_\psi$ is a homomorphism, and since it is constant on the compact open set $[\zeta,\widehat{A(\zeta)}]$, it is continuous.  The cocycle $c_\psi$ induces an action $\gamma \equiv \gamma_\psi$ of the compact abelian group $\widehat{Q}$ on $C^*(G)$ in the usual way:
\[
\gamma_z(f)([\zeta,x]) = \langle z, c_\psi([\zeta,x]) \rangle f([\zeta,x]),
\]
for $f \in C_c(G)$, $z \in \widehat{Q}$.  We call $\gamma_\psi$ the \textit{(generalized) gauge action defined by $\psi$}, and $\gamma_{\theta_\Lambda}$ the \textit{maximal gauge action}.  In the usual way, $\gamma_\psi$ can be used to define a conditional expectation $E_\psi$ from $C^*(G)$ to the fixed point algebra $C^*(G)^{\gamma_\psi}$.  One of the most important uses of the gauge action has been to prove nuclearity of $C^*(G)$, and the coincidence of the full and reduced $C^*$-algebras.  We give a sketch of this argument, which is by now more-or-less standard.

\begin{Proposition}
\label{p.nuclearity}
%!?!\textit{(p.nuclearity)} %%% erase label name
Let $G$ be a Hausdorff \'etale groupoid, $Q$ a countable abelian group, and $c : G \to Q$ a continuous homomorphism.  Let $G^c = c^{-1}(0)$, also a Hausdorff \'etale groupoid.  Suppose that $G^c$ is amenable.  Then $C^*(G)$ is nuclear, and $G$ is amenable.
\end{Proposition}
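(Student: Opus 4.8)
The plan is to prove that $G$ is amenable; the rest of the proposition is then automatic, because for a Hausdorff \'etale groupoid topological amenability implies that the canonical map $C^*(G) \to C^*_r(G)$ is an isomorphism onto a nuclear algebra (\cite{ren}; see also \cite{pat}). Two preliminary observations cost nothing. First, $Q$ is countable and abelian, hence amenable as a discrete group. Second, since $\{0\}$ is clopen in the discrete group $Q$ and $c$ is continuous, $G^c = c^{-1}(0)$ is a clopen subgroupoid of $G$; in particular it is Hausdorff and \'etale, as asserted in the statement, so that its amenability is meaningful as a hypothesis.

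The heart of the matter is the principle that $G$, sitting in the ``extension'' $G^c \to G \to Q$ (with $c$ the quotient map) with amenable kernel and amenable quotient, is itself amenable. I would organize this through the skew product $\Sigma = G \times_c Q$, whose unit space is $G^0 \times Q$, with $r(\gamma,q) = (r(\gamma),q)$, $s(\gamma,q) = (s(\gamma),\, q + c(\gamma))$, and $(\gamma,q)\,(\eta,\, q + c(\gamma)) = (\gamma\eta, q)$. The group $Q$ acts on $\Sigma$ by $p \cdot (\gamma,q) = (\gamma, q+p)$; this action is free and proper, and the quotient groupoid is canonically $G$. Consequently the transformation groupoid $\Sigma \rtimes Q$ is equivalent to $\Sigma / Q = G$, so by the invariance of amenability under groupoid equivalence it suffices to treat $\Sigma \rtimes Q$. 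Since $\Sigma$ embeds as an open subgroupoid of $\Sigma \rtimes Q$ and $Q$ is amenable, one has $\Sigma \rtimes Q$ amenable if and only if $\Sigma$ is amenable; this is exactly the step that consumes the amenability of $Q$. Thus the problem reduces to showing that $\Sigma$ is amenable. (Concretely, $\Sigma$ is the groupoid underlying the decomposition of $C^*(G)$ by the $\widehat{Q}$-action dual to $c$.)

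Finally I would establish amenability of $\Sigma$ from that of $G^c$, which is the substantive point. The induced cocycle $\widetilde{c}(\gamma,q) = c(\gamma)$ on $\Sigma$ has kernel $\widetilde{c}^{-1}(0) \cong G^c \times Q$, a disjoint bundle of copies of $G^c$, hence amenable; equivalently, the reduction of $\Sigma$ to each clopen slice $G^0 \times \{q\}$ is isomorphic to $G^c$. The remaining work is to produce the topological-amenability witnesses for $\Sigma$: a net of nonnegative compactly supported functions that are approximately invariant along $\Sigma$. The standard construction multiplies approximate-invariance functions on $G^c$ (available since $G^c$ is amenable) by the normalized indicators of a F\o lner net in $Q$, using the free $Q$-coordinate of $\Sigma$ to transport mass between the fibers $\widetilde{c}^{-1}(q)$. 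This F\o lner-type estimate is the main obstacle, and it is precisely the content of the extension theorem for amenable groupoids (Anantharaman--Delaroche and Renault): a continuous cocycle onto an amenable group with amenable kernel has amenable total space. Once it is in hand, the chain of equivalences above yields that $G$ is amenable, and the nuclearity of $C^*(G)$ follows as noted.
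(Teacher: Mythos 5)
Your reduction scheme---pass to the skew product $\Sigma = G \times_c Q$, use the free and proper translation action of $Q$ to identify $\Sigma \rtimes Q$ with a groupoid equivalent to $\Sigma/Q \cong G$, and trade $\Sigma \rtimes Q$ for $\Sigma$ using amenability of $Q$---is the classical dynamical route (it is how Renault's skew-product result, Proposition II.3.8 of \cite{ren}, is used by Kumjian and Pask for higher-rank graphs). But there is a genuine gap at the step you yourself flag as ``the main obstacle'': amenability of $\Sigma$. You dispose of it by citing ``the extension theorem for amenable groupoids (Anantharaman--Delaroche and Renault): a continuous cocycle onto an amenable group with amenable kernel has amenable total space.'' No theorem of that form is available in \cite{anaren}: what that book and \cite{ren} provide are permanence results in the opposite, building-up direction (semidirect products and skew products of amenable groupoids by amenable groups are amenable), and Renault's II.3.8, which deduces amenability of $G$ from amenability of the \emph{skew product}, not of the kernel $c^{-1}(0)$. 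The passage from ``$c^{-1}(0)$ amenable'' to ``$G \times_c Q$ amenable'' is exactly the missing link. Moreover, if the extension theorem you cite were available, it would apply verbatim to $c : G \to Q$ itself and finish the proof in one line, so your chain of reductions would be superfluous; as written, your argument assumes at its final step a statement equivalent to the proposition being proved, which is why that citation cannot stand in for a proof.

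The gap is repairable, but the repair is the real work. Either one constructs the F\o lner-type approximate invariance functions on $\Sigma$ explicitly, or one exploits the special structure of the skew product: each clopen slice $G^0 \times \{q\}$ satisfies $\Sigma|_{G^0 \times \{q\}} \cong G^c$; the saturations $[G^0 \times \{q\}]$, $q \in Q$, form a countable open invariant cover of $\Sigma^0$; each $\Sigma|_{[G^0 \times \{q\}]}$ is equivalent to $G^c$, hence amenable---granting invariance of topological amenability under groupoid equivalence, itself a nontrivial input you invoke twice without reference---and one then patches these together using permanence under restrictions to invariant sets and countable increasing unions. It is worth noting that the paper avoids this entire problem: it never proves a groupoid-level extension theorem. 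Instead it shows that the fixed-point algebra of the gauge action of $\widehat{Q}$ is $C^*(G^c)$ (amenability of $G^c$ enters through induced representations, to show the norms of $C^*(G^c)$ and $C^*(G)$ agree on $C_c(G^c)$), deduces nuclearity of $C^*(G)$ from nuclearity of this coaction fixed-point algebra via Quigg's theorem \cite{quigg}, and only then obtains amenability of $G$ from nuclearity via \cite{anaren}, Corollary 6.2.4(ii). That is, the paper derives amenability from nuclearity, the reverse of your intended order, precisely because the direct dynamical route requires the extension theorem you could not legitimately cite.
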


\begin{proof}
As above, we have the action $\gamma$ of $\widehat{Q}$ on $C^*(G)$.  A standard argument using the expectation of $C^*(G)$ onto the fixed-point algebra $C^*(G)^\gamma$ shows that $C^*(G)^\gamma = \overline{C_c(G^c)}^{\| \cdot \|_{C^*(G)}}$.  We claim that $\| f \|_{C^*(G^c)} = \| f \|_{C^*(G)}$ for $f \in C_c(G^c)$, which implies that $C^*(G)^\gamma = C^*(G^c)$.

To see this, first note that for $v \in G^0$,
\[
\text{Ind}_G v \bigr|_{C_c(G^c)} = \text{Ind}_{G^c} v \oplus \pi,
\]
for some representation $\pi$ of $C_c(G^c)$, where 
$\text{Ind}_G v$ is the representation of $C_c(G)$ induced from the point mass at $v$ (see \cite{pat}, page 107).  Thus
\begin{align*}
\| f \|_{C^*(G^c)}
&= \| f \|_{C^*_r(G^c)}, \text{ since } G^c \text{ is amenable,} \\
&= \sup_{v \in (G^c)^0} \bigl\| \text{Ind}_{G^c} v (f) \bigr\|, \text{ by \cite{pat}, Proposition 3.1.2,} \\
&\le \sup_{v \in G^0} \bigl\| \text{Ind}_G v (f) \bigr\| 
\le \| f \|_{C^*(G)}.
\end{align*}
Since $\| \cdot \|_{C^*(G^c)}$ is the maximal $C^*$-norm on $C_c(G^c)$, it follows that $\| f \|_{C^*(G^c)} = \| f \|_{C^*(G)}$.  Thus the fixed-point algebra $C^*(G)^\gamma = C^*(G^c)$ is nuclear.  This is also the fixed-point algebra of the discrete coaction of $Q$ dual to $\gamma$.  By \cite{quigg}, Corollary 2.17, it follows that $C^*(G)$ is nuclear.  Then $G$ is amenable by \cite{anaren}, Corollary 6.2.4(ii).
\end{proof}

In some situations, this result is obtained more explicitly.  For example, when $\Lambda$ is higher-rank graph, the usual degree functor $d : \Lambda \to \IN^k$ has the \textit{unique factorization} property, which may be expressed in the following form:  if $0 \le n \le d(\alpha)$ then there exists a unique $\beta \in [\alpha]$ with $d(\beta) = n$.  In \cite{raesimyee} (proof of Theorem 3.1) it is proved that the fixed point algebra for the degree functor on a finitely aligned higher-rank graph is AF.  The key step is Lemma 3.2 of \cite{raesimyee}.  We will give sufficient conditions on a finitely aligned category of paths for an analogous result to hold.  While these seem to be far from necessary, they apply to many examples with degree functors that do not have the unique factorization property.  In particular,  our conditions allow the possibility that there may be many factorizations of a path into pieces of given degrees.   The fact that the fixed point algebra is AF is a key step in the proof that the $C^*$-algebra of a finitely aligned higher-rank graph is nuclear (\cite{sim}, Proposition 8.1).   Our conditions allow us to prove this for categories of paths (see Theorem \ref{t.af}; we work at the level of the groupoid).

\begin{Lemma}
\label{l.finiteorbits}
%!?!\textit{(l.finiteorbits)} %%% erase label name
Let $\Lambda$ be a countable finitely aligned category of paths, and let $\psi : \Lambda \to Q$ be a nondegenerate degree functor.  Suppose that $\psi$ has the following two properties.
\begin{enumerate}
\item \label{l.finiteorbits_a} 
If $E \subseteq \Lambda$ is infinite, and if $\psi(\alpha) = \psi(\beta)$ for all $\alpha$, $\beta \in E$, then there is a finite subset $F \subseteq E$ such that $\vee F = \emptyset$.
\item \label{l.finiteorbits_b} 
For every finite subset $S \subseteq \psi(\Lambda)$ there is a finite subset $T \subseteq \psi(\Lambda)$ with $S \subseteq T$ such that for any finite set $E \subseteq \Lambda$, if $\psi(E) \subseteq T$, then $\psi(\vee E) \subseteq T$.
\end{enumerate}
Let $E \subseteq \Lambda$ be a finite set, and let $c \in \Lambda^*$ be a directed hereditary subset of $\Lambda$.  Let
\[
P =
\bigl\{ \beta_n \sigma^{\alpha_n} \cdots \beta_1 \sigma^{\alpha_1} : n \in \IN, \alpha_i, \beta_i \in E,\ \psi(\alpha_i) = \psi(\beta_i) \text{ for } i = 1 \ldots n \bigr\}.
\]
Then there exists $\omega \in c$ such that for any $\zeta \in P$, if $c \in \text{dom}(\zeta)$ then $\omega \in \text{dom}(\zeta)$, and the set
\[
\bigl\{ \zeta(\omega) : \zeta \in P \text{ and } \omega \in \text{dom}(\zeta) \bigr\}
\]
is finite.
\end{Lemma}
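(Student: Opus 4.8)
The plan is to exploit three facts in turn: that every map in $P$ preserves $\psi$-degree, that property \eqref{l.finiteorbits_b} confines the relevant degrees to a fixed finite window, and that property \eqref{l.finiteorbits_a} converts ``bounded degree together with compatibility'' into genuine finiteness. First I would record the degree-preservation. If $\alpha$, $\beta\in E$ with $\psi(\alpha)=\psi(\beta)$ and $\mu=\alpha\tau\in\alpha\Lambda$, then $\beta\sigma^\alpha(\mu)=\beta\tau$ and $\psi(\beta\tau)=\psi(\beta)+\psi(\tau)=\psi(\alpha)+\psi(\tau)=\psi(\mu)$; since each factor of a $\zeta\in P$ has this form, and $\pphi_\zeta(\nu\mu)=\pphi_\zeta(\nu)\mu$ by Remark \ref{r.zigzag}\eqref{r.zigzag.f}, every $\zeta\in P$ preserves $\psi$-degree on its domain. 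I would then isolate the following consequence of \eqref{l.finiteorbits_a}: if $D$ is a directed subset of $\Lambda$ and $\psi$ is constant on $D'\subseteq D$, then $D'$ is finite. Indeed, an infinite such $D'$ would contain, by \eqref{l.finiteorbits_a}, a finite $F$ with $\bigvee F=\emptyset$, hence $\bigcap_{\gamma\in F}\gamma\Lambda=\emptyset$ by Lemma \ref{l.finitelyalignedpair}; but any finite subset of a directed set has a common extension (iterate Definition \ref{d.directedset}), a contradiction.

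Next I would bound degrees. Starting from $S=\psi(E)$, take the finite set $T\supseteq S$ furnished by \eqref{l.finiteorbits_b}, and prove by induction on the length of $\zeta\in P$ that $A(\zeta)$ is a finite union (as in Corollary \ref{c.finitelyalignedzigzag}) of tail sets $\nu\Lambda$ with $\psi(\nu)\in T$; by degree-preservation the generators of the range $A(\overline\zeta)$ then also have degree in $T$. The inductive step writes $\zeta=\beta\sigma^\alpha\zeta_0$, intersects the range of $\zeta_0$ with $\alpha\Lambda$ via Lemma \ref{l.finitelyalignedpair}, applies \eqref{l.finiteorbits_b} to keep the degrees of the resulting minimal common extensions inside $T$, and pulls back through $\zeta_0$; the matching $\psi(\alpha)=\psi(\beta)$ is exactly what cancels the net degree so that the new generators land back in $T$. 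Granting this, set $c_T=\{\mu\in c:\psi(\mu)\in T\}$, which is finite by the first paragraph (a finite union over $t\in T$ of constant-degree subsets of the directed set $c$). Choose $\omega\in c$ to be a common extension of all of $c_T$. If $\zeta\in P$ is defined at $c$, then $A(\zeta)\in\CU_c$, so by Theorem \ref{t.ultrafilters}\eqref{t.ultrafilters.b} and heredity some generator $\nu$ of $A(\zeta)$ lies in $c$; as $\psi(\nu)\in T$ we get $\nu\in c_T$, whence $\omega\in\nu\Lambda\subseteq A(\zeta)$. This gives the first assertion of the lemma.

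For the finiteness of the orbit, the same generator $\nu\in c_T$ satisfies $\omega=\nu\,\sigma^\nu(\omega)$, so by Remark \ref{r.zigzag}\eqref{r.zigzag.f} we have $\zeta(\omega)=\zeta(\nu)\,\sigma^\nu(\omega)$; since $c_T$ is finite and $\sigma^\nu(\omega)$ is determined by $\nu$, it suffices to show that for each fixed $\nu\in c_T$ the set of range generators $\{\zeta(\nu):\zeta\in P,\ \nu\in A(\zeta)\}$ is finite. Each such $\zeta(\nu)$ has degree $\psi(\nu)\in T$, so the task reduces to ruling out an infinite orbit of a single bounded-degree path under the moves $\delta\mapsto\beta\sigma^\alpha(\delta)$ with $\alpha,\beta\in E$.

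This last reduction is where I expect the main difficulty. The obstacle is that distinct orbit elements need not be compatible — already in Figure \ref{fig1cat} the $\gamma_i$ are pairwise disjoint — so finiteness cannot follow from \eqref{l.finiteorbits_a} and directedness alone, as it did for $c_T$. The mechanism must combine the finiteness of $E$ (each move removes and restores an initial segment drawn from the finite set $E$, and the degree induction bounds how far into the tail a move can reach) with \eqref{l.finiteorbits_a} applied to the compatible families that genuinely do arise, namely the constant-degree slices of a single directed set. The careful bookkeeping controlling how successive moves propagate into the common tail $\sigma^\nu(\omega)$, together with the precise choice of the compatible family on which to invoke \eqref{l.finiteorbits_a} to force the reachable set to stabilize, is the technical heart of the argument.
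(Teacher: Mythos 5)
Your first two paragraphs and the reduction at the start of the third are correct, and in fact your route to the first assertion is cleaner than the paper's. The degree-preservation of maps in $P$, the observation that a constant-degree subset of a directed set is finite (via \eqref{l.finiteorbits_a} and Lemma~\ref{l.finitelyalignedpair}), and the induction showing that $A(\zeta)$ is a finite union of tail sets $\nu\Lambda$ with $\psi(\nu)\in T$ (the matching $\psi(\alpha_i)=\psi(\beta_i)$ cancelling the net degree) are all provable exactly as you sketch. Your $\omega$ is the same as the paper's (an upper bound in $c$ for $c_T=\{\mu\in c:\psi(\mu)\in T\}$), but the paper only obtains ``$c\in\text{dom}(\zeta)$ implies $\omega\in\text{dom}(\zeta)$'' as a byproduct of a long inductive construction, whereas you get it directly from heredity of $c$: some generator $\nu$ of $A(\zeta)$ lies in $c$, hence in $c_T$, hence $\omega\in\nu\Lambda\subseteq A(\zeta)$. (Only the definition of $\CU_c$ is needed there, not the full strength of Theorem~\ref{t.ultrafilters}.)

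The genuine gap is the second assertion, which is the actual content of the lemma and which you explicitly leave unproved. Your reduction to finiteness of $\{\zeta(\nu):\zeta\in P,\ \nu\in A(\zeta)\}$ for fixed $\nu\in c_T$ is valid, but it is equivalent (by right-cancellation, via $\zeta(\omega)=\zeta(\nu)\sigma^\nu(\omega)$) to the statement being proved; it is a restatement, not progress, and no argument is supplied for it. The paper's mechanism is the following. For a sequence of moves defined at $c$ it inductively constructs a commuting diagram of squares $\alpha_i^j\beta_{i+j}^{j+1}=\beta_{i+j-1}^j\alpha_{i-1}^{j+1}\in\alpha_i^j\vee\beta_{i+j-1}^j$, anchored by factorizations $\omega=\alpha_1\alpha_1^1\cdots\alpha_1^i\gamma_i$; the defining property of $\omega$ (it extends every element of $c$ of degree in $T$) is invoked at every inductive stage to produce the next $\gamma_i$, and Claims 1--3 of the paper keep all degrees in $T$. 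Finiteness of $\{\zeta(\omega)\}$ then comes from a stabilization argument: all entries of any such diagram lie in the recursively defined finite sets $E_0=E\cup s(E)$, $E_i=\bigcup_{\alpha,\beta\in E_{i-1}}\sigma^\alpha(\alpha\vee\beta)$ (together with sources); an element of $E_j\setminus E_{j-1}$ forces the existence of a path of length at least $j$ all of whose non-vertex factors have degree in the finite set $U=(T-T)\cap\psi(\Lambda)\setminus\{e\}$ while the total degree stays in $T$; and nondegeneracy of $\psi$ bounds the length of such paths, so $E_N=E_{N-1}$ for some $N$. Note that this also corrects your closing speculation: property \eqref{l.finiteorbits_a} is \emph{not} invoked again for the orbit finiteness --- it is used only to make $c_T$ finite; the stabilization rests on nondegeneracy, the finite degree window $T-T$, and finite alignment.
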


We briefly describe the significance of the conditions in the lemma, and of the statement of the lemma.  Condition \eqref{l.finiteorbits_a} implies that a directed subset of $\Lambda$ (e.g. an element of $\Lambda^*$) cannot contain infinitely many paths of the same degree.  Condition \eqref{l.finiteorbits_b} provides a bound on the ``size'' of the minimal common extensions of a set of paths in terms of their degrees.  Finally, the purpose of the lemma is to show that the \textit{fixed point subgroupoid} of $G(\Lambda)$, consisting of elements of the form $[\alpha, \beta, x]$ for which $\psi(\alpha) = \psi(\beta)$, is approximately finite (\cite{ren}, III.1.1) --- see Theorem \ref{t.af}.  (The elements of $P$ are zigzag maps, though a vertex is missing from each end of the corresponding zigzags.  We refer to them with the letter $\zeta$ instead of $\pphi$ for convenience.)

\begin{proof}
Let $S = \psi(E)$, and let $T$ be as in condition \eqref{l.finiteorbits_b}.  Let $F = \{ \gamma \in c : \psi(\gamma) \in T \}$.  Every finite subset of $c$ has an upper bound in $c$.  Hence for any $t \in T$, $\{ \gamma \in c : \psi(\gamma) = t \}$ is finite, by condition \eqref{l.finiteorbits_a}.  Since $T$ is finite, it follows that $F = \bigcup_{t \in T} \{ \gamma \in c : \psi(\gamma) = t \}$ is finite.  Let $\omega \in c$ be an upper bound for $F$, with $\omega \in F$.

For $i \ge 1$ let $\alpha_i$, $\beta_i \in E$ be such that $\psi(\alpha_i) = \psi(\beta_i)$ and such that $\beta_n \sigma^{\alpha_n} \cdots \beta_1 \sigma^{\alpha_1} (c)$ is defined for all $n$.  We will construct elements 
\begin{align*}
& \alpha_i^j, \text{ for } i \ge 1, j \ge 0 \\
& \beta_i^j, \text{ for } i \ge 1, i > j \ge 0 \\
& \gamma_i, \text{ for } i \ge 0
\end{align*}
such that
\begin{align*}
\alpha_i^j \beta_{i+j}^{j+1} &= \beta_{i+j-1}^j \alpha_{i-1}^{j+1} \in \alpha_i^j \vee \beta_{i+j-1}^j, \text{ for $i > 1$ and $j \ge 0$.} \tag{$*$} \\
\alpha_1^i \gamma_i &= \gamma_{i-1} \text{ for $i > 0$, $\alpha_1 \gamma_0 = \omega$.  (Hence also $\omega = \alpha_1 \alpha_1^1 \cdots \alpha_1^i \gamma_i$.)} \tag{$**$}
\end{align*}
The construction is illustrated in Figure~\ref{fig1af}.

\begin{figure}[ht]
\[
\begin{tikzpicture}

\node (00) at (0,0) [circle,fill=black,inner sep=1pt,outer sep = 2pt] {};
\node (02) at (0,2) [circle,fill=black,inner sep=1pt,outer sep = 2pt] {};
\node (04) at (0,4) [circle,fill=black,inner sep=1pt,outer sep = 2pt] {};
\node (06) at (0,6) [circle,fill=black,inner sep=1pt,outer sep = 2pt] {};
\node (08) at (0,8) [circle,fill=black,inner sep=1pt,outer sep = 2pt] {};
\node (22) at (2,2) [circle,fill=black,inner sep=1pt,outer sep = 2pt] {};
\node (24) at (2,4) [circle,fill=black,inner sep=1pt,outer sep = 2pt] {};
\node (26) at (2,6) [circle,fill=black,inner sep=1pt,outer sep = 2pt] {};
\node (28) at (2,8) [circle,fill=black,inner sep=1pt,outer sep = 2pt] {};
\node (44) at (4,4) [circle,fill=black,inner sep=1pt,outer sep = 2pt] {};
\node (46) at (4,6) [circle,fill=black,inner sep=1pt,outer sep = 2pt] {};
\node (48) at (4,8) [circle,fill=black,inner sep=1pt,outer sep = 2pt] {};
\node (66) at (6,6) [circle,fill=black,inner sep=1pt,outer sep = 2pt] {};
\node (68) at (6,8) [circle,fill=black,inner sep=1pt,outer sep = 2pt] {};
\node (88) at (8,8) [circle,fill=black,inner sep=1pt,outer sep = 2pt] {};

\node (m47) at (-4,7) [circle,fill=black,inner sep=1pt,outer sep = 2pt] {};

\draw[-latex,thick] (02) -- (00) node[pos=0.5, inner sep=0.5pt, anchor=west] {$\alpha_1^{\phantom{1}}$};
\draw[-latex,thick] (04) -- (02) node[pos=0.5, inner sep=0.5pt, anchor=west] {$\alpha_1^1$};
\draw[-latex,thick] (06) -- (04) node[pos=0.5, inner sep=0.5pt, anchor=west] {$\alpha_1^2$};
\draw[-latex,thick] (08) -- (06) node[pos=0.5, inner sep=0.5pt, anchor=west] {$\alpha_1^3$};
\draw[-latex,thick] (24) -- (22) node[pos=0.5, inner sep=0.5pt, anchor=west] {$\alpha_2^{\phantom{1}}$};
\draw[-latex,thick] (26) -- (24) node[pos=0.5, inner sep=0.5pt, anchor=west] {$\alpha_2^1$};
\draw[-latex,thick] (28) -- (26) node[pos=0.5, inner sep=0.5pt, anchor=west] {$\alpha_2^2$};
\draw[-latex,thick] (46) -- (44) node[pos=0.5, inner sep=0.5pt, anchor=west] {$\alpha_3^{\phantom{1}}$};
\draw[-latex,thick] (48) -- (46) node[pos=0.5, inner sep=0.5pt, anchor=west] {$\alpha_3^1$};
\draw[-latex,thick] (68) -- (66) node[pos=0.5, inner sep=0.5pt, anchor=west] {$\alpha_4^{\phantom{1}}$};

\draw[-latex,thick] (68) -- (88) node[pos=0.5, inner sep=0.5pt, anchor=south] {$\beta_4^{\phantom{1}}$};
\draw[-latex,thick] (48) -- (68) node[pos=0.5, inner sep=0.5pt, anchor=south] {$\beta_4^1$};
\draw[-latex,thick] (28) -- (48) node[pos=0.5, inner sep=0.5pt, anchor=south] {$\beta_4^2$};
\draw[-latex,thick] (08) -- (28) node[pos=0.5, inner sep=0.5pt, anchor=south] {$\beta_4^3$};
\draw[-latex,thick] (46) -- (66) node[pos=0.5, inner sep=0.5pt, anchor=south] {$\beta_3^{\phantom{1}}$};
\draw[-latex,thick] (26) -- (46) node[pos=0.5, inner sep=0.5pt, anchor=south] {$\beta_3^1$};
\draw[-latex,thick] (06) -- (26) node[pos=0.5, inner sep=0.5pt, anchor=south] {$\beta_3^2$};
\draw[-latex,thick] (24) -- (44) node[pos=0.5, inner sep=0.5pt, anchor=south] {$\beta_2^{\phantom{1}}$};
\draw[-latex,thick] (04) -- (24) node[pos=0.5, inner sep=0.5pt, anchor=south] {$\beta_2^1$};
\draw[-latex,thick] (02) -- (22) node[pos=0.5, inner sep=0.5pt, anchor=south] {$\beta_1^{\phantom{1}}$};

\draw[-latex,thick] (m47) -- (00) node[pos=0.65, inner sep=0.5pt, anchor=south west] {$\omega$};
\draw[-latex,thick] (m47) -- (02) node[pos=0.65, inner sep=0.5pt, anchor=south west] {$\gamma_0$};
\draw[-latex,thick] (m47) -- (04) node[pos=0.65, inner sep=0.5pt, anchor=south west] {$\gamma_1$};
\draw[-latex,thick] (m47) -- (06) node[pos=0.65, inner sep=0.5pt, anchor=south west] {$\gamma_2$};
\draw[-latex,thick] (m47) -- (08) node[pos=0.65, inner sep=0.5pt, anchor=south east] {$\gamma_3$};

\node [circle,fill=black,inner sep=1pt] at (-5.4,7) {};
\node [circle,fill=black,inner sep=1pt] at (-6,7) {};
\node [black,above] at (-5.4,7) {$\sigma^\omega(c)$};
\node (m487) at (-4.8,7) [circle,fill=black,inner sep=1pt,outer sep = 2pt] {};
\draw[-latex,thick] (m487) -- (m47) node[inner sep=.5pt]{};

\node [circle,fill=black,inner sep=1pt] at (-2,9) {};
\node [circle,fill=black,inner sep=1pt] at (-2,9.2) {};
\node [circle,fill=black,inner sep=1pt] at (-2,9.4) {};
\node [circle,fill=black,inner sep=1pt] at (9,9) {};
\node [circle,fill=black,inner sep=1pt] at (9.2,9.2) {};
\node [circle,fill=black,inner sep=1pt] at (9.4,9.4) {};
\node [circle,fill=black,inner sep=1pt] at (4,9) {};
\node [circle,fill=black,inner sep=1pt] at (4,9.2) {};
\node [circle,fill=black,inner sep=1pt] at (4,9.4) {};

\end{tikzpicture}
\]

\caption{} \label{fig1af}

\end{figure}

Figure~\ref{fig2af} gives a sketch of a typical square in the diagram ($i > 1$).  Its location is described as follows.  Any square containing the edge $\alpha_i^j$ has its bottom edge $i + j - 1$ units from the bottom of the diagram (measured vertically from $r(\alpha_1)$).  Any square containing the edge $\beta_p^q$ has its left edge $p - q - 1$ units from the leftmost vertical line of the diagram (measured horizontally from $s(\beta_1)$).

\begin{figure}[ht]

\[
\begin{tikzpicture}

\node (00) at (0,0) [circle,fill=black,inner sep=1pt,outer sep = 2pt] {};
\node (20) at (2,0) [circle,fill=black,inner sep=1pt,outer sep = 2pt] {};
\node (02) at (0,2) [circle,fill=black,inner sep=1pt,outer sep = 2pt] {};
\node (22) at (2,2) [circle,fill=black,inner sep=1pt,outer sep = 2pt] {};

\draw[-latex,thick] (02) -- (00) node[pos=0.5, inner sep=0.5pt, anchor=east] {$\alpha_{i-1}^{j+1}$};
\draw[-latex,thick] (22) -- (20) node[pos=0.5, inner sep=0.5pt, anchor=west] {$\alpha_i^j$};
\draw[-latex,thick] (00) -- (20) node[pos=0.5, inner sep=0.5pt, anchor=north] {$\beta_{i+j-1}^j$};
\draw[-latex,thick] (02) -- (22) node[pos=0.5, inner sep=0.5pt, anchor=south] {$\beta_{i+j}^{j+1}$};

\end{tikzpicture}
\]

\caption{} \label{fig2af}

\end{figure}

The construction proceeds inductively on $n \ge 1$, where after stage $n$ we have constructed $\alpha_i^j$ with $i + j \le n$, $\beta_i^j$ with $i \le n$, and $\gamma_i$ with $i \le n - 1$.  (Thus it is stage 4 that is pictured in Figure~\ref{fig1af}.)  When $n = 1$, we have just $\alpha_1$ and $\beta_1$.  Since $c$ is in the domain of $\beta_1 \sigma^{\alpha_1}$, then $\alpha_1 \in c$.  Since $\psi(\alpha_1) \in T$, there exists $\gamma_0$ such that $\omega = \alpha_1 \gamma_0$.

Now suppose that stage $n$ of the induction has been completed.  By the commutativity of the squares and triangles constructed thus far, two paths having the same source and range must be equal.  Therefore we may indicate the path $\mu$ by the ordered pair $\bigl(r(\mu),s(\mu)\bigr)$.  We base the inductive step on three claims. (These claims use only condition $(*)$ of the construction.)

\noindent
\textit{Claim 1.} For $k + \ell \le p + q \le n$ and $k - 1 \ge p$, we have
\[
\bigl(r(\alpha_k^\ell),s(\alpha_p^q)\bigr) 
\in \bigl(r(\alpha_k^\ell),r(\beta_{p+q}^q)\bigr) 
\vee \bigl(r(\beta_{k+\ell-1}^\ell),r(\alpha_p^q)\bigr).
\]
The situation of claim 1 is illustrated in Figure~\ref{fig3af}.

\begin{figure}[ht]

\[
\begin{tikzpicture}

\node (0_45) at (0,4.5) [circle,fill=black,inner sep=1pt,outer sep = 2pt] {};
\node (2_45) at (2,4.5) [circle,fill=black,inner sep=1pt,outer sep = 2pt] {};
\node (0_25) at (0,2.5) [circle,fill=black,inner sep=1pt,outer sep = 2pt] {};
\node (2_25) at (2,2.5) [circle,fill=black,inner sep=1pt,outer sep = 2pt] {};

\node (60) at (6,0) [circle,fill=black,inner sep=1pt,outer sep = 2pt] {};
\node (62) at (6,2) [circle,fill=black,inner sep=1pt,outer sep = 2pt] {};
\node (80) at (8,0) [circle,fill=black,inner sep=1pt,outer sep = 2pt] {};
\node (82) at (8,2) [circle,fill=black,inner sep=1pt,outer sep = 2pt] {};

\draw[-latex,thick] (0_45) -- (0_25) node[pos=0.5, inner sep=0.5pt, anchor=east] {$\alpha_p^q$};
\draw[-latex,thick] (2_45) -- (2_25) node[pos=0.5, inner sep=0.5pt, anchor=west] {$\alpha_{p+1}^{q-1}$};
\draw[-latex,thick] (0_45) -- (2_45) node[pos=0.5, inner sep=0.5pt, anchor=south] {$\beta_{p+q}^q$};
\draw[-latex,thick] (0_25) -- (2_25) node[pos=0.5, inner sep=0.5pt, anchor=north] {$\beta_{p+q-1}^{q-1}$};

\draw[-latex,thick] (82) -- (80) node[pos=0.5, inner sep=0.5pt, anchor=west] {$\alpha_k^\ell$};
\draw[-latex,thick] (62) -- (60) node[pos=0.5, inner sep=0.5pt, anchor=east] {$\alpha_{k-1}^{\ell+1}$};
\draw[-latex,thick] (62) -- (82) node[pos=0.5, inner sep=0.5pt, anchor=south] {$\beta_{k+\ell}^{\ell+1}$};
\draw[-latex,thick] (60) -- (80) node[pos=0.5, inner sep=0.5pt, anchor=north] {$\beta_{k+\ell-1}^\ell$};

\draw[dashed] (2,4.5) -- (8,4.5);
\draw[dashed] (8,4.5) -- (8,2);
\draw[dashed] (0,2.5) -- (0,0);
\draw[dashed] (0,0) -- (6,0);

\node [circle,fill=black,inner sep=1pt] at (3,2.375) {};
\node [circle,fill=black,inner sep=1pt] at (4,2.25) {};
\node [circle,fill=black,inner sep=1pt] at (5,2.125) {};

\end{tikzpicture}
\]

\caption{} \label{fig3af}

\end{figure}

The two constituent paths may be decomposed as follows:
\begin{align*}
\bigl( r(\alpha_k^\ell),r(\beta_{p+q}^q) \bigr)
&= \bigl( r(\alpha_k^\ell), r(\alpha_{p+1}^{q-1}) \bigr) \alpha_{p+1}^{q-1} \\
\bigl( r(\alpha_k^\ell),r(\alpha_p^q) \bigr)
&= \bigl( r(\alpha_k^\ell), r(\beta_{p+q-1}^{q-1}) \bigr) \beta_{p+q-1}^{q-1}. \\
\end{align*}
Since $r(\alpha_{p+1}^{q-1}) = r(\beta_{p+q-1}^{q-1})$, the claim is equivalent to asserting that
\[
\alpha_{p+1}^{q-1} \beta_{p+q}^q = \beta_{p+q-1}^{q-1} \alpha_p^q
\in \alpha_{p+1}^{q-1} \vee \beta_{p+q-1}^{q-1},
\]
which is part of the inductive hypothesis.

\noindent
\textit{Claim 2.} For $j < i \le n$ we have
\[
\beta_i \beta_i^1 \cdots \beta_i^j
= \beta_i \sigma^{\alpha_i} \beta_{i-1} \sigma^{\alpha_{i-1}} \cdots \beta_{i-j} \sigma^{\alpha_{i-j}} 
(\alpha_{i-j} \alpha_{i-j}^1 \cdots \alpha_{i-j}^j).
\]

We prove this by induction on $j$.  If $j=0$, then we have $\beta_i = \beta_i \sigma^{\alpha_i} (\alpha_i)$ whenever $1 \le i \le n$.  Suppose inductively that the claim is true for $j$, and  all $i$ such that $j < i \le n$.  Let $j + 1 < i \le n$.  Then
\begin{align*}
\beta_i \beta_i^1 \cdots \beta_i^{j+1}
&= \beta_i \sigma^{\alpha_i} \cdots \beta_{i-j} \sigma^{\alpha_{i-j}} ( \alpha_{i-j} \cdots \alpha_{i-j}^j \beta_i^{j+1} ) \\
&= \beta_i \sigma^{\alpha_i} \cdots \beta_{i-j} \sigma^{\alpha_{i-j}} ( \beta_{i-j-1} \alpha_{i-j-1}^1 \cdots \alpha_{i-j-1}^{j+1} ) \\
&= \beta_i \sigma^{\alpha_i} \cdots \beta_{i-j} \sigma^{\alpha_{i-j}} \beta_{i-j-1} \sigma^{\alpha_{i-j-1}} ( \alpha_{i-j-1} \alpha_{i-j-1}^1 \cdots \alpha_{i-j-1}^{j+1} ).
\end{align*}

\noindent
\textit{Claim 3.} For $i + j \le n$,
\begin{align*}
\psi( \alpha_i \alpha_i^1 \cdots \alpha_i^j ) &\in T,  \\
\psi( \beta_i \beta_i^1 \cdots \beta_i^j ) &\in T.
\end{align*}

\noindent
We again prove this by induction on $j$.  When $j=0$ the claim follows from the fact that $\alpha_i$, $\beta_i \in E$.  Suppose the claim is true for $j$, and for all $i$ such that $i + j \le n$.  Let $i + j + 1 \le n$.  We have
\begin{align*}
\beta_i \alpha_i^1 \cdots \alpha_i^{j+1}
&= \alpha_{i+1} \alpha_{i+1}^1 \cdots \alpha_{i+1}^j \beta_{i+j+1}^{j+1} \\
&\in \beta_i \alpha_i^1 \cdots \alpha_i^j \vee \alpha_{i + 1} \alpha_{i+1}^1 \cdots \alpha_{i+1}^j,
\end{align*}
by Claim 1.  Since the latter two paths have degrees in $T$ by the inductive hypothesis (and since $\psi(\beta_i) = \psi(\alpha_i)$), so do their minimal common extensions, by the definition of $T$.  Then since $\psi(\beta_i) = \psi(\alpha_i)$ we have that $\psi(\alpha_i \alpha_i^1 \cdots \alpha_i^{j+1}) \in T$.  By Claim 2, we know that $\beta_i \beta_i^1 \cdots \beta_i^{j+1} = \beta_i \sigma^{\alpha_i} \cdots \beta_{i-j-1} \sigma^{\alpha_{i-j-1}} ( \alpha_{i-j-1} \alpha_{i-j-1}^1 \cdots \alpha_{i-j-1}^{j+1})$.  Since the maps $\beta_\ell \sigma^{\alpha_\ell}$ preserve degree, the claim is proved.

We now return to the inductive step in the construction.  By Claim 2 we have
\begin{align*}
\beta_n \beta_n^1 \cdots \beta_n^{n-1} \gamma_{n-1}
&= \beta_n \sigma^{\alpha_n} \cdots \beta_1 \sigma^{\alpha_1} ( \alpha_1 \alpha_1^1 \cdots \alpha_1^{n-1} ) \gamma_{n-1} \\
&= \beta_n \sigma^{\alpha_n} \cdots \beta_1 \sigma^{\alpha_1} ( \omega ).
\end{align*}
We know that 
\[
\alpha_{n+1} 
\in \beta_n \sigma^{\alpha_n} \cdots \beta_1 \sigma^{\alpha_1} (c)
= \beta_n \sigma^{\alpha_n} \cdots \beta_1 \sigma^{\alpha_1} ( \omega ) \sigma^{\omega}(c)
= \beta_n \beta_n^1 \cdots \beta_n^{n-1} \gamma_{n-1} \sigma^\omega (c),
\]
by assumption.  Thus there exists
\[
\alpha_{n+1} \beta_{n+1}^1 = \beta_n \alpha_n^1 
\in ( \alpha_{n+1} \vee \beta_n ) \cap \beta_n \beta_n^1 \cdots \beta_n^{n-1} \gamma_{n-1} \sigma^\omega (c).
\]
Thus $\alpha_n^1 \in \beta_n^1 \cdots \beta_n^{n-1} \gamma_{n-1} \sigma^\omega(c)$.  Thus there exists
\[
\alpha_n^1 \beta_{n+1}^2 = \beta_n^1 \alpha_{n-1}^2 
\in (\alpha_n^1 \vee \beta_n^1) \cap \beta_n^1 \cdots \beta_n^{n-1} \gamma_{n-1} \sigma^\omega(c).
\]
Inductively, we construct the squares
\[
\alpha_{n+1-\ell}^\ell \beta_{n+1}^{\ell+1} = \beta_n^\ell \alpha_{n-\ell}^{\ell+1} 
\in (\alpha_{n+1-\ell}^\ell \vee \beta_n^\ell) \cap \beta_n^\ell \cdots \beta_n^{n-1} \gamma_{n-1} \sigma^\omega(c)
\]
for $0 \le \ell \le n - 1$.  From the case $\ell = n-1$ we have that $\alpha_1^n \in \gamma_{n-1} \sigma^\omega(c)$.  Therefore
\[
\alpha_1 \alpha_1^1 \cdots \alpha_1^n \in \alpha_1 \alpha_1^1 \cdots \alpha_1^{n-1} \gamma_{n-1} \sigma^\omega(c) = c.
\]
By Claim 3 we know that $\psi(\alpha_1 \cdots \alpha_1^n) \in T$, so that $\omega \in \alpha_1 \cdots \alpha_1^n \Lambda$.  Therefore there exists $\gamma_n$ such that $\omega = \alpha_1 \cdots \alpha_1^n \gamma_n$.  In particular, $\gamma_{n-1} = \alpha_1^n \gamma_n$.  This finishes the construction of the diagram.

We now finish the proof of the lemma.  Note that it is possible (and in fact is the case) that some of the elements we have constructed are reduced to vertices.  For the rest of the proof, we will use the term \textit{path} to mean a path in $\Lambda$ given by the composition of some of the $\alpha_i^j$ and $\beta_i^j$ in the above diagram.  By the \textit{length} of such a path we will mean the number of these elements used that are not vertices.  Let $E_0 = E \cup s(E)$, and recursively, for $i > 0$, 
\begin{align*}
E_i^0 &= \bigcup_{\alpha,\beta \in E_{i-1}} \sigma^\alpha (\alpha \vee \beta) \\
E_i &= E_i^0 \cup s(E_i^0).
\end{align*}
Note that $E_i$ consists of all $\alpha_i^j$, $\beta_i^j$, and their sources, that can arise in the diagram.  We claim that for $\delta \in E_j \setminus E_{j-1}$, there is a path $\mu$ of length at least $j$ such that $s(\mu) = r(\delta)$.  We prove this by induction on $j$. When $j=0$ there is nothing to prove.  Suppose that the claim is true for $j-1$.  Let $\delta \in E_j \setminus E_{j-1}$.  First let $\delta \in E_j^0$.  Then there are $\alpha$, $\beta \in E_{j-1}$ with $\alpha\delta \in \alpha \vee \beta$.  Since $\delta \not\in E_{j-1}$ we may assume that $\alpha \not\in E_{j-2}$ or $\beta \not\in E_{j-2}$.  Either way, there is a path $\mu$ of length at least $j-1$ such that $s(\mu) = r(\alpha)$ ($= r(\beta)$).  If $\alpha \not\in \Lambda^0$ then $\mu \alpha$ is a path of length at least $j$ with $s(\mu\alpha) = r(\delta)$.  If $\alpha \in \Lambda^0$, then $\delta = \beta \in E_{j-1}$, a contradiction.  Next, an element of $s(E_j^0)$ is of the form $s(\delta)$, for $\delta \in E_j^0$.  If $\mu$ is as above for $\delta$, then $\mu\delta$ does the job for $s(\delta)$.  This finishes the proof of the claim.

It follows from Claims 1 and 3 that $\psi(\alpha_i^j)$, $\psi(\beta_i^j)$, $\psi(\gamma_i) \in T - T$ for all $i$ and $j$.  Let $U = (T - T) \cap \psi(\Lambda) \setminus \{e\}$.  Then $U$ is a finite set.  Since $\psi$ is nondegenerate, there exists $N$ such that for any $n > N$, and any $u_1$, $\ldots$, $u_n \in U$, we have $u_1 + \cdots + u_n \not\in T$.  It follows that $E_N = E_{N-1}$.    We note that all $E_j$ are finite sets:  this is clearly true for $j=0$, and follows inductively for $j$ by the finite alignment of $\Lambda$.  It now follows that the diagram contains only finitely many paths.  Since for all $n$ there is a path $\mu$ such that $\beta_n \sigma^{\alpha_n} \cdots \beta_1 \sigma^{\alpha_1} (c) = \mu \sigma^\omega(c)$, the lemma is proved.
\end{proof}

We will use the notation $G^\psi$ for the ``fixed point groupoid'':  $G^\psi = \{g \in G : c_\psi(g) = 0\}$.  Lemma \ref{l.finiteorbits} implies that $G^\psi$ is ``locally finite''.  However it does not guarantee that $G^\psi$ has trivial isotropy, which is necessary in order to have an AF groupoid.  The following definition characterizes trivial isotropy in a manner slightly more intrinsic than the property itself.

\begin{Definition}
\label{d.nonisotropic}
%!?!\textit{(d.nonisotropic)} %%% erase label name
Let $\Lambda$ be a countable finitely aligned category of paths, and let $\psi$ be a nondegenerate degree functor on $\Lambda$.  We say that $\psi$ is \textit{non-isotropic} if the following condition holds.  Whenever $\alpha_i$, $\beta_i \in \Lambda$ for $i \in \IN$ are such that 
\begin{enumerate}
\item $\alpha_i \alpha_{i+1} = \beta_i \beta_{i+1}$ for $i \in \IN$
\label{d.nonisotropicone}
\item $\psi(\alpha_1) = \psi(\beta_1)$,
\label{d.nonisotropictwo}
\end{enumerate}
then $\alpha_1 = \beta_1$.
\end{Definition}

\begin{Remark}
It follows from \eqref{d.nonisotropicone} and \eqref{d.nonisotropictwo} that $\psi(\alpha_i) = \psi(\beta_i)$ for all $i$.  If in addition $\alpha_1 = \beta_1$, then $\alpha_i = \beta_i$ for all $i$, by left-cancellation.
\end{Remark}

\begin{Lemma}
\label{l.nonisotropic}
%!?!\textit{(l.nonisotropic)} %%% erase label name
Let $\Lambda$ be a countable finitely aligned category of paths, and let $\psi$ be a nondegenerate degree functor on $\Lambda$.  Then $G^\psi$ has trivial isotropy if and only if $\psi$ is non-isotropic.
\end{Lemma}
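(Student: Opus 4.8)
The plan is to reduce the statement to a concrete dichotomy about the elements of $G^\psi$ and then treat the two implications separately; the forward implication is a short application of Lemma \ref{l.commontail}, while the reverse — manufacturing an isotropy element out of a sequence witnessing the failure of non-isotropy — is where the work lies. First I would record the two computations underlying the reduction. For $[\alpha,\beta,x]\in G$ one has $c_\psi([\alpha,\beta,x])=\psi(\alpha)-\psi(\beta)$, since the zigzag representing $[\alpha,\beta,x]$ is $(r(\alpha),\alpha,\beta,r(\beta))$ and $\psi$ vanishes on vertices; hence $[\alpha,\beta,x]\in G^\psi$ exactly when $\psi(\alpha)=\psi(\beta)$. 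Moreover $[\alpha,\beta,x]$ is a unit iff $\alpha=\beta$: if $[\alpha,\beta,x]=[\gamma,\gamma,y]$ then Definition \ref{d.fa.groupoid.one} produces $\delta$ with $\alpha\delta=\beta\delta$, whence $\alpha=\beta$ by right-cancellation, and the converse is clear. Since the range and source of $[\alpha,\beta,x]$ are $\alpha x$ and $\beta x$, the group $G^\psi$ has nontrivial isotropy iff there exist $\alpha\neq\beta$ in $\Lambda$ with $\psi(\alpha)=\psi(\beta)$ and a point $x$ with $\alpha x=\beta x$. The lemma thus becomes: such a configuration exists iff $\psi$ fails to be non-isotropic.

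For the easy direction, suppose $[\alpha,\beta,x]$ is such an isotropy element, and set $\alpha_1=\alpha$, $\beta_1=\beta$, $z_0=x$. Since $\alpha_1 z_0=\beta_1 z_0$, Lemma \ref{l.commontail} yields $\alpha_2,\beta_2$ and a point $z_1$ with $z_0=\alpha_2 z_1=\beta_2 z_1$ and $\alpha_1\alpha_2=\beta_1\beta_2$. As $\alpha_2 z_1=\beta_2 z_1$, I would feed this back into Lemma \ref{l.commontail} and iterate, obtaining $\alpha_i,\beta_i,z_i$ with $z_{i-1}=\alpha_{i+1}z_i=\beta_{i+1}z_i$ and $\alpha_i\alpha_{i+1}=\beta_i\beta_{i+1}$ for all $i$. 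Together with $\psi(\alpha_1)=\psi(\beta_1)$ and $\alpha_1\neq\beta_1$, this is exactly a sequence showing $\psi$ is not non-isotropic.

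For the converse, start from a sequence $(\alpha_i,\beta_i)$ with $\alpha_i\alpha_{i+1}=\beta_i\beta_{i+1}$, $\psi(\alpha_1)=\psi(\beta_1)$, and $\alpha_1\neq\beta_1$. Comparing sources and ranges in consecutive relations gives $r(\alpha_i)=r(\beta_i)$ and $s(\alpha_i)=s(\beta_i)$ for every $i$, and the implication $\alpha_{i+1}=\beta_{i+1}\Rightarrow\alpha_i=\beta_i$ (again right-cancellation) propagates $\alpha_1\neq\beta_1$ to $\alpha_i\neq\beta_i$, so every $\alpha_i,\beta_i$ is a non-vertex. I would then form the set $C$ of all \emph{mixed products} $\xi_2\xi_3\cdots\xi_n$ with $\xi_i\in\{\alpha_i,\beta_i\}$ (each is a legitimate path, by the source/range identities), let $v=s(\alpha_1)$, and take $x\in v\Lambda^*$ to be the hereditary closure $\widetilde C$. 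Assuming $C$ is directed (see below), $x$ is a well-defined point, and it remains to check $\alpha_1 x=\beta_1 x$. Both $\alpha_1 C$ and $\beta_1 C$ sit inside the analogous set $C_1$ of mixed products indexed from $1$; using the relation $\alpha_1\alpha_2=\beta_1\beta_2$ at the front, every product beginning with $\beta_1$ is dominated in $\widetilde{C_1}$ by one beginning with $\alpha_1$, and conversely, so $\widetilde{\alpha_1 C}=\widetilde{C_1}=\widetilde{\beta_1 C}$, i.e.\ $\alpha_1 x=\beta_1 x$. Since $\alpha_1\neq\beta_1$ and $\psi(\alpha_1)=\psi(\beta_1)$, the element $[\alpha_1,\beta_1,x]$ is a nontrivial isotropy element of $G^\psi$.

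The main obstacle is precisely the directedness (confluence) step just invoked: one must show that any two mixed products admit a common extension that is again a mixed product. I would argue this by induction on length, first extending the shorter product (by appending, say, the relevant $\alpha_i$'s) so that the two have equal length, and then matching leading letters — when they already agree one recurses on the shifted sequence, and when they differ, say $\alpha_i$ versus $\beta_i$, one uses the relation $\alpha_i\alpha_{i+1}=\beta_i\beta_{i+1}$ to rewrite and realign the fronts. Finite alignment is what controls the minimal common extensions that arise in this realignment, and the explicit coincidences supplied by the relations are what make the rewriting confluent; carrying this bookkeeping through cleanly is the delicate part of the proof.
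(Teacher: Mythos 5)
Your reduction of the lemma to the existence of a pair $\alpha\neq\beta$ with $\psi(\alpha)=\psi(\beta)$ and $\alpha x=\beta x$ is correct, and your forward direction is sound: iterating Lemma \ref{l.commontail} produces exactly the sequences required by Definition \ref{d.nonisotropic}, which is the same mechanism the paper uses (there the common extensions are extracted directly from directedness of $x$). The gap is in the converse, and it sits precisely where you flagged it: the directedness of your set $C$ of all mixed products $\xi_2\cdots\xi_n$, $\xi_i\in\{\alpha_i,\beta_i\}$. This claim is the heart of your construction, it is not proved, and the induction you sketch --- equalize lengths, then match leading letters by rewriting a differing front with $\alpha_i\alpha_{i+1}=\beta_i\beta_{i+1}$ --- fails as described. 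The front relation can only be applied when the \emph{next} letter cooperates: to rewrite $\alpha_i\cdots$ into $\beta_i\cdots$ the second letter must be $\alpha_{i+1}$. For the pair $\alpha_2\beta_3$ and $\beta_2\alpha_3$ neither front can be rewritten at all; the smallest common extension is $\alpha_2\beta_3\beta_4\alpha_5=\beta_2\alpha_3\alpha_4\beta_5$ (indeed $\alpha_2\beta_3\beta_4\alpha_5=\alpha_2\alpha_3\alpha_4\alpha_5=\beta_2\beta_3\beta_4\beta_5=\beta_2\alpha_3\alpha_4\beta_5$), which requires extending both words by two letters and performing a chain of rewrites in the interior. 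In general the discrepancy gets pushed rightward with unbounded lookahead, and terminating that process needs a genuine confluence/invariant analysis of the rewriting system $\alpha_i\alpha_{i+1}\leftrightarrow\beta_i\beta_{i+1}$. Moreover, finite alignment --- which you invoke to "control" the realignment --- plays no role here: any common extensions must be manufactured from the relations themselves. (The same criticism applies to your domination claim $\widetilde{\alpha_1C}=\widetilde{C_1}=\widetilde{\beta_1C}$, which you justify only by "the relation at the front.") The directedness claim is in fact true, but establishing it is far more work than the lemma requires.

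The gap is avoidable by choosing a smaller point $x$, and this is what the paper does: take $x=\bigcup_{n\ge 2}[\alpha_2\cdots\alpha_n]$, the hereditary set generated by the single chain $\alpha_2,\ \alpha_2\alpha_3,\ \alpha_2\alpha_3\alpha_4,\ldots$, which is directed for free. The relations give $\alpha_1\cdots\alpha_n=\beta_1\cdots\beta_n$ for $n$ even and $\beta_1\alpha_2\cdots\alpha_n=\alpha_1\cdots\alpha_{n-1}\beta_n$ for $n$ odd, so if $\gamma\in\beta_1x$, say $\gamma\delta=\beta_1\alpha_2\cdots\alpha_n$ with $n$ odd (replacing $\delta$ by $\delta\alpha_{n+1}$ if necessary), then $\gamma\delta\beta_{n+1}=\alpha_1\cdots\alpha_{n-1}\beta_n\beta_{n+1}=\alpha_1\cdots\alpha_{n+1}$, whence $\gamma\in\alpha_1x$; by symmetry $\alpha_1x=\beta_1x$. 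If you replace your $C$ by this chain, the rest of your argument (including the identification of units via right-cancellation and of $G^\psi$ via $c_\psi([\alpha,\beta,x])=\psi(\alpha)-\psi(\beta)$) stands as written.
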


\begin{proof}
First we assume that $G^\psi$ has nontrivial isotropy.  Let $[\beta_1,\alpha_1,x]$ be a nontrivial element of $G^\psi_x$.  Thus $\alpha_1 x = \beta_1 x$, $\psi(\alpha_1) = \psi(\beta_1)$, and $\alpha_1 \not= \beta_1$.  Then there are $\alpha_2$, $\beta_2 \in \Lambda$ such that $\alpha_1 \alpha_2 = \beta_1 \beta_2 \in \alpha_1 x$.  But then $\alpha_2$, $\beta_2 \in x$.  Since $x$ is directed, there are $\alpha_3$, $\beta_3 \in \Lambda$ such that $\alpha_2 \alpha_3 = \beta_2 \beta_3 \in x$.  Now we have
\[
\sigma^{\alpha_2} x
= \sigma^{\alpha_1 \alpha_2} \alpha_1 x
= \sigma^{\beta_1 \beta_2} \beta_1 x
= \sigma^{\beta_2} x.
\]
Then since $\alpha_3$, $\beta_3 \in \sigma^{\alpha_2} x$, there are $\alpha_4$, $\beta_4 \in \Lambda$ such that $\alpha_3 \alpha_4 = \beta_3 \beta_4 \in \sigma^{\alpha_2} x$.  Continuing this process inductively, we see that $\psi$ is not non-isotropic.

Conversely, suppose that $\psi$ is not non-isotropic.  Let $\alpha_i$, $\beta_i$ satisfy \eqref{d.nonisotropicone} and \eqref{d.nonisotropictwo} of Definition \ref{d.nonisotropic} with $\alpha_1 \not= \beta_1$.  Let $x = \bigcup_{n \ge 2} [\alpha_2 \cdots \alpha_n]$.  Then $x \in \Lambda^*$.  We claim that $\alpha_1 x = \beta_1 x$.  First note that since $\alpha_i \alpha_{i+1} = \beta_i \beta_{i+1}$ for all $i$, we have that $\alpha_1 \cdots \alpha_n = \beta_1 \cdots \beta_n$ whenever $n$ is even, while for odd $n$ we have that $\alpha_1 \cdots \alpha_{n-1} \beta_n = \beta_1 \cdots \beta_n = \beta_1 \alpha_2 \cdots \alpha_n$.  Now let $\gamma \in \beta_1 x$.  Then there are $\delta \in \Lambda$ and $n \in \IN$ such that $\gamma \delta = \beta_1 \alpha_2 \cdots \alpha_n$.  If $n$ is even, we may replace $\delta$ by $\delta \alpha_{n+1}$; thus we may assume that $n$ is odd.  Hence 
\[
\gamma \delta \beta_{n+1} = \beta_1 \alpha_2 \cdots \alpha_n \beta_{n+1}
= \alpha_1 \cdots \alpha_{n-1} \beta_n \beta_{n+1}
= \alpha_1 \cdots \alpha_{n+1}.
\]
It follows that $\gamma \in \alpha_1 x$.  By symmetry we have that $\alpha_1 x = \beta_1 x$.  Thus $[\beta_1, \alpha_1, x]$ is a nontrivial element of the isotropy of $G^\psi$.
\end{proof}

\begin{Theorem}
\label{t.af}
%!?!\textit{(t.af)} %%% erase label name
Let $\Lambda$ be a countable finitely aligned category of paths, and let $\psi$ be a nondegenerate non-isotropic degree functor on $\Lambda$ satisfying properties \eqref{l.finiteorbits_a} and \eqref{l.finiteorbits_b} of Lemma \ref{l.finiteorbits}.  Then $G^\psi$ is an AF groupoid.
\end{Theorem}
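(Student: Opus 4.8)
The plan is to realize $G^\psi$ as an increasing union of elementary (that is, compact principal) open subgroupoids and to invoke Renault's characterization of AF groupoids (\cite{ren}, III.1.1).

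First I would record the standing reductions. Because $c_\psi$ is a continuous homomorphism into the discrete group $Q$ and $\{0\}$ is clopen, $G^\psi = c_\psi^{-1}(0)$ is a clopen subgroupoid of $G = G(\Lambda)$; by Proposition \ref{p.fa.topology} it is Hausdorff and \'etale, and its unit space is $X = \bigsqcup_{v\in\Lambda^0} X_v$, which is totally disconnected with each $X_v = \widehat{v\Lambda}$ compact-open. A direct computation (using $\psi(u)=0$ for $u\in\Lambda^0$) gives $c_\psi([\alpha,\beta,x]) = \psi(\alpha)-\psi(\beta)$, so that $G^\psi = \{[\alpha,\beta,x] : \psi(\alpha)=\psi(\beta)\}$. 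Since $\psi$ is non-isotropic, Lemma \ref{l.nonisotropic} shows $G^\psi$ has trivial isotropy, i.e. is principal. Thus $G^\psi$ is exactly the sort of groupoid to which Renault's criterion applies.

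Next I would build the exhausting sequence. As $\Lambda$ is countable, choose finite sets $E_1\subseteq E_2\subseteq\cdots$ with $\bigcup_k E_k = \Lambda$, each $E_k$ containing the sources and ranges of its elements. For each $k$ let $G_k$ be the open subgroupoid of $G^\psi$ generated by the compact-open bisections $[\alpha,\beta,X_{s(\alpha)}]$ with $\alpha,\beta\in E_k$, $s(\alpha)=s(\beta)$ and $\psi(\alpha)=\psi(\beta)$, together with the units lying over the finitely many vertices met by $E_k$. The $G_k$ are principal (being subgroupoids of $G^\psi$), increasing (since the $E_k$ are), and $\bigcup_k G_k = G^\psi$: any $[\alpha,\beta,x]$ with $\psi(\alpha)=\psi(\beta)$ is one of the generators once $\alpha,\beta\in E_k$. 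Likewise $\bigcup_k G_k^{(0)} = X$. It remains only to see that each $G_k$ is elementary. The unit space $G_k^{(0)}$ is a finite union of compact sets $X_v$, hence compact; so $G_k$ is elementary provided it is compact, and this compactness is precisely the content of Lemma \ref{l.finiteorbits}. Writing $E = E_k$ and letting $P$ be as in that lemma, every element of $G_k$ is represented by some $[\mu,\nu,z]$ whose associated partial map is a restriction of a zigzag map $\pphi_\zeta$ with $\zeta\in P$. For a unit $c$ the lemma produces $\omega\in c$ with $\Phi_\zeta(c) = \pphi_\zeta(\omega)\,\sigma^\omega(c)$ (using Remark \ref{r.zigzag}\eqref{r.zigzag.f}, so that the tail $\sigma^\omega(c)$ is independent of $\zeta$) and with $\{\pphi_\zeta(\omega) : \zeta\in P,\ \omega\in\operatorname{dom}\pphi_\zeta\}$ finite; hence the $G_k$-orbit of $c$ is finite. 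Moreover the bound $N$ obtained in the proof of Lemma \ref{l.finiteorbits} depends only on $E$ and $\psi$ and not on $c$, and finite alignment forces the sets $E_j$ occurring there to be finite; consequently only finitely many paths can appear as the two coordinates $\mu,\nu$ of an element of $G_k$. Therefore $G_k$ is covered by finitely many of the compact-open bisections $[\mu,\nu,X_{s(\mu)}]$, so is compact and open. A compact principal \'etale groupoid with totally disconnected unit space is elementary, and so $G^\psi = \bigcup_k G_k$ exhibits $G^\psi$ as AF.

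The main obstacle is this last step: converting the pointwise finite-orbit statement of Lemma \ref{l.finiteorbits} into the compactness (equivalently, the uniform local finiteness) of $G_k$. The delicate bookkeeping is to verify that the uniform bound $N$ from the proof, combined with the finiteness of each $E_j$, genuinely confines the coordinates $\mu,\nu$ of elements of $G_k$ to a fixed finite set, so that $G_k$ is a finite union of compact-open bisections rather than merely a principal groupoid whose orbits are finite but of unbounded size.
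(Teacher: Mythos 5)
Your reductions are correct, and your exhausting sequence is in fact identical to the paper's: the subgroupoid generated by the bisections $[\alpha,\beta,X_{s(\alpha)}]$ with $\alpha,\beta\in E_k$ and $\psi(\alpha)=\psi(\beta)$ is exactly the paper's $G_n=\{[\zeta,x]:\zeta\in P_n,\ x\in X\}$, and clopenness of $G^\psi$, principality via Lemma \ref{l.nonisotropic}, and $\bigcup_k G_k=G^\psi$ are all fine. The gap is the step you yourself flag as ``delicate bookkeeping,'' and it is not bookkeeping: the claim that only finitely many paths occur as coordinates $\mu,\nu$ of elements of $G_k$ does not follow from the statement of Lemma \ref{l.finiteorbits}, nor from ``$N$ is uniform in $c$ and each $E_j$ is finite.'' The representatives the lemma produces are $[\pphi_\zeta(\omega(c)),\,\omega(c),\,\sigma^{\omega(c)}(c)]$, and $\omega(c)$ ranges over infinitely many paths as $c$ varies, so the lemma by itself gives only pointwise finite orbits. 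Pointwise (even uniformly bounded) finite orbits do not imply elementariness: on the compact totally disconnected space $X=\{0\}\cup\{1/n:n\ge 1\}$, the ample principal Hausdorff groupoid consisting of the diagonal (declared open) together with the isolated pairs $(1/2n,1/(2n+1))$ and their inverses has orbits of size at most two, but it is not compact and not elementary (though it is AF). To make your route work you would need to extract from the \emph{proof} of Lemma \ref{l.finiteorbits} the following: (a) every element of $G_k$ has a representative $[\mu,\nu,z]$ in which $\mu,\nu$ are composites of entries of a staircase diagram with entries in the sets $E_j$, i.e.\ that minimal common extensions of composite paths are obtained by pasting squares (Claim 1 of that proof, noting that Claims 1--3 use only the square relations $(*)$ and hence apply to every branch of every decomposition, not just the branch through a given $c$); (b) the degrees of all such edge-composites lie in the finite set $T$ (Claim 3, which is where hypothesis \eqref{l.finiteorbits_b} enters); and (c) nondegeneracy then bounds the number of non-vertex factors, which together with the stabilization $E_N=E_{N-1}$ (itself a degree argument, not a consequence of finite alignment alone) makes the coordinate set finite. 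None of (a)--(c) appears in your argument.

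It is worth seeing how the paper avoids this entirely, because it is a genuinely different mechanism: it never bounds coordinates. Using only the \emph{statement} of Lemma \ref{l.finiteorbits}, it forms the compact-open sets $Z_n(x)=\bigcup_{\mu\in F_n(x)}\mu X_{s(\omega_n(x))}$, covers the unit space by finitely many $Z_n(x_i)$ by compactness, and then disjointifies into $P_n$-invariant compact-open sets $Y_i$ on which $G_n$ is literally (finite groupoid) $\times\, K_i$; non-isotropy is used there to show that $\zeta(\mu)=\mu$ forces $\zeta=\mathrm{id}$ on $\mu X_{s(\omega_n(x))}$. The uniformity thus comes from compactness of the unit space, not from uniform bounds inside the lemma's proof. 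Note finally that your closing appeal, ``a compact principal \'etale groupoid with totally disconnected unit space is elementary,'' is precisely what this disjointification argument proves; (\cite{ren}, III.1.1) is a definition of AF, not such a structure theorem, so even granting compactness of $G_k$ you would still owe that argument (or a citation to the structure theorem for compact \'etale equivalence relations). Either carry out (a)--(c) above, or replace your final step with the paper's covering-and-disjointification argument.
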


\begin{proof}
Let $\Lambda = \{ \lambda_1, \lambda_2, \ldots \}$ be an enumeration of the elements of $\Lambda$.  Put $E_n = \{ \lambda_1, \ldots, \lambda_n \}$ and $P_n = \bigl\{ \beta_j \sigma^{\alpha_j} \cdots \beta_1 \sigma^{\alpha_1} : j \in \IN,\ \alpha_i, \beta_i \in E_n,\ \psi(\alpha_i) = \psi(\beta_i) \text{ for } 1 \le i \le j \bigr\}$.  For $x \in X$ let $\omega_n(x)$ be as in Lemma \ref{l.finiteorbits} for $E_n$ and $x$.  Let $F_n(x) = \bigl\{ \zeta \bigl( \omega_n(x) \bigr) : \zeta \in P_n \bigr\}$, and let $Z_n(x) = \bigcup_{\mu \in F_n(x)} \mu X_{s(\omega_n(x))}$.  Then $F_n(x)$ is a finite set by Lemma \ref{l.finiteorbits}, hence $Z_n(x)$ is a compact-open subset of $X$, and $x \in Z_n(x)$.  By compactness of $X$ we may choose $x_1$, $\ldots$, $x_m \in X$ such that $X = \bigcup_{i = 1}^m Z_n(x_i)$.

Note that since $\psi$ is non-isotropic, if $\mu_1$, $\mu_2 \in F_n(x)$ are such that $\mu_1 x = \mu_2 x$, then $\mu_1 = \mu_2$. Lemma \ref{l.finiteorbits} implies that if $\zeta \in P_n$ and $\mu \in F_n(x)$ are such that $\zeta(\mu) = \mu$, then $\zeta|_{\mu X_{s(\omega_n(x))}} = id$.  Thus if $K \subseteq X_{s(\omega_n(x))}$ is any compact-open subset, then the subgroupoid of $G^\psi$ given by $\bigl\{ [\zeta, \mu y] : \zeta \in P_n, \mu \in F_n(x), y \in K \bigr\}$ is elementary, equal to the Cartesian product of a finite groupoid with the space $K$.

Let $L_i = X_{s(\omega_n(x_i))}$ for $1 \le i \le n$.  Set $K_1 = L_1$, and put $Y_1 = \bigcup_{\mu \in F_n(x_1)} \mu K_1 = Z_n(x_1)$.  For $\nu \in F_n(x_2)$, we have that $\nu L_2 \setminus Y_1 \subseteq \nu X_{s(\nu)}$ is a compact-open subset.  Therefore there is a compact-open set $K_2 \subseteq L_2$ such that $\nu L_2 \setminus Y_1 = \nu K_2$.  In fact, the set $K_2$ doesn't depend on the choice of $\nu \in F_n(x_2)$:  if $\nu' \in F_n(x_2)$, then since $Y_1$ is invariant under the maps in $P_n$, we have that
\[
\nu' K_2 = \nu' \sigma^{\nu} (\nu K_2)
= \nu' \sigma^{\nu} (\nu L_2 \setminus Y_1) 
= \nu' \sigma^{\nu} ( \nu L_2 ) \setminus Y_1
= \nu' L_2 \setminus Y_1.
\]
We now put $Y_2 = \bigcup_{\mu \in F_n(x_2)} \mu K_2 \subseteq Z_n(x_2)$.  Again, $Y_2$ is invariant under $P_n$, so we may define $K_3 \subseteq L_3$ by choosing any $\nu \in F_n(x_3)$ and setting $\nu L_3 \setminus (Y_1 \cup Y_2) = \nu K_3$.  Continuing this process, we obtain compact-open sets $K_i \subseteq X_{s(\omega_n(x_i))}$, and $Y_i = \bigcup_{\mu \in F_n(x_i)} \mu K_i \subseteq Z_n(x_i)$, for $1 \le i \le m$.  Then $X = \bigsqcup_{i=1}^m Y_i$.  Let $H_i = \bigl\{ [\zeta, \mu y] : \zeta \in P_n,\ \mu \in F_n(x_i),\ y \in K_i \bigr\}$.  Then $H_i$ is an elementary groupoid with unit space $H_i^0 = Y_i$, hence $H_i \cap H_j = \emptyset$ for $i \not= j$. We claim that $G_n := \bigcup_{i=1}^m H_i = \bigl\{ [\zeta,x] : \zeta \in P_n,\ x \in X \bigr\}$.  To see this, let $x \in X$.  Then $x \in Z_n(x_{i_0})$ for some $i_0$.  We may assume that $i_0$ is minimal with this property.  Then $x \in \mu X_{s(\omega_n(x_{i_0}))}$ for some $\mu \in F_n(x_{i_0})$.  Since $x \not\in Z_n(x_i)$ for $i < i_0$, $x \not\in \nu K_i$ for any $i < i_0$ and $\nu \in F_n(x_i)$.  Therefore $x \in H_{i_0}^0$. 

Finally, it is clear that $G_n \subseteq G_{n+1}$, and that $G^\psi = \bigcup_{n=1}^\infty G_n$.  Therefore $G^\psi$ is AF.
\end{proof}

\begin{Remark}
\label{r.uniquefactorization}
%!?!\textit{(r.uniquefactorization)} %%% erase label name
In the case that there is a degree functor taking values in an ordered abelian group and satisfying the unique factorization property, then all hypotheses of Theorem \ref{t.af} are satisfied.  (For an example, see \cite{ephrem}.)
\end{Remark}

\section{Aperiodicity} 
\label{s.aperiodicity}
%!?!\textit{(s.aperiodicity)} %%% erase label name

Throughout this section, $\Lambda$ will be a countable finitely aligned category of paths.  Note that right-cancellation in $\Lambda$ is crucial for the results of this section.

\begin{Definition}
\label{d.aperiodicity}
%!?!\textit{(d.aperiodicity)} %%% erase label name
A point $x \in \partial \Lambda$ is \textit{aperiodic} (or \textit{left-aperiodic}) if for all $\alpha$, $\beta \in x$ with $\alpha \not= \beta$ we have $\sigma^\alpha(x) \not= \sigma^\beta(x)$.
\end{Definition}

\begin{Lemma}
\label{l.aperiodic}
%!?!\textit{l.aperiodic)} %%% erase label name
If $x$ is aperiodic, then so are $\mu x$ and $\sigma^\nu(x)$ for all $\mu \in \Lambda r(x)$ and $\nu \in x$.
\end{Lemma}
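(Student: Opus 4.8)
The plan is to use the identification of points of $X_{v}$ with directed hereditary subsets of $v\Lambda$ (Theorem \ref{t.ultrafilters}), under which $\sigma^{\alpha}(x)=\{\tau:\alpha\tau\in x\}$ whenever $\alpha\in x$, and $\mu x$ is the directed hereditary set generated by $\{\mu\tau:\tau\in x\}$. I first note that $\mu x$ and $\sigma^{\nu}(x)$ again lie in $\partial\Lambda$, by invariance of the boundary under the concatenation and shift maps, so that aperiodicity is defined for them; granting this, the content is the combinatorial verification. Throughout I use the composition law $\sigma^{\gamma\delta}=\sigma^{\delta}\circ\sigma^{\gamma}$ and $\sigma^{\gamma}(\gamma y)=y$, which specialize $\Phi_{\zeta_{1}\zeta_{2}}=\Phi_{\zeta_{1}}\circ\Phi_{\zeta_{2}}$ from Definition \ref{d.concatenation}. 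I also observe that the aperiodicity condition is automatic when $s(\alpha)\neq s(\beta)$, since then $\sigma^{\alpha}(y)$ and $\sigma^{\beta}(y)$ lie in different spaces; so in each case it suffices to treat $\alpha,\beta$ with $s(\alpha)=s(\beta)$.

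The case of $\sigma^{\nu}(x)$ is short and uses only left-cancellation. Writing $z=\sigma^{\nu}(x)$, so that $\alpha\in z$ iff $\nu\alpha\in x$, I take distinct $\alpha,\beta\in z$; left-cancellation gives $\nu\alpha\neq\nu\beta$ in $x$, and if $\sigma^{\alpha}(z)=\sigma^{\beta}(z)$ then
\[
\sigma^{\nu\alpha}(x)=\sigma^{\alpha}\sigma^{\nu}(x)=\sigma^{\alpha}(z)=\sigma^{\beta}(z)=\sigma^{\beta}\sigma^{\nu}(x)=\sigma^{\nu\beta}(x),
\]
contradicting aperiodicity of $x$. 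Hence $\sigma^{\nu}(x)$ is aperiodic.

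For $\mu x$ the argument is the heart of the matter, and it is here that right-cancellation enters (cf. the remark opening this section). Suppose for contradiction that $\alpha,\beta\in\mu x$ are distinct with $s(\alpha)=s(\beta)$ and $z:=\sigma^{\alpha}(\mu x)=\sigma^{\beta}(\mu x)$, so that $\alpha z=\mu x=\beta z$. The crucial step is to produce a single $\eta\in z$ with both $\alpha\eta$ and $\beta\eta$ extending $\mu$. To this end I consider $U=\{\eta\in z:\alpha\eta\in\mu\Lambda\}$ and $V=\{\eta\in z:\beta\eta\in\mu\Lambda\}$. Each is nonempty: taking a common extension $\xi\in\mu x$ of $\mu$ and $\alpha$ (directedness of $\mu x$), the element $\sigma^{\alpha}(\xi)\in z$ lies in $U$, and similarly for $V$; and each is upward closed in $z$, since $\alpha\eta\in\mu\Lambda$ forces $\alpha\eta\tau\in\mu\Lambda$. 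As $z$ is directed, any $\eta_{1}\in U$, $\eta_{2}\in V$ have a common extension in $z$, which lies in $U\cap V$; so $U\cap V\neq\emptyset$. Fixing such an $\eta$, write $\alpha\eta=\mu\kappa$ and $\beta\eta=\mu\kappa'$ with $\kappa=\sigma^{\mu}(\alpha\eta)$ and $\kappa'=\sigma^{\mu}(\beta\eta)$ in $\sigma^{\mu}(\mu x)=x$. Then
\[
\sigma^{\kappa}(x)=\sigma^{\mu\kappa}(\mu x)=\sigma^{\alpha\eta}(\mu x)=\sigma^{\eta}(z)=\sigma^{\beta\eta}(\mu x)=\sigma^{\mu\kappa'}(\mu x)=\sigma^{\kappa'}(x).
\]
If $\kappa\neq\kappa'$, these are distinct elements of $x$ with equal shifts, contradicting aperiodicity of $x$; if $\kappa=\kappa'$, then $\alpha\eta=\mu\kappa=\mu\kappa'=\beta\eta$, and right-cancellation gives $\alpha=\beta$, contradicting $\alpha\neq\beta$. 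Either way we reach a contradiction, so $\mu x$ is aperiodic.

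The main obstacle is precisely the $\mu x$ case. One cannot simply reduce to $\alpha=\mu\alpha'$, because in a category of paths (already in a higher-rank graph) the initial segments of a path need not be totally ordered, so $\alpha$ and $\beta$ need not be comparable with $\mu$. The device of passing to a common $\eta\in z$ with $\alpha\eta,\beta\eta\in\mu\Lambda$ is what converts the assumed periodicity of $\mu x$ into the periodicity pair $\kappa,\kappa'$ inside $x$, and the final dichotomy is exactly where right-cancellation is indispensable — consistent with the sectional remark that right-cancellation is crucial here.
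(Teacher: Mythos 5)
Your proof is correct, and while it shares the paper's overall skeleton --- dispose of $\sigma^\nu(x)$ quickly, then for $\mu x$ use directedness to transport the assumed periodicity into $x$ and finish with right-cancellation --- the reduction in the $\mu x$ case runs genuinely differently. The paper starts from arbitrary $\alpha,\beta\in\mu x$, writes $\alpha\alpha'=\mu\gamma$ and $\beta\beta'=\mu\delta$ with $\gamma,\delta\in x$, takes a common extension $\gamma\gamma'=\delta\delta'\in x$, computes $\sigma^\alpha(\mu x)=\alpha'\gamma'z$ and $\sigma^\beta(\mu x)=\beta'\delta'z$ with $z=\sigma^{\gamma\gamma'}(x)$, and then concludes $\alpha'\gamma'=\beta'\delta'$ ``since $z$ is aperiodic'' --- a step that invokes the first part of the lemma and, implicitly, the fact that aperiodicity implies right-aperiodicity (i.e. $\rho z=\tau z$ forces $\rho=\tau$), which is only formalized later in Lemma \ref{l.rightaperiodic} and whose proof itself requires a common-extension argument plus right-cancellation. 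You instead take the periodicity $z:=\sigma^\alpha(\mu x)=\sigma^\beta(\mu x)$ as the hypothesis, use directedness of $z$ together with the upward-closed sets $U$, $V$ to manufacture a single $\eta\in z$ with $\alpha\eta=\mu\kappa$, $\beta\eta=\mu\kappa'$ and $\kappa,\kappa'\in x$, and then apply the definition of aperiodicity directly to $x$, with right-cancellation entering only in the branch $\kappa=\kappa'$. What your route buys is self-containedness: the $\mu x$ case neither leans on the $\sigma^\nu$ case nor on the unstated right-aperiodicity deduction, so nothing is deferred or compressed; what the paper's route buys is brevity, since modulo Lemma \ref{l.rightaperiodic} it is a few lines. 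Your identifications of $\mu x$ and $\sigma^\nu(x)$ as directed hereditary sets, and the verification that $U\cap V\neq\emptyset$, are all sound; your preliminary remark that the statement presupposes invariance of $\partial\Lambda$ under concatenation and shifts is a point the paper likewise takes for granted.
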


\begin{proof}
The fact that $\sigma^\nu(x)$ is aperiodic follows from the definition.  Let $\mu \in \Lambda r(x)$, and let $\alpha$, $\beta \in \mu x$.  Then there are $\alpha'$, $\beta' \in \Lambda$, and $\gamma$, $\delta \in x$, such that $\alpha \alpha' = \mu \gamma$ and $\beta \beta' = \mu \delta$.  Since $\gamma$, $\delta \in x$ we have $\gamma \Cap \delta$.  Let $\gamma \gamma' = \delta \delta' \in x$.  Then
\begin{equation}
\alpha \alpha' \gamma' = \mu \gamma \gamma' = \mu \delta \delta' = \beta \beta' \delta'.
\label{eq.aperiodic}
%!?!\textit{(eq.aperiodic)} %%% erase label name
\end{equation}
Hence
\[
\sigma^\alpha(\mu x) = \sigma^\alpha(\mu \gamma \gamma' \sigma^{\gamma\gamma'}(x))
= \alpha' \gamma' \sigma^{\gamma\gamma'}(x),
\]
and similarly, $\sigma^\beta(\mu x) = \beta'\delta' \sigma^{\delta\delta'}(x)$.  Thus if $\sigma^\alpha(\mu x) = \sigma^\beta(\mu x)$, then $\alpha' \gamma' z = \beta' \delta' z$, where $z = \sigma^{\gamma\gamma'}(x)$.  Since $z$ is aperiodic by the other part of the lemma, we have $\alpha'\gamma' = \beta'\delta'$.  Then by equation \eqref{eq.aperiodic} we have
\[
\alpha \beta' \delta' = \alpha \alpha' \gamma' = \beta \beta' \delta',
\]
and hence $\alpha = \beta$ (by right-cancellation).
\end{proof}

The importance of aperiodicity is illustrated by the following Proposition.

\begin{Proposition}
\label{p.aperiodicityandisotropy}
%!?!\textit{(p.aperiodicityandisotropy)} %%% erase label name
Let $x \in \partial \Lambda$.  $G$ has trivial isotropy at $x$ if and only if $x$ is aperiodic.
\end{Proposition}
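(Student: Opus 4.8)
The plan is to translate both notions into a single combinatorial condition on $x$ (viewed as a directed hereditary subset of $v\Lambda$ via the identification $X_v = v\Lambda^*$ of Theorem \ref{t.ultrafilters}), after which the equivalence becomes a pure contrapositive. First I would describe the isotropy group $G_x^x$ explicitly. Using the finitely aligned model of the groupoid from Remark \ref{r.fa.groupoid.two}, every element has the form $[\alpha,\beta,w]$ with source $\beta w$ and range $\alpha w$, so an isotropy element at $x$ is some $[\alpha,\beta,w]$ with $\alpha w = \beta w = x$. By Theorem \ref{t.concatenation}, the relation $\alpha w = x$ holds precisely when $\alpha\Lambda\in\CU_x$ (equivalently $\alpha\in x$) and $w = \sigma^\alpha(x)$; likewise $\beta w = x$ forces $\beta\in x$ and $w=\sigma^\beta(x)$. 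Since $w$ lies in a single $X_{s(\alpha)} = X_{s(\beta)}$, the two expressions for $w$ are compatible (and in particular $s(\alpha)=s(\beta)=r(w)$), and I read off that the isotropy elements at $x$ are exactly the triples $[\alpha,\beta,\sigma^\alpha(x)]$ with $\alpha,\beta\in x$ and $\sigma^\alpha(x)=\sigma^\beta(x)$.

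Next I would identify when such an element is a unit. The claim is that an isotropy element $[\alpha,\beta,w]$ equals the unit $[r(x),r(x),x]$ if and only if $\alpha=\beta$. The ``if'' direction is the direct check that $[\alpha,\alpha,w]$ is the unit at $\alpha w$. For ``only if'', I would feed the relation $(\alpha,\beta,w)\sim(r(x),r(x),x)$ into Definition \ref{d.fa.groupoid.one}: it supplies $\delta,\delta',z$ with $w=\delta z$ and $\alpha\delta = r(x)\delta' = \delta' = r(x)\delta' = \beta\delta$, whence right-cancellation yields $\alpha=\beta$. This is precisely the point at which right-cancellation is needed, as flagged at the start of the section; it is also the only step requiring any care, so I expect it to be the modest main obstacle, namely stating the unit criterion cleanly and invoking right-cancellation at the right place.

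Combining the two observations, $G$ has trivial isotropy at $x$ if and only if every pair $\alpha,\beta\in x$ with $\sigma^\alpha(x)=\sigma^\beta(x)$ satisfies $\alpha=\beta$, which is exactly the contrapositive of the defining condition of aperiodicity (Definition \ref{d.aperiodicity}). To present this as two implications I would argue contrapositively in each direction. If $x$ is not aperiodic, there are $\alpha\neq\beta$ in $x$ with $\sigma^\alpha(x)=\sigma^\beta(x)=:w$; then $s(\alpha)=s(\beta)=r(w)$ automatically, $[\alpha,\beta,w]$ is a legitimate isotropy element at $x$ (since $\alpha w=\beta w=x$), and it is nontrivial by the unit criterion, so $G$ has nontrivial isotropy at $x$. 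Conversely, a nontrivial isotropy element at $x$ is an $[\alpha,\beta,w]$ with $\alpha\neq\beta$, $\alpha,\beta\in x$, and $\sigma^\alpha(x)=w=\sigma^\beta(x)$, which exhibits a failure of aperiodicity. The remaining work is only the bookkeeping of passing between the groupoid description and the directed-hereditary-set description of $x$, with no genuine analytic content.
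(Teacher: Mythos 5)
Your proposal is correct and follows essentially the same route as the paper's proof: identify the isotropy elements at $x$ as the classes $[\alpha,\beta,\sigma^\alpha(x)]$ with $\alpha,\beta\in x$ and $\sigma^\alpha(x)=\sigma^\beta(x)$, and then show such an element is the unit precisely when $\alpha=\beta$ by unpacking Definition \ref{d.fa.groupoid.one} and invoking right-cancellation. The only difference is organizational (you isolate the unit criterion and argue contrapositively, while the paper proves the two implications directly), with the key use of right-cancellation occurring at exactly the same step.
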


\begin{proof}
First suppose that $G$ has trivial isotropy at $x$.  Let $\alpha$, $\beta \in x$ with $\sigma^\alpha(x) = \sigma^\beta(x) = y$.  Then $[\alpha,\beta,y] \in G(x)$, the isotropy subgroup of $G$ at $x$.  Since $G(x) = \{x\}$ by assumption, we have $[\alpha,\beta,y] = [e,e,x]$.  From Definition \ref{d.fa.groupoid.one} there are $z \in \partial \Lambda$ and $\gamma$, $\delta \in \Lambda$ such that $y = \gamma z$, $x = \delta z$, $\alpha \gamma = e \delta$, and $\beta \gamma = e \delta$.  But then $\alpha \gamma = \beta \gamma$, and hence $\alpha = \beta$ (by right-cancellation).

Next suppose that $x$ is aperiodic.  Let $[\alpha,\beta,y] \in G(x)$.  Then $\alpha y = \beta y = x$, so $\sigma^\alpha (x) = y = \sigma^\beta (x)$.  The aperiodicity implies that $\alpha = \beta$.  Thus $[\alpha,\beta,y] = [e,e,\alpha y] = [e,e,x]$.  Hence $G(x) = \{ x \}$.
\end{proof}

Recall that a Hausdorff \'etale groupoid is \textit{topologically free} (or \textit{essentially free} in \cite{arzren}) if the set of units having trivial isotropy is dense in the unit space (\cite{archspi}, \cite{ana}).

\begin{Proposition}
\label{p.topfree}
%!?!\textit{(p.topfree)} %%% erase label name
$G|_{\partial \Lambda}$ is topologically free if and only if for every $v \in \Lambda^0$, there is an aperiodic point in $v \partial \Lambda$.
\end{Proposition}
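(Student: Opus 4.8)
The plan is to translate topological freeness into a density statement about aperiodic points and then to manufacture such points inside prescribed basic open sets using the operations of Lemma \ref{l.aperiodic}. Since the isotropy group of $G|_{\partial\Lambda}$ at a unit $x \in \partial\Lambda$ coincides with the isotropy group of $G$ at $x$, Proposition \ref{p.aperiodicityandisotropy} shows that $G|_{\partial\Lambda}$ has trivial isotropy at $x$ exactly when $x$ is aperiodic; hence $G|_{\partial\Lambda}$ is topologically free if and only if the aperiodic points are dense in $\partial\Lambda$. I will use two facts about the unit space: first, $\partial\Lambda = \bigsqcup_{v} v\partial\Lambda$ with each $v\partial\Lambda = \partial\Lambda \cap X_v$ clopen, since the $X_v$ are the clopen summands of $X$; second, each $v\partial\Lambda$ is nonempty, because $\{v\}$ is a directed set and so, by Zorn's lemma (Remark \ref{r.directedset}), lies in some $C \in v\Lambda^{**} \subseteq v\partial\Lambda$.

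The forward implication is then immediate: if the aperiodic points are dense and $v \in \Lambda^0$, the nonempty open set $v\partial\Lambda$ must meet the dense set of aperiodic points.

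For the converse, assume each $v\partial\Lambda$ contains an aperiodic point; I must show density, and it suffices to find an aperiodic point in every nonempty basic open subset of $\partial\Lambda$. By Remark \ref{r.finitelyalignedring} the sets $\widehat{E}$ with $E \in \CE_v$ form (after refinement) a base, so fix $E = \alpha\Lambda \setminus \bigcup_{i=1}^n \gamma_i\Lambda \in \CE_v$ with $\widehat{E} \cap \partial\Lambda \neq \emptyset$. Since $\partial\Lambda$ is the closure of $\Lambda^{**}$ and $\widehat{E}$ is open, there is a maximal directed set $C \in \widehat{E} \cap v\Lambda^{**}$. For such a maximal $C$, Theorem \ref{t.ultrafilters}\eqref{t.ultrafilters.a} shows $\CU_{C,0}$ is an ultrafilter, and as $\CU_{C,0} \subseteq \CU_C$ with $\CU_C$ an ultrafilter, we get $\CU_{C,0} = \CU_C$. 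Thus $E \in \CU_C = \CU_{C,0}$ yields $\beta \in C$ with $\beta\Lambda \subseteq E$. Now take an aperiodic point $z \in s(\beta)\partial\Lambda$, provided by the hypothesis. Then $\beta z$ is aperiodic by Lemma \ref{l.aperiodic}, and since $\beta \in \beta z$ we have $\beta z \in \widehat{\beta\Lambda} \subseteq \widehat{E}$. Provided $\beta z \in \partial\Lambda$, this is the aperiodic boundary point we seek.

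The remaining point, which I expect to be the main technical obstacle, is that prepending preserves the boundary, i.e.\ $\widehat{\beta}(s(\beta)\partial\Lambda) \subseteq r(\beta)\partial\Lambda$. I would prove this by showing that concatenation carries maximal directed sets to maximal directed sets and then invoking continuity. Identifying $\widehat{\mu}(z)$ with the directed hereditary set $\widetilde{\mu z}$ for $z \in s(\mu)\Lambda^*$ (a routine consequence of Theorem \ref{t.concatenation}), suppose $z$ is maximal and $\widetilde{\mu z} \subsetneq C'$ for some directed hereditary $C'$. Choosing $\gamma \in C' \setminus \widetilde{\mu z}$ and using directedness of $C'$ together with $\mu \in \widetilde{\mu z} \subseteq C'$ produces $\eta \in C' \cap \gamma\Lambda \cap \mu\Lambda$; meanwhile maximality of $z$ forces $C' \cap \mu\Lambda = \widetilde{\mu z} \cap \mu\Lambda$ (apply maximality to the directed set $\sigma^\mu(C' \cap \mu\Lambda) \supseteq z$), whence $\eta \in \widetilde{\mu z}$ and, by heredity, $\gamma \in [\eta] \subseteq \widetilde{\mu z}$, a contradiction. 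Thus $\widehat{\mu}(s(\mu)\Lambda^{**}) \subseteq r(\mu)\Lambda^{**}$, and since $\widehat{\mu}$ is continuous (Theorem \ref{t.concatenation}) and $s(\mu)\partial\Lambda = \overline{s(\mu)\Lambda^{**}}$, continuity gives $\widehat{\mu}(s(\mu)\partial\Lambda) \subseteq \overline{r(\mu)\Lambda^{**}} = r(\mu)\partial\Lambda$. Taking $\mu = \beta$ finishes the proof.
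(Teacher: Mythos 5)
Your proof is correct and follows essentially the same route as the paper's: reduce topological freeness to density of aperiodic points via Proposition \ref{p.aperiodicityandisotropy}, locate a tail set $\beta\Lambda$ inside a given basic open set by exploiting maximality of a directed set there (the ultrafilter $\CU_{C,0}=\CU_C$ from Theorem \ref{t.ultrafilters}), and push an aperiodic point forward by $\beta$ using Lemma \ref{l.aperiodic}. The only difference is that you explicitly verify two facts the paper's proof uses implicitly --- that the tail sets $\gamma\partial\Lambda$, $\gamma\in y$, form a neighborhood base at a maximal point $y$, and that concatenation carries $\partial\Lambda$ into $\partial\Lambda$ (maximal directed sets to maximal directed sets, then closure by continuity) --- which is a filling-in of detail rather than a different argument.
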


\begin{proof}
We will use Proposition \ref{p.aperiodicityandisotropy}.  If $G|_{\partial \Lambda}$ is topologically free then the aperiodic points are dense in $\partial \Lambda$.  Since $v \partial \Lambda$ is an open set, it contains an aperiodic point.  Conversely, suppose that the condition in the statement holds.  Let $U \subseteq \partial \Lambda$ be a nonempty open set.  Let $y \in U \cap \Lambda^{**}$.  Since $\{ \gamma \partial \Lambda : \gamma \in y \}$ is a neighborhood base at $y$, there is $\gamma \in y$ such that $\gamma \partial \Lambda \subseteq U$.  By hypothesis there is an aperiodic point $x \in s(\gamma) \partial \Lambda$.  Then $\gamma x$ is aperiodic, by Lemma \ref{l.aperiodic}, and $\gamma x \in \gamma \partial \Lambda \subseteq U$.
\end{proof}

Aperiodicity with respect to rightward shifts is not as useful as (left-) aperiodicity; for example, if $v \in \Lambda^0$ is a \textit{sink}, i.e. if $\Lambda v = \{ v \}$, then there are no rightward shifts on elements of $v \partial \Lambda$.  Nevertheless, it will be convenient to have some results about the two kinds of aperiodicity.

\begin{Definition}
\label{d.rightaperiodic}
%!?!\textit{(d.rightaperiodic)} %%% erase label name
A point $x \in \partial \Lambda$ is \textit{right-aperiodic} if for all $\alpha$, $\beta$ in $\Lambda r(x)$ with $\alpha \not= \beta$, we have $\alpha x \not= \beta x$.
\end{Definition}

\begin{Lemma}
\label{l.rightaperiodic}
%!?!\textit{(l.rightaperiodic)} %%% erase label name
Let $x \in \partial \Lambda$.  Then $x$ is aperiodic if and only if $\sigma^\mu (x)$ is right-aperiodic for all $\mu \in x$.
\end{Lemma}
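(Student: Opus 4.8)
The plan is to prove the two implications separately, treating points of $\partial\Lambda$ as directed hereditary subsets of $\Lambda$, so that $\sigma^\alpha(x) = \{\rho : \alpha\rho \in x\}$ for $\alpha \in x$, that $\alpha\sigma^\alpha(y) = y$ whenever $\alpha \in y$, and that the composition law $\sigma^\mu \circ \sigma^\alpha = \sigma^{\alpha\mu}$ holds. The one genuinely useful auxiliary observation is that, for a point of $\partial\Lambda$, left-aperiodicity forces right-aperiodicity; this is exactly where right-cancellation enters, through Lemma \ref{l.aperiodic}.

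For the forward direction I would assume $x$ aperiodic and fix $\mu \in x$. By Lemma \ref{l.aperiodic} the point $y := \sigma^\mu(x)$ is again aperiodic, so it suffices to show that an aperiodic point is right-aperiodic. If $y$ failed this, there would be $\alpha \neq \beta$ in $\Lambda r(y)$ with $\alpha y = \beta y =: w$; by Lemma \ref{l.aperiodic} the point $w = \alpha y$ is aperiodic. But $\alpha, \beta \in w$ (since $r(y) \in y$ gives $\alpha = \alpha\, r(y) \in \alpha y$, and $\beta \in \beta y = w$), while $\sigma^\alpha(w) = \sigma^\alpha(\alpha y) = y = \sigma^\beta(\beta y) = \sigma^\beta(w)$, contradicting aperiodicity of $w$. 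Hence $\sigma^\mu(x)$ is right-aperiodic for every $\mu \in x$.

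For the converse I would argue by contrapositive, producing from the failure of aperiodicity a single $\mu \in x$ with $\sigma^\mu(x)$ not right-aperiodic. If $x$ is not aperiodic there are $\alpha \neq \beta$ in $x$ with $\sigma^\alpha(x) = \sigma^\beta(x) =: y$. Directedness of $x$ supplies a common extension of $\alpha$ and $\beta$ lying in $x$; finite alignment refines it to a minimal common extension, and heredity of $x$ keeps it in $x$, so I may choose $\eps \in (\alpha \vee \beta) \cap x$ and write $\eps = \alpha\mu = \beta\nu$, where $\mu \neq \nu$ by right-cancellation from $\alpha \neq \beta$. Setting $z := \sigma^\eps(x)$, the composition law gives $\mu = \sigma^\alpha(\eps) \in y$ with $\sigma^\mu(y) = \sigma^{\alpha\mu}(x) = z$, and likewise $\nu \in y$ with $\sigma^\nu(y) = z$. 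Therefore $\mu z = \mu\sigma^\mu(y) = y = \nu\sigma^\nu(y) = \nu z$ with $\mu \neq \nu$ and $\mu,\nu \in \Lambda r(z)$, so $z = \sigma^\eps(x)$ fails right-aperiodicity, completing the contrapositive.

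I expect the only delicate points to be bookkeeping rather than conceptual: verifying that $\eps$ can be taken inside $x$ (directedness, then finite alignment, then heredity), and checking the source/range conditions so that $\mu z$ and $\nu z$ are legitimate with $\mu, \nu \in \Lambda r(z)$, where $r(z) = s(\eps) = s(\mu) = s(\nu)$. The substantive input is Lemma \ref{l.aperiodic}, which is where right-cancellation is used; beyond the shift identities no new ideas are required.
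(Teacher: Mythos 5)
Your proof is correct, and its skeleton matches the paper's: both directions hinge on the statement that an aperiodic point is right-aperiodic, with Lemma \ref{l.aperiodic} used to transfer aperiodicity along shifts, and the converse argued by pulling a common extension of $\alpha$ and $\beta$ out of $x$. The one real difference is how you prove that an aperiodic point $y$ is right-aperiodic. The paper does it by a direct computation on $y$: from $\alpha y = \beta y$ with $\alpha \neq \beta$ it chooses a common extension $\alpha\alpha' = \beta\beta' \in \alpha y$ (directedness of $\alpha y$), notes $\alpha' \neq \beta'$ by right-cancellation, and gets $\sigma^{\alpha'}(y) = \sigma^{\beta'}(y)$. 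You instead invoke the concatenation half of Lemma \ref{l.aperiodic} to conclude that $w = \alpha y$ is aperiodic and then contradict that, since $\alpha \neq \beta$ lie in $w$ and $\sigma^\alpha(w) = y = \sigma^\beta(w)$. Your route is purely formal given Lemma \ref{l.aperiodic}, avoiding any repetition of the common-extension manipulation; the trade-off is that you need the harder (concatenation) half of that lemma, while the paper's forward direction uses only the easy half (that $\sigma^\mu(x)$ is aperiodic) and keeps the computation local. Two minor points: in your converse, the refinement to a minimal common extension via finite alignment is superfluous --- any $\gamma = \alpha\mu = \beta\nu \in x$ supplied by directedness works, and this is all the paper uses; and your observation that $\mu \neq \nu$ follows by right-cancellation is exactly the step the paper leaves implicit.
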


\begin{proof}
First we show that aperiodicity of $x$ implies right-aperiodicity of $x$.  For this, suppose that $x$ is not right-aperiodic.  Then there are $\alpha \not= \beta$ with $\alpha x = \beta x$.  Let $\alpha \alpha' = \beta \beta' \in \alpha x$.  Then
\begin{align*}
\alpha' &= \sigma^\alpha(\alpha \alpha') \in \sigma^\alpha(\alpha x) = x \\
\beta' &= \sigma^\beta(\beta\beta') \in \sigma^\beta(\beta x) = x.
\end{align*}
Since $\alpha \not= \beta$ then $\alpha' \not= \beta'$.  But 
\[
\sigma^{\alpha'}(x) = \sigma^{\alpha\alpha'}(\alpha x) = \sigma^{\beta \beta'}(\beta x) = \sigma^{\beta'}(x).
\]
Therefore $x$ is not aperiodic.  Now, by Lemma \ref{l.aperiodic} we see that if $x$ is aperiodic and $\mu \in x$, then $\sigma^\mu x$ is aperiodic, and hence $\sigma^\mu x$ is right-aperiodic.

For the converse, let $x$ not be aperiodic.  Then there are $\alpha \not= \beta$ in $x$ such that $\sigma^\alpha(x) = \sigma^\beta(x)$.  Let $\gamma = \alpha \alpha' = \beta \beta' \in x$.  Then
\[
\alpha' \sigma^\gamma (x) = \sigma^\alpha (x) = \sigma^\beta (x) = \beta' \sigma^\gamma (x).
\]
Therefore $\sigma^\gamma (x)$ is not right-aperiodic.
\end{proof}

\begin{Lemma}
\label{l.topfree}
%!?!\textit{(l.topfree)} %%% erase label name
$G|_{\partial \Lambda}$ is topologically free if and only if for all $\alpha \not= \beta$ in $\Lambda$ there is $x \in s(\alpha) \partial \Lambda$ such that $\alpha x \not= \beta x$.
\end{Lemma}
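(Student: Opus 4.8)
The plan is to deduce everything from Proposition \ref{p.topfree}, which reduces topological freeness of $G|_{\partial\Lambda}$ to the existence of an aperiodic point in $v\partial\Lambda$ for every $v\in\Lambda^0$, together with Lemma \ref{l.rightaperiodic}, which bridges (left-)aperiodicity and right-aperiodicity. Throughout, the substantive content of the displayed condition concerns pairs with $s(\alpha)=s(\beta)$ (so that $\beta x$ is defined on $s(\alpha)\partial\Lambda$); and if $r(\alpha)\ne r(\beta)$ then $\alpha x$ and $\beta x$ lie in different pieces $X_v$ so the inequality is automatic. I would dispose of those trivial cases at the outset and concentrate on pairs with $s(\alpha)=s(\beta)$ and $r(\alpha)=r(\beta)$.

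For the forward implication, I would take such a pair $\alpha\ne\beta$ with $s(\alpha)=s(\beta)=w$. Topological freeness and Proposition \ref{p.topfree} furnish an aperiodic point $x\in w\partial\Lambda$. Applying Lemma \ref{l.rightaperiodic} with $\mu=r(x)=w$ (so that $\sigma^\mu(x)=x$) shows that $x$ itself is right-aperiodic. Since $\alpha,\beta\in\Lambda r(x)$, Definition \ref{d.rightaperiodic} then gives $\alpha x\ne\beta x$, which is exactly what is required.

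For the converse, the goal is to produce an aperiodic point in each $v\partial\Lambda$, and I would do this by a Baire category argument inside the compact Hausdorff space $v\partial\Lambda$ (nonempty since $v\Lambda^{**}\subseteq v\partial\Lambda$). For each pair $\alpha\ne\beta\in v\Lambda$ with $s(\alpha)=s(\beta)$, set
\[
B_{\alpha\beta}=\bigl\{x\in\alpha\partial\Lambda\cap\beta\partial\Lambda : \sigma^\alpha(x)=\sigma^\beta(x)\bigr\}.
\]
A point of $v\partial\Lambda$ fails to be aperiodic precisely when it lies in one of these countably many sets. Each $B_{\alpha\beta}$ is closed, being the intersection of the clopen set $\alpha\partial\Lambda\cap\beta\partial\Lambda$ (Corollary \ref{c.concatenation.two}) with the equalizer of the two continuous maps $\sigma^\alpha,\sigma^\beta$ into the Hausdorff space $s(\alpha)\partial\Lambda$. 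So it suffices to show each $B_{\alpha\beta}$ has empty interior; then the aperiodic points form a dense $G_\delta$, in particular meet $v\partial\Lambda$, and Proposition \ref{p.topfree} finishes the proof.

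The heart of the argument, and the step I expect to be the main obstacle, is converting a \emph{left}-shift coincidence holding on an open set into a \emph{right}-shift coincidence that contradicts the hypothesis. Suppose $B_{\alpha\beta}$ contains a nonempty open set $U$. Choosing $y\in U\cap v\Lambda^{**}$ (possible since $\Lambda^{**}$ is dense in $\partial\Lambda$) and using that $\{\gamma\partial\Lambda:\gamma\in y\}$ is a neighbourhood base at $y$ (as in the proof of Proposition \ref{p.topfree}), I would find $\gamma\in y$ with $\gamma\partial\Lambda\subseteq U$; since $\alpha,\beta\in y$ and $y$ is directed, enlarging $\gamma$ within $y$ I may assume $\gamma\in\alpha\Lambda\cap\beta\Lambda$, say $\gamma=\alpha a=\beta b$. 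Then for every $z\in s(\gamma)\partial\Lambda$, writing $x=\gamma z\in\gamma\partial\Lambda$, one computes $\sigma^\alpha(x)=az$ and $\sigma^\beta(x)=bz$, so membership of $x$ in $B_{\alpha\beta}$ forces $az=bz$ for \emph{all} $z\in s(\gamma)\partial\Lambda$. Right-cancellation gives $a\ne b$ (otherwise $\alpha a=\beta a$ would yield $\alpha=\beta$), while $s(a)=s(b)=s(\gamma)$ and $r(a)=r(b)$; hence the hypothesis applied to the pair $(a,b)$ produces some $z$ with $az\ne bz$, a contradiction. Therefore $B_{\alpha\beta}$ is nowhere dense, completing the Baire argument and the proof.
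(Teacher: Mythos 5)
Your proof is correct and follows essentially the same route as the paper's: the forward direction via Proposition \ref{p.topfree} and Lemma \ref{l.rightaperiodic}, and the converse via the same Baire category argument (which the paper phrases contrapositively, on an open set containing no aperiodic points), including the identical key step of shrinking to a neighbourhood $\gamma\partial\Lambda$ with $\gamma \in \alpha\Lambda \cap \beta\Lambda$, factoring $\gamma = \alpha\alpha' = \beta\beta'$, and using right-cancellation to contradict the right-shift condition. The differences (direct dense-$G_\delta$ packaging, explicit treatment of the trivial cases and of $\alpha' \neq \beta'$) are cosmetic.
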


\begin{proof}
First suppose that $G|_{\partial \Lambda}$ is topologically free, and let $\alpha \not= \beta$ in $\Lambda$.  By Proposition \ref{p.topfree} there is an aperiodic point $x \in s(\alpha) \partial \Lambda$.  By Lemma \ref{l.rightaperiodic}, $x$ is right-aperiodic, and hence $\alpha x \not= \beta x$.  Conversely, suppose that $G|_{\partial \Lambda}$ is not topologically free.  Then there is a nonempty open set $U \subseteq \partial \Lambda$ containing no aperiodic points.  For $\alpha \not= \beta$ in $\Lambda$ let 
\[
D_{ \{ \alpha, \beta \} } = \{ x \in U : \alpha,\beta \in x \text{ and } \sigma^\alpha (x) = \sigma^\beta (x) \}.
\]
Then $D_{ \{ \alpha, \beta \} }$ is a relatively closed subset of $U$, and $U = \bigcup \{ D_{ \{ \alpha, \beta \} } : \alpha, \; \beta \in \Lambda,\ \alpha \not= \beta \}$.  Since $U$ is a Baire space there are $\alpha \not= \beta$ such that $W = \text{int}( D_{ \{ \alpha, \beta \} } ) \not= \emptyset$.  Then $\sigma^\alpha |_W = \sigma^\beta |_W$.  As in the proof of Proposition \ref{p.topfree}, we may replace $W$ by a set of the form $\gamma \partial \Lambda$.  Further, we may choose $\gamma \in \alpha \Lambda \cap \beta \Lambda$.  Thus $\gamma = \alpha \alpha' = \beta \beta'$.  Then for $x \in W$ we have
\[
\alpha' \sigma^\gamma (x) = \sigma^\alpha(x) = \sigma^\beta (x) = \beta' \sigma^\gamma (x).
\]
Thus $\alpha' y = \beta' y$ for all $y \in s(\gamma) \partial \Lambda$.
\end{proof}

We now give a ``local'' criterion for aperiodicity, in that it refers only to elements of $\Lambda$.  This generalizes a recent result of Lewin and Sims (\cite{lewsim}).

\begin{Definition}
\label{d.periodicity}
%!?!\textit{(d.periodicity)} %%% erase label name
Let $\alpha$, $\beta \in \Lambda$ with $\alpha \not= \beta$, $s(\alpha) = s(\beta)$, and $r(\alpha) = r(\beta)$.  We say that $\Lambda$ has \textit{$\{\alpha,\beta\}$-periodicity} if for all $\gamma \in s(\alpha) \Lambda$ we have $\alpha \gamma \Cap \beta \gamma$.  We say that $\Lambda$ is \textit{aperiodic} if $\Lambda$ does not have $\{\alpha,\beta\}$-periodicity for any $\alpha$ and $\beta$.
\end{Definition}

\begin{Lemma}
\label{l.periodicity}
%!?!\textit{(l.periodicity)} %%% erase label name
$\Lambda$ has $\{ \alpha,\beta \}$-periodicity if and only if $\alpha x = \beta x$ for all $x \in s(\alpha) \partial \Lambda$.
\end{Lemma}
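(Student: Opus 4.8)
The statement is an equivalence, and the two implications have very different flavor, so the plan is to handle them separately, throughout interpreting a boundary point $x$ as a directed hereditary subset $C\subseteq s(\alpha)\Lambda$ and writing $\alpha x$ for the directed hereditary set generated by $\{\alpha\nu:\nu\in x\}$ (as in Lemma \ref{l.aperiodic}). The routine direction is ``$\alpha x=\beta x$ for all $x$'' $\Rightarrow$ $\{\alpha,\beta\}$-periodicity, which I would prove by contraposition. If periodicity fails there is $\gamma\in s(\alpha)\Lambda$ with $\alpha\gamma\perp\beta\gamma$; extend $\{\gamma\}$ to a maximal directed set $x$ (Remark \ref{r.directedset}), which lies in $s(\alpha)\Lambda^{**}\subseteq s(\alpha)\partial\Lambda$ and contains $\gamma$. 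Then $\alpha\gamma\in\alpha x$ and $\beta\gamma\in\beta x$, so if $\alpha x=\beta x$ were a single directed set it would contain both, forcing $\alpha\gamma\Cap\beta\gamma$ by directedness, a contradiction. Hence $\alpha x\neq\beta x$.

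For the substantive direction, assume $\{\alpha,\beta\}$-periodicity and fix $x\in s(\alpha)\partial\Lambda$ with directed hereditary set $C$. By symmetry of the hypothesis it suffices to show $\beta\mu\in\alpha x$ for every $\mu\in C$, since $\alpha x$ is hereditary this gives $\beta x\subseteq\alpha x$, and swapping $\alpha,\beta$ gives equality. Fix $\mu\in C$. Periodicity gives $\alpha\mu\Cap\beta\mu$, so by finite alignment (Lemma \ref{l.finitelyalignedpair}) the set $(\alpha\mu)\vee(\beta\mu)=\{\eta_1,\dots,\eta_k\}$ is finite and nonempty; write $\eta_j=\alpha\mu\rho_j=\beta\mu\tau_j$, so $\rho_j=\sigma^{\alpha\mu}\eta_j\in s(\mu)\Lambda$. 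The first key point is that $\{\rho_1,\dots,\rho_k\}$ is exhaustive at $s(\mu)$: for $\delta\in s(\mu)\Lambda$ we have $\mu\delta\in s(\alpha)\Lambda$, so periodicity yields a common extension of $\alpha\mu\delta$ and $\beta\mu\delta$, which lies in $\alpha\mu\Lambda\cap\beta\mu\Lambda=\bigcup_j\alpha\mu\rho_j\Lambda$; cancelling $\alpha\mu$ shows $\delta\Cap\rho_j$ for some $j$.

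The crux is then to promote this to membership in $C$: I claim $\mu\rho_{j_0}\in C$ for some $j_0$. Suppose not, so $C\cap\bigcup_j\mu\rho_j\Lambda=\emptyset$; then $C\cap\mu\Lambda\subseteq E:=\mu\Lambda\setminus\bigcup_j\mu\rho_j\Lambda$, whence $E\in\CU_C$ and $C\in\widehat E$. But $\{\rho_j\}$ is a finite exhaustive set at $s(\mu)$, so the argument in the proof of Lemma \ref{l.boundary} (equivalently Lemma \ref{l.finiteexhaustive}) shows $\widehat E\cap s(\mu)... $ meets no maximal directed set, i.e.\ $\widehat E\cap r(\mu)\Lambda^{**}=\emptyset$; since $\widehat E$ is open and $\partial\Lambda=\overline{\Lambda^{**}}$ (Definition \ref{d.boundary}), this forces $\widehat E\cap\partial\Lambda=\emptyset$, contradicting $C\in\widehat E\cap\partial\Lambda$. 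Thus $\mu\rho_{j_0}\in C$, so $\eta_{j_0}=\alpha(\mu\rho_{j_0})\in\alpha C\subseteq\alpha x$, and since $\eta_{j_0}$ extends $\beta\mu$ we get $\beta\mu\in\alpha x$ by heredity, completing the argument.

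The main obstacle is exactly this last step — transferring the combinatorial conclusion ``$\alpha\mu$ and $\beta\mu$ have a finite exhaustive family of minimal common extensions'' into the assertion that one of those extensions actually lies in the given boundary directed set $C$. This is where the defining property of $\partial\Lambda$ must be invoked, and the cleanest route is to reuse the open-set-avoids-$\Lambda^{**}$ fact isolated in Lemma \ref{l.boundary}, rather than manipulating the ultrafilters $\CU_{\alpha x},\CU_{\beta x}$ directly. Establishing exhaustiveness of $\{\rho_j\}$ from periodicity, by contrast, is a short cancellation computation, and no use of right-cancellation is needed anywhere in this lemma.
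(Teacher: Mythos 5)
Your proof is correct, and while your first direction coincides with the paper's (up to contraposition), your argument for the substantive direction is genuinely different. The paper proves ``periodicity $\Rightarrow$ $\alpha x = \beta x$ for all $x$'' by contrapositive: given $x \in s(\alpha)\partial\Lambda$ with $\alpha x \neq \beta x$, the set $\{y \in s(\alpha)\partial\Lambda : \alpha y \neq \beta y\}$ is open, so by density of $\Lambda^{**}$ in $\partial\Lambda$ one may take $x$ maximal; then $\alpha x$ and $\beta x$ are maximal directed sets, so Lemma \ref{l.directedsets} yields $\gamma_1, \gamma_2 \in x$ with $\alpha\gamma_1 \perp \beta\gamma_2$, and a common extension $\gamma = \gamma_1\gamma_1' = \gamma_2\gamma_2' \in x$ satisfies $\alpha\gamma \perp \beta\gamma$, contradicting periodicity. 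You instead prove the implication directly at an arbitrary boundary point: periodicity is converted into the statement that $\sigma^{\alpha\mu}\bigl((\alpha\mu)\vee(\beta\mu)\bigr)$ is a finite exhaustive set at $s(\mu)$, and the claim established inside the proof of Lemma \ref{l.boundary} (a set $\mu\Lambda\setminus\bigcup_j\mu\rho_j\Lambda$ with $\{\rho_j\}$ exhaustive has $\widehat{E}$ disjoint from $\Lambda^{**}$, hence from $\partial\Lambda$) forces some $\mu\rho_{j_0}$ into $C$. The paper's route is shorter, but it quietly relies on continuity of concatenation plus Hausdorffness of $X$, on density of $\Lambda^{**}$, and on the unproved (though routine) fact that concatenation carries maximal directed sets to maximal directed sets; your route avoids all of these, at the price of invoking the heavier finite-exhaustive-set machinery of section \ref{s.boundary}, and it has the side benefit of making explicit the exhaustive family that periodicity produces. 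One small caveat: your parenthetical ``equivalently Lemma \ref{l.finiteexhaustive}'' is not quite interchangeable with your main citation, since applying that lemma at the vertex $s(\mu)$ would require knowing $\sigma^\mu C \in \partial\Lambda$ (invariance of the boundary under shifts), which you do not establish; the route through the claim in the proof of Lemma \ref{l.boundary}, which you give as primary, is the correct one.
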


\begin{proof}
First suppose that $\alpha x = \beta x$ for all $x \in s(\alpha) \partial \Lambda$.  Let $\gamma \in s(\alpha) \Lambda$.  Choose $x \in \gamma \partial \Lambda$; then $\gamma \in x$.  Then $\alpha \gamma \in \alpha x$, $\beta \gamma \in \beta x$, and $\alpha x = \beta x$.  Hence $\alpha \gamma \Cap \beta \gamma$.

Conversely, suppose that there is $x \in s(\alpha) \partial \Lambda$ with $\alpha x \not= \beta x$.  Since $\{ y \in s(\alpha) \partial \Lambda : \alpha y \not= \beta y \}$ is an open set, we may assume that $x \in s(\alpha) \Lambda^{**}$.  Since $\alpha x$ and $\beta x$ are maximal, Lemma \ref{l.directedsets} gives $\gamma_1$, $\gamma_2 \in x$ such that $\alpha \gamma_1 \perp \beta \gamma_2$.  Let $\gamma_1'$ and $\gamma_2'$ be such that $\gamma = \gamma_1 \gamma_1' = \gamma_2 \gamma_2' \in x$.  Then $\alpha \gamma \perp \beta \gamma$.
\end{proof}

\begin{Theorem}
\label{t.aperiodic} 
%!?!\textit{(t.aperiodic)} %%% erase label name
$G|_{\partial \Lambda}$ is topologically free if and only if $\Lambda$ is aperiodic.
\end{Theorem}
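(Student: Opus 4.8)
The plan is to deduce the theorem directly from the two preceding lemmas, Lemma \ref{l.topfree} and Lemma \ref{l.periodicity}, which together translate each side of the asserted equivalence into the \emph{same} statement about the right-shift maps on the boundary. No new groupoid analysis should be required; the real content has already been extracted by those lemmas (and, beneath them, by Proposition \ref{p.topfree} and the right-cancellation arguments), so the remaining work is essentially bookkeeping.

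First I would record the two characterizations side by side. By Lemma \ref{l.topfree}, $G|_{\partial\Lambda}$ is topologically free precisely when for every pair $\alpha \neq \beta$ in $\Lambda$ there exists $x \in s(\alpha)\partial\Lambda$ with $\alpha x \neq \beta x$. On the other hand, Lemma \ref{l.periodicity} asserts that $\Lambda$ has $\{\alpha,\beta\}$-periodicity if and only if $\alpha x = \beta x$ for every $x \in s(\alpha)\partial\Lambda$; contrapositively, $\Lambda$ fails to have $\{\alpha,\beta\}$-periodicity exactly when some $x \in s(\alpha)\partial\Lambda$ satisfies $\alpha x \neq \beta x$. Hence aperiodicity of $\Lambda$ (Definition \ref{d.periodicity}), being the absence of $\{\alpha,\beta\}$-periodicity for every admissible pair, is the statement that for every $\alpha \neq \beta$ with $s(\alpha) = s(\beta)$ and $r(\alpha) = r(\beta)$ there is $x \in s(\alpha)\partial\Lambda$ with $\alpha x \neq \beta x$.

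The only discrepancy between these two formulations is the range of the quantifier over $(\alpha,\beta)$: Lemma \ref{l.topfree} ranges over all distinct pairs, whereas $\{\alpha,\beta\}$-periodicity is defined only for pairs with matching sources and ranges. I would close this gap by showing that the condition ``there is $x \in s(\alpha)\partial\Lambda$ with $\alpha x \neq \beta x$'' is automatic for the excluded pairs. If $s(\alpha) \neq s(\beta)$, choose any $x \in s(\alpha)\partial\Lambda$; this set is nonempty because $\{s(\alpha)\}$ is directed and so extends to a maximal directed subset of $s(\alpha)\Lambda$ by Remark \ref{r.directedset}, giving a point of $s(\alpha)\partial\Lambda$. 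For such $x$ the product $\beta x$ is not defined, since $r(x) = s(\alpha) \neq s(\beta)$, so trivially $\alpha x \neq \beta x$. If instead $s(\alpha) = s(\beta)$ but $r(\alpha) \neq r(\beta)$, then $\alpha x \in r(\alpha)\partial\Lambda$ and $\beta x \in r(\beta)\partial\Lambda$ lie in disjoint pieces of $X = \bigsqcup_{v} X_v$, whence again $\alpha x \neq \beta x$. Thus the excluded pairs impose no condition, and the quantifier in Lemma \ref{l.topfree} restricts without loss to pairs with $s(\alpha) = s(\beta)$ and $r(\alpha) = r(\beta)$ — exactly the quantifier defining aperiodicity. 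The two conditions therefore coincide, establishing the equivalence in both directions simultaneously.

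I do not expect a genuine obstacle, as the argument is a direct assembly of Lemmas \ref{l.topfree} and \ref{l.periodicity}. The only point demanding care is the correct interpretation of the inequality $\alpha x \neq \beta x$ when the sources or ranges of $\alpha$ and $\beta$ differ, together with verifying that $s(\alpha)\partial\Lambda$ is nonempty so that an honest witness $x$ exists; both are dispatched by the elementary observations above.
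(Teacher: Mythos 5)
Your proposal is correct and follows exactly the paper's route: the paper's entire proof of Theorem \ref{t.aperiodic} is the one-line assembly of Lemmas \ref{l.topfree} and \ref{l.periodicity}. Your additional bookkeeping — checking that pairs with $s(\alpha)\neq s(\beta)$ or $r(\alpha)\neq r(\beta)$ impose no condition, and that $s(\alpha)\partial\Lambda\neq\emptyset$ — simply makes explicit the quantifier-matching the paper leaves implicit, and is carried out correctly.
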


\begin{proof}
The theorem follows from Lemmas \ref{l.topfree} and \ref{l.periodicity}.
\end{proof}

\begin{Remark}
\label{r.aperiodicity}
%!?!\textit{(r.aperiodicity)} %%% erase label name
$\Lambda$ is aperiodic if and only if the following holds:  for all $\alpha \not= \beta$ in $\Lambda$ there is $\gamma \in s(\alpha)\Lambda$ such that $\alpha \gamma \perp \beta \gamma$.
\end{Remark}

We now give analogs of the usual ``uniqueness'' theorems in the subject.  The next two results rely on the characterization of $C^*(\Lambda)$ by generators and relations, given in Theorem \ref{t.restrictionboundary}.

\begin{Theorem} (Cuntz-Krieger uniqueness theorem)
\label{t.ckuniqueness}
%!?!\textit{(t.ckuniqueness)} %%% erase label name
Let $\Lambda$ be a countable finitely aligned category of paths with amenable groupoid.  Suppose that $\Lambda$ is aperiodic.  Let $\pi$ be a $*$-homomorphism from $C^*(\Lambda)$ into a $C^*$-algebra.  If $\pi(S_v) \not= 0$ for all $v \in \Lambda^0$, then $\pi$ is faithful.
\end{Theorem}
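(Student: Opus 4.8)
The plan is to reduce the statement to the standard groupoid form of the Cuntz-Krieger uniqueness theorem, applied to the reduction $H := G|_{\partial\Lambda}$, and then to show that the hypothesis $\pi(S_v)\neq 0$ for all $v$ is precisely what guarantees $\pi$ faithful on the diagonal $C_0(\partial\Lambda)$.

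First I would record the structural facts about $H$. Since $\partial\Lambda$ is a closed invariant subset of $X = G^0$ (Definition \ref{d.boundary}), $H$ is again a Hausdorff (Proposition \ref{p.fa.topology}) ample \'etale groupoid, and it is amenable because the restriction of an amenable groupoid to a closed invariant set is amenable; in particular $C^*(\Lambda) = C^*(H) = C^*_r(H)$. By hypothesis $\Lambda$ is aperiodic, so Theorem \ref{t.aperiodic} gives that $H$ is topologically free. I would then invoke the groupoid form of the uniqueness theorem (\cite{ren}, \cite{archspi}): for a topologically free \'etale groupoid whose full and reduced $C^*$-algebras coincide, a $*$-homomorphism out of $C^*(H)$ is injective as soon as its restriction to $C_0(H^0) = C_0(\partial\Lambda)$ is injective. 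Thus it suffices to prove that $\pi|_{C_0(\partial\Lambda)}$ is injective.

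For this, suppose toward a contradiction that $\pi|_{C_0(\partial\Lambda)}$ is not injective. As $C_0(\partial\Lambda)$ is commutative, its kernel is an ideal, so there is a nonempty open set $U\subseteq\partial\Lambda$ on which $\pi$ vanishes, in the sense that $\pi(f)=0$ for every $f\in C_0(\partial\Lambda)$ supported in $U$. Because $\partial\Lambda$ is the closure of $\Lambda^{**}$, I can choose a maximal directed set $C\in U\cap\Lambda^{**}$. For such $C$ the family $\{\gamma\partial\Lambda : \gamma\in C\}$ is a neighborhood base at $C$ (as used in the proof of Proposition \ref{p.topfree}), so there is $\gamma\in C$ with $\emptyset\neq\gamma\partial\Lambda\subseteq U$. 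The characteristic function of $\gamma\partial\Lambda$ is exactly the diagonal projection $S_\gamma S_\gamma^*$ appearing in Theorem \ref{t.restrictionboundary}, so $\pi(S_\gamma S_\gamma^*)=0$. Then $\pi(S_\gamma)\pi(S_\gamma)^* = 0$ forces $\pi(S_\gamma)=0$, whence, using $S_\gamma^* S_\gamma = S_{s(\gamma)}$ from Theorem \ref{t.restrictionboundary}\eqref{t.restrictionboundary.a}, we get $\pi(S_{s(\gamma)}) = \pi(S_\gamma)^*\pi(S_\gamma) = 0$. Since $s(\gamma)\in\Lambda^0$, this contradicts the hypothesis that $\pi(S_v)\neq 0$ for all $v$, completing the argument.

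The main obstacle is the first reduction: pinning down and justifying the abstract uniqueness theorem for $H$, in particular that topological freeness together with amenability (hence coincidence of the full and reduced norms, and faithfulness of the canonical conditional expectation $C^*_r(H)\to C_0(\partial\Lambda)$) yields faithfulness of any representation that is faithful on the diagonal. Verifying that $H$ inherits amenability from $G$ upon restriction to the closed invariant set $\partial\Lambda$, and confirming that the diagonal of $C^*(H)$ is precisely $C_0(\partial\Lambda)$ with the $S_\gamma S_\gamma^*$ realized as the expected compact-open projections, are the points needing the most care; once these are in place, the propagation argument in the previous paragraph is routine.
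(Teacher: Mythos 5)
Your proposal is correct and takes essentially the same route as the paper's proof: aperiodicity yields topological freeness of $G|_{\partial\Lambda}$ by Theorem \ref{t.aperiodic}; the ideal-intersection property for topologically free Hausdorff \'etale groupoids (the \cite{archspi} argument, with amenability handling full versus reduced) reduces faithfulness of $\pi$ to faithfulness on $C_0(\partial\Lambda)$; and the diagonal step is settled by locating some $\gamma\partial\Lambda$ inside a given nonempty open set and observing that $\pi(S_\gamma S_\gamma^*)=0$ would force $\pi(S_{s(\gamma)})=0$, contradicting the hypothesis. The only difference is presentational: you run the diagonal argument by contradiction, while the paper states it directly.
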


\begin{proof}
Since $G|_{\partial \Lambda}$ is topologically free, ideals in $C^*(\Lambda)$ are determined by their intersection with $C_0(\partial \Lambda)$ (the argument in \cite{archspi} works for Hausdorff \' etale groupoids.  See also \cite{exel2}).  Since every nonempty open set in $\partial \Lambda$ contains a set of the form $\partial \Lambda \cap \gamma \Lambda$ for some $\gamma$, and $\pi(S_\gamma S_\gamma^*) \not= 0$ since $\pi(S_{s(\gamma)}) \not= 0$, we see that $\pi|_{C_0(\partial \Lambda)}$ is faithful.  Therefore $\pi$ must be faithful.
\end{proof}

\begin{Theorem} (Gauge-invariant uniqueness theorem)
\label{t.gaugeinvariantuniqueness}
%!?!\textit{(t.gaugeinvariantuniqueness)} %%% erase label name
Let $\Lambda$ be a countable finitely aligned category of paths, and let $\psi : \Lambda \to Q$ be a non-degenerate non-isotropic degree functor satisfying the conditions \eqref{l.finiteorbits_a} and \eqref{l.finiteorbits_b} of Lemma \ref{l.finiteorbits}.  Let $\gamma = \gamma_\psi$ be the associated gauge action of $\widehat{Q}$ on $C^*(\Lambda)$.  Let $\pi$ be a $*$-homomorpism from $C^*(\Lambda)$ to a $C^*$-algebra $B$, and suppose that there is an action $\delta$ of $\widehat{Q}$ on $B$ such that $\pi$ is equivariant for $\gamma$ and $\delta$.  If $\pi(S_v) \not= 0$ for all $v \in \Lambda^0$ then $\pi$ is faithful.
\end{Theorem}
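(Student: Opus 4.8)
The plan is to run the standard gauge-invariant uniqueness argument: reduce faithfulness of $\pi$ to faithfulness on the fixed-point algebra $C^*(\Lambda)^\gamma$, and then transfer this conclusion back to all of $C^*(\Lambda)$ using the faithful conditional expectation coming from the compact group action. Since $Q$ is countable, $\widehat{Q}$ is a compact metrizable abelian group, and averaging against its normalized Haar measure yields a faithful conditional expectation $E_\psi(a) = \int_{\widehat{Q}} \gamma_z(a)\,dz$ of $C^*(\Lambda)$ onto $C^*(\Lambda)^\gamma$, and likewise $E^B(b) = \int_{\widehat{Q}} \delta_z(b)\,dz$ of $B$ onto $B^\delta$. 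The equivariance hypothesis $\pi \circ \gamma_z = \delta_z \circ \pi$, together with continuity of $\pi$, gives the intertwining relation $\pi \circ E_\psi = E^B \circ \pi$.

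The crux is to prove that $\pi$ is faithful on $C^*(\Lambda)^\gamma$. First I would identify this algebra: rerunning the argument of Proposition \ref{p.nuclearity} for the groupoid $G|_{\partial\Lambda}$ and the restricted cocycle $c_\psi$ shows that $C^*(\Lambda)^\gamma$ is canonically isomorphic to $C^*\bigl(G^\psi|_{\partial\Lambda}\bigr)$, the $C^*$-algebra of the restriction of the fixed-point groupoid $G^\psi = c_\psi^{-1}(0)$ to the closed invariant subset $\partial\Lambda$. Because $\psi$ satisfies conditions \eqref{l.finiteorbits_a} and \eqref{l.finiteorbits_b}, Theorem \ref{t.af} shows $G^\psi$ is AF; since restriction to a closed invariant subset carries elementary subgroupoids to elementary subgroupoids, $G^\psi|_{\partial\Lambda}$ is again AF, and in particular amenable. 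Moreover $\psi$ is non-isotropic, so by Lemma \ref{l.nonisotropic} the groupoid $G^\psi$, and hence its restriction, has trivial isotropy, i.e. is principal and in particular topologically free. Consequently (exactly as in the proof of Theorem \ref{t.ckuniqueness}, using amenability and \cite{archspi}, \cite{exel2}) every ideal of $C^*\bigl(G^\psi|_{\partial\Lambda}\bigr)$ is determined by its intersection with $C_0(\partial\Lambda)$, so it suffices to check that $\pi$ is faithful on $C_0(\partial\Lambda)$.

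For the diagonal I would argue verbatim as in Theorem \ref{t.ckuniqueness}: each $S_\alpha$ is a partial isometry with $\pi(S_\alpha)^*\pi(S_\alpha) = \pi(S_{s(\alpha)})$, so $\pi(S_{s(\alpha)}) \ne 0$ forces $\pi(S_\alpha S_\alpha^*) \ne 0$; since every nonempty open subset of $\partial\Lambda$ contains a basic compact-open set $\partial\Lambda \cap \gamma\Lambda^*$, whose characteristic function is $S_\gamma S_\gamma^*$, the kernel $C_0(V)$ of $\pi|_{C_0(\partial\Lambda)}$ must have $V = \emptyset$. Hence $\pi|_{C_0(\partial\Lambda)}$ is faithful, and therefore so is $\pi|_{C^*(\Lambda)^\gamma}$. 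To finish, suppose $\pi(a) = 0$; then $\pi(a^*a) = 0$, so $\pi(E_\psi(a^*a)) = E^B(\pi(a^*a)) = 0$. As $E_\psi(a^*a) \in C^*(\Lambda)^\gamma$ and $\pi$ is faithful there, $E_\psi(a^*a) = 0$, and faithfulness of $E_\psi$ gives $a^*a = 0$, whence $a = 0$.

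I expect the main obstacle to be the second step: the clean identification $C^*(\Lambda)^\gamma \cong C^*\bigl(G^\psi|_{\partial\Lambda}\bigr)$, which requires transporting the averaging/norm computation of Proposition \ref{p.nuclearity} to the restricted setting and confirming that $\partial\Lambda$ is $G^\psi$-invariant, together with the verification that the AF structure survives restriction to $\partial\Lambda$ so that the ``ideals are detected on the diagonal'' principle applies. The remaining manipulations are routine $C^*$-algebraic bookkeeping; I note in particular that, in contrast with Theorem \ref{t.ckuniqueness}, no amenability hypothesis on the full groupoid $G$ is needed here, since amenability is used only for $G^\psi|_{\partial\Lambda}$, where it is automatic from AF-ness.
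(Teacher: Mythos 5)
Your proposal is correct and follows essentially the same route as the paper's proof: faithfulness of $\pi$ on $C_0(\partial\Lambda)$ from $\pi(S_v) \not= 0$ (as in Theorem \ref{t.ckuniqueness}), identification of the fixed-point algebra $C^*(\Lambda)^\gamma$ with the $C^*$-algebra of the fixed-point groupoid over $\partial\Lambda$, which is AF by Theorem \ref{t.af} together with restriction to the closed invariant set $\partial\Lambda$, and then the standard faithful-conditional-expectation argument using equivariance. The only cosmetic difference is that the paper passes from diagonal-faithfulness to faithfulness on the fixed-point algebra directly via the AF structure, whereas you route through principality/topological freeness and the ideal-detection principle of \cite{archspi}; both are valid, and your closing observation that no amenability hypothesis on the full groupoid is needed (it being automatic from the AF core) agrees with the paper.
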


\begin{proof}
As in the proof of Theorem \ref{t.ckuniqueness}, we find that $\pi|_{C_0(\partial \Lambda)}$ is faithful.  $G^\gamma$ is an AF groupoid, by Theorem \ref{t.af}, and $\partial \Lambda$ is the unit space of $G^\gamma$.  Thus the injectivity of $\pi|_{C_0(\partial \Lambda)}$ implies that $\pi$ is faithful on $C^*(G^\gamma) = C^*(\Lambda)^\gamma$.  By the equivariance of $\pi$ it follows that $\pi(C^*(\Lambda)^\gamma) = \pi(C^*(\Lambda))^\delta$.  Now the usual argument using the faithful conditional expectations defined by the two actions of $\widehat{Q}$ shows that $\pi$ is faithful.
\end{proof}

We now characterize those $\Lambda$ for which the groupoid $G|_{\partial \Lambda}$ is minimal (this is a version of \textit{cofinality} for a category of paths; see \cite{lewsim}, Definition 3.3).

\begin{Theorem}
\label{t.minimality}
%!?!\textit{(t.minimality)} %%% erase label name
$G|_{\partial \Lambda}$ is minimal if and only if for every pair $u, v \in \Lambda^0$ there exists $F \in FE(v)$ such that for each $\alpha \in F$, we have $u \Lambda s(\alpha) \not= \emptyset$.
\end{Theorem}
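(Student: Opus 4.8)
The plan is to pass through the unit space $\partial\Lambda = \bigsqcup_v v\partial\Lambda$ (identifying $X_v = v\Lambda^*$), using the description of the groupoid orbit of $w\in\partial\Lambda$ as $\{\alpha\,\sigma^\beta(w) : \beta\in w,\ \alpha\in\Lambda s(\beta)\}$, which follows from the fact (Remark \ref{r.fa.groupoid.two}) that $[\alpha,\beta,z]$ has source $\beta z$ and range $\alpha z$. The first bookkeeping step is to record that the orbit of $w$ meets $u\partial\Lambda$ if and only if there is $\beta\in w$ with $u\Lambda s(\beta)\neq\emptyset$: if such $\beta$ exists then for $\eta\in u\Lambda s(\beta)$ the point $\eta\,\sigma^\beta(w)$ lies in $u\partial\Lambda$ and in the orbit, and conversely every orbit point has the displayed form. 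I write $N_u = \{w_0\in\Lambda^0 : u\Lambda w_0 = \emptyset\}$ for the vertices admitting no path to $u$, and note that if $\lambda\in\Lambda$ has $r(\lambda)\notin N_u$ then $s(\lambda)\notin N_u$, by composing a path from $r(\lambda)$ to $u$ with $\lambda$.

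For the forward implication I would argue by contraposition, using the set
\[
Y_u = \{w\in\partial\Lambda : s(\beta)\in N_u \text{ for all } \beta\in w\},
\]
i.e. the boundary points whose orbit misses $u\partial\Lambda$. First I would check that $Y_u$ is closed: its complement is the union of the basic open sets $\beta\partial\Lambda$ over the $\beta$ with $s(\beta)\notin N_u$. Next, invariance: given $w\in Y_u$ and an orbit point $w' = \alpha\,\sigma^\beta(w)$, any $\beta'\in w'$ satisfies $\beta'\preceq\alpha\gamma$ for some $\gamma\in\sigma^\beta(w)$, so $\alpha\gamma = \beta'\lambda$ with $r(\lambda)=s(\beta')$ and $s(\lambda)=s(\gamma)$; since $\gamma\in\sigma^\beta(w)$ forces $\beta\gamma\in w$ and hence $s(\gamma)\in N_u$, a path from $s(\beta')$ to $u$ would compose with $\lambda$ to give one from $s(\gamma)$ to $u$, a contradiction, whence $s(\beta')\in N_u$ and $w'\in Y_u$. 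Because every $w\in u\partial\Lambda$ contains the vertex $u$ and $u\Lambda u\neq\emptyset$, we have $Y_u\neq\partial\Lambda$, so $Y_u$ is a proper closed invariant set and minimality forces $Y_u=\emptyset$. Fixing $v$, this says every $w\in v\partial\Lambda$ contains some $\beta$ with $u\Lambda s(\beta)\neq\emptyset$, so the clopen sets $\beta\partial\Lambda$ over such $\beta\in v\Lambda$ cover the compact set $v\partial\Lambda$; a finite subcover $F$ is exhaustive at $v$ by Lemma \ref{l.finiteexhaustive}, giving $F\in\text{FE}(v)$ with $u\Lambda s(\alpha)\neq\emptyset$ for all $\alpha\in F$.

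For the reverse implication I would show cofinality forces every nonempty open invariant $U\subseteq\partial\Lambda$ to equal $\partial\Lambda$. Choosing a maximal $w\in U$ (possible since $\Lambda^{**}$ is dense in $\partial\Lambda$) and using that $\{\gamma\partial\Lambda:\gamma\in w\}$ is a neighbourhood base at a maximal point, I obtain $\gamma\in w$ with $\gamma\partial\Lambda\subseteq U$; applying the shift $\sigma^\gamma$, whose image is $s(\gamma)\partial\Lambda$, together with invariance of $U$, yields a full clopen cylinder $v_0\partial\Lambda\subseteq U$ with $v_0=s(\gamma)$. Then for arbitrary $u$ and $w'\in u\partial\Lambda$, I apply cofinality to the pair $(v_0,u)$ to get $F\in\text{FE}(u)$ with $v_0\Lambda s(\alpha)\neq\emptyset$ for all $\alpha\in F$; Lemma \ref{l.finiteexhaustive} supplies $\alpha\in F$ with $\alpha\in w'$, and for $\beta\in v_0\Lambda s(\alpha)$ the point $\beta\,\sigma^\alpha(w')\in v_0\partial\Lambda\subseteq U$ lies in the orbit of $w'$, so invariance forces $w'\in U$. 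Hence $u\partial\Lambda\subseteq U$ for every $u$, and $U=\partial\Lambda$, proving minimality.

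The step I expect to be the main obstacle is the invariance of $Y_u$ in the forward direction: the temptation is to split $\beta'\in\alpha\,\sigma^\beta(w)$ according to whether $\beta'\preceq\alpha$ or $\alpha\preceq\beta'$, but the initial segments of a path need not be totally ordered in a general category of paths, so this dichotomy is unavailable. The composition-of-paths argument above is designed precisely to avoid it, deriving $s(\beta')\in N_u$ directly from $\alpha\gamma=\beta'\lambda$ and $s(\gamma)\in N_u$. A secondary point requiring care is the compactness extraction of the finite exhaustive set, which relies on the compactness of $v\partial\Lambda$ and on the clopen-cover reformulation of exhaustiveness provided by Lemma \ref{l.finiteexhaustive}.
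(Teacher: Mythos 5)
Your proposal is correct. In the implication (minimal $\Rightarrow$ cofinality) you and the paper do essentially the same thing: the paper uses density of the orbit of each $x \in v\partial\Lambda$ to attach to it some $\alpha(x)\in x$ with $u\Lambda s(\alpha(x))\neq\emptyset$, covers the compact set $v\partial\Lambda$ by finitely many $\alpha(x_i)\partial\Lambda$, and checks exhaustiveness of $\{\alpha(x_i)\}$ directly; your $Y_u$ is precisely the union of the orbits that miss $u\partial\Lambda$, so its invariance is automatic (your hands-on verification is correct but dispensable), and your citation of Lemma \ref{l.finiteexhaustive} replaces the paper's direct check. The genuine divergence is in (cofinality $\Rightarrow$ minimal). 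The paper fixes an arbitrary $x\in\partial\Lambda$ and open $U\neq\emptyset$, invokes Theorem \ref{t.boundary} to produce $\mu\in x$ such that every $F\in\text{FE}(s(\mu))$ meets $\sigma^\mu x$, applies the hypothesis at $v=s(\mu)$, and exhibits the explicit arrow $[\gamma\beta,\mu\alpha,\sigma^{\mu\alpha}(x)]$ from $x$ into $U$, proving orbit density pointwise. You instead use the invariant-open-set formulation of minimality (equivalent here since $G|_{\partial\Lambda}$ is \'etale, so orbit closures are invariant): invariance of $U$ lets you inflate $\gamma\partial\Lambda\subseteq U$ to the full cylinder $s(\gamma)\partial\Lambda\subseteq U$, after which the covering direction of Lemma \ref{l.finiteexhaustive}, applied at the range vertex of an arbitrary boundary point $w'$, yields $\alpha\in F\cap w'$ and hence an arrow from $w'$ into $U$. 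What your route buys is independence from Theorem \ref{t.boundary} --- you need only Lemma \ref{l.finiteexhaustive}, whose proof rests on Lemma \ref{l.directedsets} and density of $\Lambda^{**}$; what the paper's route buys is a direct, pointwise exhibition of orbit density, with no appeal to the equivalence between the two formulations of minimality.
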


\begin{proof}
First suppose that the condition in the statement holds, let $x \in \partial \Lambda$, and  let $U \subseteq \partial \Lambda$ be open.  Choose $\gamma \in \Lambda$ such that $\gamma \partial \Lambda \subseteq U$ (as in the proof of Proposition \ref{p.topfree}).  Let $u = s(\gamma)$.  By Theorem \ref{t.boundary} there is $\mu \in x$ such that for all $F \in FE(s(\mu))$ we have $\mu F \cap x \not= \emptyset$.  Let $v = s(\mu)$, and choose $F \in FE(v)$ such that for each $\alpha \in F$, we have $u \Lambda s(\alpha) \not= \emptyset$.  Let $\alpha \in F$ with $\mu \alpha \in x$.  By the assumed condition there is $\beta \in u \Lambda s(\alpha)$.  Then $g = [\gamma \beta, \mu \alpha, \sigma^{\mu \alpha} (x)] \in G|_{\partial \Lambda}$ satisfies $s(g) = x$ and $r(g) \in U$.

Next suppose that $G|_{\partial \Lambda}$ is minimal.  Let $u, v \in \Lambda^0$.  If $x \in v \partial \Lambda$, then there is $[\alpha, \beta, y] \in G|_{\partial \Lambda}$ with $\alpha y = x$ and $\beta y \in u \partial \Lambda$ (i.e. with $r(\beta) = u$).  Thus for each $x \in v \partial \Lambda$ there exists $\alpha(x) \in x$ such that $u \Lambda s(\alpha(x)) \not= \emptyset$.  Then $\{ \alpha(x) \partial \Lambda : x \in v \partial \Lambda \}$ is an open cover of $v \partial \Lambda$.  By compactness there are $x_1$, $\ldots$, $x_n \in v \partial \Lambda$ such that $\{ \alpha(x_i) \partial \Lambda : 1 \le i \le n \}$ is a cover.  We claim that $\{ \alpha(x_i) : 1 \le i \le n \}$ is exhaustive.  For this, let $\gamma \in v \Lambda$.  Choose $x \in v \partial \Lambda$ with $\gamma \in x$.  Choose $i$ such that $x \in \alpha(x_i) \partial \Lambda$.  Then $\alpha(x_i) \in x$.  Hence $\gamma \Cap \alpha(x_i)$.
\end{proof}

We end this section with a sufficient condition that $G|_{\partial \Lambda}$ be locally contractive.  Unlike the case of a directed graph (\cite{spi_graphalg}, Theorem 3.4), we do not know if the condition is necessary (see also \cite{sim}, \cite{evasim}).  We adapt some notions from \cite{evasim}.

\begin{Definition}
\label{d.cycle}
%!?!\textit{(d.cycle)} %%% erase label name
Let $\Lambda$ be a category of paths. A \textit{generalized cycle} in $\Lambda$ is a pair $(\mu,\nu) \in \Lambda \times \Lambda$ such that $\mu \not= \nu$, $s(\mu) = s(\nu)$, $r(\mu) = r(\nu)$, and $\mu \tau \Cap \nu$ for all $\tau \in s(\mu) \Lambda$.
\end{Definition}

(A \textit{cycle} in $\Lambda$ is a path $\alpha \not\in \Lambda^0$ such that $s(\alpha) = r(\alpha)$.  A cycle $\alpha$ defines a generalized cycle $(\alpha,s(\alpha))$.  Examples in \cite{evasim} show that it is possible for a higher-rank graph to contain generalized cycles, but no cycles.)  Lemma 3.2 of \cite{evasim} gives two other equivalent descriptions of generalized cycles in higher-rank graphs.  Since their proof cites other work, and the proof for categories of paths is quite simple, we present it here.

\begin{Lemma}
\label{l.generalizedcycle}
%!?!\textit{l.generalizedcycle} %%% erase label name
Let $\mu$, $\nu \in \Lambda$ with that $\mu \not= \nu$, $s(\mu) = s(\nu)$, and $r(\mu) = r(\nu)$.  The following are equivalent:
\begin{enumerate}
\item \label{l.generalizedcycle.a} $(\mu,\nu)$ is a generalized cycle.
\item \label{l.generalizedcycle.b} $\sigma^\mu(\mu \vee \nu)$ is exhaustive.
\item \label{l.generalizedcycle.c} $\mu \partial \Lambda \subseteq \nu \partial \Lambda$.
\end{enumerate}
\end{Lemma}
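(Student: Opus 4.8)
The plan is to prove a cycle of implications, isolating the one genuinely set-theoretic equivalence and then using the description of boundary points as directed hereditary subsets of $\Lambda$. Concretely, I would show \eqref{l.generalizedcycle.a} $\Leftrightarrow$ \eqref{l.generalizedcycle.b} directly and combinatorially, and then close the loop with \eqref{l.generalizedcycle.b} $\Rightarrow$ \eqref{l.generalizedcycle.c} $\Rightarrow$ \eqref{l.generalizedcycle.a}.

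For \eqref{l.generalizedcycle.a} $\Leftrightarrow$ \eqref{l.generalizedcycle.b}, the key reduction is the identity: for $\tau \in s(\mu)\Lambda$, one has $\mu\tau \Cap \nu$ if and only if $\tau \Cap \sigma^\mu(\eps)$ for some $\eps \in \mu \vee \nu$. Indeed, since $\mu\tau\Lambda \subseteq \mu\Lambda$, Lemma \ref{l.finitelyalignedpair} gives $\mu\tau\Lambda \cap \nu\Lambda = \bigcup_{\eps \in \mu\vee\nu}(\mu\tau\Lambda \cap \eps\Lambda)$, and writing $\eps = \mu\delta$ with $\delta = \sigma^\mu(\eps)$, left-cancellation yields $\mu\tau\Lambda \cap \mu\delta\Lambda = \mu(\tau\Lambda \cap \delta\Lambda)$, which is nonempty exactly when $\tau \Cap \delta$. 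Quantifying over all $\tau \in s(\mu)\Lambda$, the statement ``$\mu\tau \Cap \nu$ for all $\tau$'' becomes precisely the assertion that $\sigma^\mu(\mu\vee\nu)$ is exhaustive at $s(\mu)$. (Note $\sigma^\mu(\mu\vee\nu)$ is finite, since $\mu\vee\nu$ is, by finite alignment.)

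For \eqref{l.generalizedcycle.c} $\Rightarrow$ \eqref{l.generalizedcycle.a}, I would fix $\tau \in s(\mu)\Lambda$ and choose a maximal directed set $x$ with $\tau \in x$ (which exists by Remark \ref{r.directedset} and lies in $s(\mu)\Lambda^{**} \subseteq s(\mu)\partial\Lambda$). Then $\mu x \in \mu\partial\Lambda \subseteq \nu\partial\Lambda$, so $\nu \in \mu x$; also $\mu\tau \in \mu x$ since $\tau \in x$. As $\mu x$ is directed, $\mu\tau$ and $\nu$ admit a common extension, i.e.\ $\mu\tau \Cap \nu$. This holds for every $\tau$, giving \eqref{l.generalizedcycle.a}.

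For \eqref{l.generalizedcycle.b} $\Rightarrow$ \eqref{l.generalizedcycle.c}, take any $\mu x \in \mu\partial\Lambda$ with $x \in s(\mu)\partial\Lambda$. Applying Lemma \ref{l.finiteexhaustive} to the finite exhaustive set $\sigma^\mu(\mu\vee\nu)$ shows $s(\mu)\partial\Lambda \subseteq \bigcup_{\eps \in \mu\vee\nu} \sigma^\mu(\eps)\Lambda^*$, so some $\delta = \sigma^\mu(\eps) \in x$ with $\eps \in \mu\vee\nu$. Then $\eps = \mu\delta \in \mu x$, and since $\nu$ is an initial segment of $\eps \in \nu\Lambda$ and $\mu x$ is hereditary, $\nu \in \mu x$. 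By Theorem \ref{t.concatenation} this gives $\mu x = \nu\,\sigma^\nu(\mu x)$, and $\sigma^\nu(\mu x) \in \partial\Lambda$ by invariance of the boundary under the shift maps; hence $\mu x \in \nu\partial\Lambda$. The main point requiring care here is exactly this boundary-invariance step (that $\mu x$ and $\sigma^\nu(\mu x)$ stay in $\partial\Lambda$), but it is the same invariance already invoked in Lemma \ref{l.aperiodic}, so it is available; the remaining delicacy is only the bookkeeping with $\sigma^\mu$ and heredity in the finitely aligned identities.
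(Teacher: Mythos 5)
Your proposal is correct and takes essentially the same route as the paper, which proves the cycle \eqref{l.generalizedcycle.a} $\Rightarrow$ \eqref{l.generalizedcycle.b} $\Rightarrow$ \eqref{l.generalizedcycle.c} $\Rightarrow$ \eqref{l.generalizedcycle.a}: your arguments for \eqref{l.generalizedcycle.b} $\Rightarrow$ \eqref{l.generalizedcycle.c} and \eqref{l.generalizedcycle.c} $\Rightarrow$ \eqref{l.generalizedcycle.a} coincide with the paper's (Lemma \ref{l.finiteexhaustive}, directedness/heredity of boundary points, and the tacit invariance of $\partial\Lambda$ under shifts and concatenation, which the paper likewise uses without separate proof), and the forward half of your equivalence \eqref{l.generalizedcycle.a} $\Leftrightarrow$ \eqref{l.generalizedcycle.b} is exactly the paper's left-cancellation computation. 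The only difference is that you also prove \eqref{l.generalizedcycle.b} $\Rightarrow$ \eqref{l.generalizedcycle.a} directly via the identity $\mu\tau\Lambda \cap \mu\delta\Lambda = \mu(\tau\Lambda \cap \delta\Lambda)$, a step the paper obtains for free by closing the loop through \eqref{l.generalizedcycle.c}; this is harmless, slightly self-contained redundancy.
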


\begin{proof}
\eqref{l.generalizedcycle.a} $\implies$ \eqref{l.generalizedcycle.b}:  Let $\gamma \in s(\mu) \Lambda$.  Then $\mu \gamma \Cap \nu$, so there are $\delta$, $\eps$ such that $\mu \gamma \delta = \nu \eps$.  There is $\eta \in \mu \vee \nu$ such that $\mu \gamma \delta \in \eta \Lambda$.  Write $\eta = \mu \mu' = \nu \nu'$ and $\mu \gamma \delta = \eta \xi$.  Then $\mu \gamma \delta = \eta \xi = \mu \mu' \xi$, so $\gamma \delta = \mu' \xi$.  Then $\mu' \in \sigma^\mu(\mu \vee \nu)$ and $\mu' \Cap \gamma$.

\noindent
\eqref{l.generalizedcycle.b} $\implies$ \eqref{l.generalizedcycle.c}:  Let $x \in s(\mu) \partial \Lambda$, $x$ a directed hereditary subset of $\Lambda$.  Since $\sigma^\mu(\mu \vee \nu)$ is exhaustive, there is $\mu' \in \sigma^\mu(\mu \vee \nu) \cap x$.  There is $\nu'$ such that $\mu \mu' = \nu \nu'$ ($\in \mu \vee \nu$), so $\mu x = \mu \mu' \sigma^{\mu'} x = \nu \nu' \sigma^{\mu'} x \in \nu \partial \Lambda$.

\noindent
\eqref{l.generalizedcycle.c} $\implies$ \eqref{l.generalizedcycle.a}:  Let $\tau \in s(\mu) \Lambda$.  Let $z \in s(\tau) \partial \Lambda$, so $\tau z \in s(\mu) \partial \Lambda$.  Then $\mu \tau z \in \mu \partial \Lambda \subseteq \nu \partial \Lambda$, so $\nu \in \mu \tau z$.  Therefore $\nu \Cap \mu \tau$.
\end{proof}

The next definition is adapted from Definition 3.5 of \cite{evasim}.

\begin{Definition}
\label{d.entrance}
%!?!\textit{d.entrance} %%% erase label name
The generalized cycle $(\mu,\nu)$ \textit{has an entrance} if there is $\tau \in s(\mu) \Lambda$ such that $\mu \perp \nu \tau$.
\end{Definition}

It follows from Lemma \ref{l.generalizedcycle} that $(\mu,\nu)$ has an entrance if and only if $\mu \partial \Lambda \subsetneq \nu \partial \Lambda$.  We recall that a groupoid is \textit{locally contractive} if for every nonempty open subset $U$ of the unit space there exists an open $G$-set $\Delta$ such that $s(\overline{\Delta}) \subseteq U$ and $r(\overline{\Delta}) \subsetneq s(\Delta)$ (\cite{ana}, \cite{lacspi}).

\begin{Theorem}
\label{t.contractive}
%!?!\textit{(t.contractive)} %%% erase label name
Let $\Lambda$ be a countable category of paths.  Suppose that for each $v \in \Lambda^0$ there is a generalized cycle, $(\mu,\nu)$, having an entrance, such that $v \Lambda r(\mu) \not= \emptyset$ (i.e. every vertex is \textit{seen} by a generalized cycle having an entrance).  Then $G|_{\partial \Lambda}$ is locally contractive.
\end{Theorem}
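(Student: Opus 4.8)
The plan is to produce, for each nonempty open $U \subseteq \partial\Lambda$, a single basic compact-open $G$-set that witnesses the contraction, and to locate it inside $U$ using the generalized cycle supplied by the hypothesis. First I recall from Remark \ref{r.fa.groupoid.two} that an element $[\alpha,\beta,x]$ of $G|_{\partial\Lambda}$ has range $\alpha x$ and source $\beta x$, and that the basic sets $[\alpha,\beta,E]$ are bisections, since $r$ and $s$ restrict to injective open maps on them (Proposition \ref{p.fa.topology}, via Proposition \ref{p.topology}). Exactly as in the proof of Proposition \ref{p.topfree}, I would shrink $U$: because $\Lambda^{**}$ is dense in $\partial\Lambda$, choose $y \in U \cap \Lambda^{**}$, then $\gamma \in y$ with $\gamma\partial\Lambda \subseteq U$, and set $v = s(\gamma)$.

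By hypothesis there is a generalized cycle $(\mu,\nu)$ having an entrance with $v\Lambda r(\mu) \neq \emptyset$; choose $\lambda \in v\Lambda r(\mu)$. Then $\gamma\lambda\mu$ and $\gamma\lambda\nu$ are defined and share the common source $s(\mu)=s(\nu)$, and I would set
\[
\Delta = \bigl[\gamma\lambda\mu,\ \gamma\lambda\nu,\ X_{s(\mu)}\bigr] \cap G|_{\partial\Lambda} = \bigl\{\,[\gamma\lambda\mu,\gamma\lambda\nu,x] : x \in s(\mu)\partial\Lambda\,\bigr\}.
\]
This is an open $G$-set in the relative topology; moreover $s(\mu)\partial\Lambda = \partial\Lambda \cap X_{s(\mu)}$ is compact, so $\Delta$ is compact and hence $\overline{\Delta}=\Delta$. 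Using that concatenation composes, $\widehat{\gamma\lambda\mu} = \widehat{\gamma\lambda}\circ\widehat{\mu}$, together with Theorem \ref{t.concatenation} and Corollary \ref{c.concatenation.two}, the source and range images are the compact-open sets
\[
s(\Delta) = \gamma\lambda\nu\,\partial\Lambda, \qquad r(\Delta) = \gamma\lambda\mu\,\partial\Lambda.
\]

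It then remains to verify the two defining inequalities. Since $\gamma\lambda\nu\partial\Lambda = \widehat{\gamma}(\lambda\nu\partial\Lambda) \subseteq \widehat{\gamma}\bigl(s(\gamma)\partial\Lambda\bigr) = \gamma\partial\Lambda \subseteq U$, we obtain $s(\overline{\Delta}) \subseteq U$. For the contraction, I would invoke the remark following Definition \ref{d.entrance}, namely that $(\mu,\nu)$ having an entrance is equivalent to $\mu\partial\Lambda \subsetneq \nu\partial\Lambda$. Applying the injective concatenation map $\widehat{\gamma\lambda}$ (Theorem \ref{t.concatenation}) to this strict inclusion yields $\gamma\lambda\mu\partial\Lambda \subsetneq \gamma\lambda\nu\partial\Lambda$, that is $r(\overline{\Delta}) \subsetneq s(\Delta)$, which is precisely the local contractivity condition.

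The main point requiring care is the orientation of $r$ and $s$, so that we contract the larger set $\gamma\lambda\nu\partial\Lambda$ onto the strictly smaller $\gamma\lambda\mu\partial\Lambda$ rather than the reverse, together with the bookkeeping that the concatenation operators send the boundary into the boundary as compact-open sets and \emph{strictly} preserve the inclusion $\mu\partial\Lambda \subsetneq \nu\partial\Lambda$ by injectivity. All of these facts are already available from Theorem \ref{t.concatenation}, Corollary \ref{c.concatenation.two}, and Lemma \ref{l.generalizedcycle}, so no genuinely new estimate is needed; the argument is essentially a matter of assembling these ingredients in the correct order.
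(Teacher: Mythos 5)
Your proof is correct and takes essentially the same route as the paper's: shrink $U$ to a set of the form $\gamma\partial\Lambda$, use the hypothesis at $v=s(\gamma)$ to pick $\lambda \in v\Lambda r(\mu)$, and take the compact-open bisection $\Delta = [\gamma\lambda\mu,\gamma\lambda\nu,s(\mu)\partial\Lambda]$, with the entrance condition (via Lemma \ref{l.generalizedcycle}) giving $r(\Delta) \subsetneq s(\Delta)$. The only cosmetic differences are that the paper locates $\gamma$ via the ultrafilter characterization of Theorem \ref{t.ultrafilters} applied to a maximal directed set, whereas you use the neighborhood-base argument from Proposition \ref{p.topfree}, and that you make explicit the point (left implicit in the paper) that injectivity of the concatenation map preserves the strictness of the inclusion $\mu\partial\Lambda \subsetneq \nu\partial\Lambda$.
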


\begin{proof}
Let $U \subseteq \partial \Lambda$ be nonempty.  Let $E \in \CA$ be such that $\emptyset \not= \widehat{E} \cap \partial \Lambda \subseteq U$.  Then there is $C \in \Lambda^{**} \cap \widehat{E}$.  By Theorem \ref{t.ultrafilters}\eqref{t.ultrafilters.a} there is $\gamma \in C$ such that $\gamma \Lambda \subseteq E$.  By hypothesis there is a generalized cycle, $(\mu,\nu)$, having an entrance, and $\alpha \in s(\gamma) \Lambda r(\mu)$.  Let $\Delta = [\gamma \alpha \mu, \gamma \alpha \nu, s(\mu) \partial \Lambda]$.  Then $\Delta$ is a compact-open $G$-set, $s(\Delta) \subseteq U$, and $r(\Delta) = \gamma \alpha \mu \partial \Lambda \subsetneq \gamma \alpha \nu \partial \Lambda = s(\Delta)$.
\end{proof}

\section{Example:  amalgamation of categories of paths}
\label{s.amalgamation}

We give a generalization of the examples termed \textit{hybrid graph algebras} that were constructed in \cite{kirchmodels}, Definition 2.1.  In particular, these include the obvious generalizations of those examples, and many others besides.  The results of this section give considerable simplification to those constructions.

\begin{Definition}
\label{d.amalgamation}
%!?!\textit{(d.amalgamation)} %%% erase label name
Let $\{ \Lambda_i : i \in I \}$ be a collection of categories of paths.  Let $\sim$ be an equivalence relation on $\bigcup_{i \in I} \Lambda_i^0$.  Let 
\[
L = \bigl\{ (\alpha_1, \alpha_2, \ldots, \alpha_n) : \alpha_j \in \bigcup_{i \in I} \Lambda_i,\ s(\alpha_j) \sim r(\alpha_{j+1}), \text{ for all } j, n \ge 1 \bigr\}.
\]
$L$ admits a partially defined concatenation:  $L^2 = \bigl\{ \bigl( (\alpha_1, \ldots, \alpha_m), \; (\beta_1, \ldots, \beta_n) \bigr) : s(\alpha_m) \sim r(\beta_1) \bigr\}$, and then $(\alpha_1, \ldots, \alpha_m) (\beta_1, \ldots, \beta_n) = (\alpha_1, \ldots, \alpha_m, \beta_1, \ldots, \beta_n)$.  (Thus $L$ is a \textit{semigroupoid}, in that composition is not everywhere defined, and there are no units.)  We define a relation $\approx$ on $L$ as follows.  Let $\alpha$, $\beta \in L$.  Then $\alpha \approx \beta$ if there are $\alpha^0$, $\ldots$, $\alpha^n \in L$ such that $\alpha^0 = \alpha$, $\alpha^n = \beta$, and for each $j$ one of the following holds:

\begin{enumerate}
\item 
\label{d.amalgamation_itemone}
$\alpha^j = (\mu_1, \cdots, \mu_k, \theta_1, \cdots, \theta_\ell, \nu_1, \cdots, \nu_m)$, where $\theta_1$, $\ldots$, $\theta_\ell \in \Lambda_i$ for some $i \in I$, and $s(\theta_p) = r(\theta_{p+1})$ for all $p$, and $\alpha_{j+1} = (\mu_1, \cdots, \mu_k, \theta, \nu_1, \cdots, \nu_m)$, where $\theta = \theta_1 \cdots \theta_\ell$ in $\Lambda_i$.
\item 
\label{d.amalgamation_itemtwo}
As in \eqref{d.amalgamation_itemone}, but with the roles of $\alpha_j$ and $\alpha_{j+1}$ reversed.
\item
\label{d.amalgamation_itemthree}
$\alpha^j = (\mu_1, \cdots, \mu_k, w, \nu_1, \cdots, \nu_m)$, where $w \sim s(\mu_k)$ (and hence also $w \sim r(\nu_1)$), and $\alpha_{j+1} = (\mu_1, \cdots, \mu_k, \nu_1, \cdots, \nu_m)$.
\item\label{d.amalgamation_itemfour}
As in \eqref{d.amalgamation_itemthree}, but with the roles of $\alpha_j$ and $\alpha_{j+1}$ reversed.
\end{enumerate}
\end{Definition}

It is clear that $\approx$ is an equivalence relation on $L$.  We note that if $\alpha \approx \alpha'$ and $\beta \approx \beta'$, then $(\alpha,\beta) \in L^2$ implies that $(\alpha', \beta') \in L^2$ and $\alpha \beta \approx \alpha' \beta'$.  Thus concatenation descends to $\Lambda = L / \approx$.  We will show that $\Lambda$ is a category of paths.  First, let $S \subseteq \bigcup_{i \in I} \Lambda_i$ be an equivalence class of $\sim$.  It is easy to see that $S$ must also be an equivalence class of $\approx$.  We define $\Lambda^0 = \bigl( \bigcup_{i \in I} \Lambda_i^0 \bigr) \bigm/ \sim$.  These are the identity elements for concatenation in $\Lambda$.  Associativity in $\Lambda$ follows from associativity in $L$.  In order to verify the properties of a category of paths, it is helpful to have a normal form for elements of $\Lambda$.

\begin{Lemma}
\label{l.amalgamationnormalform}
Let $\alpha \in L$.  There exists a unique element $\beta \in L$ such that
\begin{enumerate}
\item $\alpha \approx \beta$.
\item $\beta = (\beta_1, \ldots, \beta_m)$ with
\begin{enumerate}
\item $\beta_j \not\in \bigcup_{i \in I} \Lambda_i^0$ for all $j$.
\item $s(\beta_j) \not= r(\beta_{j+1})$ for all $j$.
\end{enumerate}
\end{enumerate}
\end{Lemma}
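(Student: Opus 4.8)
The plan is to prove both existence and uniqueness by realizing the normal form as the unique irreducible element of a terminating, confluent rewriting system on $L$.

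For existence I would start from an arbitrary representative $\alpha = (\alpha_1, \ldots, \alpha_n)$ and apply two kinds of reduction steps: \textbf{(C)} whenever two consecutive entries $\alpha_j, \alpha_{j+1}$ lie in a common $\Lambda_i$ with $s(\alpha_j) = r(\alpha_{j+1})$, replace them by their composite $\alpha_j\alpha_{j+1} \in \Lambda_i$ (an instance of move \eqref{d.amalgamation_itemone} with $\ell = 2$); and \textbf{(D)} whenever some entry $\alpha_j \in \bigcup_{i} \Lambda_i^0$ is a vertex and $n \ge 2$, delete it (move \eqref{d.amalgamation_itemthree}, whose $\sim$-side-condition holds automatically for any vertex entry of a word in $L$). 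Each step strictly decreases the length $n$, so the process terminates. A composite of two non-vertices is again a non-vertex, by the no-inverses axiom (Definition \ref{d.categoryofpaths}\eqref{d.categoryofpathsc}), so a terminal word $\beta = (\beta_1, \ldots, \beta_m)$ contains no vertex entries (else \textbf{(D)} applies, since then $m \ge 2$) and no consecutive pair composable in a common category. As the $\Lambda_i$ are disjoint, so that each morphism determines its category, the equality $s(\beta_j) = r(\beta_{j+1})$ would place $\beta_j, \beta_{j+1}$ in one $\Lambda_i$ and make them composable; hence $s(\beta_j) \ne r(\beta_{j+1})$, which is condition \textup{2(b)}, and \textup{2(a)} holds. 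This yields the required $\beta$, provided $\alpha$ is not $\approx$-equivalent to a single vertex; the $\approx$-classes consisting solely of vertices are exactly the units $\Lambda^0 = (\bigcup_i \Lambda_i^0)/\!\sim$ and are handled separately.

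For uniqueness, let $\rightsquigarrow$ denote one application of a type \textbf{(C)} or type \textbf{(D)} step. Then $\approx$ is the equivalence relation generated by $\rightsquigarrow$, since moves \eqref{d.amalgamation_itemtwo} and \eqref{d.amalgamation_itemfour} are the inverses of \eqref{d.amalgamation_itemone} and \eqref{d.amalgamation_itemthree}, and a length-$\ell$ instance of \eqref{d.amalgamation_itemone} factors as $\ell-1$ successive type \textbf{(C)} steps. Since $\rightsquigarrow$ strictly decreases length it is terminating, and the words satisfying \textup{2(a)} and \textup{2(b)} are precisely the $\rightsquigarrow$-irreducible words other than single vertices. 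By Newman's lemma (local confluence plus termination implies confluence) it therefore suffices to establish \emph{local confluence}: if $\alpha \rightsquigarrow \alpha'$ and $\alpha \rightsquigarrow \alpha''$ by two distinct steps, then $\alpha'$ and $\alpha''$ have a common descendant. Steps at disjoint positions commute. Two overlapping type \textbf{(C)} steps, composing $(\alpha_j,\alpha_{j+1})$ versus $(\alpha_{j+1},\alpha_{j+2})$, both reach $\alpha_j\alpha_{j+1}\alpha_{j+2}$ by associativity in the common $\Lambda_i$ (disjointness forces all three entries into one category). A type \textbf{(C)} step whose composite absorbs a vertex coincides with the corresponding type \textbf{(D)} deletion, so those agree as well.

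The case requiring care — which I expect to be the main obstacle — is the critical pair of two type \textbf{(D)} deletions of adjacent vertices: if a word $(\ldots,\mu,v,w,\nu,\ldots)$ has $v,w \in \bigcup_i\Lambda_i^0$ with $v \ne w$ yet $v \sim w$, then deleting $v$ gives $(\ldots,\mu,w,\nu,\ldots)$ while deleting $w$ gives $(\ldots,\mu,v,\nu,\ldots)$, and these are distinct. They reconverge by a further deletion of the surviving vertex to the common word $(\ldots,\mu,\nu,\ldots)$, a step that is legitimate precisely because some other (non-vertex) entry remains to keep the word in $L$. This is exactly where the hypothesis that $\alpha$ is not $\approx$-equivalent to a vertex enters: under it, reduction never exhausts the non-vertex entries, every vertex stays deletable, and the critical pair closes; without it, two distinct single-vertex irreducibles $(v)$ and $(w)$ would be $\approx$-equivalent, and uniqueness would genuinely fail. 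With local confluence established in all cases, Newman's lemma gives confluence, and a terminating confluent system has a unique irreducible in each $\approx$-class; that irreducible is the asserted normal form $\beta$.
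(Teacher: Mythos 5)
Your proof is correct, and it checks all the overlaps that need checking; but your route to uniqueness is genuinely different from the paper's. The paper fixes a single deterministic normalization procedure --- delete every vertex entry, then concatenate adjacent composable entries --- and proves uniqueness by observing that this procedure returns the same output before and after any one generating move of $\approx$; being constant on $\approx$-classes and fixing words already in normal form, it forces normal forms to be unique. You instead allow the elementary reductions to be applied in any order and invoke Newman's lemma, verifying local confluence by a case analysis of overlapping steps. The paper's argument buys brevity and elementariness: no rewriting theory, no enumeration of critical pairs, only the invariance of one fixed procedure under single moves. Your argument buys precision: the critical-pair analysis isolates exactly the one configuration where confluence fails, namely a length-two word $(v,w)$ of distinct $\sim$-equivalent vertices, and thereby makes explicit that for $\alpha$ in a unit class the lemma as stated admits no normal form at all (and would lose uniqueness if condition 2(a) were dropped). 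This degenerate case is passed over silently in the paper --- its procedure applied to an all-vertex word produces the empty tuple, which is not an element of $L$ --- so your explicit exclusion of the unit classes is a sharpening rather than a defect. Likewise, your remark that the $\Lambda_i$ must be regarded as pairwise disjoint, so that each entry determines its category and irreducibility under composition is equivalent to condition 2(b), makes explicit a hypothesis that the paper uses only tacitly.
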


The element $\beta$ is called the \textit{normal form} of $\alpha$.

\begin{proof}
For the existence, note that we can obtain $\beta$ from $\alpha$ by first deleting all vertices among the entries of $\alpha$ and then concatenating adjacent entries $\alpha_j \alpha_{j+1}$ if $s(\alpha_j) = r(\alpha_{j+1})$.  For the uniqueness, note that if $\alpha^k$ are succesive moves as in the definition of $\approx$, then the above process of constructing $\beta$ gives the same result for $\alpha^k$ as for $\alpha^{k+1}$.  Therefore equivalent elements of $L$ yield the same normal form under this process.  In particular, two normal forms for the same element must be equal.
\end{proof}

Now suppose that $[\alpha] [\beta] = [\alpha] [\gamma]$ in $\Lambda$ (where square brackets denote equivalence classes of $\approx$).  Then $\alpha \beta \approx \alpha \gamma$.  Let $\alpha = (\alpha_1, \ldots, \alpha_m)$, $\beta = (\beta_1, \ldots, \beta_k)$, and $\gamma = (\gamma_1, \ldots, \gamma_n)$ in normal form.  First suppose that $s(\alpha_m) \not= r(\beta_1)$.  Then $\alpha \beta$ is in normal form.  If $s(\alpha_m) = r(\gamma_1)$, then the normal form of $\alpha \gamma$ is $(\alpha_1, \ldots, \alpha_{m-1}, \alpha_m \gamma_1, \gamma_2, \ldots, \gamma_n)$.  Since this equals $\alpha \beta$, we must have $\alpha_m = \alpha_m \gamma_1$.  By cancellation in the appropriate $\Lambda_i$ we see that $\gamma_1 \in \Lambda_i^0$, contradicting the assumption that $\gamma$ is in normal form.  Thus $s(\alpha_m) \not= r(\gamma_n)$, and it follows that $\beta = \gamma$.  Now suppose that $s(\alpha_m) = r(\beta_1)$.  By the previous argument, we must have $s(\alpha_m) = r(\gamma_1)$.  Then the same considerations show that $\alpha_m \beta_1 = \alpha_m \gamma_1$ in the appropriate $\Lambda_i$, and hence that $\beta_1 = \gamma_1$, $k = n$, and $\beta_j = \gamma_j$ for $j > 1$.  Hence again $\beta = \gamma$.  The arguments for right cancellation and absence of inverses are similar.

\begin{Definition}
We refer to $\Lambda$ constructed as above as an \textit{amalgmation} of the collection $\{ \Lambda_i\}$.  It depends on the choice of the equivalence relation $\sim$ on the set of units of the collection.
\end{Definition}

We have the following result on common extensions in an amalgamation.

\begin{Lemma}
\label{l.amalgamationextension}
%!?!\textit{(l.amalgamationextension)} %%% erase label name
Let $\alpha = (\alpha_1, \ldots, \alpha_m)$ and $\beta = (\beta_1, \ldots, \beta_k)$ be in normal form. Then $[\alpha] \Cap [\beta]$ if and only if 
\begin{enumerate}
\item in case $m \not= k$, we have $\alpha_j = \beta_j$ for $j < \min \{m,k\}$, and if, e.g., $m < k$ then $\beta_m \in \alpha_m \Lambda_i$, where $\alpha_m$, $\beta_m \in \Lambda_i$ for some $i$ (i.e. one extends the other);
\item in case $m = k$, we have $\alpha_j = \beta_j$ for $j < m$; $\alpha_m$, $\beta_m \in \Lambda_i$ for some $i$; and $\alpha_m \Cap \beta_m$.
\end{enumerate}
\end{Lemma}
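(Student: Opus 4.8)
The plan is to reduce the statement to a single structural fact about initial segments, which then yields both directions. The central claim is the following description of extensions in normal form: \textit{if $\alpha = (\alpha_1,\ldots,\alpha_m)$ is in normal form and $[\gamma]\in[\alpha]\Lambda$, with $\gamma=(\gamma_1,\ldots,\gamma_p)$ the normal form of $[\gamma]$, then $p\ge m$, $\gamma_j=\alpha_j$ for $1\le j\le m-1$, and $\gamma_m\in\alpha_m\Lambda_i$, where $\alpha_m\in\Lambda_i$.} Granting this, the forward direction of the lemma follows by applying the claim to a common extension $[\gamma]$ of $[\alpha]$ and $[\beta]$ from both sides and invoking the uniqueness of normal form (Lemma \ref{l.amalgamationnormalform}).

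To prove the central claim I would write $[\gamma]=[\alpha][\delta]$ and take $\delta$ in normal form (if $[\delta]$ is a vertex then $[\gamma]=[\alpha]$ and there is nothing to prove). The key observation is that the concatenation $\alpha\delta = (\alpha_1,\ldots,\alpha_m,\delta_1,\ldots,\delta_q)$ admits at most one reduction, occurring only at the junction: since $\alpha$ and $\delta$ are individually in normal form, no two consecutive entries within either block lie in a common $\Lambda_i$ with matching source and range, so the only move of type \eqref{d.amalgamation_itemone} available is the merger of $\alpha_m$ with $\delta_1$ (when they lie in the same $\Lambda_i$ and $s(\alpha_m)=r(\delta_1)$). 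The ``no inverses'' axiom guarantees that the merged entry $\alpha_m\delta_1$ is not a vertex, and a direct check shows that after this single merger neither new junction is reducible, because $s(\alpha_{m-1})\ne r(\alpha_m)$ and $s(\delta_1)\ne r(\delta_2)$. Hence the resulting tuple is already in normal form and has the asserted shape; uniqueness of normal form finishes the claim. This is the step I expect to be the main obstacle, since it requires pinning down exactly how reductions propagate, but it is entirely elementary once the ``single junction merger'' phenomenon is isolated.

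For the forward direction, let $[\gamma]$ with normal form $(\gamma_1,\ldots,\gamma_p)$ be a common extension, so $[\gamma]=[\alpha][\alpha']=[\beta][\beta']$. Applying the claim on the $\alpha$-side gives $\gamma_j=\alpha_j$ for $j<m$ and $\gamma_m\in\alpha_m\Lambda_i$, and on the $\beta$-side gives $\gamma_j=\beta_j$ for $j<k$ and $\gamma_k\in\beta_k\Lambda_{i'}$. Assume without loss of generality $m\le k$. If $m<k$, then for $j<m$ we get $\alpha_j=\gamma_j=\beta_j$, and since $m\le k-1$ we get $\beta_m=\gamma_m\in\alpha_m\Lambda_i$, which is case (1). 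If $m=k$, then $\alpha_j=\gamma_j=\beta_j$ for $j<m$, while $\gamma_m\in\alpha_m\Lambda_i\cap\beta_m\Lambda_{i'}$ forces, since the non-vertex morphism $\gamma_m$ lies in exactly one $\Lambda_{i''}$, that $i=i'=i''$; thus $\alpha_m,\beta_m\in\Lambda_i$ and $\alpha_m\Cap\beta_m$ in $\Lambda_i$, which is case (2).

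For the converse I would construct an explicit common extension. If $m=k$ with $\alpha_j=\beta_j$ for $j<m$ and $\alpha_m\Cap\beta_m$, choose $\eps=\alpha_m\mu=\beta_m\nu\in\alpha_m\Lambda_i\cap\beta_m\Lambda_i$; then $[(\alpha_1,\ldots,\alpha_{m-1},\eps)]$ extends both $[\alpha]$ (via $[\alpha][(\mu)]$ and a type-\eqref{d.amalgamation_itemone} move) and $[\beta]$. If $m<k$ with $\beta_m=\alpha_m\mu\in\alpha_m\Lambda_i$, then $[\alpha]\,[(\mu,\beta_{m+1},\ldots,\beta_k)]=[\beta]$, so $[\beta]$ itself witnesses $[\alpha]\Cap[\beta]$. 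In both cases the routine verification that the displayed products reduce as claimed uses only the normal-form computation above and cancellation in the individual $\Lambda_i$.
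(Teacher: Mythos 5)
Your proposal is correct and is essentially the paper's own argument: the paper dismisses this lemma with the single remark that it ``follows easily from the use of normal forms,'' and your proof --- the observation that concatenating two normal forms admits at most a single merger at the junction, followed by an appeal to the uniqueness statement of Lemma \ref{l.amalgamationnormalform} --- is precisely the intended elaboration of that remark, with the converse handled by the same explicit common extensions the paper has in mind.
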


The proof follows easily from the use of normal forms.  It follows from this Lemma that $\Lambda$ is finitely aligned if all of the $\Lambda_i$ are finitely aligned.

We conclude with a result implying that an amalgamation of finitely aligned categories of paths has a degree functor defining an AF core (as in section \ref{s.gaugeactions}) if each of the individual categories has one.  Thus in particular, such an amalgamation has nuclear $C^*$-algebras.  The degree functor we construct will generally be more complicated than necessary (compare, e.g., with the examples in \cite{kirchmodels}).  We require a couple of additional hypotheses.  First, it is possible that a nondegenerate degree functor takes inverse values on two paths (for example, if such paths cannot occur as parts of the same path).  Since the amalgamation can allow such paths to be composed, we have to proscribe such behavior.  Second, property \eqref{l.finiteorbits_b} of Lemma \ref{l.finiteorbits} could be violated for an amalgamation if the range of the degree functor admits divisibility.  We replace it with a stronger version. The hypotheses we give are convenient rather than sharp, but they are easily verified in many examples, such as an amalgamation of higher-rank graphs.

\begin{Theorem}
\label{t.amalgamationdegree}
%!?!\textit{(t.amalgamationdegere)} %%% erase label name
Let $\{ \Lambda_i : i \in I \}$ be a collection of finitely aligned categories of paths, and let $\Lambda$ be their amalgamation over an equivalance relation on $\bigcup_{i \in I} \Lambda_i^0$.  For each $i$ let $\psi_i : \Lambda_i \to Q_i$ be a nondegenerate non-isotropic degree functor into an abelian group $Q_i$ satisfying property \eqref{l.finiteorbits_a} of Lemma \ref{l.finiteorbits}.  Suppose additionally that the range of $\psi_i$ lies in a positive cone $Q_i^+ \subseteq Q_i$, and also the following stronger version of property \eqref{l.finiteorbits_b} of Lemma \ref{l.finiteorbits}:
\begin{itemize}
\item[(\ref{l.finiteorbits_b}$'$)]
For every finite subset $S \subseteq \psi_i(\Lambda_i)$ there is a finite subset $T \subseteq \psi_i(\Lambda_i)$ with $S \subseteq T$ such that for any finite set $E \subseteq \Lambda_i$, if $\psi_i(E) \subseteq T$ then 
\begin{itemize}
\item[(a)] $\psi(\vee E) \subseteq T$
\item[(b)] If $\alpha$, $\beta \in \Lambda_i$ with $\psi_i(\alpha\beta) \in T$, then $\psi_i(\alpha)$, $\psi_i(\beta) \in T$.
\end{itemize}
\end{itemize}
Let $Q = \bigoplus_{i \in I} Q_i$, and define $\psi : \Lambda \to Q$ by $\psi([\alpha_1, \ldots, \alpha_m]) = \sum_{j=1}^m \psi_{i_j}(\alpha_j)$, where $i_j \in I$ is such that $\alpha_j \in \Lambda_{i_j}$.  Then $\psi$ is a well-defined nondegenerate non-isotropic degree functor also satisfying properties \eqref{l.finiteorbits_a} and \eqref{l.finiteorbits_b} of Lemma \ref{l.finiteorbits}.
\end{Theorem}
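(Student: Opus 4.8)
The plan is to verify the four asserted properties of $\psi$ in turn, each time peeling off the block structure of the amalgamation through the normal form (Lemma~\ref{l.amalgamationnormalform}) and reducing to the corresponding property of the factors $\psi_i$. First I would check that $\psi$ is a well-defined homomorphism: each of the four moves generating $\approx$ either regroups a composable string $\theta_1,\dots,\theta_\ell$ of morphisms of a single $\Lambda_i$ into their product, or inserts/deletes a vertex, so invariance of $\sum_j \psi_{i_j}(\alpha_j)$ follows from the two facts that each $\psi_i$ is a homomorphism and that $\psi_i(\Lambda_i^0)=\{0\}$. Thus $\psi$ descends to $\Lambda = L/\approx$, and it is a homomorphism since concatenation in $\Lambda$ is concatenation of representing tuples. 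For nondegeneracy, I would write a non-vertex class in normal form with first (non-vertex) block $C_1\in\Lambda_{i_1}$; the $Q_{i_1}$-component of $\psi$ is then a sum of elements of the positive cone $Q_{i_1}^+$ containing the term $\psi_{i_1}(C_1)\neq 0$ (nondegeneracy of $\psi_{i_1}$), and in a positive cone a sum of positive elements with one summand nonzero is itself nonzero, so $\psi\neq 0$.

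The substantial point is non-isotropy. Given sequences $\alpha_i,\beta_i\in\Lambda$ with $\alpha_i\alpha_{i+1}=\beta_i\beta_{i+1}$ and $\psi(\alpha_1)=\psi(\beta_1)$, I would form the directed hereditary set $x=\bigcup_{n\ge 2}[\alpha_2\cdots\alpha_n]$ exactly as in the converse part of the proof of Lemma~\ref{l.nonisotropic}, so that $\alpha_1 x = \beta_1 x$; it then suffices to show that $\alpha_1 x=\beta_1 x$ together with $\psi(\alpha_1)=\psi(\beta_1)$ forces $\alpha_1=\beta_1$. Passing to the block normal form of the common infinite path $\alpha_1 x=\beta_1 x$, both $\alpha_1$ and $\beta_1$ are finite initial segments of it, ending in blocks $Y_p$ and $Y_q$ respectively. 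If $p\neq q$ then one of $\alpha_1,\beta_1$ is a proper initial segment of the other and the intervening factor is a non-vertex, so nondegeneracy gives $\psi(\alpha_1)\neq\psi(\beta_1)$, a contradiction; hence $p=q$, and writing $W=Y_1\cdots Y_{p-1}$ we get $\alpha_1=W\bar A$, $\beta_1=W\bar B$ with $\bar A,\bar B$ initial segments of the single block $Y:=Y_p\in\Lambda_{i_0}$. Now the first block of $x$ equals $\sigma^{\bar A}(Y)$ read from the $\alpha$-side and $\sigma^{\bar B}(Y)$ read from the $\beta$-side; uniqueness of the normal form of $x$ forces $\sigma^{\bar A}(Y)=\sigma^{\bar B}(Y)$, and then right-cancellation in $\Lambda$ applied to $\bar A\,\sigma^{\bar A}(Y)=Y=\bar B\,\sigma^{\bar B}(Y)$ yields $\bar A=\bar B$, whence $\alpha_1=\beta_1$. (This is the point where right-cancellation, flagged as crucial in Section~\ref{s.aperiodicity}, enters.)

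Finally, for properties~\eqref{l.finiteorbits_a} and~\eqref{l.finiteorbits_b} of Lemma~\ref{l.finiteorbits} I would argue block-wise using the extension criterion of Lemma~\ref{l.amalgamationextension}. For~\eqref{l.finiteorbits_a}, given infinite $E$ with $\psi\equiv d$, I first dispose of pairs with distinct ranges or with first blocks in distinct $\Lambda_i$ (these are disjoint, so $\vee F=\emptyset$ with $|F|=2$); otherwise all first blocks lie in one $v_0\Lambda_{i_0}$, and a pigeonhole argument on the finitely supported value $d\in\bigoplus_i Q_i^+$ reduces me either to an infinite family of first blocks of constant $\psi_{i_0}$-value, where property~\eqref{l.finiteorbits_a} for $\psi_{i_0}$ produces a disjoint finite subfamily that stays disjoint in $\Lambda$ by Lemma~\ref{l.amalgamationextension}, or to passing to common-first-block tails of strictly smaller degree and recursing. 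For~\eqref{l.finiteorbits_b}, given finite $S\subseteq\psi(\Lambda)$ I take the finite set $I_0$ of indices it involves, apply the strengthened property~(\ref{l.finiteorbits_b}$'$) to each $\psi_i$ ($i\in I_0$) to obtain finite $T_i$, and set $T=\bigl(\bigoplus_{i\in I_0}T_i\bigr)\cap\psi(\Lambda)$; since by Lemma~\ref{l.amalgamationextension} a minimal common extension of $E$ in $\Lambda$ is a common string of blocks followed by a minimal common extension inside a single $\Lambda_{i^*}$, clause~(a) of~(\ref{l.finiteorbits_b}$'$) controls the degree of that last meet while clause~(b) controls the degrees of the initial-segment blocks, keeping $\psi(\vee E)\subseteq T$.

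The main obstacle throughout is the synchronization of the block decompositions of several paths and the bookkeeping of degrees in the direct sum $\bigoplus_i Q_i$: because consecutive blocks of a normal form need not be composable within a single $\Lambda_i$, a component of $d$ appears as a \emph{sum} of block-degrees rather than as the degree of a single composite, so the positive-cone hypothesis (guaranteeing monotonicity and that such sums do not collapse) and clause~(b) of~(\ref{l.finiteorbits_b}$'$) (guaranteeing that every block-degree that arises remains in a fixed finite set) are precisely what make the reductions to the factor properties go through.
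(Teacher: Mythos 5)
Your non-isotropy step has a genuine gap, and there is a telltale symptom: nowhere in it do you invoke the hypothesis that the factors $\psi_i$ are non-isotropic. That hypothesis cannot be dispensable, since taking $I$ a singleton and $\sim$ trivial makes $\Lambda=\Lambda_{i_0}$ and $\psi=\psi_{i_0}$, so a proof of non-isotropy of $\psi$ that never uses non-isotropy of the $\psi_i$ would show that every nondegenerate, positively-valued degree functor with properties \eqref{l.finiteorbits_a} and (\ref{l.finiteorbits_b}$'$) is automatically non-isotropic, which is not what the theorem asserts. The concrete failure point is your appeal to a ``block normal form of the common infinite path $\alpha_1x=\beta_1x$'': Lemma \ref{l.amalgamationnormalform} provides normal forms only for finite paths, and for an element of $\Lambda^*$ whose tail lies inside a single factor --- e.g.\ when all the $\alpha_j,\beta_j$ already lie in one $\Lambda_{i_0}$, which is the generic case --- the final ``block'' $Y_p$ is not an element of $\Lambda_{i_0}$ at all, but an infinite directed hereditary subset of it. Then $\bar A\,\sigma^{\bar A}(Y)=Y=\bar B\,\sigma^{\bar B}(Y)$ is an equality of directed hereditary sets, not of morphisms, and right-cancellation (Definition \ref{d.categoryofpaths}\eqref{d.categoryofpathsb}, an axiom about morphisms) does not apply. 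Indeed, the implication ``$\bar A\,w=\bar B\,w$ and $\psi_{i_0}(\bar A)=\psi_{i_0}(\bar B)$ imply $\bar A=\bar B$'' for $w\in\Lambda_{i_0}^*$ is exactly trivial isotropy of $G^{\psi_{i_0}}$, i.e.\ (Lemma \ref{l.nonisotropic}) non-isotropy of $\psi_{i_0}$ --- precisely the hypothesis you never used. (A smaller instance of the same looseness: if $\bar A=Y_p$ then $\sigma^{\bar A}(Y)$ is a vertex and your two block decompositions of $x$ have different lengths, so ``uniqueness of the normal form'' alone does not give $\sigma^{\bar A}(Y)=\sigma^{\bar B}(Y)$; one needs the degree equation to exclude this.)

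The gap is repairable --- when $Y_p$ is an honest morphism your right-cancellation argument is correct, and in the infinite-tail case you could apply Lemma \ref{l.nonisotropic} to $(\Lambda_{i_0},\psi_{i_0})$ and the isotropy element $[\bar A,\bar B,\sigma^{\bar A}(Y_p)]$ --- but you would then also have to develop the block decomposition of elements of $\Lambda^*$ and its compatibility with the shifts, none of which exists in the paper. The paper's own proof avoids infinite paths entirely: it works directly with the sequences $\alpha^j,\beta^j$, using uniqueness of \emph{finite} normal forms together with positivity to show that any extra block would have degree $0$ and hence be a vertex, so each $\alpha^j,\beta^j$ is a single block and the whole periodic system $\alpha^j\alpha^{j+1}=\beta^j\beta^{j+1}$ lives in one $\Lambda_i$; only then does it invoke non-isotropy of $\psi_i$. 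Your other parts are essentially in line with the paper (well-definedness, nondegeneracy from the positive cone, and property \eqref{l.finiteorbits_b} via $T=\sum_iT_i$), with one caveat in property \eqref{l.finiteorbits_a}: your recursion's termination is justified by ``strictly smaller degree,'' which is not a valid termination argument in a general positive cone; the paper instead notes that constant degree plus nondegeneracy prevents any element of $E$ from extending another, so by Lemma \ref{l.amalgamationextension} all elements of $E$ share every block but the last, and a single application of property \eqref{l.finiteorbits_a} for $\psi_{i_0}$ finishes without recursion.
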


\begin{proof}
It is clear that the definition of $\psi(\alpha)$ is unchanged when $\alpha$ is modified by the moves \eqref{d.amalgamation_itemone} - \eqref{d.amalgamation_itemfour} of Definition \ref{d.amalgamation}.  Thus $\psi$ is well-defined, and functoriality follows easily.  Nondegeneracy follows immediately from nondegeneracy of the $\psi_i$.  Let us prove that $\psi$ is non-isotropic.  Let $[\alpha^1]$, $[\alpha^2]$, $\ldots$, $[\beta^1]$, $[\beta^2]$, $\ldots \in \Lambda$ be such that $[\alpha^j][\alpha^{j+1}] = [\beta^j][\beta^{j+1}]$ for all $j$, and $\psi([\alpha^1]) = \psi([\beta^1])$.  Let $\alpha^j = (\alpha^j_1,\ldots,\alpha^j_{k_j})$ and $\beta^j = (\beta^j_1,\ldots,\beta^j_{\ell_j})$ in normal form.  Suppose that $\alpha^1 \not= \beta^1$.  We may as well assume that $\alpha^1_1 \not= \beta^1_1$, as otherwise they can be deleted.  We claim that $k_1 = 1$.  For suppose that $k_1 > 1$.  Then
\[
[\alpha^1_1,\ldots,\alpha^1_{k_1},\alpha^2_1,\ldots,\alpha^2_{k_2}] = [\alpha^1][\alpha^2] = [\beta^1][\beta^2] = [\beta^1_1,\ldots,\beta^1_{\ell_1},\beta^2_1,\ldots,\beta^2_{\ell_2}].
\]
By the uniqueness of the normal form, we must have $\ell_1 = 1$, $s(\beta^1_1) = r(\beta^2_1)$, and $\alpha^1_1 = \beta^1_1 \beta^2_1$ in $\Lambda_i$, for some $i \in I$.  But then \begin{align*}
\psi_i(\beta^1_1) = \psi(\beta^1) = \psi(\alpha^1) &= \psi_i(\alpha^1_1) + \psi([\alpha^1_2 \cdots \alpha^1_{k_1}]) \\
&= \psi_i(\beta^1_1 \beta^2_1) + \psi([\alpha^1_2 \cdots \alpha^1_{k_1}]) = \psi_i(\beta^1_1) + \psi_i(\beta^2_1) + \psi([\alpha^1_2 \cdots \alpha^1_{k_1}]).
\end{align*}
Thus, in particular, $\psi_i(\beta^2_1) = 0$ (here we use the hypothesis that the degree functors have ranges lying in positive cones).  Hence $\beta^2_1$ is a unit, contradicting the definition of the normal form of $\beta^2$.  It follows that $k_1 = 1$; similarly we have $\ell_1 = 1$.  Now by the uniqueness of the normal form (of $[\alpha^1][\alpha^2]$) we must have $\alpha^1 = \alpha^1_1$ and $\beta^1 = \beta^1_1$ in $\Lambda_i$.  Since $\alpha^1_1 \not= \beta^1_1$, we must have $s(\alpha^1_1) = r(\alpha^2_1)$, $s(\beta^1_1) = r(\beta^2_1)$, and $\alpha^1_1 \alpha^2_1 = \beta^1_1 \beta^2_1$.  Then $\alpha^2_1 \not= \beta^2_1$, by right cancellation.  We may now apply the above argument to $\alpha^j$, $\beta^j$ for $j \ge 2$.  We find that $\alpha^2 = \alpha^2_1$, $\beta^2 = \beta^2_1$, $s(\alpha^2_1) = r(\alpha^3_1)$, $s(\beta^2_1) = r(\beta^3_1)$, and $\alpha^2_1 \alpha^3_1 = \beta^2_1 \beta^3_1$.  Continuing, we find that the entire process occurs inside of $\Lambda_i$.  This contradicts the assumption that $\psi_i$ is non-isotropic.

Finally, we verify properties \eqref{l.finiteorbits_a} and \eqref{l.finiteorbits_b} of Lemma \ref{l.finiteorbits}.  For \eqref{l.finiteorbits_a}, let $E \subseteq \Lambda$ be infinite such that $\psi$ is constant on $E$.  Suppose that every pair in $E$ has a common extension.  Since $\psi$ is nondegenerate, no element of $E$ can extend another.  Thus by Lemma \ref{l.amalgamationextension}, every pair $\alpha$, $\beta \in E$ has normal forms $\alpha = [(\alpha_1, \ldots, \alpha_m, \gamma)]$, $\beta = [(\alpha_1, \ldots, \alpha_m, \delta)]$ such that $\gamma \Cap \delta$ in some $\Lambda_i$.  Thus the final terms of the normal forms of the elements of $E$ give an infinite subset $E' \subseteq \Lambda_i$ such that $\psi_i$ is constant on $E'$.  By property \eqref{l.finiteorbits_a} of $\psi_i$, there is a finite subset $F' \subseteq E'$ with $\bigvee F' = \emptyset$.  Then the corresponding finite set $F \subseteq E$ satisfies $\bigvee F = \emptyset$.  For \eqref{l.finiteorbits_b}, let $S \subseteq \psi(\Lambda)$ be finite.  Let $\pi_i : Q \to Q_i$ be the projection, and let $S_i = \pi_i(S)$.  Since $S$ is finite, there are only finitely many $i$ such that $S_i \not= \{ 0 \}$.  For such $i$, choose $T_i \subseteq \psi_i(Q_i)$ as in (\ref{l.finiteorbits_b}$'$).  By our hypothesis, we may assume that if $t \in T_i$ can be written as $t = t_1 + \cdots + t_k$ with $t_1$, $\ldots$, $t_k \in Q_i^+$, then $t_1$, $\ldots$, $t_k \in T_i$.  Let $T = \sum_i T_i$, a finite subset of $\psi(Q)$.  Let $E \subseteq \Lambda$ with $\psi(E) \subseteq T$.  If $\bigvee E \not= \emptyset$, we must have that $E$ is as above, in the proof of \eqref{l.finiteorbits_a}:  there is $\alpha \in \Lambda$ and $i \in I$ such that each element of $E$ is of the form $\alpha \gamma$ for some $\gamma \in \Lambda_i$, and $\bigvee E = \alpha \cdot \bigvee \{ \gamma : \alpha \gamma \in E \}$.  But then $\{ \psi_i(\gamma) : \alpha \gamma \in E \} \subseteq T_i$, so that $\psi_i(\bigvee E) \subseteq T$.
\end{proof}

\end{document}